\newtheorem{theorem}{Theorem}[section]
\newtheorem*{theorem*}{Theorem}
\newtheorem{corollary}[theorem]{Corollary}
\newtheorem{lemma}[theorem]{Lemma}
\newtheorem{rem}[theorem]{Remark}
\newtheorem{proposition}[theorem]{Proposition}
\newtheorem{fact}[theorem]{Fact}
\theoremstyle{definition}
\newcommand{\ee}{\varepsilon}
\newcommand{\nn}{\mathbb{N}}
\begin{document}

\title{The $\xi,\zeta$-Dunford Pettis property}

\begin{abstract} Using the hierarchy of weakly null sequences introduced in \cite{AMT}, we introduce two new families of operator classes. The first family simultaneously generalizes the completely continuous operators and the weak Banach-Saks operators. The second family generalizes the class $\mathfrak{DP}$. We study the distinctness of these classes, and prove that each class is an operator ideal. We also investigate the properties possessed by each class, such as injectivity, surjectivity, and identification of the dual class.  We produce a number of examples, including the higher ordinal Schreier and Baernstein spaces.  We prove ordinal analogues of several known results for Banach spaces with the Dunford-Pettis, hereditary Dunford-Pettis property, and hereditary by quotients Dunford-Pettis property. For example, we prove that for any $0\leqslant \xi, \zeta<\omega_1$, a Banach space $X$ has the hereditary $\omega^\xi, \omega^\zeta$-Dunford Pettis property if and only if every seminormalized, weakly null sequence either has a subsequence which is an $\ell_1^{\omega^\xi}$-spreading model or a $c_0^{\omega^\zeta}$-spreading model.

\end{abstract}

\author{R.M. Causey}
\address{Department of Mathematics, Miami University, Oxford, OH 45056, USA}
\email{causeyrm@miamioh.edu}

\thanks{2010 \textit{Mathematics Subject Classification}. Primary: 46B03, 47L20; Secondary: 46B28.}
\thanks{\textit{Key words}: Completely continuous operators, Schur property, Dunford Pettis property, operator ideals, ordinal ranks.}

\maketitle

%\tableofcontents

%\addtocontents{toc}{\setcounter{tocdepth}{2}}

\section{Introduction}

In \cite{DP}, Dunford and Pettis showed that any weakly compact operator defined on an $L_1(\mu)$ space must be completely continuous (sometimes also called a Dunford-Pettis operator).  In \cite{Grothendieck}, Grothendieck showed that $C(K)$ spaces enjoy the same property. That is, any weakly compact operator defined on a $C(K)$ domain is also completely continuous.   Now, we say a Banach space $X$ has the \textbf{Dunford-Pettis property} provided that for any Banach space $Y$ and any weakly compact operator $A:X\to Y$, $A$ is completely continuous.     A standard characterization of this property is as follows: $X$ has the Dunford-Pettis Property if for any weakly null sequences $(x_n)_{n=1}^\infty\subset X$, $(x^*_n)_{n=1}^\infty\subset X^*$, $\lim_n x^*_n(x_n)=0$.  Generalizing this, one can study the class of operators $A:X\to Y$ such that for any weakly null sequences $(x_n)_{n=1}^\infty\subset X$ and $(y^*_n)_{n=1}^\infty\subset Y^*$, $\lim_n y^*_n(Ax_n)=0$.   This class of operators has been denoted in the literature by $\mathfrak{DP}$, although it is not to be confused with the class of Dunford-Pettis operators, $\mathfrak{V}$. Then the Banach space $X$ has the Dunford-Pettis property if and only if $I_X\in \mathfrak{DP}$.

By the well-known Mazur lemma, if $X$ is a Banach space and $(x_n)_{n=1}^\infty$ is a weakly null sequence in $X$, then $(x_n)_{n=1}^\infty$ admits a norm null convex block sequence. Of course, the simplest form of convex block sequences would be one in which all coefficients are equal to $1$, in which case the convex block sequence of $(x_n)_{n=1}^\infty$ is actually a subsequence. The next simplest form of a convex block sequence is a sequence of Cesaro means.  A property of significant interest is whether the sequence $(x_n)_{n=1}^\infty$ has a subsequence (or whether every subsequence of $(x_n)_{n=1}^\infty$ has a further subsequence) whose Cesaro means converge to zero in norm.  A weakly null sequence having the property that every subsequence has a further subsesquence whose Cesaro means converge to zero in norm is sometimes called \emph{uniformly weakly null}. Schreier \cite{Schreier} produced an example of a weakly null sequence which is not uniformly weakly null.    More generally, there is a hierarchy of weak nullity fully elucidated by Argyros, Merkourakis, and Tsarpalilas \cite{AMT} indexed by countable ordinals.   As described above, norm null sequences are $0$-weakly null, uniformly weakly null sequences are $1$-weakly null, and for every countable ordinal $\xi$ there exists a weakly null sequence which is $\xi$-weakly null and not $\zeta$-weakly null for any $\zeta<\xi$. By convention, we establish that a sequence is said to be $\omega_1$-weakly null if it is weakly null.    Consistent with this convention is the fact that for any $0\leqslant \xi\leqslant \zeta\leqslant \omega_1$, every sequence which is $\xi$-weakly null is $\zeta$-weakly null.    The ordinal quantification assigns to a given weakly null sequence some measure of how complex the convex coefficients of a norm null convex block sequence must be.

This yields a natural generalization of the class $\mathfrak{DP}$.  Given an operator $A:X\to Y$, rather than asking that every weakly null sequence in $(x_n)_{n=1}^\infty\subset X$ and any weakly null sequence $(y^*_n)_{n=1}^\infty \subset Y^*$, $\lim_n y^*_n(Ax_n)=0$, we may instead only require the weaker condition that every pair of sequences $(x_n)_{n=1}^\infty\subset X$, $(y^*_n)_{n=1}^\infty \subset Y^*$ which are ``very'' weakly null, $\lim_n y^*_n(Ax_n)=0$.    Formally, for any $0\leqslant \xi, \zeta\leqslant \omega_1$, we let $\mathfrak{M}_{\xi, \zeta}$ denote the class of all operators $A:X\to Y$ such that for every $\xi$-weakly null $(x_n)_{n=1}^\infty\subset X$ and every $\zeta$-weakly null $(y^*_n)_{n=1}^\infty \subset Y^*$, $\lim_n y^*_n(Ax_n)=0$. We let $\textsf{M}_{\xi, \zeta}$ denote the class of all Banach spaces $X$ such that $I_X\in \mathfrak{M}_{\xi, \zeta}$. Then $\mathfrak{DP}=\mathfrak{M}_{\omega_1, \omega_1}$ and $\textsf{M}_{\omega_1, \omega_1}$ is the class of all Banach spaces with the Dunford-Pettis property.    Note that  every operator lies in $\mathfrak{DP}_{\xi, \zeta}$ when $\min \{\xi, \zeta\}=0$, since $0$-weakly null sequences are norm null. Thus we are interested in studying  the classes $\mathfrak{M}_{\xi, \eta}$ only for $0<\xi, \zeta$.  Furthermore, one may ask for a characterization, as one does with the Dunford-Pettis property, of Banach spaces all of whose subspaces, or all of whose quotients, enjoy a given property (in our case, membership in $\textsf{M}_{\xi, \zeta}$).    We note that the classes $\textsf{M}_{1, \omega_1}$ were introduced and studied in \cite{GG}, while the classes $\textsf{M}_{\omega_1, \xi}$, were introduced and studied in \cite{AG}. The study of classes of operators with these weakened Dunford-Pettis conditions  rather than spaces with these conditions is new to this work.  Along these lines, we have the following results.

\begin{theorem} For every $0<\xi, \zeta\leqslant \omega_1$, $\mathfrak{M}_{\xi, \zeta}$ is a closed ideal which is  not injective, surjective, or symmetric. Moreover, the ideals $(\mathfrak{M}_{\xi, \zeta})_{0<\xi, \zeta\leqslant \omega_1}$ are distinct.

\end{theorem}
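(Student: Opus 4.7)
The plan is to handle the ideal/closedness axioms first, then the three failure-type assertions, and finally distinctness. The two preservation facts I will take as inputs (these should be recorded as lemmas earlier in the paper from the basic theory of the Argyros–Merkourakis–Tsarpalilas hierarchy) are: if $C:W\to X$ is bounded and $(w_n)$ is $\xi$-weakly null in $W$, then $(Cw_n)$ is $\xi$-weakly null in $X$; dually, if $B:Y\to Z$ is bounded and $(z_n^*)$ is $\zeta$-weakly null in $Z^*$, then $(B^*z_n^*)$ is $\zeta$-weakly null in $Y^*$. Granted these, the two-sided ideal property of $\mathfrak{M}_{\xi,\zeta}$ is immediate from the identity $z_n^*(BACw_n)=(B^*z_n^*)(A(Cw_n))$ together with the defining limit condition on $A$. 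Additivity and containment of rank-one operators are obvious from the definition. For closedness, if $\|A_k-A\|\to 0$ with $A_k\in\mathfrak{M}_{\xi,\zeta}$, and the admissible sequences $(x_n),(y_n^*)$ are uniformly bounded by $M$, then
\[
|y_n^*(Ax_n)|\leqslant M^2\|A-A_k\|+|y_n^*(A_kx_n)|,
\]
so choosing $k$ large and then $n$ large yields the required convergence to $0$.

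For the three failure-type assertions, fix any $(\xi,\zeta)$ and pick a Banach space $X$ with $I_X\notin\mathfrak{M}_{\xi,\zeta}$ (examples exist by the Schreier/Baernstein constructions appearing later in the paper). For non-injectivity, embed $X$ isometrically into an $\ell_\infty(\Gamma)$ via $J$; since $\ell_\infty(\Gamma)$ has the Dunford–Pettis property we obtain $J=J\circ I_X\in\mathfrak{DP}\subset\mathfrak{M}_{\xi,\zeta}$, whereas $I_X\notin\mathfrak{M}_{\xi,\zeta}$. For non-surjectivity, realize $X$ as a quotient $Q:\ell_1(\Gamma)\twoheadrightarrow X$; since $\ell_1(\Gamma)$ has the Schur property every weakly null sequence in $\ell_1(\Gamma)$ is norm null, so $Q=I_X\circ Q\in\mathfrak{M}_{\xi,\zeta}$ vacuously, while $I_X\notin\mathfrak{M}_{\xi,\zeta}$. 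For non-symmetry, exhibit a $Z$ with $I_Z\in\mathfrak{M}_{\xi,\zeta}$ but $I_{Z^*}=(I_Z)^*\notin\mathfrak{M}_{\zeta,\xi}$; the classical phenomenon of Banach spaces with the Dunford–Pettis property whose duals lack it, suitably transplanted to the ordinal hierarchy, supplies such $Z$.

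For distinctness I would first record the monotonicity $\mathfrak{M}_{\xi,\zeta}\subset\mathfrak{M}_{\xi',\zeta'}$ whenever $\xi'\leqslant\xi$ and $\zeta'\leqslant\zeta$, so that it suffices to separate pairs along each coordinate. The higher Schreier space $S_\xi$ carries a canonical basis that is $\xi$-weakly null but fails to be $\xi'$-weakly null for any $\xi'<\xi$; paired with a matching dual sequence from a Baernstein-type construction, one builds an operator that lies in $\mathfrak{M}_{\xi',\zeta}$ but not in $\mathfrak{M}_{\xi,\zeta}$. A symmetric construction on the codomain dual separates across the $\zeta$-coordinate, and a direct-sum combination disposes of any remaining pair of distinct parameters.

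The main obstacle is the distinctness claim. The ideal and closedness checks are essentially bookkeeping once the preservation lemmas are in hand, and the failure examples are standard embed-into-DP-space and quotient-of-Schur-space arguments. By contrast, distinguishing $\mathfrak{M}_{\xi,\zeta}$ across the entire range $(\xi,\zeta)\in(0,\omega_1]^2$ requires the detailed combinatorial properties of the higher Schreier and Baernstein hierarchies and the construction of test operators that probe each coordinate independently, which is where the technical heart of the argument lies.
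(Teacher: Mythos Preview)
Your overall architecture matches the paper's: ideal axioms and closedness first, then failure of injectivity/surjectivity/symmetry via concrete spaces, then distinctness via Schreier-type constructions. The paper actually deduces closedness and the ideal property from the quotient representation $\mathfrak{M}_{\xi,\zeta}=(\mathfrak{W}_\zeta^{\text{dual}})^{-1}\circ\mathfrak{K}\circ\mathfrak{W}_\xi^{-1}$ rather than by direct verification, but your direct argument is fine. Your non-injectivity/non-surjectivity examples are exactly the paper's (embed $\ell_2$ into $\ell_\infty$, realize $\ell_2$ as a quotient of $\ell_1$).

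Two corrections. First, in your non-symmetry sketch the indices are swapped: to show $\mathfrak{M}_{\xi,\zeta}\neq\mathfrak{M}_{\xi,\zeta}^{\text{dual}}$ you need $I_Z\in\mathfrak{M}_{\xi,\zeta}$ with $I_{Z^*}\notin\mathfrak{M}_{\xi,\zeta}$, not $\notin\mathfrak{M}_{\zeta,\xi}$. The paper uses Stegall's $Z=\ell_1(\ell_2^n)$, which is Schur (hence in $\textsf{M}_{\xi,\zeta}$) while $Z^*$ contains a complemented $\ell_2$ (hence fails $\textsf{M}_{1,1}\supset\textsf{M}_{\xi,\zeta}$). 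Second, your Schreier-basis claim is off by one: the canonical basis of $X_\xi$ is an $\ell_1^\xi$-spreading model, so it is $(\xi+1)$-weakly null and \emph{not} $\xi$-weakly null.

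The genuine gap is in distinctness. Your sketch (``pair with a matching dual sequence from a Baernstein-type construction'') does not identify the key object. What the paper actually produces is the formal inclusion $I:X_{\xi}\to X_{\lambda(\xi)}$ (and its adjoint $J:X_{\lambda(\xi)}^*\to X_\xi^*$), and proves $I\in\mathfrak{M}_{\xi,\omega_1}\cap\complement\mathfrak{M}_{\xi+1,1}$ and $J\in\mathfrak{M}_{\omega_1,\xi}\cap\complement\mathfrak{M}_{1,\xi+1}$. The non-obvious half is membership in $\mathfrak{M}_{\xi,\omega_1}$: one must show that every $\xi$-weakly null sequence in $X_\xi$ (more precisely in $X_{\xi,p}^{**}$) has vanishing pairing against \emph{every} weakly null sequence in $X_{\lambda(\xi)}^*$. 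This relies on the structural fact that $\xi$-weakly null sequences in $X_\xi$ are characterized by $\lim_n\|x_n\|_\beta=0$ for all $\beta<\lambda(\xi)$, which forces subsequences to be WUC in $X_{\lambda(\xi)}$; the paper isolates this as a separate lemma. Without that lemma your plan does not yet produce an operator in $\mathfrak{M}_{\xi,\zeta}\setminus\mathfrak{M}_{\xi+1,\zeta}$, because an identity map on a Schreier space will typically fail $\mathfrak{M}_{\xi,\zeta}$ already.
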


In addition to generalizations of the Dunford-Pettis property, one may use the quantified weak nullity to generalize other classes of operators.  Two classes of interest are the classes $\mathfrak{V}$ of completely continuous operators and $\mathfrak{wBS}$ of weak Banach-Saks operators. Also of interest are the associated space ideals $\textsf{V}$ of Schur spaces and $\textsf{wBS}$ of weak Banach-Saks spaces.  The concepts behind these classes are that weakly null sequences are mapped by the operator to sequences which are ``very'' weakly null (completely continuous operators send weakly null sequences to $0$-weakly null sequences, and weak Banach-Saks operators send weakly null sequences to $1$-weakly null sequences).   In \cite{CN}, the notions of $\xi$-completely continuous operators and $\xi$-Schur Banach spaces were introduced. These notions are weakenings of the notions of completely continuous operators and Schur Banach spaces, respectively.    An operator  is $\xi$-completely continuous if it sends $\xi$-weakly null sequences to norm null ($0$-weakly null) sequences.  Heuristically, this is an operator which sends sequences which are ``not too bad'' to sequences which are ``good.''  In \cite{BC}, the notion of $\xi$-weak Banach-Saks was introduced.  An operator  is $\xi$-weak Banach-Saks if it sends  weakly null sequences to  $\xi$-weakly null sequences. Heuristically, this is an operator which sends any weakly null sequence, regardless of how ``bad'' it is,  to sequences which are ``not too bad.''    Of course, there is a simultaneous  generalization of both of these notions.  For $0\leqslant \zeta<\xi\leqslant \omega_1$, we let $\mathfrak{G}_{\xi, \zeta}$ denote the class of operators which send $\xi$-weakly null sequences to $\zeta$-weakly null sequences. Along these lines, we prove the following.

\begin{theorem} For every $0\leqslant \zeta<\xi\leqslant \omega_1$, $\mathfrak{G}_{\xi, \zeta}$ is a closed, injective ideal which fails to be surjective or symmetric. These ideals are distinct. 

\end{theorem}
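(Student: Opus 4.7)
The plan is to check each property in turn, relying on two standard permanence properties of the $\xi$-weakly null hierarchy of \cite{AMT}: first, that bounded operators preserve $\xi$-weak nullity (since they commute with convex combinations and merely scale norms by the operator norm), and second, that within any fixed Banach space the $\zeta$-weakly null sequences form a vector space that is stable under uniformly small perturbations. The first observation gives closure of $\mathfrak{G}_{\xi,\zeta}$ under left and right composition with bounded operators, while the second gives closure under sums of operators sharing a domain and codomain; together these establish the operator ideal axioms. For norm closedness, I will show that if $A_n\to A$ in operator norm with $A_n\in\mathfrak{G}_{\xi,\zeta}$ and $(x_k)$ is $\xi$-weakly null, then $(Ax_k)_k$ differs from $(A_nx_k)_k$ uniformly in $k$ by at most $\|A-A_n\|\sup_k\|x_k\|$; since $\zeta$-weak nullity is witnessed by the norms of specific convex blocks, sending $n\to\infty$ transfers this property from $(A_nx_k)$ to $(Ax_k)$.

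For injectivity I would fix an isomorphic embedding $j:Y\to Z$ and an operator $A:X\to Y$ with $jA\in\mathfrak{G}_{\xi,\zeta}$. Given an $\xi$-weakly null sequence $(x_n)\subset X$, the sequence $(jAx_n)$ is $\zeta$-weakly null in $Z$; since norms of convex blocks in $Y$ and $j(Y)$ agree up to the equivalence constants of $j$, the sequence $(Ax_n)$ is itself $\zeta$-weakly null in $Y$, and so $A\in\mathfrak{G}_{\xi,\zeta}$.

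To see that surjectivity fails I plan to combine a quotient map $q:\ell_1\to X$ onto a separable space $X$ with $I_X\notin\mathfrak{G}_{\xi,\zeta}$ (which, since $\zeta<\xi$, is supplied by one of the higher-ordinal Schreier or Baernstein spaces referenced in the abstract) with the Schur property of $\ell_1$. Every weakly null sequence in $\ell_1$ is norm null, so every bounded operator out of $\ell_1$ lies in $\mathfrak{G}_{\xi,\zeta}$, and in particular $I_X\circ q=q$ does; but $I_X$ does not, demonstrating non-surjectivity. For failure of symmetry I will seek an operator whose domain is Schur (guaranteeing membership in $\mathfrak{G}_{\xi,\zeta}$) while the adjoint maps into a space whose $\xi$-weakly null sequences are not mapped to $\zeta$-weakly null ones; a concrete candidate is again an operator $A:\ell_1\to Y$ with $Y$ drawn from the same hierarchy of examples, so that $A^*:Y^*\to \ell_\infty$ meets a witnessing sequence of high rank.

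Distinctness of the family $(\mathfrak{G}_{\xi,\zeta})$ is where the main work and the principal obstacle sit. For any admissible $(\xi,\zeta)\neq(\xi',\zeta')$, the natural strategy is to take the identity on an appropriate higher-ordinal Schreier or Baernstein space and tune parameters so that its unit vector basis is $\xi$-weakly null but not $\zeta$-weakly null for the prescribed values. The technical difficulty is controlling both ranks simultaneously, which reduces to a delicate combinatorial analysis of Schreier-type families in the spirit of \cite{AMT}; once the appropriate sequence spaces are identified, the distinctness follows by comparing the resulting ranks against the defining thresholds of the two ideals.
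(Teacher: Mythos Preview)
Your verification of the ideal, closedness, and injectivity properties is essentially correct and close in spirit to the paper's argument, though the paper packages the ideal property more slickly via the quotient representation $\mathfrak{G}_{\xi,\zeta}=\mathfrak{W}_\zeta\circ\mathfrak{W}_\xi^{-1}$, from which closedness and the ideal axioms are inherited automatically. Your non-surjectivity argument (quotient $\ell_1\twoheadrightarrow X_\zeta$) matches the paper exactly, and your symmetry sketch can be made to work with a specific choice of target (the paper uses $I_{\ell_1}$ for $\zeta=0$ and the reflexive Baernstein space $X_{\zeta,2}$ for $\zeta>0$).

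The genuine gap is in your distinctness argument. You propose to separate $\mathfrak{G}_{\xi,\zeta}$ from $\mathfrak{G}_{\xi',\zeta'}$ using \emph{identity operators} on suitably calibrated Schreier or Baernstein spaces. This cannot succeed in general. The paper proves (Proposition \ref{gis} and Corollary \ref{nn}) that for any Banach space $X$ and any $\zeta$, the set $\{\eta: X\in\textsf{G}_{\eta+\zeta,\zeta}\}$ is closed under ordinal addition, so the least $\eta$ with $X\notin\textsf{G}_{\eta+\zeta,\zeta}$ is always of the form $\omega^\gamma$. Concretely, no identity operator can lie in $\mathfrak{G}_{2,1}\setminus\mathfrak{G}_{3,1}$: if $I_X\in\mathfrak{G}_{2,1}$ then composing $I_X$ with itself gives $I_X\in\mathfrak{G}_{3,1}$ as well. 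The space ideals $\textsf{G}_{\xi,\zeta}$ are therefore \emph{not} all distinct, and identities are too coarse a tool.

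The paper's separating examples are instead the \emph{formal inclusions} $I:X_\xi\to X_\zeta$ between different Schreier spaces (Theorem \ref{gx}). The key computation is that $I\in\mathfrak{G}_{\xi,\zeta}\cap\complement\mathfrak{G}_{\xi+1,\zeta}$: a $\xi$-weakly null sequence in $X_\xi$ satisfies $\lim_n\|x_n\|_\beta=0$ for all $\beta<\lambda(\xi)$, which forces $\zeta$-weak nullity in $X_\zeta$, while the canonical basis itself is $(\xi{+}1)$-weakly null in $X_\xi$ but not $\zeta$-weakly null in $X_\zeta$. With these non-identity examples in hand, a short case analysis (Corollary \ref{gcor}) separates all pairs. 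Your proposal would need to replace identities by such inclusions (or analogous non-idempotent maps) to close the gap.
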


We also recall the stratification $(\mathfrak{W}_\xi)_{0\leqslant \xi\leqslant \omega_1}$ of the weakly compact operators. The class $\mathfrak{W}_\xi$ is called the class of $\xi$-\emph{Banach-Saks operators}.    We recall the basic facts of these classes and basic facts about operator classes, including the quotients $\mathfrak{A}\circ \mathfrak{B}^{-1}$ and $\mathfrak{B}^{-1}\circ \mathfrak{A}$,  in Section $3$.      We note that $\mathfrak{W}_0$ is the class of compact operators, also denoted by $\mathfrak{K}$. The class of weakly compact operators is denoted by $\mathfrak{W}$ and $\mathfrak{W}_{\omega_1}$, and $\mathfrak{W}_1$ denotes the class of Banach-Saks operators.    It is a well-known identity regarding completely continuous operators that $\mathfrak{V}=\mathfrak{K}\circ \mathfrak{W}^{-1}$. It is also standard that $\mathfrak{DP}=\mathfrak{W}^{-1}\circ \mathfrak{V}=\mathfrak{W}^{-1}\circ \mathfrak{K}\circ \mathfrak{W}^{-1}$.  Rewriting theses identities using the ordinal notation for these classes gives $$\mathfrak{V}_{\omega_1}= \mathfrak{W}_0\circ \mathfrak{W}_{\omega_1}^{-1} $$ and $$\mathfrak{M}_{\omega_1, \omega_1}= \mathfrak{W}^{-1}_{\omega_1}\circ \mathfrak{K}\circ \mathfrak{W}_{\omega_1}^{-1}.$$       We generalize these identities in the following theorem.

\begin{theorem} For $0\leqslant \zeta<\xi\leqslant \omega_1$, $$\mathfrak{G}_{\xi, \zeta}=\mathfrak{W}_\zeta\circ \mathfrak{W}_\xi^{-1}$$ and $$\mathfrak{G}_{\xi, \zeta}^\text{\emph{dual}}= (\mathfrak{W}_\xi^\text{\emph{dual}})^{-1}\circ \mathfrak{W}_\zeta^\text{\emph{dual}}.$$

For $0<\zeta, \xi\leqslant \omega_1$, $$\mathfrak{M}_{\xi, \zeta}= (\mathfrak{W}^\text{\emph{dual}}_\zeta)^{-1}\circ \mathfrak{V}_\xi = (\mathfrak{W}^\text{\emph{dual}}_\zeta)^{-1}\circ \mathfrak{K}\circ \mathfrak{W}_\xi^{-1}.$$

\end{theorem}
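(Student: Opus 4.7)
My plan is to mirror the classical derivations $\mathfrak{V}=\mathfrak{K}\circ \mathfrak{W}^{-1}$ and $\mathfrak{DP}=\mathfrak{W}^{-1}\circ \mathfrak{V}$. The engine of those proofs is a pair of representation facts: a weakly null $(x_n)\subset X$ can be realized as $(Be_n)$ for a weakly compact $B:\ell_1\to X$, and a weakly null $(y_n^*)\subset Y^*$ can be realized as the coordinate functionals of a weakly compact $B:Y\to c_0$. My first step is therefore to record (or extract from Section $3$) the ordinal analogues: $(x_n)\subset X$ is $\xi$-weakly null if and only if the canonical operator $B:\ell_1\to X$, $Be_n=x_n$, lies in $\mathfrak{W}_\xi$; and $(y_n^*)\subset Y^*$ is $\zeta$-weakly null if and only if the operator $B:Y\to c_0$, $By=(y_n^*(y))_n$, satisfies $B\in \mathfrak{W}_\zeta^{\text{dual}}$.

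For the first identity $\mathfrak{G}_{\xi,\zeta}=\mathfrak{W}_\zeta\circ \mathfrak{W}_\xi^{-1}$, both inclusions are direct. For $\subseteq$: if $A\in\mathfrak{G}_{\xi,\zeta}$ and $B\in\mathfrak{W}_\xi$, any bounded sequence $(z_n)$ in the domain of $B$ has a subsequence $(z_{n_k})$ with $Bz_{n_k}$ $\xi$-weakly convergent, so the differences from its limit are $\xi$-weakly null; applying $A$ produces a $\zeta$-weakly null sequence, witnessing $AB\in\mathfrak{W}_\zeta$. For $\supseteq$: given $\xi$-weakly null $(x_n)\subset X$, the representation lemma produces $B\in \mathfrak{W}_\xi(\ell_1,X)$ with $Be_n=x_n$; then $AB\in \mathfrak{W}_\zeta$ applied to the distinguished bounded sequence $(e_n)\subset \ell_1$ forces $(Ax_n)=(ABe_n)$ to be $\zeta$-weakly null.

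The second identity $\mathfrak{G}_{\xi,\zeta}^{\text{dual}}=(\mathfrak{W}_\xi^{\text{dual}})^{-1}\circ \mathfrak{W}_\zeta^{\text{dual}}$ is obtained by dualizing the first: $A\in $ LHS means $A^*\in \mathfrak{G}_{\xi,\zeta}=\mathfrak{W}_\zeta\circ \mathfrak{W}_\xi^{-1}$, which unwinds to $A^*C\in \mathfrak{W}_\zeta$ for every $C\in \mathfrak{W}_\xi$ with codomain $Y^*$, whereas $A\in $ RHS unwinds to $A^*B^*=(BA)^*\in \mathfrak{W}_\zeta$ for every $B$ with $B\in \mathfrak{W}_\xi^{\text{dual}}$. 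One direction is immediate since every $B^*$ is such a $C$; for the converse one must show that the adjoints $C=B^*$ already suffice as test operators, which is the principal technical obstacle and is handled by the dual representation lemma: every $\xi$-weakly null sequence in $Y^*$ arises as $(B^*e_n)$ for $B\in\mathfrak{W}_\xi^{\text{dual}}$, so an arbitrary $C\in\mathfrak{W}_\xi$ can be replaced by such an adjoint on the test sequences that matter.

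For the third identity, $\mathfrak{M}_{\xi,\zeta}=(\mathfrak{W}_\zeta^{\text{dual}})^{-1}\circ \mathfrak{V}_\xi$, the argument parallels the classical $\mathfrak{DP}$ identity. In the forward direction, if $A\in \mathfrak{M}_{\xi,\zeta}$, $B$ has $B^*\in \mathfrak{W}_\zeta$, and $(x_n)$ is $\xi$-weakly null with $\|BAx_n\|\not\to 0$, I pick unit $z_n^*$ almost attaining the norm, use $B^*\in \mathfrak{W}_\zeta$ to split $B^*z_n^*=v^*+w_n^*$ on a subsequence with $w_n^*$ $\zeta$-weakly null, and note that $w_n^*(Ax_n)\to 0$ (by $A\in \mathfrak{M}_{\xi,\zeta}$) and $v^*(Ax_n)\to 0$ (by weak nullity of $(Ax_n)$) together contradict the choice of $z_n^*$. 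In the reverse direction, given $\xi$-weakly null $(x_n)$ and $\zeta$-weakly null $(y_n^*)$, the dual representation lemma produces $B:Y\to c_0$ with $B\in \mathfrak{W}_\zeta^{\text{dual}}$ and $By=(y_n^*(y))_n$, whence $BA\in \mathfrak{V}_\xi$ forces $\|BAx_n\|_{c_0}\to 0$ and in particular $|y_n^*(Ax_n)|\to 0$ via the diagonal coordinate. The alternate presentation $(\mathfrak{W}_\zeta^{\text{dual}})^{-1}\circ \mathfrak{K}\circ \mathfrak{W}_\xi^{-1}$ then follows by substituting $\mathfrak{V}_\xi=\mathfrak{K}\circ \mathfrak{W}_\xi^{-1}$, which is the $\zeta=0$ instance of the first identity.
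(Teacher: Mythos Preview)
Your approach is essentially the paper's: the representation facts you cite are exactly Proposition~\ref{CN} and the remark after it, and your forward/reverse arguments for $\mathfrak{G}_{\xi,\zeta}$ and $\mathfrak{M}_{\xi,\zeta}$ track Theorems~\ref{downshift theorem} and~\ref{dp theorem} closely (the paper proves the $\mathfrak{M}_{\xi,\zeta}$ identity via $\mathfrak{K}$ first and then substitutes $\mathfrak{V}_\xi=\mathfrak{K}\circ\mathfrak{W}_\xi^{-1}$, you do the reverse, but this is cosmetic). One caution: your ``if and only if'' form of the representation lemma is stronger than what the paper establishes---Proposition~\ref{CN} only guarantees that \emph{some subsequence} of a $\xi$-weakly null $(x_n)$ yields $\Phi\in\mathfrak{W}_\xi$, not that the full sequence does---but since every application in your sketch proceeds by contradiction and passes to subsequences anyway, this weaker form suffices and your argument goes through.
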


The appearance of $\mathfrak{W}_\xi^\text{dual}$, rather than simply $\mathfrak{W}_\xi$ as it appeared in the identities preceding the theorem are due to the fact that $\mathfrak{W}_0=\mathfrak{K}=\mathfrak{K}^\text{dual}=\mathfrak{W}_0^\text{dual}$ and $\mathfrak{W}_{\omega_1}=\mathfrak{W}=\mathfrak{W}^\text{dual}=\mathfrak{W}_{\omega_1}$, while $\mathfrak{W}_\xi\neq \mathfrak{W}_\xi^\text{dual}$ for $0<\xi<\omega_1$.  This duality is known to fail for all $0<\xi<\omega_1$. The failure for $\xi=1$ is the classical fact that the Banach-Saks property is not a self-dual property, while the $1<\xi<\omega_1$ cases are generalizations of this.

We say  Banach space $X$ is \emph{hereditarily} $\textsf{M}_{\xi, \zeta}$ if for every every closed subspace $Y$ of $X$, $Y\in \textsf{M}_{\xi, \zeta}$.       We say $X$ is \emph{hereditary by quotients} $\textsf{M}_{\xi, \zeta}$ if for every closed subspace $Y$ of $X$, $X/Y\in \textsf{M}_{\xi, \zeta}$.  In Section $2$, we define the relevant notions regarding $\ell_1^\xi$ and $c_0^\zeta$-spreading models. We also adopt the convention that a sequence which is equivalent to the canonical $c_0$ basis will be called a $c_0^{\omega_1}$-spreading model.  We summarize our results regarding these hereditary and spatial notions in the following theorem. We note that item $(i)$ of the following theorem generalizes a characterization of the hereditary Dunford-Pettis property due to Elton, as well as a characterization of the hereditary $\zeta$-Dunford-Pettis property defined by Argyros and Gasparis.

\begin{theorem} Fix $0<\xi, \zeta\leqslant \omega_1$. \begin{enumerate}[(i)]\item $X$ is hereditarily $\textsf{\emph{M}}_{\xi, \zeta}$ if every $\xi$-weakly null sequence has a subsequence which is a $c_0^\zeta$-spreading model. \item $X$ is hereditary by quotients $\textsf{\emph{M}}_{\omega_1, \zeta}$ if and only if $X^*$ is hereditarily $\textsf{\emph{M}}_{\zeta, \omega_1}$. \item     If $\xi<\omega_1$, then $X$ is hereditarily $\textsf{\emph{M}}_{\gamma, \zeta}$ for some $\omega^\xi<\gamma<\omega^{\xi+1}$ if and only if $X$ is hereditarily $\textsf{\emph{M}}_{\gamma, \zeta}$ for every $\omega^\xi<\gamma<\omega^{\xi+1}$.   \item If $\zeta<\omega_1$, then $X$ is hereditarily $\textsf{\emph{M}}_{\xi, \gamma}$ for some $\omega^\zeta<\gamma<\omega^{\zeta+1}$ if and only if $X$ is hereditarily $\textsf{\emph{M}}_{\xi, \gamma}$ for every $\omega^\zeta<\gamma<\omega^{\zeta+1}$.  \end{enumerate}

\end{theorem}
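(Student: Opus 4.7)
The engine driving all four parts is a dichotomy for $\xi$-weakly null sequences: after passing to a subsequence, such a sequence is either a $c_0^\zeta$-spreading model or admits a $\zeta$-weakly null sequence of functionals that pair with it above a positive constant. I expect this to be proved as a standalone lemma using Ramsey-type stabilization with the Schreier family $\mathcal{S}_\zeta$ together with the hierarchy of $\xi$-weak nullity from \cite{AMT}.

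For (i), assume every $\xi$-weakly null sequence in $X$ has a $c_0^\zeta$-spreading model subsequence, and let $Y\subseteq X$ be closed. Given $\xi$-weakly null $(y_n)\subset Y$ and $\zeta$-weakly null $(y_n^*)\subset Y^*$, I extract a subsequence of $(y_n)$ which is a $c_0^\zeta$-spreading model with some constant $C$; the upper $c_0^\zeta$-estimate against $\mathcal{S}_\zeta$-admissible convex combinations, combined with the $\zeta$-weak nullity of $(y_n^*)$, forces $y_n^*(y_n)\to 0$ and yields $Y\in \textsf{M}_{\xi,\zeta}$. For the converse (implicit in calling this a characterization), if some $\xi$-weakly null $(x_n)$ admitted no $c_0^\zeta$-spreading model subsequence, the dichotomy would produce a $\zeta$-weakly null $(x_n^*)\subset \overline{\mathrm{span}}(x_n)^*$ with $x_n^*(x_n)\not\to 0$, contradicting $\overline{\mathrm{span}}(x_n)\in \textsf{M}_{\xi,\zeta}$.

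For (ii), I argue by duality. Using the identification $(X/Y)^*\simeq Y^\perp$, the condition $X/Y\in \textsf{M}_{\omega_1,\zeta}$ unpacks as: every $\zeta$-weakly null sequence in $Y^\perp$ pairs null with every weakly null sequence in $X/Y$. Since weakly null sequences in $X/Y$ inject canonically into weakly null sequences in $(Y^\perp)^*\simeq X^{**}/Y^{\perp\perp}$, the hereditary property of $X^*$ at level $\textsf{M}_{\zeta,\omega_1}$ gives the hereditary-by-quotients property of $X$. The reverse direction is the more delicate one: an arbitrary norm-closed $Z\subseteq X^*$ need not be of the form $Y^\perp$, so I plan to pass to the weak*-closure $\overline{Z}^{w^*}=Y^\perp$ and check that $\zeta$-weak nullity of sequences in $Z$ is preserved under the inclusion into $Y^\perp$, while weakly null sequences in $Z^*$ can be realized by restriction from $(Y^\perp)^*$ via Hahn-Banach.

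For (iii) and (iv), these fall out of (i) once one has the following stability fact, which I expect to be established earlier using the structure of the Schreier families: for $\gamma,\gamma'\in (\omega^\xi,\omega^{\xi+1})$, every $\gamma$-weakly null sequence has a $\gamma'$-weakly null subsequence, and every $c_0^\gamma$-spreading model has a $c_0^{\gamma'}$-spreading model subsequence. Then (i) translates the hereditary property at any parameter in the interval into the same condition on every $\xi$-weakly null sequence, giving the cross-interval equivalence in (iii); part (iv) follows symmetrically on the second index. The main obstacle is the nontrivial direction of (i), namely the extraction of testing $\zeta$-weakly null functionals from the failure of the $c_0^\zeta$ upper estimate at every subsequence; this requires careful Ramsey combinatorics on $\mathcal{S}_\zeta$ and a bookkeeping argument to ensure the produced functional sequence is genuinely $\zeta$-weakly null rather than merely weakly null.
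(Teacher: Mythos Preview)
Your approach to (i) is essentially the paper's: the forward direction uses the $c_0^\zeta$ upper estimate to force the functionals into an $\ell_1^\zeta$-spreading model, and the converse extracts $\zeta$-weakly null coordinate functionals on a suitable subspace (the paper quotes \cite[Theorem 3.9]{AG} for this extraction rather than proving a standalone dichotomy, and handles $\zeta=\omega_1$ separately via Elton's argument).

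There are genuine gaps in (ii), (iii), and (iv).

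For (iii) and (iv), the stability fact you rely on is false. Take $\xi=0$, so the interval is $(1,\omega)$; the canonical basis of $X_2$ is $3$-weakly null but has no $2$-weakly null subsequence, so it is not true that ``every $\gamma$-weakly null sequence has a $\gamma'$-weakly null subsequence'' for arbitrary $\gamma,\gamma'$ in the interval. The paper's argument is instead a minimality/descent argument: assuming the hereditary property holds at some but not all $\mu\in(\omega^\xi,\omega^{\xi+1})$, take the least bad $\eta$, write $\eta=\alpha+\gamma$ with $\alpha,\gamma<\eta$ and $\gamma>1$, and use the block-of-block lemma (Corollary~\ref{block of block}) to manufacture from an $\eta$-weakly null witness a $\gamma$-weakly null sequence which is still an $\ell_1^1$-spreading model (resp.\ has no $c_0^\gamma$-subsequence), contradicting minimality.

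For (ii), the reverse direction of your duality argument does not go through as stated: Hahn--Banach extensions of a weakly null sequence in $Z^*$ need not be weakly null in $(Y^\perp)^*$, so you cannot reduce an arbitrary subspace $Z\leqslant X^*$ to an annihilator. The paper avoids this entirely by proving a stronger chain of equivalences passing through $X^*\in\textsf{V}_\zeta$: one shows $X$ hereditary by quotients $\textsf{M}_{\omega_1,\zeta}$ implies $\ell_1\not\hookrightarrow X$ and $X\in\textsf{M}_{\omega_1,\zeta}$, and then that these two together force $X^*\in\textsf{V}_\zeta$ (via a weakly Cauchy difference trick), which trivially gives the hereditary property of $X^*$.
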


We also study three space properties related to the $\xi$-weak Banach-Saks property, modifying a method of Ostrovskii \cite{Ostrovskii}. In \cite{Ostrovskii}, it was shown that the weak Banach-Saks property is not a three-space property. Our final theorem generalizes this. In our final theorem, $\textsf{wBS}_\xi$ denotes the class of Banach spaces $X$ such that $I_X\in \mathfrak{wBS}_\xi$. 

\begin{theorem} For $0\leqslant \zeta, \xi<\omega_1$, if $X$ is a Banach space and $Y$ is a closed subspace such that $Y\in \textsf{\emph{wBS}}_\zeta$ and $X/Y\in \textsf{\emph{wBS}}_\xi$, then $X\in \textsf{\emph{wBS}}_{\xi+\zeta}$.    

For every $0\leqslant \zeta, \xi<\omega_1$, there exists a Banach space $X$ with a closed subspace $Y$ such that $Y\in \textsf{\emph{wBS}}_\zeta$, $X/Y\in \textsf{\emph{wBS}}_\xi$, and for each $\gamma<\xi+\zeta$, $X$ fails to lie in $\textsf{\emph{wBS}}_\gamma$.

\end{theorem}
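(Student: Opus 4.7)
The plan is to handle the two halves separately, relying on the characterization of $\xi$-weakly null sequences via norm-null convex block sequences whose supports are admissible for the Schreier family $\mathcal{S}_\xi$, together with the composition identity $\mathcal{S}_\zeta[\mathcal{S}_\xi]\subseteq \mathcal{S}_{\xi+\zeta}$ (up to spreading of indices).

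For the positive three-space part, let $(x_n)\subset B_X$ be weakly null and let $\pi:X\to X/Y$ denote the quotient map. Since $X/Y\in \textsf{wBS}_\xi$, the weakly null sequence $(\pi x_n)$ is $\xi$-weakly null, so after passing to a subsequence there is a block sequence of convex combinations $u_k=\sum_{i\in F_k}a^k_i x_i$ with $F_k\in \mathcal{S}_\xi$ and $\|\pi u_k\|\to 0$. For each $k$, choose $w_k\in Y$ with $\|u_k-w_k\|<2^{-k}$. The sequence $(w_k)\subset Y$ is weakly null (since $(u_k)$ is a convex block of a weakly null sequence and $\|u_k-w_k\|\to 0$), so by $Y\in \textsf{wBS}_\zeta$ there are further convex combinations $v_j=\sum_{k\in G_j}b^j_k w_k$ with $G_j\in \mathcal{S}_\zeta$ and $\|v_j\|\to 0$. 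Forming the corresponding convex combinations in $X$, $\tilde v_j=\sum_{k\in G_j}b^j_k u_k$, the supports lie in $\mathcal{S}_\zeta[\mathcal{S}_\xi]\subseteq \mathcal{S}_{\xi+\zeta}$ (after absorbing the spreading into the initial subsequence) and $\|\tilde v_j\|\leqslant \|v_j\|+\sum_{k\in G_j}b^j_k\|u_k-w_k\|\to 0$. Since this procedure can be applied inside every subsequence of $(x_n)$, the weakly null sequence $(x_n)$ is $(\xi+\zeta)$-weakly null, hence $X\in \textsf{wBS}_{\xi+\zeta}$.

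For the optimality part, I would adapt Ostrovskii's construction from \cite{Ostrovskii} using the higher ordinal Schreier and Baernstein spaces already featured in the paper. The goal is to build a Banach space $X$ with a distinguished closed subspace $Y$ such that the Baernstein-type norm on $Y$ produces $Y\in \textsf{wBS}_\zeta$, the Schreier-type norm inherited by the quotient gives $X/Y\in \textsf{wBS}_\xi$, and a ``diagonal'' weakly null sequence $(x_n)\subset X$ admits $\ell_1^{\omega^\gamma}$-spreading model estimates for every $\gamma<\xi+\zeta$, ruling out $X\in \textsf{wBS}_\gamma$ for those $\gamma$. The combined ordinal rank $\xi+\zeta$ should emerge naturally from the two-tiered interaction in the construction, matching the positive bound.

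The chief obstacle will be the second half: verifying simultaneously that the quotient $X/Y$ does enjoy $\textsf{wBS}_\xi$ (quotients typically lose such properties) and that the witness sequence attains the full ordinal $\xi+\zeta$ on the nose rather than some smaller value. Balancing the two norms so that neither side dominates prematurely is the delicate point. The positive half, by contrast, is largely bookkeeping, with the one subtlety being to spread the initial subsequence enough so that $\mathcal{S}_\zeta[\mathcal{S}_\xi]\subseteq \mathcal{S}_{\xi+\zeta}$ actually holds; this is a standard Schreier family computation.
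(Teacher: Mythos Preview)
Your argument for the positive three-space statement is correct and essentially the same as the paper's: the paper phrases it as a contradiction (assume $(x_n)$ is an $\ell_1^{\mathcal{S}_\zeta[\mathcal{S}_\xi]}$-spreading model after passing to a subsequence, then produce a small $\mathcal{S}_\zeta[\mathcal{S}_\xi]$-convex combination via the two-step quotient/subspace blocking), but the content is identical to your direct construction.

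The optimality half, however, is only a plan, not a proof, and the plan as stated drifts from what actually works. The paper does not use Baernstein spaces, nor a single diagonal witnessing sequence. Concretely (in the labeling of the statement you were given, so $Y\in\textsf{wBS}_\zeta$ and $X/Y\in\textsf{wBS}_\xi$): pick sequences $\xi_n\uparrow\xi$ and $\zeta_n\uparrow\zeta$, set $W=(\bigoplus_m X_{\xi+\zeta_m})_{\ell_1}$ and $Z=(\bigoplus_m(\bigoplus_n X_{\xi_n})_{\ell_1})_{\ell_1}$, let $S:W\to Z$ be built from the formal inclusions $X_{\xi+\zeta_m}\to X_{\xi_n}$ (suitably scaled), take a quotient $q:\ell_1\to Z$, put $X=\ell_1\oplus_1 W$, $T(x,w)=qx+Sw$, and $Y=\ker T$. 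Then $X/Y\cong Z\in\textsf{wBS}_\xi$ because $\ell_1$-sums preserve $\textsf{wBS}_\xi$ and each $X_{\xi_n}\in\textsf{wBS}_\xi$. The failure $X\notin\textsf{wBS}_\gamma$ for $\gamma<\xi+\zeta$ is witnessed not by one diagonal sequence but by the basis of the summand $X_{\xi+\zeta_m}\hookrightarrow W\hookrightarrow X$ once $m$ is large enough that $\gamma\le\xi+\zeta_m$. The genuinely delicate verification is $Y\in\textsf{wBS}_\zeta$: for a weakly null $(x_n,w_n)\subset\ker T$ one has $x_n\to 0$ in $\ell_1$, hence $Sw_n\to 0$, which forces each coordinate of $w_n$ to tend to zero in every $X_\beta$ with $\beta<\xi$; Proposition~\ref{phenomenal}(ii) then shows that (after passing to a subsequence) each coordinate sequence is dominated by a subsequence of the $X_{\zeta_m}$ basis and is therefore $\zeta$-weakly null, and an $\ell_1$-sum truncation finishes it. Your outline does not indicate how you would get this last step, and invoking Baernstein norms here would not obviously help---the key input is the Schreier-space domination lemma, not any $\ell_p$ lower estimate.
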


\section{Combinatorics}

\subsection{Regular families}

Througout, we let $2^\nn$ denote the power set of $\nn$ and topologize this set with the Cantor topology. Given a subset $M$ of $\nn$, we  let $[M]$ (resp. $[M]^{<\nn}$) denote set of infinite (resp. finite) subsets of $M$.  For convenience, we often write subsets of $\nn$ as sequences, where a set $E$ is identified with the (possibly empty) sequence obtained by listing the members of $E$ in strictly increasing order.  Henceforth, if we write $(m_i)_{i=1}^r\in [\nn]^{<\nn}$ (resp. $(m_i)_{i=1}^\infty\in [\nn]$), it will be assumed that $m_1<\ldots <m_r$ (resp. $m_1<m_2<\ldots$).  Given $M=(m_n)_{n=1}^\infty\in [\nn]$ and $\mathcal{F}\subset [\nn]^{<\nn}$, we define $$\mathcal{F}(M)=\{(m_n)_{n\in E}: E\in \mathcal{F}\}$$ and $$\mathcal{F}(M^{-1})=\{E: (m_n)_{n\in E}\in \mathcal{F}\}.$$  

Given $(m_i)_{i=1}^r, (n_i)_{i=1}^r\in [\nn]^{<\nn}$, we say $(n_i)_{i=1}^r$ is a \emph{spread} of $(m_i)_{i=1}^r$ if $m_i\leqslant n_i$ for each $1\leqslant i\leqslant r$.   We agree that $\varnothing$ is a spread of $\varnothing$.    We write $E\preceq F$ if either $E=\varnothing$ or $E=(m_i)_{i=1}^r$ and $F=(m_i)_{i=1}^s$ for some $r\leqslant s$.   In this case, we say $E$ is an \emph{initial segment} of $F$.  For $E,F\subset \nn$, we write $E<F$ to mean that either $E=\varnothing$, $F=\varnothing$, or $\max E<\min F$. Given $n\in \nn$ and $E\subset \nn$, we write $n\leqslant E$ (resp. $n<E$) to mean that $n\leqslant \min E$ (resp. $n<\min E$).

We say $\mathcal{G}\subset [\nn]^{<\nn}$ is \begin{enumerate}[(i)]\item \emph{compact} if it is compact in the Cantor topology, \item \emph{hereditary} if $E\subset F\in \mathcal{G}$ implies $E\in \mathcal{G}$,  \item \emph{spreading} if whenever $E\in \mathcal{G}$ and $F$ is a spread of $E$, $F\in \mathcal{G}$,  \item \emph{regular} if it is compact, hereditary, and spreading. \end{enumerate}

Let us also say that $\mathcal{G}$ is \emph{nice} if \begin{enumerate}[(i)]\item $\mathcal{G}$ is regular, \item $(1)\in \mathcal{G}$,  \item for any $\varnothing\neq E\in \mathcal{G}$, either $E\in MAX(\mathcal{G})$ or $E\cup (1+\max E)\in \mathcal{G}$.  \end{enumerate} Let us briefly explain why these last two properties are desirable. We wish to create norms on $c_{00}$ of the form $$\|\sum_{n=1}^\infty a_ne_n\|_\mathcal{F}=\sup\{\sum_{n\in F}|a_n|: F\in \mathcal{F}\}.$$  In order for this to be a norm and not just a seminorm, we require that $(1)\in \mathcal{F}$. The last condition is because we wish to have the property that any $M\in[\nn]$ can be uniquely decomposed into sets $F_1<F_2<\ldots$, where each $F_n\in MAX(\mathcal{F})$. If $\mathcal{F}$ is compact and $M\in[\nn]$, then there exists a largest (with respect to inclusion) $F$ which is an initial segment of $M$ and which lies in $\mathcal{F}$, but this $F$ need not be a maximal member of $\mathcal{F}$.  To see why, let $$\mathcal{F}=\{E\subset \nn: |E|\leqslant 2\}\setminus\{(1,2)\}.$$   This is compact, spreading, and hereditary, but the largest initial segment of the set $M=(1, 3,4,\ldots)$ which lies in $\mathcal{F}$ is $(1)$, which is not a maximal member of $\mathcal{F}$.

If $M\in[\nn]$ and if $\mathcal{F}$ is nice, then there exists a unique, finite, non-empty initial segment of $M$ which lies in $MAX(\mathcal{F})$.  We let $M_\mathcal{F}$ denote this initial segment.    We now define recursively $M_{\mathcal{F},1}=M_\mathcal{F}$ and $M_{\mathcal{F}, n+1}= (M\setminus \cup_{i=1}^n M_{\mathcal{F},i})_{\mathcal{F}}$.    An alternate description of $M_{\mathcal{F},1}, M_{\mathcal{F}, 2}, \ldots$ is that the sequence $M_{\mathcal{F},1}, M_{\mathcal{F},2}, \ldots$ is the unique partition of $M$ into successive sets which are maximal members of $\mathcal{F}$.

If $\mathcal{F}$ is nice and $M\in [\nn]$, then there exists a partition $E_1<E_2<\ldots$ of $\nn$ such that $M_{\mathcal{F},n}=(m_i)_{i\in E_n}$ for all $n\in\nn$.  We define $M^{-1}_{\mathcal{F},n}=E_n$.

Given a topological space $K$ and a subset $L$ of $K$, $L'$ denotes the \emph{Cantor Bendixson derivative} of $L$ consists of those members of $L$ which are not relatively isolated in $L$.    We define by transfinite induction the higher order transfinite derivatives of $L$ by $$L^0=L,$$ $$L^{\xi+1}=(L^\xi)',$$ and if $\xi$ is a limit ordinal, $$L^\xi=\bigcap_{\zeta<\xi}L^\zeta.$$ We recall that $K$ is said to be \emph{scattered} if there exists an ordinal $\xi$ such that $K^\xi=\varnothing$. In this case, we define the \emph{Cantor Bendixson index} of $K$ by $CB(K)=\min\{\xi: K^\xi=\varnothing\}$.    If $K^\xi\neq \varnothing$ for all ordinals $\xi$, we write $CB(K)=\infty$.   We agree to the convention that $\xi<\infty$ for all ordinals $\xi$, and therefore $CB(K)<\infty$ simply means that $CB(K)$ is an ordinal, and $K$ is scattered.

For each $n\in\nn\cup \{0\}$, we let $\mathcal{A}_n=\{E\in [\nn]^{<\nn}: |E|\leqslant n\}$.    It is clear that $\mathcal{A}_n$ is regular.    Also of importance are the Schreier families, $(\mathcal{S}_\xi)_{\xi<\omega_1}$.  We recall these families.  We let $$\mathcal{S}_0=\mathcal{A}_1,$$ $$\mathcal{S}_{\xi+1}=\{\varnothing\} \cup \Bigl\{\bigcup_{i=1}^n E_i: \varnothing\neq E_i \in \mathcal{S}_\xi, n\leqslant E_1, E_1<\ldots <E_n\Bigr\},$$ and if $\xi<\omega_1$ is a limit ordinal, there exists a sequence $\xi_n\uparrow \xi$ such that $$\mathcal{S}_\xi=\{E\in [\nn]^{<\nn}: \exists n\leqslant E\in \mathcal{S}_{\xi_n+1}\},$$  and $(\xi_n)_{n=1}^\infty$ has the property that for any $n\in\nn$, $\mathcal{S}_{\xi_n+1}\subset \mathcal{S}_{\xi_{n+1}}$.    The existence of such families with the last indicated property is discussed, for example, in \cite{Concerning}.  With the fact that $\mathcal{S}_{\xi_n+1}\subset \mathcal{S}_{\xi_{n+1}}\subset \mathcal{S}_{\xi_{n+1}+1}$, and equivalent, useful way of representing these sets is $$\mathcal{S}_\xi=\{\varnothing\}\cup \{E\in [\nn]^{<\nn}: \varnothing\neq E\in \mathcal{S}_{\xi_{\min E}+1}\}.$$  Sometimes for convenience, we simply represent $$\mathcal{S}_\xi=\{E\in [\nn]^{<\nn}: \exists n\leqslant E\in \mathcal{S}_{\zeta_n}\},$$ where $\zeta_n=\xi_n+1$.    In each instance, we use the notation which is most convenient. 

Given two non-empty regular families $\mathcal{F}, \mathcal{G}$, we let $$\mathcal{F}[\mathcal{G}]=\{\varnothing\}\cup \Bigl\{\bigcup_{i=1}^n E_i: \varnothing \neq E_i\in \mathcal{G}, E_1<\ldots <E_n, (\min E_i)_{i=1}^n\in \mathcal{F}\Bigr\}.$$  We let $\mathcal{F}[\mathcal{G}]=\varnothing$ if either $\mathcal{F}=\varnothing$ or $\mathcal{G}=\varnothing$. 

The following facts are collected in \cite{Concerning}.

\begin{proposition} \begin{enumerate}[(i)]\item For any non-empty regular families $\mathcal{F}, \mathcal{G}$, $\mathcal{F}[\mathcal{G}]$ is regular. Furthermore, if $CB(\mathcal{F})=\beta+1$ and $CB(\mathcal{G})=\alpha+1$, then $CB(\mathcal{F}[\mathcal{G}])=\alpha\beta+1$. \item For any $n\in\nn$, $CB(\mathcal{A}_n)=n+1$. \item For any $\xi<\omega_1$, $CB(\mathcal{S}_\xi)=\omega^\xi+1$. \item If $\mathcal{F}$ is regular and $M\in [\nn]$, then $\mathcal{F}(M^{-1})$ is regular and $CB(\mathcal{F})=CB(\mathcal{F}(M^{-1}))$. \item For regular families $\mathcal{F}, \mathcal{G}$, there exists $M\in [\nn]$ such that $\mathcal{F}(M)\subset \mathcal{G}$ if and only if there exists $M\in [\nn]$ such that $\mathcal{F}\subset \mathcal{G}(M^{-1})$ if and only if $CB(\mathcal{F})\leqslant CB(\mathcal{G})$. \item For $\xi\leqslant \zeta<\omega_1$, there exists $n\in\nn$ such that $n\leqslant E\in \mathcal{S}_\xi$ implies $E\in \mathcal{S}_\zeta$. \item For all $1\leqslant \xi<\omega_1$, $\mathcal{S}_1\subset \mathcal{S}_\xi$. \end{enumerate} 

\label{deep facts}

\end{proposition}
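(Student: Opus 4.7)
The proposition collects seven interrelated facts about regular families, and my plan is to organize the proof into three blocks whose pieces feed one another by transfinite induction.

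The first block handles the derivative calculus: (ii), (i), and (iii) in that order. Item (ii) is a direct computation: $(\mathcal{A}_n)' = \mathcal{A}_{n-1}$ because any size-$n$ set is a Cantor limit of size-$n$ sets with a larger last element, so inductively $(\mathcal{A}_n)^{n+1} = \varnothing$. For (i), regularity of $\mathcal{F}[\mathcal{G}]$ is immediate from the definitions; for compactness, the bound $n \leq \min E_1$ on the number of pieces keeps elements with bounded first coordinate inside a finite subfamily. The CB computation proceeds by transfinite induction on $\beta = CB(\mathcal{F}) - 1$. The central identity, proved by unpacking what a Cantor limit of elements of $\mathcal{F}[\mathcal{G}]$ looks like, is that each block of $\alpha$ successive derivatives peels off one ``level'' of $\mathcal{F}$-structure by collapsing a maximal $\mathcal{G}$-piece; iterating $\beta$ such passes yields $(\mathcal{F}[\mathcal{G}])^{\alpha\beta} = \{\varnothing\}$, hence $CB(\mathcal{F}[\mathcal{G}]) = \alpha\beta + 1$. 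Item (iii) is then transfinite induction on $\xi$. The base $CB(\mathcal{S}_1) = \omega + 1$ must be computed directly (it cannot be bootstrapped from $\mathcal{S}_0$ through (i), since that application is circular) by showing $(\mathcal{S}_1)^j = \{E \in \mathcal{S}_1 : |E| + j \leq \min E\}$, whence $(\mathcal{S}_1)^\omega = \{\varnothing\}$. The successor step uses the identity $\mathcal{S}_{\xi+1} = \mathcal{S}_1[\mathcal{S}_\xi]$ (the condition $(\min E_i)_{i=1}^n \in \mathcal{S}_1$ unpacks to $n \leq \min E_1$) together with (i) to give $\omega^\xi \cdot \omega + 1 = \omega^{\xi+1} + 1$. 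The limit step uses the representation $\mathcal{S}_\xi = \bigcup_n \{E : n \leq E \in \mathcal{S}_{\xi_n+1}\}$ and the nested inclusions $\mathcal{S}_{\xi_n+1} \subset \mathcal{S}_{\xi_{n+1}}$ to conclude $CB(\mathcal{S}_\xi) = \omega^\xi + 1$.

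The second block handles invariance under relabeling, (iv) and (v). For (iv), regularity of $\mathcal{F}(M^{-1})$ is a routine check from that of $\mathcal{F}$. For the CB equality, the map $E \mapsto (m_n)_{n \in E}$ is a homeomorphism from $\mathcal{F}(M^{-1})$ onto $\mathcal{F}(M)$, and the latter is closed in $\mathcal{F}$, so $CB(\mathcal{F}(M^{-1})) = CB(\mathcal{F}(M)) \leq CB(\mathcal{F})$; the spreading property gives the reverse inclusion $\mathcal{F} \subset \mathcal{F}(M^{-1})$ (since $(m_n)_{n \in E}$ is a spread of $E$), closing the chain. For (v), the first biconditional is immediate from unpacking definitions, and the implication $\mathcal{F} \subset \mathcal{G}(M^{-1}) \Rightarrow CB(\mathcal{F}) \leq CB(\mathcal{G})$ uses (iv). The reverse implication is the technically hardest step: given $CB(\mathcal{F}) \leq CB(\mathcal{G})$, I would construct $M = (m_k)$ recursively, at each stage choosing $m_k$ large enough that every continuation of $(m_1, \ldots, m_{k-1})$ inside $\mathcal{F}$ spreads to an element of $\mathcal{G}$ lying at the correct derivative level. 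This is where the raw ordinal inequality must be promoted to an honest combinatorial inclusion, and is the principal obstacle of the proposition.

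The third block handles the Schreier-specific facts, (vi) and (vii). For (vii) I would induct on $\xi$. For the successor step $\eta \to \eta+1$: given nonempty $E = \{m_1 < \ldots < m_k\} \in \mathcal{S}_1$ (so $k \leq m_1$), each singleton $\{m_i\}$ lies in $\mathcal{S}_\eta$ by spreading (since $\{1\} \in \mathcal{S}_\eta$), so writing $E = \bigcup_i \{m_i\}$ and using $(\min \{m_i\})_{i=1}^k = E \in \mathcal{S}_1$ exhibits $E \in \mathcal{S}_1[\mathcal{S}_\eta] = \mathcal{S}_{\eta+1}$. For limit $\xi$, apply the inductive hypothesis at $\xi_{\min E} + 1 < \xi$ to put $E \in \mathcal{S}_{\xi_{\min E}+1}$, then invoke the limit representation of $\mathcal{S}_\xi$. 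For (vi), induct on $\zeta \geq \xi$: the base $\zeta = \xi$ takes $n = 1$; the successor step uses $\mathcal{S}_\zeta \subset \mathcal{S}_{\zeta+1}$ (view any $E \in \mathcal{S}_\zeta$ as a single block, valid since $1 \leq \min E$). For limit $\zeta$ with $\zeta_m \uparrow \zeta$, pick $N$ with $\xi < \zeta_N + 1$, apply the inductive hypothesis at $\zeta_N + 1$ to obtain a threshold $n_0$, and set $n := \max(n_0, N)$; any $E$ with $n \leq E \in \mathcal{S}_\xi$ then lies in $\mathcal{S}_{\zeta_N+1}$ with $\min E \geq N$, hence in $\mathcal{S}_\zeta$ by the limit representation.
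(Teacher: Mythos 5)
A preliminary remark: the paper does not prove Proposition \ref{deep facts} at all --- it is stated as a list of known facts with a citation to \cite{Concerning} --- so your proposal can only be measured against the standard arguments in that literature. Measured that way, your blocks for (ii), (iii), (iv), (vi) and (vii) are essentially the standard proofs and are sound in outline: the direct computation of $CB(\mathcal{S}_1)$, the identity $\mathcal{S}_{\xi+1}=\mathcal{S}_1[\mathcal{S}_\xi]$ feeding into (i), the homeomorphism $E\mapsto (m_n)_{n\in E}$ onto $\mathcal{F}\cap 2^M$ combined with the spreading inclusion $\mathcal{F}\subset \mathcal{F}(M^{-1})$ for (iv), and the two inductions in the third block are all correct (the justification of (ii) is garbled as written --- a set of size exactly $n$ is isolated, and it is the smaller sets that are limits --- but the conclusion $(\mathcal{A}_n)'=\mathcal{A}_{n-1}$ is right).

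The genuine gaps are in (i) and in the hard direction of (v). For (i), your compactness argument is wrong as stated: in the general definition of $\mathcal{F}[\mathcal{G}]$ there is no bound $n\leqslant \min E_1$ on the number of blocks (that is the special case $\mathcal{F}=\mathcal{S}_1$), and fixing the first coordinate does not confine an element to a finite subfamily (already for $\mathcal{F}[\mathcal{G}]=\mathcal{S}_1[\mathcal{S}_1]=\mathcal{S}_2$ the later blocks are unconstrained by $\min E_1$); compactness has to be proved by a closedness argument in which a convergent sequence of decompositions is stabilized using compactness of $\mathcal{G}$ for the blocks and of $\mathcal{F}$ for the sequence of minima. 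Similarly, the ``central identity'' behind $CB(\mathcal{F}[\mathcal{G}])=\alpha\beta+1$ is precisely the content of the statement: one needs exact two-sided inclusions relating $(\mathcal{F}[\mathcal{G}])^{\alpha\cdot\gamma}$ to the derivatives of $\mathcal{F}$, and ``each block of $\alpha$ derivatives peels off one level'' is an intuition, not yet an argument. Most seriously, for $CB(\mathcal{F})\leqslant CB(\mathcal{G})\Rightarrow \exists M\in[\nn],\ \mathcal{F}(M)\subset \mathcal{G}$ you offer only a greedy recursion ``choose $m_k$ large enough,'' without saying what invariant the choice is supposed to preserve; as described this fails, since largeness of $m_k$ by itself controls nothing about the members of $\mathcal{F}$ whose earlier coordinates are already fixed. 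The known proofs either run a transfinite induction on the index through the localized families $\mathcal{F}_E=\{F: E<F,\ E\cup F\in\mathcal{F}\}$, whose Cantor--Bendixson indices track the derivatives of $\mathcal{F}$ at $E$, together with a fusion/diagonalization over the finitely many $E$ inside each initial segment of $M$, or else invoke a Nash--Williams/Ramsey-type dichotomy (for every $N$ there is $M\in[N]$ with $\mathcal{F}\cap[M]^{<\nn}\subset\mathcal{G}$ or $\mathcal{G}\cap[M]^{<\nn}\subset\mathcal{F}$) followed by an index argument using (iv) to handle the second alternative. One of these ideas has to appear explicitly; your sketch does not contain it, and it is exactly the step you flagged but did not supply.
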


Item $(vi)$ is sometimes referred to as the \emph{almost monotone property}.

\begin{lemma} Fix a countable ordinal  $\gamma$. \begin{enumerate}[(i)]\item For any $L\in[\nn]$ and $\delta<\omega_1$, there exists $M\in[L]$ such that for all $(n_i)_{i=1}^\infty\in [M]$, $G\in \mathcal{S}_\delta$, and $E_1<E_2<\ldots$, $\varnothing\neq E_i\in \mathcal{S}_\gamma$, $$\bigcup_{i\in G} E_{n_i}\in \mathcal{S}_{\gamma+\delta}.$$  \item For any $L\in[\nn]$, there exists $M\in[L]$ such that for all $(n_i)_{i=1}^\infty\in [M]$ and any $E\in \mathcal{S}_{\gamma+\delta}$, there exist $E_1<\ldots <E_d$, $\varnothing\neq E_i\in \mathcal{S}_\gamma$, such that $(n_{\min E_i})_{i=1}^d\in \mathcal{S}_\delta$ and $E=\cup_{i=1}^d E_i$. \end{enumerate}

\label{tech}
\end{lemma}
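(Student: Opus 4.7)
My plan is to prove both parts by transfinite induction on $\delta$, with $\gamma$ fixed and the parameter $L$ free. The base case $\delta = 0$ is immediate: take $M = L$. Since $\mathcal{S}_0 = \mathcal{A}_1$ consists only of $\varnothing$ and singletons, and $\mathcal{S}_{\gamma+0} = \mathcal{S}_\gamma$, part (i) reduces to the observation $E_{n_i} \in \mathcal{S}_\gamma$ and part (ii) to the trivial decomposition $E = E_1$ with $d = 1$.

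For the successor case $\delta = \beta + 1$, I would take $M$ to be the set provided by the IH at $\beta$; no further refinement is needed. For part (i), given $G \in \mathcal{S}_{\beta+1}$, write $G = \bigcup_{j=1}^k G_j$ with $k \leq G_1 < \cdots < G_k$ and $G_j \in \mathcal{S}_\beta$; the IH applied to each $G_j$ yields $F_j := \bigcup_{i \in G_j} E_{n_i} \in \mathcal{S}_{\gamma+\beta}$, and the critical inequality $\min F_1 = \min E_{n_{\min G_1}} \geq n_{\min G_1} \geq \min G_1 \geq k$ places $\bigcup_j F_j$ in $\mathcal{S}_{\gamma+\beta+1}$ by the recursive definition of the Schreier family. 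Part (ii) is dual: decompose $E \in \mathcal{S}_{\gamma+\beta+1}$ as $E = \bigcup_j G_j$ with $G_j \in \mathcal{S}_{\gamma+\beta}$ and $k \leq \min G_1$, apply the IH to each $G_j$ to obtain $\mathcal{S}_\gamma$-pieces whose $n$-minima lie in $\mathcal{S}_\beta$, and concatenate; the same inequality shows the full concatenated minima-sequence starts at $n_{\min G_1} \geq k$ and therefore lies in $\mathcal{S}_{\beta+1}$.

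For the limit case, fix a defining sequence $(\delta_n) \uparrow \delta$, chosen compatibly so that $(\gamma + \delta_n)$ serves as a defining sequence for $\gamma + \delta$. For each $n$, invoke the IH at $\delta_n + 1$ to obtain $M_n^0 \in [L]$; arrange $M_1^0 \supseteq M_2^0 \supseteq \cdots$ by successive refinement. The almost monotone property (item (vi) of Proposition \ref{deep facts}) furnishes constants $r_n \in \nn$ such that any $F \in \mathcal{S}_{\gamma+\delta_n+1}$ or $\mathcal{S}_{\delta_n+1}$ with $\min F \geq r_n$ already belongs to $\mathcal{S}_{\gamma+\delta}$, respectively $\mathcal{S}_\delta$. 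I would then define $M = (m_n)_{n=1}^\infty$ diagonally with $m_n \in M_n^0$, $m_n > m_{n-1}$, and $m_n \geq r_n$. Given $(n_i) \in [M]$ and $G \in \mathcal{S}_\delta$ (or $E \in \mathcal{S}_{\gamma+\delta}$), setting $N = \min G$ (resp. $\min E$) yields $G \in \mathcal{S}_{\delta_N+1}$, and the nested construction forces $\{n_i : i \geq N\} \subseteq \{m_N, m_{N+1},\ldots\} \subseteq M_N^0$. Applying the IH at $\delta_N + 1$ places the relevant set in $\mathcal{S}_{\gamma+\delta_N+1}$ (part (i)) or produces a decomposition with minima in $\mathcal{S}_{\delta_N+1}$ (part (ii)); the bound $n_N \geq m_N \geq r_N$ then lets almost monotonicity upgrade the conclusion to $\mathcal{S}_{\gamma+\delta}$ or $\mathcal{S}_\delta$.

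The main obstacle will be cleanly invoking the IH at $\delta_N + 1$ when only the tail $(n_i)_{i \geq N}$ lies in $M_N^0$ rather than the full sequence. I expect to handle this by extending the tail to a sequence $(\tilde n_i) \in [M_N^0]$ with $\tilde n_i = n_i$ for $i \geq N$, prepending $N-1$ elements of $M_N^0$ below $n_N$; this requires ensuring each $M_n^0$ is rich enough in small elements, which can be arranged by applying the IH at each stage to a suitably chosen $L$ containing the required prefix, or by observing that the IH conclusion, being purely a statement about the values $\tilde n_g$ for $g \geq N$, is insensitive to how the prepended prefix is filled in. Once this technical point is justified, the bookkeeping above goes through and completes the induction.
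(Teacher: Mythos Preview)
Your overall strategy---transfinite induction on $\delta$ with the base and successor cases handled as you describe, and the limit case via a nested sequence $M_1^0 \supseteq M_2^0 \supseteq \cdots$, a diagonal $M$, and prepending to extend a tail $(n_i)_{i\geqslant N}$ to a full sequence in $M_N^0$---matches the paper's proof essentially line for line. The prepending issue you flag is real and is resolved exactly as you suggest: the paper writes $S=(m_1^k,\ldots,m_{k-1}^k,n_k,n_{k+1},\ldots)\in[M_k]$ and observes the conclusion only depends on the tail.

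There is, however, a genuine gap in your limit case for part $(ii)$. You write that $(\delta_n)$ can be ``chosen compatibly so that $(\gamma+\delta_n)$ serves as a defining sequence for $\gamma+\delta$,'' and you then use this to infer that $E\in\mathcal{S}_{\gamma+\delta}$ with $N=\min E$ lies in $\mathcal{S}_{\gamma+\delta_N+1}$, so that the inductive hypothesis at $\delta_N+1$ applies directly to $E$. But the Schreier families in this paper are constructed once and for all with \emph{fixed} defining sequences; $\mathcal{S}_{\gamma+\delta}$ has its own sequence $(\beta_n)$ which need not equal $(\gamma+\delta_n)$, so $E\in\mathcal{S}_{\gamma+\delta}$ gives only $E\in\mathcal{S}_{\beta_N}$, and there is no direct reason $\beta_N\leqslant\gamma+\delta_N+1$. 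The almost monotone property you invoke goes in the wrong direction for this step: you need to push $E$ \emph{down} from $\mathcal{S}_{\beta_N}$ into some $\mathcal{S}_{\gamma+\delta_m}$, not up. The paper handles this by introducing auxiliary indices $p_k,q_k$ with $\beta_k\leqslant\gamma+\delta_{p_k}$ and $q_k\leqslant F\in\mathcal{S}_{\beta_k}\Rightarrow F\in\mathcal{S}_{\gamma+\delta_{p_k}}$, then applying the inductive hypothesis at $\delta_{p_k}$ only to the tail $E\cap[q_k,\infty)$, and treating the finitely many leftover points $E\cap(1,q_k)$ as singleton $\mathcal{S}_\gamma$-pieces whose minima fit into $\mathcal{S}_1\subset\mathcal{S}_{\delta_{p_k}}$. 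This extra layer of bookkeeping (two-way almost monotonicity plus the singleton tail) is what is missing from your sketch.
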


\begin{rem}\upshape Both parts of Lemma \ref{tech} are strengthenings of  Proposition \ref{deep facts}.

\end{rem}

\begin{proof} For both $(i)$ and $(ii)$, we induct on $\delta$. 

$(i)$ For $\delta=0$, we can simply take $M=L$.  Now suppose that the result holds for $\delta$ and $L\in[\nn]$ is fixed. By the inductive hypothesis, there exists $M\in[L]$ such that for any $(n_i)_{i=1}^\infty\in [M]$, $E_1<E_2<\ldots$, $\varnothing\neq E_i\in \mathcal{S}_\gamma$, and $G\in \mathcal{S}_\delta$, $\cup_{i\in E} E_{n_i}\in \mathcal{S}_{\gamma+\delta}$.    Now fix  $(n_i)_{i=1}^\infty \in [M]$, $E_1<E_2<\ldots$, $\varnothing\neq E_i\in \mathcal{S}_\gamma$, and $\varnothing \neq G\in \mathcal{S}_{\gamma+1}$.   Let $k=\min G$ and note that we may write $G=\cup_{i=1}^d G_i$ for some $G_1<\ldots <G_d$, $\varnothing\neq G_i\in \mathcal{S}_\delta$, nd $d\leqslant k$.    By the choice of $M$, for each $1\leqslant j\leqslant d$, $F_j:=\cup_{i\in G_j} E_{n_i}\in \mathcal{S}_{\gamma+\delta}$.        Since $F_1<\ldots <F_d$ and $\min F_1 = \min E_k \geqslant k\geqslant d$, $$\bigcup_{i\in G} E_{n_i}=\bigcup_{j=1}^d F_j\in \mathcal{S}_{\gamma+\delta+1}.$$  

Now suppose that $\delta<\omega_1$ is a limit ordinal. Let $(\delta_n)_{n=1}^\infty$, $(\beta_n)_{n=1}^\infty$ be the sequences such that $$\mathcal{S}_{\gamma+\delta}=\{\varnothing\}\cup \{E: \varnothing\neq E \in \mathcal{S}_{\beta_{\min E}}\}$$ and $$\mathcal{S}_\delta=\{\varnothing\}\cup \{E: \varnothing\neq E\in \mathcal{S}_{\delta_n}\}.$$     Now let us choose natural numbers $p_1<p_2<\ldots$ and $q_1<q_2<\ldots$ such that $$\gamma+\delta_n<\beta_{p_n}$$ and if $q_n\leqslant E\in \mathcal{S}_{\gamma+\delta_n}$, $E\in \mathcal{S}_{\beta_{p_n}}$.  By the inductive hypothesis, we may fix $$M_0:=L\supset M_1\supset M_2\supset \ldots,$$ $M_n\in [\nn]$, such that for each $n\in \nn$, each $(n_i)_{i=1}^\infty \in [M_n]$, each $E_1<E_2<\ldots$ with $\varnothing \neq E_i\in \mathcal{S}_\gamma$, and each $G\in \mathcal{S}_{\delta_n}$, $\cup_{i\in G}E_{n_i}\in \mathcal{S}_{\gamma+\delta_n}$.   Since each $M_n$ may be taken to lie in any infinite subset of $M_{n-1}$, we may also assume that $\min M_n \geqslant \max\{p_n, q_n\}$ for all $n\in\nn$.       Now write $M_n=(m^n_i)_{i=1}^\infty$ and let $m_n=m^n_n$. Note that $m_1<m_2<\ldots$. Let $M=(m_i)_{i=1}^\infty$.     Fix $(n_i)_{i=1}^\infty\in [M]$, $E_1<E_2<\ldots$ with $\varnothing\neq E\in \mathcal{S}_\gamma$, and $\varnothing\neq G\in \mathcal{S}_\delta$.      Let $k=\min G$ and note that $G\in \mathcal{S}_{\delta_k}$. Let $$S=(m_1^k, m_2^k, \ldots, m^k_{k-1}, n_k, n_{k+1}, n_{k+2}, \ldots) \in [M_k].$$   Write $S=(s_i)_{i=1}^\infty$ and note that since $s_i=n_i$ for all $i\geqslant k$, $H:=\cup_{i\in G}E_{n_i}=\cup_{i\in G}E_{s_i}$.       Since $G\in \mathcal{S}_{\delta_k}$ and $S\in [M_k]$, $H\in \mathcal{S}_{\gamma+\delta_k}$.    Note that $$\min H \geqslant n_k\geqslant \min M_k \geqslant\max\{p_k, q_k\}.$$  Since $q_k\leqslant H \in \mathcal{S}_{\gamma+\delta_k}$, $H\in \mathcal{S}_{\beta_{p_k}}$.   Since $p_k\leqslant H\in \mathcal{S}_{\beta_{p_k}}$, $H\in \mathcal{S}_{\gamma+\delta}$.

$(ii)$ Note that if $M=(m_i)_{i=1}^\infty$ and $N=(n_i)_{i=1}^\infty\in [M]$, then for any $\varnothing\neq E\in [\nn]^{<\nn}$, $(n_i)_{i\in E}$ is a spread of $(m_i)_{i\in E}$.     Thus if we verify the conclusion when $(n_i)_{i=1}^\infty =M$, this implies the result for all $(n_i)_{i=1}^\infty \in [M]$.

For $\delta=0$, we may simply take $M=L$. Suppose the result holds for $\delta$ and fix $L\in[\nn]$. Choose $M=(m_i)_{i=1}^\infty\in [L]$ such that for any $E\in \mathcal{S}_{\gamma+\delta}$, there exist $F_1<\ldots <F_d$ such that  $E=\cup_{i=1}^d F_i$, $\varnothing\neq F_i\in \mathcal{S}_\gamma$ such that $(m_{\min F_i})_{i=1}^d\in \mathcal{S}_\delta$.  Now fix $E\in \mathcal{S}_{\gamma+\delta+1}$ and let $k=\min E$. Write $E=\cup_{j=1}^l E_j$, $E_1<\ldots<E_l$, $\varnothing \neq E_i\in \mathcal{S}_\gamma$, and $l\leqslant k$.     We may recursively select $F_1<\ldots <F_n$, $\varnothing\neq F_i\in \mathcal{S}_\gamma$ and $0=d_0<\ldots <d_l=n$ such that for each $1\leqslant i\leqslant l$, $E_i=\cup_{j=d_{i-1}+1}^{d_i} F_j$ and $H_i:=(m_{\min F_j})_{j=d_{i-1}+1}^{d_i}\in \mathcal{S}_\delta$.    Note that $\min H_1\geqslant \min F_1=\min E=k\geqslant l$.  Therefore $E=\cup_{i=1}^l E_i=\cup_{j=1}^n F_j$ and $$(m_{\min F_j})_{j=1}^n = \bigcup_{i=1}^l (m_{\min F_j})_{j=d_{i-1}+1}^{d_i}=\bigcup_{i=1}^l H_i \in \mathcal{S}_{\delta+1}.$$

Last, let $\delta<\omega_1$ be a limit ordinal. Let $(\delta_n)_{n=1}^\infty$, $(\beta_n)_{n=1}^\infty$ be the sequences such that $$\mathcal{S}_{\gamma+\delta}=\{\varnothing\}\cup \{E: \varnothing\neq E \in \mathcal{S}_{\beta_{\min E}}\}$$ and $$\mathcal{S}_\delta=\{\varnothing\}\cup \{E: \varnothing\neq E\in \mathcal{S}_{\delta_{\min E}+1}\},$$ and recall that $\mathcal{S}_{\delta_n+1}\subset \mathcal{S}_{\delta_{n+1}}$ for all $n\in\nn$.     Choose natural numbers $p_1<p_2<\ldots$, $q_1<q_2<\ldots$ such that for all $n\in\nn$, $\beta_n\leqslant \gamma+\delta_{p_n}$ and $q_n\leqslant E\in \mathcal{S}_{\beta_n}$ implies $E\in \mathcal{S}_{\gamma+\delta_{p_n}}$.       Recursively select $$M_0=L\supset M_1\supset M_2\supset \ldots$$ such that $\min M_n \geqslant \max \{p_n, q_n\}$ and, with $M_n=(m^n_i)_{i=1}^\infty$, if $E\in \mathcal{S}_{\gamma+\delta_{p_n}}$, there exist $F_1<\ldots <F_d$ such that $\varnothing\neq F_i\in \mathcal{S}_\gamma$, $E=\cup_{i=1}^d E_i$, and $(m^n_{\min E_i})_{i=1}^d\in \mathcal{S}_{\delta_{r_n}}$.  Let $m_n=m^n_n$.    Now fix $\varnothing\neq E\in \mathcal{S}_{\gamma+\delta}$ and let $k=\min E$. If $k=1$, then $E=(1)$, and we may write $E=E_1$, $E_1=(1)\in \mathcal{S}_\gamma$, $(m_{\min E_1})\in \mathcal{S}_\delta$. Assume $1<k$.     Then $E\in \mathcal{S}_{\beta_k}$, and $E\cap [q_k, \infty)\in \mathcal{S}_{\gamma+\delta_{p_k}}$.   Let us choose $F_1<F_2<\ldots <F_d$, $\varnothing\neq F_i\in \mathcal{S}_\gamma$ such that $E\cap [q_k, \infty)=\cup_{i=1}^d F_i$ and $J:=(m_{\min F_i}^k)_{i=1}^d\in \mathcal{S}_{\delta_{p_k}}$.   Since $\min F_1\geqslant k$, $p_k\leqslant m_k\leqslant J\in \mathcal{S}_{\delta_{p_k}}\subset \mathcal{S}_{\delta_{p_k}+1}$, $J\in \mathcal{S}_\delta$.   Then since $H:=(m_{\min F_i})_{i=1}^d$ is a spread of $J$, $H\in \mathcal{S}_{\delta_{p_k}}\cap \mathcal{S}_\delta$.     If $E\cap [q_k, \infty)=E$, this is the desired conclusion.  Otherwise enumerate $E\cap (1, q_k)=(b_1, \ldots, b_t)$ and let $G_i=\{b_i\}$ for each $1\leqslant i\leqslant t$.    Note that $G_1<\ldots <G_t<F_1<\ldots <F_d$, $E=\bigl(\cup_{i=1}^t G_i\bigr)\cup \bigl(\cup_{i=1}^d F_i\bigr)$, and $\varnothing\neq G_i, F_i\in \mathcal{S}_\gamma$.   Let $G=(m_{\min G_i})_{i=1}^t$ and note that $m_k\leqslant G$ and $|G|\leqslant q_k\leqslant m_k$, so $G\in \mathcal{S}_1\subset \mathcal{S}_{\delta_{p_k}}$.  Since $2\leqslant G<H$ and $G,H\in \mathcal{S}_{\delta_{p_k}}$, $G\cup H \in \mathcal{S}_{\delta_{p_k}+1}$. Since $p_k\leqslant m_k\leqslant G$, $$(m_{\min G_i})_{i=1}^t \cup (m_{\min F_i})_{i=1}^d = G\cup H\in \mathcal{S}_\delta.$$

\end{proof}

\subsection{$\ell_1^\xi$ and $c_0^\xi$-spreading models}

Given a regular family $\mathcal{F}$, a Banach space $X$, and a seminormalized sequence $(x_n)_{n=1}^\infty\subset X$, we say $(x_n)_{n=1}^\infty$ is an $\ell_1^\mathcal{F}$-\emph{spreading model} provided that $$0<\inf\{\|x\|: F\in \mathcal{F}, x\in \text{abs\ co}(x_n: n\in F)\}.$$  Here, $$\text{abs\ co}(x_n: n\in F)=\Bigl\{\sum_{n\in F}a_n x_n: \sum_{n\in F}|a_n|=1\Bigr\}.$$   We say that a sequence  $(x_n)_{n=1}^\infty$ is a $c_0^\mathcal{F}$-\emph{spreading model} provided that $$0<\inf \{\|\sum_{n\in F}\ee_n x_n\|: F\in \mathcal{F}, \max_{n\in F}|\ee_n|=1\} \leqslant \sup \{\|\sum_{n\in F}\ee_n x_n\|: F\in \mathcal{F}, \max_{n\in F}|\ee_n|= 1\}<\infty.$$    If $\mathcal{F}=\mathcal{S}_\xi$, we write $\ell_1^\xi$ or $c_0^\xi$-spreading model in place of $\ell_1^{\mathcal{S}_\xi}$ or $c_0^{\mathcal{S}_\xi}$.  Note that a weakly null $\ell_1^0$ or $c_0^0$-spreading model is simply a seminormalized, weakly null sequence.   

Note that for a regular family $\mathcal{F}$, the spreading property of $\mathcal{F}$ yields that for any $k_1<k_2<\ldots$, $$\inf\{\|x\|: F\in \mathcal{F}, x\in \text{abs\ co}(x_{k_n}: n\in F)\} \geqslant \inf\{\|x\|: F\in \mathcal{F}, x\in \text{abs\ co}(x_n: n\in F)\},$$ so that any subsequence of an $\ell_1^\mathcal{F}$-spreading model is also an $\ell_1^\mathcal{F}$-spreading model.    Similarly, every subsequence of a $c_0^\mathcal{F}$-spreading model is also a $c_0^\mathcal{F}$-spreading model.

For $\xi<\omega_1$, we say a weakly null sequence is $\xi$-\emph{weakly null} if it has no subsequence which is an $\ell_1^\xi$-spreading model.   From this definition together with the convex unconditionality theorem of \cite{AMT}, it follows that if $(x_n)_{n=1}^\infty$ is a $\xi$-weakly null sequence in the Banach space $X$, then there exist sets $F_1<F_2<\ldots$, $F_n\in \mathcal{S}_\xi$, and positive scalars $(a_i)_{i\in \cup_{n=1}^\infty F_n}$ such that for each $n\in\nn$, $\sum_{i\in F_n}a_i=1$, and such that $\lim_n \|\sum_{i\in F_n}a_ix_i\|=0$. We will use this fact often.    However, we will also often need a technical fact which states that the coefficients $(a_i)_{i\in F_n}$ can come from the repeated averages hierarchy. We make this precise below.

Let $\mathscr{P}$ denote the set of all probability measures on $\nn$.    We treat each member $\mathbb{P}$ of $\mathscr{P}$ as a function from $\nn$ into $[0,1]$, where $\mathbb{P}(n)=\mathbb{P}(\{n\})$.  We let $\text{supp}(\mathbb{P})=\{n\in \nn: \mathbb{P}(n)>0\}$.   Given a nice family $\mathcal{P}$ and a subset $\mathfrak{P}=\{\mathbb{P}_{M,n}: M\in [\nn], n\in \nn\}$ of $ \mathscr{P}$, we say $(\mathfrak{P}, \mathcal{P})$ is a \emph{probability block} provided that \begin{enumerate}[(i)]\item for each $M\in [\nn]$, $\text{supp}(\mathbb{P}_{M,1})=M_{\mathcal{P}, 1}$, and \item for any $M\in [\nn]$ and $r\in \nn$, if $N=M\setminus \cup_{i=1}^{r-1}\text{supp}(\mathbb{P}_{M,i})$, then $\mathbb{P}_{N,1}=\mathbb{P}_{M,r}$.     \end{enumerate}

\begin{rem}\upshape It follows from the definition of probability block that for any $M\in [\nn]$, $(M_{\mathcal{P}, n})_{n=1}^\infty=(\text{supp}(\mathbb{P}_{M,n}))_{n=1}^\infty$ and for any $s\in \nn$ and $M,N\in \nn$, and $r_1<\ldots <r_s$ such that $\cup_{i=1}^s \text{supp}(\mathbb{P}_{M, r_i})$ is an initial segment of $N$, then $\mathbb{P}_{N, i}=\mathbb{P}_{M, r_i}$ for all $1\leqslant i\leqslant s$.   This was proved in \cite{CN}. 

\label{permanence}

\end{rem}

Suppose that $\mathcal{Q}$ is nice.    Given $L=(l_n)_{n=1}^\infty\in[\nn]$, there exists a unique sequence $0=p_0<p_1<\ldots$ such that $(l_i)_{i=p_{n-1}+1}^{p_n}\in MAX(\mathcal{Q})$ for all $n\in \nn$. We then define $L^{-1}_{\mathcal{Q}, n}=\nn\cap (p_{n-1}, p_n]$.   

Suppose we have probability blocks $(\mathfrak{P}, \mathcal{P})$, $(\mathfrak{Q}, \mathcal{Q})$.  We define a collection $\mathfrak{Q}*\mathfrak{P}$ such that $(\mathfrak{Q}*\mathfrak{P}, \mathcal{Q}[\mathcal{P}])$ is a probability block.     Fix $M\in \nn$ and for each $n\in\nn$,  let $l_n=\min \text{supp}(\mathbb{P}_{M,n})$ and $L=(l_n)_{n=1}^\infty$.    We then let $$\mathbb{O}_{M,n}= \sum_{i\in L_{\mathcal{Q},n}^{-1}} \mathbb{Q}_{L,n}(l_i) \mathbb{P}_{M,i}$$   and $\mathfrak{Q}*\mathfrak{P}=\{\mathbb{O}_{M,n}: M\in [\nn], n\in \nn\}$.   

In \cite{AMT}, the \emph{repeated averages hierarchy} was defined. This is a collection $\mathfrak{S}_\xi$, $\xi<\omega_1$, such that $(\mathfrak{S}_\xi, \mathcal{S}_\xi)$ is a probability block for every $\xi<\omega_1$. We will denote the members of $\mathfrak{S}_\xi$ by $\mathbb{S}^\xi_{M,n}$, $M\in [\nn]$, $n\in\nn$.

For $\xi<\omega_1$, we say a probability block $(\mathfrak{P}, \mathcal{P})$ is $\xi$-\emph{sufficient} provided that for any $L\in[\nn]$, any $\ee>0$, and any regular family $\mathcal{G}$ with $CB(\mathcal{G})\leqslant\omega^\xi$, there exists $M\in[\nn]$ such that $$\sup \{\mathbb{P}_{N,1}(E): E\in \mathcal{G}, N\in [M]\}<\ee.$$  It was shown in \cite{AMT} that $(\mathfrak{S}_\xi, \mathcal{S}_\xi)$ is $\xi$-sufficient.

 The following facts were shown in \cite{CN}. Item $(ii)$ was shown in \cite{AMT} in the particular case that $(\mathfrak{P}, \mathcal{P})=(\mathfrak{S}_\xi, \mathcal{S}_\xi)$.

\begin{theorem} \begin{enumerate}[(i)]\item For $\xi, \zeta<\omega_1$, if $(\mathfrak{P}, \mathcal{P})$ is $\xi$-sufficient and $(\mathfrak{Q}, \mathcal{Q})$ is $\zeta$-sufficient, then $(\mathfrak{Q}*\mathfrak{P}, \mathcal{Q}[\mathcal{P}])$ is $\xi+\zeta$-sufficient. \item If $X$ is a Banach space, $\xi<\omega_1$, $(\mathfrak{P}, \mathcal{P})$ is $\xi$-sufficient, and $CB(\mathcal{P})=\omega^\xi+1$, then a weakly null sequence $(x_n)_{n=1}^\infty \subset X$ is $\xi$-weakly null if and only if for any $L\in [\nn]$ and $\ee>0$, there exists $M\in[L]$ such that for all $N\in [M]$, $\|\sum_{i=1}^\infty \mathbb{P}_{N,1}(i)x_i\|<\ee$. \end{enumerate}

\label{7surv}
\end{theorem}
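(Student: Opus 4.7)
The plan is to treat the two parts separately: part (i) is a combinatorial statement about composing probability blocks, while part (ii) links such blocks to the spreading-model characterization of $\xi$-weak nullity.

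For part (i), fix $L\in [\nn]$, $\ee>0$, and a regular family $\mathcal{G}$ with $CB(\mathcal{G})\leqslant \omega^{\xi+\zeta}$. Since $CB(\mathcal{G})\leqslant \omega^{\xi+\zeta}<CB(\mathcal{S}_{\xi+\zeta})$, Proposition \ref{deep facts}(v) lets me pass to $L_0\in [L]$ after which each $E\in \mathcal{G}$, re-indexed through $L_0$, lies in $\mathcal{S}_{\xi+\zeta}$. I would then apply Lemma \ref{tech}(ii) with $\gamma=\xi$ and $\delta=\zeta$ to obtain $L_1\in [L_0]$ so that every $E\in \mathcal{S}_{\xi+\zeta}$ arising through $L_1$ decomposes as $E=\bigcup_{j=1}^d F_j$ with $F_1<\cdots <F_d$, each $F_j\in \mathcal{S}_\xi$, and $(\min F_j)_{j=1}^d\in \mathcal{S}_\zeta$. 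Using $\xi$-sufficiency of $(\mathfrak{P},\mathcal{P})$ with tolerance $\ee/3$, pass to $L_2\in [L_1]$ so that $\mathbb{P}_{N,1}(F)<\ee/3$ for every $N\in [L_2]$ and every $F$ in an $\omega^\xi$-bounded family encompassing the pieces $F_j$. A further application of $\zeta$-sufficiency of $(\mathfrak{Q},\mathcal{Q})$ yields $M\in [L_2]$ with $\mathbb{Q}_{K,1}(H)<\ee/3$ for $K\in [M]$ and $H$ in the corresponding $\omega^\zeta$-bounded family controlling the index sets $(\min F_j)_{j=1}^d$.

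Now fix $N\in [M]$ and $E\in \mathcal{G}$, set $J=(\min\mathrm{supp}(\mathbb{P}_{N,i}))_{i=1}^\infty$, and expand
$$\mathbb{O}_{N,1}(E)=\sum_{i\in J^{-1}_{\mathcal{Q},1}}\mathbb{Q}_{J,1}(j_i)\,\mathbb{P}_{N,i}(E).$$
Split the indices into ``good'' $i$ (where $\mathbb{P}_{N,i}(E)<\ee/3$) and ``bad'' $i$. Because $E\cap \mathrm{supp}(\mathbb{P}_{N,i})$ can only carry substantial mass when it contains some piece $F_j$, the image of the bad indices under $i\mapsto j_i$ is contained in the $\mathcal{S}_\zeta$-set $(\min F_j)_j$, so their cumulative $\mathbb{Q}_{J,1}$-weight is less than $\ee/3$ by the choice of $M$. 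The good indices contribute at most $\ee/3$ since the $\mathbb{Q}_{J,1}$-weights sum to one, giving $\mathbb{O}_{N,1}(E)<\ee$ as required.

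For part (ii), the $(\Leftarrow)$ direction is by contraposition: if some subsequence $(x_{n_k})$ is an $\ell_1^\xi$-spreading model with constant $c>0$, take $\ee<c$ and $L=(n_k)_{k=1}^\infty$. The hypothesis $CB(\mathcal{P})=\omega^\xi+1=CB(\mathcal{S}_\xi)$ together with Proposition \ref{deep facts}(v) yields $M\in [L]$ such that $\mathrm{supp}(\mathbb{P}_{N,1})$ sits inside an $\mathcal{S}_\xi$-set relative to the indexing by $(n_k)$ for every $N\in [M]$. Then $\sum_i \mathbb{P}_{N,1}(i)x_i$ is an absolute convex combination on an $\mathcal{S}_\xi$-subset of $(x_{n_k})$, whose norm is at least $c>\ee$, contradicting the assumption. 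For $(\Rightarrow)$, assume $(x_n)$ is bounded and $\xi$-weakly null, fix $L$ and $\ee$, and iterate the convex unconditionality theorem of \cite{AMT} (together with the fact that $\xi$-weak nullity excludes $\ell_1^\xi$-spreading models) to pass to $L_0\in [L]$ such that $\sup\{\|\sum_{i\in F}a_ix_i\|:F\in \mathcal{S}_\xi,\, n\leqslant F\subset L_0,\, \sum_i|a_i|=1\}<\ee$ for all sufficiently large $n$. Proposition \ref{deep facts}(v) supplies $M\in [L_0]$ with $\mathrm{supp}(\mathbb{P}_{N,1})\in \mathcal{S}_\xi$ and $n\leqslant \min\mathrm{supp}(\mathbb{P}_{N,1})$ for $N\in [M]$, so $\sum_i \mathbb{P}_{N,1}(i)x_i$ is one of these admissible convex combinations and has norm less than $\ee$.

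The main obstacle throughout is aligning the combinatorial structure of regular families with the measure-theoretic structure of the probability blocks. In (i), one must verify that the ``bad'' indices really form an $\mathcal{S}_\zeta$-family, which requires Lemma \ref{tech}(ii) to mesh cleanly with the definition of $\mathfrak{Q}*\mathfrak{P}$ and with the permanence property of Remark \ref{permanence}. In (ii), the delicate point is leveraging $CB(\mathcal{P})=\omega^\xi+1$ through Proposition \ref{deep facts}(v) to confine the supports of $\mathbb{P}_{N,1}$ to $\mathcal{S}_\xi$; without this, the convex unconditionality estimate cannot be applied directly to $\mathbb{P}$-averages.
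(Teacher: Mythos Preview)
The paper does not actually prove this theorem: immediately before the statement it says ``The following facts were shown in \cite{CN}. Item $(ii)$ was shown in \cite{AMT} in the particular case that $(\mathfrak{P},\mathcal{P})=(\mathfrak{S}_\xi,\mathcal{S}_\xi)$.'' The theorem is quoted without proof, so there is no in-paper argument to compare your proposal against.

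That said, a few remarks on your sketch. In part~(i) you reduce $\mathcal{G}$ to $\mathcal{S}_{\xi+\zeta}$ and then invoke Lemma~\ref{tech}(ii) to split into $\mathcal{S}_\xi$ pieces indexed by an $\mathcal{S}_\zeta$ set. The general shape is correct, but your control of the ``bad'' indices is not justified: you assert that an index $i$ can only have $\mathbb{P}_{N,i}(E)\geqslant\ee/3$ when $\mathrm{supp}(\mathbb{P}_{N,i})$ contains one of the pieces $F_j$, and hence that $\{j_i:i\text{ bad}\}$ lands in the $\mathcal{S}_\zeta$ index set. Neither claim follows from what you have written. The $\xi$-sufficiency of $(\mathfrak{P},\mathcal{P})$ bounds $\mathbb{P}_{N,i}(F)$ only for $F$ lying in a fixed regular family of index $\leqslant\omega^\xi$, whereas $E\cap\mathrm{supp}(\mathbb{P}_{N,i})$ ranges over a family you have not controlled. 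You also never use $\zeta$-sufficiency on a specific regular family built from the data; the sentence ``a further application of $\zeta$-sufficiency'' needs an explicit $\omega^\zeta$-bounded family, and you have not produced one from the minima $(\min F_j)_j$, which depend on $E$.

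In part~(ii) your $(\Leftarrow)$ direction has an alignment problem. Given the $\ell_1^\xi$-spreading model $(x_{n_k})$ and $L=(n_k)$, the hypothesis hands you $M\in[L]$ with small $\mathbb{P}_{N,1}$-averages for \emph{all} $N\in[M]$; to derive a contradiction you must find a \emph{single} $N\in[M]$ whose $\mathbb{P}_{N,1}$-average is large. You claim Proposition~\ref{deep facts}(v) yields such an $N$ with $\mathrm{supp}(\mathbb{P}_{N,1})$ re-indexing to an $\mathcal{S}_\xi$ set, but the proposition gives a set $P$ with $\mathcal{P}(P)\subset\mathcal{S}_\xi$, and it is not automatic that the maximal $\mathcal{P}$-initial segment of $N=(m_{p_l})_l$ pulls back to an $\mathcal{S}_\xi$ set in the $k$-indexing. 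This step needs an explicit construction. Your $(\Rightarrow)$ direction is closer to correct, though ``iterate the convex unconditionality theorem'' should really be ``since no subsequence is an $\ell_1^\xi$-spreading model, a standard Ramsey/diagonal argument gives $L_0$ on which all sufficiently late $\mathcal{S}_\xi$-convex combinations are $<\ee$''; again the passage from $\mathcal{S}_\xi$ to $\mathcal{P}$ via Proposition~\ref{deep facts}(v) needs to be spelled out so that $\mathrm{supp}(\mathbb{P}_{N,1})$ actually lies in $\mathcal{S}_\xi$ for $N\in[M]$.
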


\begin{rem}\upshape
Since for each $\xi<\omega_1$, at least one $\xi$-sufficient probability block $(\mathfrak{P}, \mathcal{P})$ with $CB(\mathcal{P})=\omega^\xi+1$ exists, item $(ii)$ of the preceding theorem yields that if $X$ is a Banach space and $(x_n)_{n=1}^\infty$, $(y_n)_{n=1}^\infty$ are $\xi$-weakly null in $X$, then $(x_n+y_n)_{n=1}^\infty$ is also $\xi$-weakly null.  This generalizes to sums of any number of sequences.  The importance of this fact, which we will use often throughout, is that if for $k=1, \ldots, l$, if $(x^k_n)_{n=1}^\infty\subset X$ is a $\xi$-weakly null sequence, then for any $\ee>0$, there exist $F\in \mathcal{S}_\xi$ and positive scalars $(a_i)_{i\in F}$ such that $\sum_{i\in F}a_i=1$ and for each $1\leqslant k\leqslant l$, $$\|\sum_{i\in F}a_i x_i^k\|\leqslant \ee.$$  That is, there one choice of $F$ and $(a_i)_{i\in F}$ such that the corresponding linear combinations of the $l$ different sequences are simultaneously small.    

Note that the  preceding implies that for two Banach spaces $X,Y$ and $\xi$-weakly null sequences $(x_n)_{n=1}^\infty\subset X$, $(y_n)_{n=1}^\infty\subset Y$, for any $\ee>0$, there exist $F\in \mathcal{S}_\xi$ and positive scalars $(a_i)_{i\in F}$ summing to $1$ such that $$\|\sum_{i\in F}a_i x_i\|_X, \|\sum_{i\in F}a_iy_i\|_Y<\ee.$$  This is because the sequences $(x_n, 0)_{n=1}^\infty \subset X\oplus_\infty Y$ and $(0, y_n)_{n=1}^\infty \subset X\oplus_\infty Y$ are also $\xi$-weakly null, as is their sum in $X\oplus_\infty Y$.

\label{simul}
\end{rem}

\begin{rem}\upshape Let $X$ be a Banach space and let $(x_n)_{n=1}^\infty$ be $\xi$-weakly null.  Let $(\mathfrak{P}, \mathcal{P})$ be $\xi$-sufficient with $CB(\mathcal{P})=\omega^\xi+1$.  Then by Theorem \ref{7surv}$(ii)$, we may recursively select $M_1\supset M_2\supset \ldots$, $M_n\in[\nn]$ such that for each $n\in\nn$, $$\sup\{\|\sum_{i=1}^\infty \mathbb{P}_{N,1}(i)x_i\|: N\in [M_n]\}<1/n.$$   Now choose $m_n\in M_n$ with $m_1<m_2<\ldots$ and let $M=(m_n)_{n=1}^\infty$.  Then for any $N\in [M]$ and $n\in\nn$, if $F_1<F_2<\ldots$ is a partition of $N$ into consecutive, maximal members of $\mathcal{P}$ and $N_j=N\setminus \cup_{i=1}^{j-1}F_i$ for each $j\in\nn$, $N_n\in [M_n]$.  By the permanence property mentioned in Remark \ref{permanence}, $$\|\sum_{i=1}^\infty \mathbb{P}_{N,n}(i)x_i\|=\|\sum_{i=1}^\infty \mathbb{P}_{N_n, 1}(i)x_i\|<1/n.$$

\label{6surv}
\end{rem}

Before proceeding to the following, we recall that for $M\in[\nn]$ and a regular family $\mathcal{F}$, we let $M|_\mathcal{F}$ denote the maximal initial segment of $M$ which lies in $\mathcal{F}$. If $\mathcal{F}$ is nice, then $M|_\mathcal{F}$ lies in $MAX(\mathcal{F})$. 

\begin{lemma} Let $X$ be a Banach space,  $(x_n)_{n=1}^\infty\subset X$ a seminormalized, weakly null sequence, and $\mathcal{F}$ a nice family.  \begin{enumerate}[(i)]\item $(x_n)_{n=1}^\infty$ admits a subsequence which is a $c_0^\mathcal{F}$-spreading model if and only if there exists $L\in [\nn]$ such that $$\sup \{\|\sum_{n\in M|_{\mathcal{F}}}x_n\|: M\in [L]\}<\infty.$$ \item If $(x_n)_{n=1}^\infty$ admits no subsequence which is a $c_0^\mathcal{F}$-spreading model, then there exists $L\in[\nn]$ such that for any $H_1<H_2<\ldots$, $H_n\in MAX(\mathcal{F})\cap [L]^{<\nn}$, $\|\sum_{i\in H_n}x_i\|>n$ for each $n\in\nn$.  \end{enumerate}

\label{c0}

\end{lemma}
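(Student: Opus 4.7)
The proof of Lemma \ref{c0} proceeds by establishing part (i) as an equivalence and then deriving part (ii) via the contrapositive of (i) combined with a Ramsey-type diagonalization.

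For part (i), the forward direction requires showing that if $(x_{m_n})$ is a $c_0^\mathcal{F}$-spreading model with bound $C$, then taking $L=\{m_n:n\in\nn\}$ (possibly after passing to a sufficiently sparse further subsequence) satisfies $\sup_{M\in[L]}\|\sum_{n\in M|_\mathcal{F}}x_n\|<\infty$. For $M=(m_{k_n})\in[L]$ with $M|_\mathcal{F}=(m_{k_1},\ldots,m_{k_p})\in\mathcal{F}$, the sum equals $\sum_{i=1}^p y_{k_i}$ where $y_n=x_{m_n}$; although the positional index set $\{k_1,\ldots,k_p\}$ may not itself lie in $\mathcal{F}$ (the spread $(m_{k_i})$ being in $\mathcal{F}$ does not imply this), I would partition it into its successive maximal $\mathcal{F}$-segments via the unique decomposition afforded by the nice structure and apply the $c_0^\mathcal{F}$-spreading model bound $C$ to each piece. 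For the backward direction, the natural candidate subsequence is $(x_{l_n})_{n=1}^\infty$ itself. Given $F\in\mathcal{F}$ with $\max|\epsilon_n|=1$, the image $F^*=\{l_n:n\in F\}$ lies in $\mathcal{F}$ by spreading (using $l_n\geq n$), and extending $F^*$ to a maximal $\mathcal{F}$-set $H\subseteq L$ via $M=F^*\cup(L\cap(\max F^*,\infty))$ (with $H=M|_\mathcal{F}$) allows one to bound $\|\sum_{n\in F}\epsilon_n x_{l_n}\|$ by combining $\|\sum_{j\in H}x_j\|\leq C$ with a sign-splitting argument and iterative control of the residual $H\setminus F^*\in\mathcal{F}$ (applying the same extension procedure to the residual).

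For part (ii), the contrapositive of (i) gives that for every $N\in[\nn]$, $\sup_{M\in[N]}\|\sum_{n\in M|_\mathcal{F}}x_n\|=\infty$. Since $\mathcal{F}$ is nice, $MAX(\mathcal{F})$ is a Nash-Williams front on $\nn$, and the Nash-Williams Ramsey theorem applies. For each $n\in\nn$, two-coloring $H\in MAX(\mathcal{F})$ by whether $\|\sum_{i\in H}x_i\|>n$ yields an infinite monochromatic $N_n\in[\nn]$; the infinite-supremum hypothesis on $[N_n]$ rules out the ``$\leq n$'' color, forcing every $H\in MAX(\mathcal{F})\cap[N_n]^{<\nn}$ to satisfy $\|\sum_{i\in H}x_i\|>n$. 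Nesting $N_1\supseteq N_2\supseteq\ldots$ and diagonalizing by choosing $l_n\in N_n$ with $l_1<l_2<\cdots$ yields $L=\{l_n\}$ with the desired property: for any chain $H_1<H_2<\ldots$ in $MAX(\mathcal{F})\cap[L]^{<\nn}$, the sets $H_1,\ldots,H_{n-1}$ collectively use at least $n-1$ elements of $L$, forcing $\min H_n\geq l_n$, so that $H_n\subseteq\{l_k:k\geq n\}\subseteq N_n$ and hence $\|\sum_{i\in H_n}x_i\|>n$.

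The main obstacle I anticipate is in the forward direction of (i): obtaining a uniform bound on the number of $\mathcal{F}$-pieces in the canonical decomposition of $\{k_1,\ldots,k_p\}$ as $M$ varies over $[L]$ may force a careful choice of sparse $L$ rather than taking $L=\{m_n\}$ directly. The remainder is relatively mechanical once the Nash-Williams theorem is invoked for (ii) and the nice structure of $\mathcal{F}$ is used to extend $\mathcal{F}$-sets to maximal ones within $L$.
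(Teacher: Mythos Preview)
Your plan for part (ii) is correct and matches the paper's argument essentially verbatim (the paper phrases the Ramsey step on the closed sets $\mathcal{V}_n=\{M:\|\sum_{i\in M|_\mathcal{F}}x_i\|\leqslant n\}\subset[\nn]$ rather than on the front $MAX(\mathcal{F})$, but this is the same dichotomy).

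There is a genuine gap in your backward direction of (i) (sup condition $\Rightarrow$ $c_0^\mathcal{F}$-spreading model). Your ``iterative control of the residual'' does not terminate with a uniform bound: given $F^*\in\mathcal{F}\cap[L]^{<\nn}$, you extend to $H=M|_\mathcal{F}\supseteq F^*$ and write $\|\sum_{F^*}x_j\|\leqslant C+\|\sum_{H\setminus F^*}x_j\|$, then iterate on $H\setminus F^*\in\mathcal{F}$. But each iteration produces a new residual of uncontrolled size, and there is no reason the procedure stabilizes independently of $F^*$. The paper's fix is a one-line replacement for this recursion: first pass to a further $L$ on which $(x_n)_{n\in L}$ is $2$-basic. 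Then $F^*$ is an \emph{initial segment} of $M|_\mathcal{F}$ for $M=F^*\cup(L\cap(\max F^*,\infty))$, and the basis projection gives $\|\sum_{n\in F^*}x_n\|\leqslant 2\|\sum_{n\in M|_\mathcal{F}}x_n\|\leqslant 2C$ in one step. The sign-splitting you mention then finishes as you describe.

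For your forward direction of (i), you correctly flag the obstacle and correctly guess that a sparse $L$ is needed, but your proposed decomposition of $\{k_1,\ldots,k_p\}$ into maximal $\mathcal{F}$-segments is not the right mechanism (the number of pieces is not uniformly bounded). The paper's sparse choice is explicit and sharper than a decomposition: take $s_1=1$ and $s_{n+1}>r_{s_n}$, set $l_n=r_{s_n}$. Then for $M=(r_{t_n})_{n=1}^\infty\in[L]$ with $M|_\mathcal{F}=(r_{t_n})_{n=1}^k$, the sparseness forces $t_n>r_{t_{n-1}}$ for each $n\geqslant 2$, so $(t_n)_{n=2}^k$ is a spread of $(r_{t_n})_{n=1}^{k-1}\in\mathcal{F}$ and hence lies in $\mathcal{F}$. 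Thus you split off \emph{one} term, not many: $\|\sum_{n=1}^k x_{r_{t_n}}\|\leqslant \|x_{r_{t_1}}\|+c$.
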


\begin{proof}$(i)$ Assume there exists $L\in[\nn]$ such that $$\sup \{\|\sum_{n\in M|_\mathcal{F}}x_n\|: M\in [L]\}=C<\infty.$$   By passing to a subsequence, we may assume $(x_n)_{n\in L}$ is $2$-basic.   If $F\in \mathcal{F}\cap [L]^{<\nn}$, there exists an infinite subset $M$ of $L$ such that $F$ is an initial segment of $M|_\mathcal{F}$, whence $$\|\sum_{n\in F} x_n\|\leqslant 2\|\sum_{n\in M|_\mathcal{F}}x_n\|\leqslant 2C.$$   Thus $$\sup\{\|\sum_{n\in F}x_n\|: F\in \mathcal{F}\cap [L]^{<\nn}\}\leqslant 2C.$$  Then if $\varnothing\neq F\in \mathcal{F}\cap [L]^{<\nn}$, $(a_n)_{n\in F} \in [-1,1]^F$, $$\sum_{n\in F}a_nx_n\in \text{co}\Bigl(\sum_{n\in G}x_n: G\subset F\Bigr)-\text{co}\Bigl(\sum_{n\in G}x_n:G\subset F\Bigr) \subset 4CB_X.$$   Now for any $\varnothing\neq F\in \mathcal{F}\cap [L]^{<\nn}$ and for any scalars $(a_n)_{n\in F}$ with $|a_n|\leqslant 1$, $$\|\sum_{n\in F}a_nx_n\|\leqslant \|\sum_{n\in F}\text{Re\ }(a_n)x_n\|+\|\sum_{n\in F}\text{Im\ }(a_n)x_n\|\leqslant 8C.$$   If $L=(l_n)_{n=1}^\infty$, this yields the appropriate upper estimates to deduce that  $(x_{l_n})_{n=1}^\infty$ is a $c_0^\mathcal{F}$-spreading model.    The lower estimates follow from the fact that $(x_{l_n})_{n=1}^\infty$ is seminormalized basic.

For the converse, suppose that $(x_{r_n})_{n=1}^\infty$ is a $c_0^\mathcal{F}$-spreading model and let $$c=\sup\{\|\sum_{n\in F}x_{r_n}\|: F\in \mathcal{F}\}<\infty.$$    Let us choose $1=s_1<s_2<\ldots$ such that  $s_{n+1}>r_{s_n}$ for all $n\in\nn$. Let $l_n=r_{s_n}$, $L=(l_n)_{n=1}^\infty$, and $S=(s_n)_{n=1}^\infty$.    Fix $M\in[L]$ and note that $M=(r_{t_n})_{n=1}^\infty$ for some $(t_n)_{n=1}^\infty \in [S]$.  Let $M|_\mathcal{F}=(r_{t_n})_{n=1}^k\in \mathcal{F}$ and note that $(t_n)_{n=2}^k\in \mathcal{F}$. Indeed, if $t_{n-1}=s_i$ and $t_n=s_j$, $i<j$, then $$t_n=s_j\geqslant s_{i+1}>r_{s_i}=r_{t_{n-1}}.$$    Thus $E:=(t_n)_{n=2}^k$ is a spread of $(r_{t_n})_{n=1}^{k-1}\subset (r_{t_n})_{n=1}^k\in \mathcal{F}$, and $E\in \mathcal{F}$.    Therefore, with $b=\sup_n \|x_n\|$, $$\|\sum_{n\in M|_\mathcal{F}} x_n\| \leqslant \|x_{r_{t_1}}\|+\|\sum_{n=2}^k x_{r_{t_n}}\|=\|x_{r_{t_1}}\|+\|\sum_{n\in E}x_{r_{t_n}}\|\leqslant b+c=:C.$$    Therefore we have shown that $$\sup\{\|\sum_{n\in M|_\mathcal{F}}x_n\|: M\in [L]\}\leqslant C.$$

$(ii)$ For each $n\in\nn$, let $$\mathcal{V}_n=\{M\in[\nn]: \|\sum_{i\in M|_\mathcal{F}}x_i\|\leqslant n\|\}.$$  It is evident that $\mathcal{V}_n$ is closed, and in fact $M\mapsto \|\sum_{i\in M|_\mathcal{F}}x_i\|$ is locally constant on $[\nn]$.    By the Ramsey theorem, we may select $M_1\supset M_2\supset \ldots$ such that for all $n\in\nn$, either $[M_n]\subset \mathcal{V}_n$ or $\mathcal{V}_n\cap [M_n]=\varnothing$.   By $(i)$ and the hypothesis that $(x_n)_{n=1}^\infty$ admits no subsequence which is a $c_0^\mathcal{F}$-spreading model, for each $n\in\nn$, $\mathcal{V}_n\cap [M_n]=\varnothing$.    Now fix $l_1<l_2<\ldots$, $l_n\in M_n$,  and let $L=(l_n)_{n=1}^\infty$.    Fix $\varnothing\neq H_1<H_2<\ldots$.  For each $n\in\nn$, let  $N_n=\cup_{i=n}^\infty H_i\in [M_n]$ and note that $H_n=N_n|_\mathcal{F}$. Since $N_n\in [M_n]\subset [\nn]\setminus \mathcal{V}_n$, $$\|\sum_{i\in H_n}x_i\|=\|\sum_{i\in N_n|_\mathcal{F}}x_i\|>n.$$

\end{proof}

For ordinals $\xi, \zeta<\omega_1$ and any $M\in[\nn]$, there exists $N\in [M]$ such that $\mathcal{S}_\xi[\mathcal{S}_\zeta](N)\subset \mathcal{S}_{\zeta+\xi}$ and $\mathcal{S}_{\zeta+\xi}(N)\subset \mathcal{S}_\xi[\mathcal{S}_\zeta]$. From this it follows that for a given sequence $(x_n)_{n=1}^\infty$ in a Banach space $X$, there exist  $m_1<m_2<\ldots$ such that $$0< \inf\{\|x\|: F\in \mathcal{S}_{\zeta+\xi}, x\in \text{abs\ co}(x_{m_n}:n\in F)\}$$ if and only if there exist $m_1<m_2<\ldots$ such that $$0<\inf\{\|x\|: F\in \mathcal{S}_\xi[\mathcal{S}_\zeta], x\in \text{abs\ co}(x_{m_n}: n\in F)\}.$$   This fact will be used throughout to deduce that if $(x_n)_{n=1}^\infty$ is an $\ell_1^{\zeta+\xi}$-spreading model (or has a subsequence which is an $\ell_1^{\zeta+\xi}$-spreading model), then there exists a subsequence of $(x_n)_{n=1}^\infty$ which is an $\ell_1^{\mathcal{S}_\xi[\mathcal{S}_\zeta]}$-spreading model.  Similarly, if $(x_n)_{n=1}^\infty$ has a subsequence which is a $c_0^{\zeta+\xi}$-spreading model, then it has a subsequence which is a $c_0^{\mathcal{S}_\xi[\mathcal{S}_\zeta]}$-spreading model.

\begin{corollary} Fix $\alpha, \beta, \gamma<\omega_1$. Let $X,Y$ be  Banach spaces, $A:X\to Y$ an operator,  and let $(x_n)_{n=1}^\infty$ be a seminormalized, weakly null sequence in $X$.     \begin{enumerate}[(i)]\item If $(Ax_n)_{n=1}^\infty$ has a subsequence which is an $\ell_1^{\alpha+\beta}$-spreading model and $(x_n)_{n=1}^\infty$ has no subsequence which is an $\ell_1^{\alpha+\gamma}$-spreading model, then there exists a convex block sequence $(z_n)_{n=1}^\infty$ of $(x_n)_{n=1}^\infty$ which has no subsequence which is an $\ell_1^\gamma$-spreading model and such that $(Az_n)_{n=1}^\infty$ is an $\ell_1^\beta$-spreading model. \item If $(x_n)_{n=1}^\infty$ has a subsequence which is a $c_0^{\alpha+\beta}$-spreading model but no subsequence which is a $c_0^{\alpha+\gamma}$-spreading model, then there exists a  block sequence of $(x_n)_{n=1}^\infty$ which is a $c_0^\beta$-spreading model and has no subsequence which is a $c_0^\gamma$-spreading model. If $0<\beta$, the block sequence is also weakly null.   \end{enumerate}

\label{block of block}
\end{corollary}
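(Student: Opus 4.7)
The plan is to exploit the equivalence emphasized in the paragraph just before the corollary: after passing to a subsequence, an $\ell_1^{\alpha+\beta}$-spreading model is the same as an $\ell_1^{\mathcal{S}_\beta[\mathcal{S}_\alpha]}$-spreading model, and likewise for $c_0$. The decomposition $\alpha+\beta$ then corresponds to an outer $\mathcal{S}_\beta$-layer whose atoms are maximal $\mathcal{S}_\alpha$ bundles; the block sequence will be obtained by aggregating $(x_n)$ into such bundles so that each bundle becomes one atomic coordinate of an $\ell_1^\beta$ (resp.\ $c_0^\beta$) spreading model, while the ``missing ordinal'' $\gamma$ is filled by the ambient $\alpha+\gamma$ weak-null (resp.\ $c_0^{\alpha+\gamma}$-failure) behavior of $(x_n)$.

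For $(i)$, I first pass to a subsequence so that $(Ax_n)$ is an $\ell_1^{\mathcal{S}_\beta[\mathcal{S}_\alpha]}$-spreading model with constant $c>0$, and so that the conclusions of Lemma \ref{tech}(i) hold with parameters $(\alpha,\gamma)$. Since $(x_n)$ has no $\ell_1^{\alpha+\gamma}$-spreading model subsequence, it is $(\alpha+\gamma)$-weakly null. I take the probability block $(\mathfrak{S}_\gamma*\mathfrak{S}_\alpha,\mathcal{S}_\gamma[\mathcal{S}_\alpha])$, which is $(\alpha+\gamma)$-sufficient by Theorem \ref{7surv}(i), and apply Remark \ref{6surv} to choose $M\in[\nn]$ on which the composite averages are uniformly small. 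Define $F_n=\mathrm{supp}(\mathbb{S}^\alpha_{M,n})\in MAX(\mathcal{S}_\alpha)$ and $z_n=\sum_{j\in F_n}\mathbb{S}^\alpha_{M,n}(j)\,x_j$. Property (b) is then immediate: for any $G\in\mathcal{S}_\beta$ and convex $(b_n)_{n\in G}$, the sum $\sum_{n\in G}b_n A z_n$ is a convex combination of $(Ax_j)$ supported on $\bigcup_{n\in G}F_n$; since $(\min F_n)_{n\in G}$ is a spread of $G$, this support lies in $\mathcal{S}_\beta[\mathcal{S}_\alpha]$, and the norm is bounded below by $c$. For property (c), the permanence property in Remark \ref{permanence} ensures that for $N=\bigcup_i F_{r_i}\in[M]$ one has $\mathbb{S}^\alpha_{N,i}=\mathbb{S}^\alpha_{M,r_i}$, so that the composite average unfolds as $\sum_j \mathbb{O}_{N,1}(j)x_j=\sum_{i=1}^k d_i\,z_{r_i}$ with $(d_i)=(\mathbb{S}^\gamma_{L',1}(l'_i))$ a convex sequence and $(l'_1,\dots,l'_k)=L'|_{\mathcal{S}_\gamma}$ where $L'=(\min F_{r_i})$. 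Combining this with Lemma \ref{tech}(i) (choosing the initial subsequence so that $(r_i)_{i=1}^k\in\mathcal{S}_\gamma$ whenever the spread $(l'_i)$ does) and the convex unconditionality theorem of AMT gives that every subsequence of $(z_n)$ admits $\mathcal{S}_\gamma$-convex combinations of arbitrarily small norm, hence has no $\ell_1^\gamma$-spreading model subsequence.

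For $(ii)$, the construction is parallel but with unit coefficients. After a subsequence reduction, assume $(x_n)$ is a $c_0^{\mathcal{S}_\beta[\mathcal{S}_\alpha]}$-spreading model with bounds $d\leqslant D$, and that the decomposition statement of Lemma \ref{tech}(ii) applies. Partition (a tail of) the subsequence into successive maximal $\mathcal{S}_\alpha$-blocks $F_1<F_2<\ldots$ and let $y_k=\sum_{i\in F_k}x_i$. Property (b) then repeats the argument of (i): for $G\in\mathcal{S}_\beta$ and $|\varepsilon_k|\leqslant 1$, $\sum_{k\in G}\varepsilon_k y_k=\sum_{i\in\bigcup_{k\in G}F_k}\varepsilon_{k(i)}x_i$ is a unit-coefficient combination over a set in $\mathcal{S}_\beta[\mathcal{S}_\alpha]$, giving two-sided control. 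For property (c), fix a subsequence $(y_{k_s})$ and observe that $(x_i)_{i\in\bigcup_s F_{k_s}}$ inherits the failure of the $c_0^{\alpha+\gamma}$-spreading model property, so by Lemma \ref{c0}(ii) there exist arbitrarily large unit-coefficient sums on maximal $\mathcal{S}_\gamma[\mathcal{S}_\alpha]$-sets inside this subset. Lemma \ref{tech}(ii) lets me arrange, after a further subsequence choice, that each such maximal set decomposes as $\bigcup_{s\in T}F_{k_s}$ with $T\in MAX(\mathcal{S}_\gamma)$; the resulting unit-coefficient sum $\sum_{s\in T}y_{k_s}$ then has norm $>m$ for any $m$, and Lemma \ref{c0}(i) forces $(y_{k_s})$ not to be a $c_0^\gamma$-spreading model. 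Finally, when $\beta>0$, weak nullity of $(y_k)$ follows from its $c_0^\beta$-spreading model structure: if some $x^*$ had $\limsup|x^*(y_k)|>\delta>0$ along a subsequence, then the $c_0^1\subset c_0^\beta$ partial sums $\sum_{k=N}^{2N-1}\varepsilon_k y_k$ (with $|F|=N=\min F$) would grow linearly in $N$, contradicting the $c_0^\beta$ upper bound.

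The main technical obstacle in both parts is the alignment issue in step (c): ensuring that the $\mathcal{S}_\gamma$-structure that arises naturally from the $(\alpha+\gamma)$-level behavior of $(x_n)$ (either through the composite probability block or through the Lemma \ref{tech}(ii) decomposition of maximal $\mathcal{S}_\gamma[\mathcal{S}_\alpha]$-sets) actually lands on the $\mathcal{S}_\gamma$-index structure of the constructed block sequence $(z_n)$ or $(y_k)$. This is handled by choosing the initial subsequence of $(x_n)$ using Lemma \ref{tech} together with the spreading and heredity properties from Proposition \ref{deep facts}, so that spreads of $\mathcal{S}_\gamma$-sets under the map $k\mapsto\min F_k$ remain in $\mathcal{S}_\gamma$ in the right sense; the remaining verification is then bookkeeping.
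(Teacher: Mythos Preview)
Your construction is exactly the paper's: in $(i)$ you take $\mathcal{S}_\alpha$-convex blocks using the $(\alpha+\gamma)$-sufficient probability block $\mathfrak{S}_\gamma*\mathfrak{S}_\alpha$, and in $(ii)$ you take unit-coefficient sums over successive maximal $\mathcal{S}_\alpha$-sets after invoking Lemma~\ref{c0}$(ii)$ with $\mathcal{F}=\mathcal{S}_\gamma[\mathcal{S}_\alpha]$. The $\ell_1^\beta$ (resp.\ $c_0^\beta$) lower/upper estimates and the weak-nullity argument for $\beta>0$ are fine.

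The gap is in the alignment step, which you correctly flag as the main obstacle but do not actually resolve. In $(i)$, the composite average gives $\sum_{i=1}^k d_i z_{r_i}$ small with $(l'_i)_{i=1}^k=(\min F_{r_i})_{i=1}^k\in MAX(\mathcal{S}_\gamma)$, but to contradict an $\ell_1^\gamma$-spreading model you need $(r_i)_{i=1}^k\in\mathcal{S}_\gamma$. Since $l'_i\geqslant r_i$, spreading goes the wrong way, and Lemma~\ref{tech}$(i)$ does not reverse it: that lemma produces sets in $\mathcal{S}_{\gamma+\delta}$ from $\mathcal{S}_\delta$-indexed unions, not the converse. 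The paper's fix is a one-step-ahead subsequence: choose $q_1<q_2<\ldots$ with $q_{n+1}>\min F_{r_{q_n}}$, so that after dropping the first term of each $\mathcal{S}_\gamma$-block, the remaining indices $(q_j)_{j=k_{n-1}+2}^{k_n}$ form a spread of the initial segment $(l'_j)_{j=k_{n-1}+1}^{k_n-1}$ and hence lie in $\mathcal{S}_\gamma$; one then uses that $d_{k_{n-1}+1}=\mathbb{S}^\gamma_{T,n}(l'_{k_{n-1}+1})\to 0$ to absorb the dropped term. The same device (choose $s_{n+1}>\min F_{s_n}$, drop one term, spread) handles the alignment in $(ii)$; Lemma~\ref{tech}$(ii)$ is not what is used there either. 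Once you insert this trick, your proof becomes the paper's proof.
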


\begin{proof}$(i)$ We first assume $\sup_n \|x_n\|=1$.  By passing to a subsequence, we may assume without loss of generalilty that $$0<\ee=\inf\{\|Ax\|: F\in \mathcal{S}_\beta[\mathcal{S}_\alpha], x\in \text{abs\ co}(x_n: n\in F)\}.$$  Let $\mathcal{P}=\mathcal{S}_\gamma[\mathcal{S}_\alpha]$, $\mathfrak{P}=\mathfrak{S}_\gamma*\mathfrak{S}_\alpha=\{\mathbb{P}_{M,n}:M\in[\nn], n\in\nn\}$.     As mentioned in Remark \ref{6surv}, we may also fix $L\in[\nn]$ such that for all $M\in[L]$ and $n\in\nn$, $$\|\sum_{i=1}^\infty \mathbb{P}_{M,n}(i)x_i\|\leqslant 1/n.$$   Now fix $F_1<F_2<\ldots$, $F_n\in MAX(\mathcal{S}_\alpha)$, $L=\cup_{n=1}^\infty F_n$ and let $y_n=\sum_{i=1}^\infty \mathbb{S}^\alpha_{L,n}(i)x_i=\sum_{i\in F_n} \mathbb{S}^\alpha_{L,n}(i)x_i$.   It follows from the second sentence that $$\ee\leqslant \inf\{\|Ay\|: F\in \mathcal{S}_\beta, y\in \text{abs\ co}(y_n: n\in F)\}.$$   That is, $(Ay_n)_{n=1}^\infty$ is an $\ell_1^\beta$-spreading model.  It remains to show that $(y_n)_{n=1}^\infty$ has no subsequence which is an $\ell_1^\gamma$-spreading model. To that end, assume $R=(r_n)_{n=1}^\infty$, $\delta>0$ are such that $$\delta\leqslant \inf\{\|\sum_{n\in F}a_ny_{r_n}\|: F\in \mathcal{S}_\gamma, \sum_{n\in F}|a_n|=1\}.$$   Now let $E_n=F_{r_n}$, $N=\cup_{n=1}^\infty E_n$, $S=(s_n)_{n=1}^\infty=(\min E_n)_{n=1}^\infty$ and note that $$z_n:=y_{r_n}=\sum_{i=1}^\infty \mathbb{S}^\alpha_{N,n}(i)x_i$$ for all $n\in\nn$.     Now fix $1=q_1<q_2<\ldots$ such that $q_{n+1}>s_{q_n}$.  Let $M=\cup_{n=1}^\infty E_{q_n}$ and note that there exist $0=k_0<k_1<\ldots$ such that for all $n\in\nn$, $$\mathbb{P}_{M,n}=\sum_{j=k_{n-1}+1}^{k_n} \mathbb{S}^\gamma_{T,n}(s_{q_j})\mathbb{S}^\alpha_{M,j}$$ and $(s_{q_j})_{j=k_{n-1}+1}^{k_n}\in \mathcal{S}_\gamma$, where $T=(s_{q_j})_{j=1}^\infty$.   Moreover, since $0<\gamma$, $\mathbb{S}^\gamma_{T,n}(s_{q_{k_{n-1}+1}})\to 0$.  We now observe that since $s_{q_j}<q_{j+1}$, $G_n:=(q_j)_{j=k_{n-1}+2}^{k_n}$ is a spread of $(q_j)_{j=k_{n-1}+1}^{k_n-1}$, which is a subset of a member of $\mathcal{S}_\gamma$.    Therefore, for any $n\in\nn$, \begin{align*} \delta(1-\mathbb{S}^\gamma_{T,n}(s_{q_{k_{n-1}+1}})) & \leqslant \|\sum_{j=k_{n-1}+2}^{k_n} \mathbb{S}^\gamma_{T,n}(s_{q_j})z_j\| \leqslant \|\sum_{j=k_{n-1}+1}^{k_n}\mathbb{S}^\gamma_{T,n}(s_{q_j})z_j\|-\mathbb{S}^\gamma_{T,n}(s_{q_{k_{n-1}+1}}) \\ & \|\sum_{i=1}^\infty \mathbb{P}_{M,n}(i)x_i\| - \mathbb{S}^\gamma_{T,n}(s_{q_{k_{n-1}+1}})\leqslant 1/n + \mathbb{S}^\gamma_{T,n}(s_{q_{k_{n-1}+1}}) .\end{align*}  Since $\lim_n \mathbb{S}^\gamma_{T,n}(s_{q_{k_{n-1}+1}})=0$, these inequalities yield a contradiction for sufficiently large $n$.

$(ii)$ We may assume without loss of generality that $$\sup\{\|\sum_{n\in F}\ee_nx_n\|: F\in \mathcal{S}_\beta[\mathcal{S}_\alpha], |\ee_n|=1\}=C<\infty$$ and that $(x_n)_{n=1}^\infty$ is basic.  By Lemma \ref{c0} applied with $\mathcal{F}=\mathcal{S}_\gamma[\mathcal{S}_\alpha]$,  there exists $L\in [\nn]$ such that for all $H_1<H_2<\ldots$, $H_n\in MAX(\mathcal{S}_\gamma[\mathcal{S}_\alpha])\cap [L]^{<\nn}$, $\|\sum_{i\in H_n}x_i\|>n$.   We claim that for any $F_1<F_2<\ldots$, $F_n\in MAX(\mathcal{S}_\alpha)\cap [L]^{<\nn}$, $(\sum_{i\in F_n}x_i)_{n=1}^\infty$ fails to have a subsequence which is a $c_0^\gamma$-spreading model. In order to prove this, it is sufficient to prove that $(\sum_{i\in F_n} x_i)_{n=1}^\infty$ is not a $c_0^\gamma$-spreading model. To see this, simply observe that if $F_1<F_2<\ldots$, $F_n\in MAX(\mathcal{S}_\alpha)\cap [L]^{<\nn}$ and $(\sum_{i\in F_{r_n}}x_i)_{n=1}^\infty$ is a $c_0^\gamma$-spreading model, this contradicts the previous sentence, since $F_{r_1}<F_{r_2}<\ldots$ also lie in $MAX(\mathcal{S}_\alpha)\cap [L]^{<\nn}$.  Seeking a contradiction, suppose that $$\sup \{\|\sum_{n\in E}\sum_{i\in F_n}x_i\|: E\in \mathcal{S}_\gamma\}=D<\infty.$$    Now fix $1=s_1<s_2<\ldots$ such that for all $n\in\nn$, $s_{n+1}>\min F_{s_n}$.    Let $T=\cup_{n=1}^\infty F_{s_n}$   and let $H_1<H_2<\ldots$ be such that $H_n\in MAX(\mathcal{S}_\gamma[\mathcal{S}_\alpha])$ and $T=\cup_{n=1}^\infty H_n$.   Note that $\|\sum_{i\in H_n}x_i\|>n$ for all $n\in\nn$.    Note also that there exist $0=k_0<k_1<\ldots$ such that $H_n=\cup_{j=k_{n-1}+1}^{k_n} F_{s_j}$, and these numbers are uniquely determined by the property that $(\min F_{s_j})_{j=k_{n-1}+1}^{k_n}\in MAX(\mathcal{S}_\gamma)$. As is now familiar, we note that for each $n\in \nn$, $E_n:=(s_j)_{k_{n-1}+2}^{k_n}$ is a spread of a subset $(\min F_{s_j})_{j=k_{n-1}+1}^{k_n-1}$, whence $E_n\in \mathcal{S}_\gamma$.    We note that for each $n\in \nn$, $$n<\|\sum_{i\in H_n} x_i\| \leqslant \|\sum_{i\in F_{k_{n-1}+1}}x_i\|+ \|\sum_{j=k_{n-1}+2}^{k_n} \sum_{i\in F_{s_j}}x_i\|\leqslant C+\|\sum_{j\in E_n} x_i\|\leqslant C+D.$$ This is a contradiction for sufficiently large $n$.

\end{proof}

\subsection{Schreier and Baernstein spaces}

If $\mathcal{F}$ is a nice family, we let $X_\mathcal{F}$ denote the completion of $c_{00}$ with respect to the norm $$\|x\|_\mathcal{F}=\sup \{\|Ex\|_{\ell_1}:E\in \mathcal{F}\}.$$    In the case that $\mathcal{F}=\mathcal{S}_\xi$, we write $\|\cdot\|_\xi$ in place of $\|\cdot\|_{\mathcal{S}_\xi}$ and $X_\xi$ in place of $X_{\mathcal{S}_\xi}$. The spaces $X_\xi$ are called the \emph{Schreier spaces}.   Note that $X_0=c_0$ isometrically. 

Given $1<p< \infty$ and a nice family $\mathcal{F}$, we let $X_{\mathcal{F}, p}$ be the completion of $c_{00}$ with respect to the norm $$\|x\|_{\mathcal{F}, p}= \sup \Bigl\{\bigl(\sum_{i=1}^\infty \|E_ix\|_{\ell_1}^p\bigr)^{1/p}: E_1<E_2<\ldots, E_i\in \mathcal{F}\Bigr\}.$$   For convenience, we let $X_{\xi, p}$ and $\|\cdot\|_{\xi, p}$ denote $X_{\mathcal{S}_\xi, p}$ and $\|\cdot\|_{\mathcal{S}_\xi, p}$, respectively. We also let $X_{\xi, \infty}$ denote $X_\xi$.

\begin{rem}\upshape The Schreier families $\mathcal{S}_\xi$, $\xi<\omega_1$, possess the almost monotone property, which means that for any $\zeta<\xi<\omega_1$, there exists $m\in\nn$ such that if $m\leqslant E\in \mathcal{S}_\zeta$, then $E\in \mathcal{S}_\xi$. From this it follows that the formal inclusion $I:X_\xi\to X_\zeta$ is bounded for any $\zeta\leqslant \xi<\omega_1$.  In fact, there exists a tail subspace $[e_i:i\geqslant m]$ of $X_\xi$ such that the restriction of $I:[e_i:i\geqslant m]\to X_\zeta$ is norm $1$. We will use this fact throughout. 

It is also obvious that the formal inclusion from $X_{\xi,p}$ to $X_{\zeta, p}$ is bounded for any $\zeta\leqslant \xi<\omega_1$, as is the inclusion from $X_{\xi,p}$ to $X_{\xi, q}$ whenever $p<q\leqslant \infty$. Combining these facts yields that the formal inclusion from $X_{\xi,p}$ to $X_\zeta$ is bounded whenever $\zeta\leqslant \xi$.  Furthermore, the adjoints of all of these maps are also bounded.

\end{rem}

The following collects known facts about the Schreier and Baernstein spaces. Throughout, we let $\|\cdot\|_{\xi,p}$ denote the norm of $X_{\xi,p}$ as well as its first and second duals.

\begin{theorem} Fix $\xi<\omega_1$ and $1<p\leqslant \infty$. \begin{enumerate}[(i)]\item  $\|\sum_{i=1}^n x_i\|_{\xi,p}=\|\sum_{i=1}^n |x_i|\|_{\xi,p}$ for any disjointly supported $x_1, \ldots, x_n\in X_{\xi,p}$. \item  The canonical basis of $X_{\xi, p}$ is shrinking. \item The basis of $X_{\xi,p}$ is boundedly-complete (and $X_{\xi,p}$ is reflexive) if and only if $p<\infty$. \item   If $p<\infty$ and $1/p+1/q=1$, $$\|\sum_{i=1}^n x_i\|_{\xi,p}\geqslant \bigl(\sum_{i=1}^n \|x_i\|_{\xi,p}^p\bigr)^{1/p}$$ and $$\|\sum_{i=1}^n x^*_i\|_{\xi,p} \leqslant \bigl(\sum_{i=1}^n \|x^*_i\|_{\xi,p}^q\bigr)^{1/q}$$ for any $x_1<\ldots <x_n\in X_{\xi,p}$ and $x_1^*<\ldots <x^*_n$, $x^*_i\in X_{\xi,p}^*$. \item The canonical basis of $X_{\xi,p}$ is a weakly null $\ell_1^\xi$-spreading model, while every normalized, weakly null sequence in $X_{\xi,p}$ is $\xi+1$-weakly null. \item The space $X_\xi$ is isomorphically embeddable into $C(\mathcal{S}_\xi)$.   \end{enumerate}

\label{sbfacts}

\end{theorem}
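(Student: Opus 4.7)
My plan is to prove the six items in a roughly linear order, leveraging the combinatorial properties of $\mathcal{S}_\xi$ (heredity, spreading, compactness, and almost monotonicity) together with the definition of the $X_{\xi,p}$ norm. Items (i) and (iv) form the computational backbone. For (i), $\|Ex\|_{\ell_1}$ depends only on $|x|$ so both norms are lattice norms, and since $|\sum x_i|=\sum|x_i|$ pointwise for disjointly supported vectors, the equality is immediate. For the lower $\ell_p$ estimate in (iv), given $x_1<\ldots<x_n$ and $\varepsilon>0$, I would fix for each $j$ successive families $E^j_1<E^j_2<\ldots$ in $\mathcal{S}_\xi$ nearly witnessing $\|x_j\|_{\xi,p}$, and concatenate them into an admissible family for $\|\sum_j x_j\|_{\xi,p}$. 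The upper $\ell_q$ estimate in the dual then follows by duality, decomposing any test vector into restrictions to $\mathrm{supp}(x^*_j)$ and applying H\"older together with the lower $\ell_p$ estimate.

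For (vi), define $\Phi:X_\xi\to C(\mathcal{S}_\xi)$ by $\Phi(x)(E)=\sum_{i\in E}x_i$. This is continuous on $\mathcal{S}_\xi$ because $E\mapsto\sum_{i\in E}x_i$ is locally constant for $x\in c_{00}$ and extends by density. For any $E\in\mathcal{S}_\xi$, the positive and negative parts $E^{\pm}$ of $x|_E$ lie in $\mathcal{S}_\xi$ by heredity, so $\|Ex\|_{\ell_1}=\Phi(x)(E^+)-\Phi(x)(E^-)\le 2\|\Phi(x)\|_{\infty}$, which combined with the trivial reverse inequality $\|\Phi(x)\|_\infty\le\|x\|_\xi$ gives the isomorphism onto its image. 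For (v), the $\ell_1^\xi$-spreading model property is immediate by choosing $E=F$ in the supremum: $\|\sum_{n\in F}a_ne_n\|_{\xi,p}\ge \sum_{n\in F}|a_n|$. For the claim that every normalized weakly null sequence is $\xi+1$-weakly null, I would pass to a normalized block sequence via a standard perturbation argument and use the combinatorial identity that every $F\in\mathcal{S}_{\xi+1}$ is a union of at most $\min F$ successive elements of $\mathcal{S}_\xi$; combined with the dual $\ell_q$ upper estimate from (iv) this forces the norms of $\ell_1$-type convex combinations indexed by $F$ to decay with $\min F$, ruling out any $\ell_1^{\xi+1}$-spreading model subsequence.

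Items (ii) and (iii) then follow. The basis is monotone (in fact $1$-unconditional) by the hereditary property of $\mathcal{S}_\xi$, so to show it is shrinking it suffices that every normalized block sequence is weakly null, which is implied by (v). For (iii), if $p<\infty$, the lower $\ell_p$ estimate of (iv) immediately yields bounded completeness, since $\sup_n\|\sum_{i=1}^n a_ie_i\|_{\xi,p}\le M$ forces $\sum_{i=1}^\infty |a_i|^p\le M^p$ and hence norm convergence of $\sum a_ie_i$; combined with shrinking this yields reflexivity by James's theorem. For $p=\infty$, the lower $\ell_p$ estimate fails and bounded completeness does too: one can construct explicit scalars $(a_n)$ (e.g.\ $a_n=1/n$ in $X_1$ or analogous scalars in general $X_\xi$) whose partial sums are bounded in $X_\xi$ but do not converge, because one exhibits $F\in\mathcal{S}_\xi$ with $\min F$ arbitrarily large and $\sum_{n\in F}a_n$ bounded below.

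The main obstacle I anticipate is the $\xi+1$-weakly null claim in (v), which requires careful coefficient bookkeeping against the nested Schreier structure; if the direct argument becomes unwieldy, I would invoke Corollary \ref{block of block} or the repeated averages hierarchy via Theorem \ref{7surv}(ii) to close the estimate cleanly. A secondary concern is to prove the dual $\ell_q$ estimate in (iv) without a circular dependence on shrinking, which I would address by the direct H\"older computation sketched above, working only with the primal $\ell_p$ estimate and the definition of the dual norm.
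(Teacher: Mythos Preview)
The paper does not supply a proof of this theorem; it is presented as a list of known facts about the Schreier and Baernstein spaces without argument, so there is no paper proof to compare against.

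Your outline is essentially correct for (i), (iv), (vi), and the deductions of (ii) and (iii). Two points to watch. First, your primary argument for the second half of (v) leans on the dual $\ell_q$ upper estimate from (iv), but that estimate is stated only for $p<\infty$; for $p=\infty$ you will indeed need the repeated-averages fallback you mention, or a direct combinatorial argument showing that any normalized block sequence in $X_\xi$ has convex combinations over $\mathcal{S}_{\xi+1}$ sets tending to zero in norm. Second, be careful with the logical order in deducing (ii) from (v): the hypothesis of the second clause of (v) is that the sequence is already weakly null, which for block sequences is exactly what (ii) asserts. In practice your block-sequence estimate will not actually use weak nullity, so it simultaneously shows no normalized block sequence is equivalent to $\ell_1$, which (with $1$-unconditionality) gives shrinking; alternatively, route (ii) through (vi) by observing that $X_\xi\hookrightarrow C(\mathcal{S}_\xi)$ with $\mathcal{S}_\xi$ countable compact, so $\ell_1\not\hookrightarrow X_\xi$, and then the bounded formal inclusion $X_{\xi,p}\to X_\xi$ transfers this to all $p$.
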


\begin{rem}\upshape Throughout, if $E\in [\nn]^{<\nn}$, we will use the notation $x^*\sqsubset E$ to mean that $\|x^*\|_{c_0}\leqslant 1$ and $\text{supp}(x^*)= E$.   It is evident that for any regular family $\mathcal{F}$, $$\bigcup_{E\in \mathcal{F}}\{x^*: x^*\sqsubset E\}\subset B_{X^*_\mathcal{F}}.$$ Moreover, a convexity argument yields that for any $y^*\in B_{X^*_\mathcal{F}}$ with $\text{supp}(y^*)\subset F\in [\nn]^{<\nn}$, $$y^*\in \text{co}\Bigl(\bigcup_{F\supset E\in \mathcal{F}}\{x^*: x^*\sqsubset E\}\Bigr).$$

Finally, we note that if there exist $x^*_1<\ldots <x^*_d$ and for each $1\leqslant i\leqslant d$, there exist $l_i\in \nn$, $E_{i,j}\subset \text{supp}(x^*_i)$, and $x^*_{i,j}$, $j=1, \ldots, l_i$, such that $x^*_{i,j}\sqsubset E_{i,j}\subset \text{supp}(x_i^*)$, $x^*_i\in \text{co}(x^*_{i,j}: 1\leqslant j\leqslant l_i)$,  and for each $(j_i)_{i=1}^d\in \prod_{i=1}^d \{1, \ldots, l_i\}$, $\cup_{i=1}^d E_{i, j_i}\in \mathcal{F}$, then $$\|\sum_{i=1}^d x^*_i\|_{X^*_\mathcal{F}}\leqslant 1.$$    Moreover, if we replace $x^*_i$ with $a_ix^*_i$, where $a_1, \ldots, a_d$ are such that $|a_i|\leqslant 1$ for each $1\leqslant i\leqslant d$, the resulting functionals $a_1 x^*_1, \ldots, a_d x^*_d$ also satisfy the hypotheses, so $\|\sum_{i=1}^d a_ix_i^*\|_{X^*_\mathcal{F}}\leqslant 1$ for any $(a_i)_{i=1}^d\in \ell_\infty^d$.

Let us see why $\|\sum_{i=1}^d x^*_i\|_{X^*_\mathcal{F}}\leqslant 1$.    Write $x_i^*=\sum_{j=1}^{l_i} w_{i,j}x^*_{i,j}$ where $w_{i,j}\geqslant 0$ and $\sum_{j=1}^{l_i}w_{i,j}=1$.     Let $I=\prod_{i=1}^d \{1, \ldots, l_i\}$ and for each $t=(j_i)_{i=1}^d\in I$, let $w_t=\prod_{i=1}^d w_{i, j_i}$ and $x^*_t=\sum_{i=1}^d x^*_{i, j_i}$. Then $x^*=\sum_{t\in I}w_t x^*_t$, $w_t\geqslant 0$, and $\sum_{t\in I}w_t=1$. Therefore it suffices to show that $\|x^*_t\|_{X^*_\mathcal{F}}\leqslant 1$ for each $t\in I$.   But $x^*_t\sqsubset \cup_{i=1}^d E_{i, j_i}\in \mathcal{F}$, whence $\|x^*_t\|_{X^*_\mathcal{F}}\leqslant 1$ follows. 

\label{daniel}
\end{rem}

\begin{proposition} Fix $0\leqslant \gamma, \delta<\omega_1$, and $1<p\leqslant \infty$.   \begin{enumerate}[(i)]\item If $(x^*_n)_{n=1}^\infty \subset X^*_\gamma$ is weakly null and satisfies $\underset{n}{\lim\inf} \|x^*_n\|_\gamma^*<C$, then there exists a subsequence $(x^*_{n_i})_{i=1}^\infty$ of $(x^*_n)_{n=1}^\infty$ such that for any $G\in \mathcal{S}_\delta$, $\|\sum_{i\in G} x^*_{n_i}\|_{\gamma+\delta}^*<C$.   \item Suppose $(x_n)_{n=1}^\infty \subset X_{\gamma+\delta,p}$ is weakly null  in $X_{\gamma+\delta,p}$, and for every $\beta<\gamma$, $\lim_n \|x_n\|_\beta=0$. Then  every subsequence of $(x_n)_{n=1}^\infty$ has a further subsequence which is dominated by a subsequence of the $X_{\delta,p}$ basis. \item If $(x_n)_{n=1}^\infty\subset X_{\gamma+\delta,p}$ is a weakly null sequence such that $\lim\sup_n \|x_n\|_\gamma>0$, then $(x_n)_{n=1}^\infty$ has a subsequence which dominates the $X_{\delta,p}$ basis. \end{enumerate}

\label{phenomenal}

\end{proposition}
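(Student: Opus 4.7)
For part (i), I first use the shrinking basis of $X_\gamma$ (Theorem \ref{sbfacts}(ii)) to pass to a subsequence which, up to an arbitrarily small perturbation, is a block sequence in $X^*_\gamma$; combined with $\liminf_n \|x^*_n\|_\gamma^* < C$ I may assume $(x^*_n)$ is block with successive supports $T_n$ and $\|x^*_n\|_\gamma^* \le C' < C$. Remark \ref{daniel} then writes each $x^*_n/C'$ as a convex combination $\sum_j w_{n,j}\, y^*_{n,j}$ with $y^*_{n,j}\sqsubset E_{n,j}\in\mathcal{S}_\gamma$ and $E_{n,j}\subset T_n$. The whole proof reduces to choosing a further subsequence $(x^*_{p_i})$ so that, for every $G\in\mathcal{S}_\delta$ and every selection $(j_i)_{i\in G}$, the union $\bigcup_{i\in G}E_{p_i,j_i}$ lies in $\mathcal{S}_{\gamma+\delta}$: once this is achieved, expanding $\sum_{i\in G}x^*_{p_i}/C'$ via these representations and re-invoking Remark \ref{daniel} gives $\|\sum_{i\in G}x^*_{p_i}\|_{\gamma+\delta}^* \le C' < C$.

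The main obstacle in (i) is arranging this universal union condition. I apply Lemma \ref{tech}(i) with $L=\nn$ to obtain $M=(m_i)\in[\nn]$, then use $\min T_n\to\infty$ to pick $(p_i)$ with $\min T_{p_1}\ge m_1$ and $\min T_{p_{i+1}}\ge\max T_{p_i}+(m_{i+1}-m_i)$. For any $F_i:=E_{p_i,j_i}\in\mathcal{S}_\gamma$ with $F_i\subset T_{p_i}$, this growth lets me build a successive sequence $(\widetilde{E}_k)_{k\ge 1}$ of nonempty sets in $\mathcal{S}_\gamma$ with $\widetilde{E}_{m_i}=F_i$ and singleton fillers for $k\notin M$. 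Applying Lemma \ref{tech}(i) to $(\widetilde{E}_k)$ with $(m_i)\in[M]$ and $G$ then gives $\bigcup_{i\in G}F_i=\bigcup_{i\in G}\widetilde{E}_{m_i}\in\mathcal{S}_{\gamma+\delta}$.

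For part (iii), I pass to a block subsequence and use $\limsup\|x_n\|_\gamma>0$ to extract $F_n\in\mathcal{S}_\gamma$ with $F_n\subset T_n$ and $\|F_n x_n\|_{\ell_1}>c$ for some $c>0$. A further subsequence selected exactly as in (i) via Lemma \ref{tech}(i) satisfies $\bigcup_{i\in G}F_{n_i}\in\mathcal{S}_{\gamma+\delta}$ for every $G\in\mathcal{S}_\delta$. For any successive family $(G_j)\subset\mathcal{S}_\delta$ the sets $E_j:=\bigcup_{i\in G_j}F_{n_i}$ are successive in $\mathcal{S}_{\gamma+\delta}$, and the block structure forces $\|E_j\sum_i a_i x_{n_i}\|_{\ell_1}\ge c\sum_{i\in G_j}|a_i|$. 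Taking the supremum in the definition of $\|\cdot\|_{\gamma+\delta,p}$ over $(G_j)$ gives $\|\sum_i a_i x_{n_i}\|_{\gamma+\delta,p}\ge c\|\sum_i a_i e_i\|_{\delta,p}$, so $(x_{n_i})$ dominates the $X_{\delta,p}$ basis.

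For part (ii), I pass to a block subsequence and use Lemma \ref{tech}(ii) to choose $(n_i)$ so that every $E\in\mathcal{S}_{\gamma+\delta}$ admits a decomposition $E=\bigcup_l E^l$ with $E^l\in\mathcal{S}_\gamma$ successive and $(n_{\min E^l})_l\in\mathcal{S}_\delta$. For any successive $(E_j)\subset\mathcal{S}_{\gamma+\delta}$, I split the contribution of each piece $E_j^l$ to $\|E_j\sum_i a_i x_{n_i}\|_{\ell_1}$ into an \emph{aligned} part (where $E_j^l\subset T_{n_{i(j,l)}}$, bounded by $C|a_{i(j,l)}|$ since $\|x_n\|_\gamma$ is bounded via the formal inclusion $X_{\gamma+\delta,p}\to X_\gamma$) and a \emph{straddling} part; reindexing the aligned contributions along the $\mathcal{S}_\delta$-set $(n_{\min E_j^l})_l$ yields precisely an $X_{\delta,p}$-type upper bound. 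The main obstacle is the straddling case: by choosing $(n_i)$ diagonally over a sequence $\beta_k\uparrow\gamma$ and imposing growth on the gaps between the $T_{n_i}$'s forcing each straddling intersection $E_j^l\cap T_{n_i}$ into some $\mathcal{S}_{\beta_k}$, the hypothesis $\|x_n\|_{\beta_k}\to 0$ makes the straddling contributions summably negligible. Interleaving Lemma \ref{tech}(ii), this diagonal selection, and the block-gap growth is the technical heart of the argument.
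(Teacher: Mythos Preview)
Your proposal is correct and, for parts (i) and (ii), follows essentially the same path as the paper: reduce to block sequences, invoke Lemma~\ref{tech}(i) for (i) and Lemma~\ref{tech}(ii) for (ii), and in (ii) split each $\mathcal{S}_\gamma$-piece into an aligned contribution (controlled by the $X_{\delta,p}$ norm) and a straddling contribution (made summably small via the diagonal selection against $\|x_n\|_\beta\to 0$). Your filler construction in (i) is simply an explicit unpacking of what the paper compresses into the sentence ``By Lemma~\ref{tech}, there exist $n_1<n_2<\ldots$ such that\ldots''.

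The one genuine difference is in (iii). The paper proves (iii) \emph{via} (i): it picks block biorthogonal functionals $(x^*_n)\subset \ee^{-1}B_{X^*_\gamma}$, applies (i) to force $\sup\{\|\sum_{n\in G}\ee_n x^*_n\|^*_{\gamma+\delta}:G\in\mathcal{S}_\delta,|\ee_n|=1\}\le 1/\ee$, and then dualizes (with a H\"older step in the $p<\infty$ case). Your route is more direct and avoids the dual space: you pick witnessing sets $F_n\in\mathcal{S}_\gamma$ with $\|F_n x_n\|_{\ell_1}>c$, apply Lemma~\ref{tech}(i) to the sets themselves rather than to functionals, and read off the lower bound straight from the definition of $\|\cdot\|_{\gamma+\delta,p}$. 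Both work; the paper's version has the virtue of exhibiting (iii) as a formal consequence of (i), while yours is self-contained and slightly more elementary.
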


\begin{proof}$(i)$ By passing to a subsequence, we may assume that $(x^*_n)_{n=1}^\infty$ is a block sequence and $\sup_n \|x^*_n\|<C_1<C$.  By scaling, we may assume $C_1=1$.   For each $n\in\nn$, let $S_n=\text{supp}(x^*_n)$.  For each $n\in\nn$, it follows from convexity and compactness arguments that for each $n\in\nn$, there exist $d_n$,  $(x^*_{n,i})_{i=1}^{d_n}$,  and $(E_{n,i})_{i=1}^{d_n}\subset \mathcal{S}_\gamma\cap [S_n]^{<\nn}$ such that  $x^*_{n,i}\sqsubset E_{n,i}$, and $x^*_n\in \text{co}(x^*_{i,n}: 1\leqslant i\leqslant d_n)$.   By Lemma \ref{tech}, there exist $n_1<n_2<\ldots$ such that for any $G\in \mathcal{S}_\delta$ and $E_1<E_2<\ldots$, $E_i\in \mathcal{S}_\gamma$, $\cup_{i\in G}E_{n_i}\in \mathcal{S}_{\gamma+\delta}$.    Now we conclude that for each $G\in \mathcal{S}_\delta$, $\|\sum_{n\in G}x^*_n\|_{\gamma+\delta}\leqslant C_1=1$ using the facts contained in Remark \ref{daniel}.

$(ii)$ By perturbing and scaling, we may assume $(x_n)_{n=1}^\infty \subset B_{X_{\xi,p}}$ is a block sequence.  If $\gamma$ is a successor, let $\gamma_n+1=\gamma$ for all $n\in \nn$ if $\gamma$ is a successor. If $\gamma$ is a limit ordinal, let $(\gamma_n)_{n=1}^\infty$ be the  sequence defining  $\mathcal{S}_\gamma$.    For each $n\in\nn$, let $\ee_n=2^{-n-2}$.  Let $m_n=\max \text{supp}(x_n)$.    We may recursively choose $1=k_1<k_2<\ldots$ such that for any $n<l$, $$\|x_{k_l}\|_{\gamma_{k_n}}<\ee_n/m_{k_n}.$$    By relabeling, we may assume $k_n=n$.

Now by Lemma \ref{tech}, we may fix $(n_i)_{i=1}^\infty$ such that if $E\in \mathcal{S}_{\gamma+\delta}$, there exist $E_1<\ldots <E_d$, $\varnothing\neq E_i\in \mathcal{S}_\gamma$ such that $(n_{\min E})_{i=1}^d\in \mathcal{S}_\delta$ and $E=\cup_{i=1}^d E_i$.  Now let $r_i=n_{m_i}$.   We first consider the $p=\infty$ case.  We claim that $(x_i)_{i=1}^\infty$ is dominated by $(e_{r_i})_{i=1}^\infty\subset X_\delta$.   Fix $(a_i)_{i=1}^\infty\in c_{00}\cap S_{\ell_\infty}$ and let $x=\sum_{i=1}^\infty a_ix_i$ and $y=\sum_{i=1}^\infty a_ie_{r_i}$.    Fix $E\in \mathcal{S}_{\gamma+\delta}$ and write $E=\cup_{i=1}^d E_i$, where $E_1<\ldots <E_d$, $\varnothing\neq E_i\in \mathcal{S}_\gamma$, and $(n_{\min E_i})_{i=1}^d\in \mathcal{S}_\delta$.   If $\gamma=0$, we can take each $E_i$ to be a singleton.   By omitting any superfluous $E_i$ and relabeling, we may assume that for each $1\leqslant i\leqslant d$, there exists $j$ such that $E_ix_j\neq 0$.

As the following estimates involve many definitions, we say a word before proceeding. For each $E_i$, our choice of the sequence $(x_i)_{i=1}^\infty$ will yield that $\|E_ix_l\|_{\ell_1}$ will be essentially negligible for all vectors except the first one whose support $E_i$ intersects.    Moreover, of all of the sets $E_i$ which intersect the support of $x_l$, since the sets are successive, at most one of the sets can intersect the support of a later vector, so we can control the number of negligible pieces.    For each $1\leqslant i\leqslant d$, let $j_i=\min \{l: E_ix_l\neq 0\}$ and $J=\{j_i: 1\leqslant i\leqslant d\}$.   For each $j\in J$, let $S_j=\{i\leqslant d: j_i=j\}$.      For each $j\in J$, let $s_j=\max S_j$ and let $T_j=S_j\setminus \{s_j\}$.  Note that for each $i\in S_j$, $E_ix_l=0$ for all $l<j$ by the minimality of $j=j_i$.  Note also that for each $i\in T_j$, $E_ix_l=0$ for all $l>j$, since $$\max E_i <\min E_{s_j}\leqslant \max \text{supp}(x_j)<\min \text{supp}(x_l).$$        Furthermore, since $E_{s_j}x_j\neq 0$, $E_{s_j}\in \mathcal{S}_\gamma$ with $\min E_{s_j} \leqslant m_j$.  If $\gamma$ is a limit ordinal, then $E_{s_j}\in \mathcal{S}_{\gamma_{m_j}}$, which means that for any $k>j$, $$\|E_{s_j}x_k\|_{\ell_1} \leqslant  \ee_k/m_j\leqslant \ee_k.$$   If $\gamma$ is a successor, then $\gamma=\gamma_{m_j}+1$ and $\min E_{s_j}\leqslant m_j$ yield that $E_{s_j}=\cup_{i=1}^q F_i$ for some $F_1<\ldots <F_q$, $q\leqslant m_j$, and $F_i\in \mathcal{S}_{\gamma_{m_j}}$.  Then for $k>j$, $$\|E_{s_j}x_k\|_{\ell_1}\leqslant \sum_{i=1}^q\|F_ix_k\|_{\ell_1} \leqslant m_n\|x_k\|_{\gamma_{m_j}}\leqslant \ee_k.$$     In the case $\gamma=0$, each $E_i$ is a singleton, so we have the trivial estimate that for $i\in S_j$ and $l>j$, $E_ix_l=0$.      Therefore in each of the $\gamma=0$, $\gamma$ a successor, and $\gamma$ a limit ordinal cases, $$\sum_{i\in S_j} \|E_ix\|_{\ell_1} \leqslant |a_j|\|Ex_j\|_{\ell_1} +\sum_{k=j+1}^\infty \|E_{s_j}x_k\|_{\ell_1} \leqslant |a_j|+\sum_{k=j+1}^\infty \ee_k.$$    Summing over $i$ yields that $$\|Ex\|_{\ell_1}\leqslant \sum_{j\in J}\sum_{i\in S_j} \|E_ix\|_{\ell_1} \leqslant \sum_{j\in J} |a_j| + \sum_{j\in J}\sum_{k=j+1}^\infty \ee_k \leqslant \sum_{j\in J}|a_j|+ \sum_{j=m(E)}^\infty\sum_{k=j}^\infty \ee_k,$$ where $m(E)=\min \{j: Ex_j\neq 0\}$.  Now for each $j\in J$, fix some $i_j\in \{1, \ldots, d\}$ such that $j=j_{i_j}$. Then $j\mapsto i_j$ is an injection of $J$ into $\{1, \ldots, d\}$, whence $(m_{i_j})_{j\in J}$ is a spread of a subset of $(\min E_i)_{i=1}^d$. Therefore $T(E):=(r_{i_j})_{j\in J}=(n_{m_{i_j}})_{j\in J}$ is a spread of a subset of $(n_{\min E_i})_{i=1}^d\in \mathcal{S}_\delta$, so $T(E)\in \mathcal{S}_\delta$.  Therefore $$\|y\|_\delta \geqslant \|T(E)y\|_{\ell_1} = \sum_{j\in J}|a_j|.$$  Collecting these estimates and recalling our  assumption that $(a_i)_{i=1}^\infty \in S_{\ell_\infty}$, we deduce that $$\|x\|_{\gamma+\delta} \leqslant \sum_{j\in J}|a_i|+\sum_{j=m(E)}^\infty \sum_{k=j}^\infty \ee_k \leqslant 2\|y\|_\delta.$$ This completes the $p=\infty$ case. 

Now assume $1<p<\infty$.  Fix $E_1<E_2<\ldots$, $E_i\in \mathcal{S}_{\gamma+\delta}$.    Let $x=\sum_{i=1}^\infty a_ix_i$, $y=\sum_{i=1}^\infty a_ie_{r_i}$ as in the previous paragraph.     For each $i\in\nn$, let $$J_i=\{j\in \nn: (\forall i\neq k\in\nn)(E_jx_k=0)\}.$$   Let $J=\cup_{i=1}^\infty J_i$ and $I=\nn\setminus J$. Let us rename the sets $(E_i)_{i\in I}$ as $F_1<G_1<F_2<G_2<\ldots$ (ignoring this step if $I$ is empty and with the appropriate notational change if $I$ is finite and non-empty). By the properties of  $I$, for each $i$ such that $F_i$ (resp. $G_i$) is defined, there exist at least two distinct indices $j,k$ such that $F_ix_j, F_ix_k\neq 0$ (resp.  at least two distinct indices $j', k'$ such that $G_ix_{j'}, G_ix_{k'}\neq 0$). From this it follows that, with $$U_i=\{j: F_ix_j\neq 0\}$$ and $$V_i=\{j: G_ix_j\neq 0\},$$ the sets $(U_i)_i$ are successive, as are $(V_i)_i$.  In particular, $F_ix_j=G_ix_j=0$ whenever $j<i$.  Observe that \begin{align*} \bigl(\sum_{i\in J} \|E_ix\|_{\ell_1}^p\bigr)^{1/p} & = \bigl(\sum_{j=1}^\infty |a_j|^p\sum_{i\in J_j}\|E_ix_j\|_{\ell_1}^p\bigr)^{1/p} \leqslant \|(a_j)_{j=1}^\infty\|_{\ell_p} \leqslant \|y\|_{\delta,p}.\end{align*}  Now, arguing as in the $p=\infty$ case, for each $i$ such that $F_i$ is defined, if $m(F_i)=\min \{j: F_ix_j\neq 0\}$, there exists a set $T(F_i)\in \mathcal{S}_\delta$ such that $$\|F_ix\|_{\ell_1}\leqslant \|T(F_i)y\|_{\ell_1}+\sum_{l=m(F_i)}^\infty \sum_{k=l}^\infty \ee_k.$$  Furthermore, $T(F_i)\subset \{n_{m_j}: j\in  U_i\}$, whence the sets $T(F_i)$ are successive, since the sets $U_i$ are.  From this,  the triangle inequality, and the fact that $m(F_i)\geqslant i$ for each appropriate $i$,  it follows that \begin{align*} \Bigl(\sum_i \|F_ix\|_{\ell_1}^p\Bigr)^{1/p} & \leqslant  \Bigl(\sum_{i=1}^\infty \|T(F_i)y\|_{\ell_1}^p\Bigr)^{1/p} +\sum_{i=1}^\infty \sum_{l=m(F_i)}^\infty \sum_{k=l}^\infty \ee_k \\ & \leqslant \|y\|_{\delta, p} +\sum_{i=1}^\infty\sum_{l=i}^\infty \sum_{k=l}^\infty \ee_k \\ & = \|y\|_{\delta, p} +1 \leqslant 2\|y\|_{\delta,p}.\end{align*} A similar argument yields that $$\Bigl(\sum_i \|G_ix\|_{\ell_1}^p\Bigr)^{1/p} \leqslant 2\|y\|_{\delta, p}.$$    Therefore $$\Bigl(\sum_{j=1}^\infty \|E_jx\|_{\ell_1}^p\Bigr)^{1/p} \leqslant 5\|y\|_{\delta, p}.$$  Since $E_1<E_2<\ldots$, $E_i\in \mathcal{S}_{\gamma+\delta}$ were arbitrary, $\|x\|_{\gamma+\delta,p}\leqslant 5 \|y\|_{\delta,p}$.

$(iii)$ By passing to a subsequence and perturbing, we may assume $(x_n)_{n=1}^\infty$ is a block sequence in $X_{\gamma+\delta,p}$ and $\inf_n \|x_n\|_\gamma=\ee>0$. We may fix a block sequence $(x^*_n)\in \ee^{-1} B_{X^*_\gamma}$ biorthogonal to $(x_n)_{n=1}^\infty$. By $(i)$, after passing to a subsequence and using properties of the $X_{\gamma+\delta,p}$ basis, assume that $$\sup \{\|\sum_{n\in G} \ee_n x^*_n\|_{\gamma+\delta}: G\in \mathcal{S}_\delta, |\ee_n|=1\}\leqslant 1/\ee.$$    If $p=\infty$, note that for any $(a_i)_{i=1}^\infty \in c_{00}$, \begin{align*} \|\sum_{i=1}^\infty a_ie_i\|_\gamma & = \sup \{\sum_{n\in G}|a_n|: G\in \mathcal{S}_\delta\}  \\ & \leqslant \sup \Bigl\{\text{Re\ }\bigl(\sum_{n\in G}\ee_n x^*_n\bigr)\bigl(\sum_{n=1}^\infty a_nx_n\bigr): G\in \mathcal{S}_\gamma, |\ee_n|=1\Bigr\} \\ &  \leqslant \ee^{-1}\bigl(\sum_{n=1}^\infty a_nx_n\bigr). \end{align*}

Now suppose that $1<p<\infty$.   Fix $(a_i)_{i=1}^\infty\in c_{00}$ and let $x=\sum_{i=1}^\infty a_ie_i$.  Fix $E_1<E_2<\ldots <E_n$, $E_i\in \mathcal{S}_\delta$ and a sequence $(b_i)_{i=1}^n\in S_{\ell_q^n}$, such that $$\|x\|_{\gamma, p}=\bigl(\sum_{i=1}^n \|E_ix\|_{\ell_1}^p\bigr)^{1/p} = \sum_{i=1}^n b_i\bigl(\sum_{j\in E_i}|a_j|\bigr).$$   Let $y^*_i=\sum_{j\in E_i} \ee_j x^*_j$, where $\ee_j a_j=|a_j|$, and let $y^*=\sum_{i=1}^n b_iy^*_i$. Since $\|y^*_i\|_{\gamma+\delta}\leqslant \ee^{-1}$ and $\sum_{i=1}^n b_i^q=1$, $\|y^*\|_{\gamma+\delta, p}\leqslant \ee^{-1}$.    Indeed, by H\"{o}lder's inequality, for any $x\in c_{00}$, if $I_1<\ldots <I_n$ are such that $\text{supp}(y^*_i)\subset I_i$, then $$|y^*(x)| \leqslant \sum_{i=1}^n b_i|y^*_i(x)|\leqslant \ee^{-1}\sum_{i=1}^n b_i\|I_ix_i\|_\gamma \leqslant \ee^{-1}\bigl(\sum_{i=1}^n b_i^q\bigr)\bigl(\sum_{i=1}^n  \|I_ix\|_\gamma^p\bigr)^{1/p} \leqslant \ee^{-1}\|x\|_{\gamma,p}.$$  Moreover, $$\ee^{-1}\|\sum_{i=1}^\infty a_i x_i\|_{\xi,p} \geqslant y^*(\sum_{i=1}^\infty a_ix_i) =\sum_{i=1}^n b_i\bigl(\sum_{j\in E_i} |a_j|\bigr)=\|x\|_{\gamma, p}.$$

\end{proof}

Let us recall that for any ordinals $\gamma, \xi$ with $\gamma\leqslant \xi$, there exists a unique ordinal $\delta$ such that $\gamma+\delta=\xi$. We denote this ordinal $\delta$ by $\xi-\gamma$.

We also recall that any non-zero ordinal $\xi$ admits a unique representation (called the \emph{Cantor normal form}) as $$\xi=\omega^{\ee_1}n_1+\ldots +\omega^{\ee_k}n_k,$$ where $k, n_1, \ldots, n_k\in\nn$ and $\ee_1>\ldots >\ee_k$.   By writing $\omega^\ee n=\omega^\ee+\ldots +\omega^\ee$, where the summand $\omega^\ee$ appears $n$ times, we may also uniquely represent $\xi$ as $$\xi=\omega^{\delta_1}+\ldots +\omega^{\delta_l},$$ where $l\in\nn$ and $\delta_1\geqslant \ldots \geqslant \delta_l$.  In this case, $\delta_1=\ee_1$. Let us define $\lambda(\xi)=\omega^{\ee_1}=\omega^{\delta_1}$ in this case. For completeness, let us define $\lambda(0)=0$ and note that if $\zeta\leqslant \xi$, $\lambda(\zeta)\leqslant \lambda(\xi)$.

\begin{theorem} Fix $\xi<\omega_1$ and $1<p\leqslant \infty$.   Fix a weakly null sequence $(x_n)_{n=1}^\infty\subset X_{\xi,p}$. Let $\Gamma=\{\zeta\leqslant \xi: \lim\sup_n \|x_n\|_\zeta>0\}$. \begin{enumerate}[(i)]\item If $p=\infty$, then $\Gamma=\varnothing$ if and only if $(x_n)_{n=1}^\infty$ is norm null.   \item If $p<\infty$ and $\Gamma=\varnothing$, then either $(x_n)_{n=1}^\infty$ is norm null or $(x_n)_{n=1}^\infty$ has a subsequence equivalent to the canonical $\ell_p$ basis. \item If $\Gamma\neq \varnothing$ and $\gamma=\min \Gamma$,  then  $(x_n)_{n=1}^\infty$ admits a subsequence which is equivalent to a subsequence of the $X_{\xi-\gamma,p}$ basis.  In particular, $(x_n)_{n=1}^\infty$ is $\xi-\gamma+1$ weakly null and not $\xi-\gamma$ weakly null.  \item If $p=\infty$,  then  every subsequence of $(x_n)_{n=1}^\infty$ has a further WUC subsequence if and only if $\Gamma\subset \{\xi\}$. \item If $0<\xi$, a weakly null sequence $(x_n)_{n=1}^\infty $ is $\xi$-weakly null if and only if for every $\beta<\lambda(\xi)$, $\lim_n \|x_n\|_\beta=0$.  \end{enumerate}

\label{shinsuke}

\end{theorem}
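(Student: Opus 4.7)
All five parts are consequences of Proposition \ref{phenomenal} combined with the basis structure of Theorem \ref{sbfacts}, and I would prove them in the order listed, each building on the previous. The recurring engine is that Proposition \ref{phenomenal}(ii)--(iii) converts the asymptotic behavior of the scalar scale $(\|x_n\|_\zeta)_n$ into a concrete equivalence of a subsequence of $(x_n)_{n=1}^\infty$ with a basis subsequence of an appropriate $X_{\alpha,p}$.

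Part (i) is immediate: for $p=\infty$ the norm $\|\cdot\|_\xi$ is the ambient norm on $X_\xi$, and the bounded formal inclusions $X_\xi\to X_\zeta$ for $\zeta\leqslant\xi$ (noted in the remark preceding Theorem \ref{sbfacts}) imply norm-nullity is equivalent to $\lim_n\|x_n\|_\zeta=0$ for every $\zeta\leqslant\xi$. For (ii), apply Proposition \ref{phenomenal}(ii) with $\gamma=\xi$ and $\delta=0$ to dominate each subsequence by a subsequence of the $X_{0,p}=\ell_p$ basis, and combine with the lower $\ell_p$-estimate of Theorem \ref{sbfacts}(iv) on successive vectors: a seminormalized block subsequence (extracted if $(x_n)_{n=1}^\infty$ is not norm null) is then forced to be equivalent to the $\ell_p$ basis. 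For (iii), apply both Proposition \ref{phenomenal}(ii) and (iii) with $(\gamma,\delta)=(\min\Gamma,\xi-\min\Gamma)$: the hypothesis of (iii) holds since $\gamma\in\Gamma$ (giving lower domination by a subsequence of the $X_{\xi-\gamma,p}$ basis), and the hypothesis of (ii) holds by the minimality of $\gamma$ (giving upper domination). The subsymmetry of the $X_{\xi-\gamma,p}$ basis, a consequence of the spreading property of $\mathcal{S}_{\xi-\gamma}$, identifies any two basis subsequences up to equivalence, so these bounds align and yield the claimed equivalence; the weak-nullity assertions then transport under equivalence from Theorem \ref{sbfacts}(v).

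Part (iv) applies (iii) and the observation that the canonical basis of $X_\alpha$ is WUC precisely when $\alpha=0$: for $\alpha\geqslant 1$ the $\ell_1^\alpha$-spreading model property forces $\|\sum_{i\in F}e_i\|_\alpha=|F|$ to be unbounded as $F$ ranges over $\mathcal{S}_\alpha$. If $\Gamma\subset\{\xi\}$, then either (i) renders $(x_n)_{n=1}^\infty$ norm null (hence trivially WUC in every subsequence) or $\min\Gamma=\xi$ and (iii) provides a subsequence equivalent to a subsequence of the $c_0$ basis, so further subsequences remain WUC. Conversely, if $\min\Gamma<\xi$, first extract a subsequence on which $\|x_{n_k}\|_{\min\Gamma}$ is bounded below; (iii) then gives a further subsequence equivalent to a subsequence of the $X_{\xi-\min\Gamma,\infty}$ basis with $\xi-\min\Gamma\geqslant 1$, which fails WUC, and equivalence propagates this failure to every further subsequence.

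Part (v) is the most delicate and hinges on the ordinal arithmetic identity $\xi-\gamma=\xi$ if and only if $\gamma<\lambda(\xi)$, which follows from the Cantor normal form of $\xi$ together with the absorption rule $\alpha+\omega^\beta=\omega^\beta$ for $\alpha<\omega^\beta$. In the forward direction, if $(x_n)_{n=1}^\infty$ is $\xi$-weakly null yet some $\beta<\lambda(\xi)$ satisfies $\lim\sup_n\|x_n\|_\beta>0$, then $\gamma=\min\Gamma\leqslant\beta<\lambda(\xi)$ yields $\xi-\gamma=\xi$, and (iii) produces a subsequence equivalent to a subsequence of the $X_{\xi,p}$ basis, which is an $\ell_1^\xi$-spreading model by Theorem \ref{sbfacts}(v), contradicting $\xi$-weak nullity. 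For the converse, assume $\lim_n\|x_n\|_\beta=0$ for all $\beta<\lambda(\xi)$ and suppose for contradiction some subsequence $(x_{n_k})_{k=1}^\infty$ is an $\ell_1^\xi$-spreading model. Its associated $\Gamma'\subset\Gamma$ is non-empty (else (i) or (ii) would make it norm null or $\ell_p$-equivalent, neither being an $\ell_1^\xi$-spreading model for $\xi\geqslant 1$), and $\gamma':=\min\Gamma'\geqslant\min\Gamma\geqslant\lambda(\xi)$, so $\xi-\gamma'<\xi$ and $(\xi-\gamma')+1\leqslant\xi$. By (iii) a further subsequence is equivalent to a subsequence of the $X_{\xi-\gamma',p}$ basis, which is $(\xi-\gamma')+1$-weakly null by Theorem \ref{sbfacts}(v), hence admits no $\ell_1^{(\xi-\gamma')+1}$-spreading model subsequence. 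The almost monotone property from Proposition \ref{deep facts}(vi) shows that on tails $\mathcal{S}_{(\xi-\gamma')+1}$ embeds in $\mathcal{S}_\xi$, so being an $\ell_1^\xi$-spreading model implies being an $\ell_1^{(\xi-\gamma')+1}$-spreading model on a tail, giving the contradiction. The main obstacle is precisely this last alignment in (v): coordinating the subsequence extraction provided by (iii) with the Schreier hierarchy via the ordinal arithmetic identity.
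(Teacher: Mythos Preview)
Your overall architecture is the same as the paper's: parts (i)--(v) are deduced from Proposition \ref{phenomenal}(ii)--(iii) together with Theorem \ref{sbfacts}, and your treatment of (i), (ii), (iv), and (v) is essentially correct (in (v) the paper argues directly that $\delta+1\leqslant\xi$ via Cantor normal form rather than routing through almost monotonicity, but your contrapositive version works).

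There is, however, a genuine gap in (iii). You assert that ``the subsymmetry of the $X_{\xi-\gamma,p}$ basis, a consequence of the spreading property of $\mathcal{S}_{\xi-\gamma}$, identifies any two basis subsequences up to equivalence.'' The spreading property of $\mathcal{S}_{\xi-\gamma}$ gives only a one-sided inequality: if $s_i\leqslant r_i$ for all $i$, then $(e_{r_i})$ dominates $(e_{s_i})$. It does \emph{not} give the reverse bound, and the canonical basis of $X_{\delta,p}$ is not subsymmetric for $\delta\geqslant 1$ (already in $X_1$ one has $\|e_1+e_2\|_1=1$ while $\|e_2+e_3\|_1=2$; more to the point, shifting can change norms by factors that are not a priori controlled). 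So from Proposition \ref{phenomenal}(ii)--(iii) you obtain a subsequence $(z_i)$ dominated by some $(e_{n_i})$ and dominating some $(e_{m_i})$, but you cannot simply declare these two basis subsequences equivalent.

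The paper closes this gap with an interlacing trick: pass to a further subsequence $(z_{k_i})$ chosen so that $m_{k_{i+1}}>n_{k_i}$, which forces $s_i:=m_{k_i}$ and $r_i:=n_{k_i}$ to satisfy $s_1\leqslant r_1<s_2\leqslant r_2<\cdots$. For such interlaced index sequences, one-sided spreading plus bimonotonicity does yield that $(e_{s_i})$ and $(e_{r_i})$ are $2$-equivalent; the paper cites \cite{CR} for this fact. With that in hand, $(z_{k_i})$ is squeezed between two equivalent basis subsequences and the conclusion of (iii) follows.

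Note, incidentally, that the ``in particular'' clause of (iii)---that $(x_n)$ is $(\xi-\gamma)+1$-weakly null and not $(\xi-\gamma)$-weakly null---does \emph{not} require the full equivalence: it follows directly from the separate upper and lower dominations given by Proposition \ref{phenomenal}(ii) and (iii). Since your arguments for (iv) and (v) rely only on this weak-nullity consequence, they survive the gap.
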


\begin{proof} First note that by the almost monotone property of the Schreier families, if $\zeta\in \Gamma$, then $[\zeta, \xi]\subset \Gamma$.

$(i)$ It is evident that $\lim_n \|x_n\|_\xi=0$ if and only if $\xi\notin \Gamma$.

$(ii)$ If $\xi\notin \Gamma$, then let $\gamma=\xi$ and $\delta=0$. By Proposition \ref{phenomenal}$(ii)$, any subsequence of $(x_n)_{n=1}^\infty$ has a further subsequence which is dominated by a subsequence of the $X_{\delta, p}=\ell_p$ basis.  Then since every seminormalized block sequence in $X_{\xi,p}$ which dominates the $\ell_p$ basis, either $\lim_n \|x_n\|_{\xi,p}=0$, or $(x_n)_{n=1}^\infty$ has a seminormalized subsequence which dominates the $\ell_p$ basis, and this subsequence has a further subsequence equivalent to the $\ell_p$ basis.

$(iii)$ Let $\delta=\xi-\gamma$, so that $\gamma+\delta=\xi$.  Proposition \ref{phenomenal}$(ii)$ yields that every subsequence of $(x_n)_{n=1}^\infty$ has a further subsequence which is dominated by a subsequence of the $X_{\delta, p}$ basis. Since no subsequence of the $X_{\delta,p}$ basis is an $\ell_1^{\delta+1}$-spreading model, this yields that $(x_n)_{n=1}^\infty$ is $\delta+1$-weakly null.   Since $\gamma\in \Gamma$, Proposition \ref{phenomenal}$(iii)$ yields the existence of a subsequence $(y_n)_{n=1}^\infty$ of $(x_n)_{n=1}^\infty$ which dominates the $X_{\delta, p}$ basis, so $(x_n)_{n=1}^\infty$ is not $\delta$-weakly null.     Now note that $\lim_n \|y_n\|_\beta=0$ for all $\beta<\gamma$, so by Proposition \ref{phenomenal}$(ii)$, there exists a subsequence $(z_n)_{n=1}^\infty$ of $(y_n)_{n=1}^\infty$ which is dominated by some subsequence $(x_{n_i})_{i=1}^\infty$ of the canonical $X_{\delta,p}$ basis. This sequence $(z_n)_{n=1}^\infty$ also dominates some subsequence $(x_{m_i})_{i=1}^\infty$ of the canonical $X_{\delta,p}$ basis (where $m_i$ has the property that $z_i=y_{m_i}$).    Now let us choose $1=k_1<k_2<\ldots$ such that $m_{k_{i+1}}>n_{k_i}$ for all $i\in\nn$ and let $u_i=z_{k_i}$. Then $(u_i)_{i=1}^\infty$ is dominated by some subsequence $(x_{r_i})_{i=1}^\infty$ of the $X_{\delta, p}$ basis and dominates some subsequence $(x_{s_i})_{i=1}^\infty$ of the $X_{\delta,p}$ basis, where $s_1\leqslant r_1<s_2\leqslant r_2<\ldots$. This is seen by taking $s_i=m_{k_i}$ and $r_i=n_{k_i}$. But it is observed in \cite{CR} that two such subsequences of the $X_{\delta,p}$ basis must be $2$-equivalent, whence $(u_i)_{i=1}^\infty$ is equivalent to $(e_{r_i})_{i=1}^\infty$ (and to $(e_{s_i})_{i=1}^\infty$).

$(iv)$ If $\Gamma\subset \{\xi\}$, then by Proposition \ref{phenomenal}$(ii)$ applied with $\gamma=\xi$ and $\delta=0$, every subsequence of $(x_n)_{n=1}^\infty$ has a further subsequence which is dominated by the $X_\delta=c_0$ basis. Conversely, if $\xi>\gamma\in \Gamma$, then with $\delta=\xi-\gamma>0$, $(x_n)_{n=1}^\infty$ has a subsequence which is an $\ell_1^\delta$-spreading model. No subsequence of this sequence can be WUC.

$(v)$ Note that both conditions are satisfied if $(x_n)_{n=1}^\infty$ is norm null, so assume $(x_n)_{n=1}^\infty$ is not norm null. If $\Gamma=\varnothing$, then $p<\infty$, and every subsequence of $(x_n)_{n=1}^\infty$ has a further subsequence which is equivalent to the $\ell_p$ basis, which means $(x_n)_{n=1}^\infty$ is $1$-weakly null, and therefore $\xi$-weakly null.  Thus both conditions are satisfied in this case as well.

It remains to consider the case  $\Gamma\neq \varnothing$. Let $\gamma=\min \Gamma$.    Let us write $$\xi=\omega^{\ee_1}+\ldots +\omega^{\ee_k},$$ where $\ee_1\geqslant \ldots \geqslant \ee_k$.    Note that $\lambda(\xi)=\omega^{\ee_1}$.  First assume that $\lim_n \|x_n\|_\beta=0$ for all $\beta<\lambda(\xi)$, which means $\gamma\geqslant \lambda(\xi)$.   Then if $\gamma+\delta=\xi$, $\delta \leqslant \omega^{\ee_2}+\ldots +\omega^{\ee_k}$.   By $(iii)$, $(x_n)_{n=1}^\infty$ is $\delta+1$-weakly null, and $$\delta+1\leqslant \omega^{\ee_2}+\ldots +\omega^{\ee_k}+1 \leqslant \omega^{\ee_2}+\ldots +\omega^{\ee_k}+\omega^{\ee_1}\leqslant \omega^{\ee_1}+\ldots +\omega^{\ee_k}=\xi$$ yields that $(x_n)_{n=1}^\infty$ is $\xi$-weakly null.   Conversely, assume there exists $\beta<\lambda(\xi)$ such that $\lim\sup_n \|x_n\|_\beta>0$.   Then $\gamma<\lambda(\xi)$. If $\gamma+\delta=\xi$, then $\delta=\xi$.   By $(iii)$, $(x_n)_{n=1}^\infty$ is not $\xi$-weakly null.

\end{proof}

\begin{corollary} For any $0<\xi<\omega_1$ and any seminormalized, weakly null sequence $(x_n)_{n=1}^\infty$ in $X_{\omega^\xi}$, $(x_n)_{n=1}^\infty$ has a subsequence $(y_n)_{n=1}^\infty$  which is either equivalent to the canonical $c_0$ basis or to a subsequence of the $X_{\omega^\xi}$ basis.

\end{corollary}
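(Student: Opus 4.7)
The plan is to reduce directly to Theorem \ref{shinsuke} (which I may apply with $p=\infty$, since $X_{\omega^\xi}=X_{\omega^\xi,\infty}$) and then use a single ordinal-arithmetic identity to split into the two advertised cases. Let $\Gamma=\{\zeta\leqslant \omega^\xi:\limsup_n\|x_n\|_\zeta>0\}$ as in that theorem.

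First I would dispense with the degenerate case: since $(x_n)_{n=1}^\infty$ is seminormalized, it is not norm null, so Theorem \ref{shinsuke}(i) forces $\Gamma\neq\varnothing$. Set $\gamma=\min\Gamma$. Theorem \ref{shinsuke}(iii) then furnishes a subsequence of $(x_n)_{n=1}^\infty$ which is equivalent to a subsequence of the canonical basis of $X_{\omega^\xi-\gamma}$.

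Now the whole point is that the ordinal $\omega^\xi$ is additively indecomposable: for every ordinal $\gamma<\omega^\xi$ one has $\gamma+\omega^\xi=\omega^\xi$, and consequently $\omega^\xi-\gamma=\omega^\xi$. Therefore, when $\gamma<\omega^\xi$, the subsequence provided by Theorem \ref{shinsuke}(iii) is equivalent to a subsequence of the $X_{\omega^\xi}$ basis itself, which is one of the two desired conclusions. In the remaining case $\gamma=\omega^\xi$, we have $\omega^\xi-\gamma=0$, and since $X_0=c_0$ isometrically, the subsequence is equivalent to a subsequence of the canonical $c_0$ basis, hence to the whole $c_0$ basis, giving the other advertised conclusion.

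There is essentially no main obstacle beyond invoking the right piece of Theorem \ref{shinsuke} and observing the indecomposability $\gamma+\omega^\xi=\omega^\xi$; the only mild care needed is to separate the $\gamma=\omega^\xi$ case (where the target space collapses to $c_0$) from the $\gamma<\omega^\xi$ case (where the target space is $X_{\omega^\xi}$ again).
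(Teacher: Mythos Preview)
Your proof is correct and follows essentially the same approach as the paper: both reduce to Theorem \ref{shinsuke} and exploit the additive indecomposability of $\omega^\xi$ (so that $\omega^\xi-\gamma=\omega^\xi$ whenever $\gamma<\omega^\xi$). The only cosmetic difference is that the paper invokes part (iv) to handle the $c_0$ case (via the WUC characterization when $\Gamma\subset\{\omega^\xi\}$) and part (iii) for the other case, whereas you use part (i) to rule out $\Gamma=\varnothing$ and then apply part (iii) uniformly, letting the case split $\gamma<\omega^\xi$ versus $\gamma=\omega^\xi$ emerge from the ordinal arithmetic; your route is arguably a bit cleaner.
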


\begin{proof} By Theorem \ref{shinsuke}$(iv)$, every subsequence of $(x_n)_{n=1}^\infty$ has a further WUC (and therefore equivalent to the $c_0$ basis) subsequence if and only if $\lim_n \|x_n\|_\beta=0$ for every $\beta<\xi=\lambda(\xi)$.  If this condition fails, then there exists a minimum $\gamma<\omega^\xi$ such that $\lim\sup_n \|x_n\|_\gamma>0$. Then if $\gamma+\delta=\omega^\xi$, $\delta=\omega^\xi$, and $(x_n)_{n=1}^\infty$ has a subsequence equivalent to a subsequence of the $X_{\omega^\xi}$ basis.

\end{proof}

\begin{corollary} Fix $0<\xi<\omega_1$, $1<p\leqslant \infty$,  and let $(x_n)_{n=1}^\infty \subset X_{\xi,p}$ be weakly null.  Then $(x_n)_{n=1}^\infty$ is $\xi$-weakly null in $X_{\xi,p}$ if and only if for every $\gamma<\lambda(\xi)$, $\lim_n \|x_n\|_\gamma=0$ if and only if every subsequence of $(x_n)_{n=1}^\infty$ has a further subsequence which is WUC in $X_{\lambda(\xi)}$. 

\label{AC}

\end{corollary}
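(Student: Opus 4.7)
The plan is to observe that this corollary is essentially a repackaging of two parts of Theorem \ref{shinsuke}, so both equivalences should follow by direct citation once the target sequence is positioned in the right space.

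For the first equivalence, that $(x_n)_{n=1}^\infty$ is $\xi$-weakly null in $X_{\xi,p}$ iff $\lim_n \|x_n\|_\gamma = 0$ for every $\gamma < \lambda(\xi)$, I would simply quote Theorem \ref{shinsuke}(v) applied to $(x_n)_{n=1}^\infty \subset X_{\xi,p}$. Since $0 < \xi$ by hypothesis, this is exactly the statement provided.

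For the second equivalence, the key move is to transfer $(x_n)_{n=1}^\infty$ from $X_{\xi,p}$ into $X_{\lambda(\xi)}$. Since $\lambda(\xi) \leqslant \xi$, the remark following the definition of the Baernstein spaces guarantees that the formal inclusion $X_{\xi,p} \hookrightarrow X_{\lambda(\xi)}$ is bounded, and therefore the weakly null sequence $(x_n)_{n=1}^\infty \subset X_{\xi,p}$ is also weakly null when regarded as a sequence in $X_{\lambda(\xi)}$. Now I would apply Theorem \ref{shinsuke}(iv) to $(x_n)_{n=1}^\infty$ viewed in $X_{\lambda(\xi)}$ (this is the $p=\infty$ case with $\xi$ there replaced by $\lambda(\xi)$). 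That result yields: every subsequence has a further WUC subsequence in $X_{\lambda(\xi)}$ if and only if $\Gamma_0 \subset \{\lambda(\xi)\}$, where $\Gamma_0 = \{\zeta \leqslant \lambda(\xi): \limsup_n \|x_n\|_\zeta > 0\}$. The condition $\Gamma_0 \subset \{\lambda(\xi)\}$ unwinds exactly to $\lim_n \|x_n\|_\gamma = 0$ for every $\gamma < \lambda(\xi)$ (non-negative bounded sequences with vanishing $\limsup$ have vanishing limit), which is precisely the middle condition.

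There is no substantive obstacle here: the corollary is a direct synthesis of Theorem \ref{shinsuke}(iv) and (v), and the only non-cosmetic step is the observation that the bounded inclusion $X_{\xi,p} \to X_{\lambda(\xi)}$ lets us read off a hypothesis about a sequence in $X_{\xi,p}$ as a hypothesis about a sequence in $X_{\lambda(\xi)}$. Consequently the entire proof should fit in a few lines, with the bulk of the conceptual work already absorbed into the preceding theorem.
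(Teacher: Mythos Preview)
Your proposal is correct and matches the paper's approach exactly: the paper's proof is the single line ``This follows from combining Theorem \ref{shinsuke}$(iv)$ and $(v)$,'' and your write-up simply unpacks this, including the observation that one must pass through the bounded formal inclusion $X_{\xi,p}\to X_{\lambda(\xi)}$ in order to invoke part $(iv)$ in the space $X_{\lambda(\xi)}$. There is nothing to add or correct.
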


\begin{proof} This follows from combining Theorem \ref{shinsuke}$(iv)$ and $(v)$.

\end{proof}

\begin{lemma} Fix $0<\xi<\omega_1$ and $1<p\leqslant \infty$. \begin{enumerate}[(i)]\item If $(x_n^{**})_{n=1}^\infty\subset X^{**}_{\xi,p}$ is $\xi$-weakly null, then for every $\gamma<\lambda(\xi)$, $\lim_n\|x^{**}_n\|_\gamma=0$. \item If $(x^{**}_n)_{n=1}^\infty \subset X^{**}_{\xi,p}$ is $\xi$-weakly null and $(x^*_n)_{n=1}^\infty \subset X^*_{\lambda(\xi)}$ is weakly null, then $\lim_n x^{**}_n(x^*_n)=0$. \end{enumerate}

\label{double dual}
\end{lemma}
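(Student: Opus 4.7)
Both parts will be proved by contradiction: a failure of the conclusion produces an $\ell_1^\xi$-spreading model subsequence of $(x^{**}_n)$, violating $\xi$-weak nullity. For part (i), the reflexive case $p<\infty$ is immediate, since Theorem~\ref{sbfacts}(iii) gives $X^{**}_{\xi,p}=X_{\xi,p}$ and Corollary~\ref{AC} applies directly.

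For part (i) with $p=\infty$, the shrinking basis of $X_\xi$ (Theorem~\ref{sbfacts}(ii)) lets me identify $x^{**}_n$ with its coordinate sequence $(a^{(n)}_k)_k$, under which $\|x^{**}_n\|^{**}_\gamma=\sup_{E\in\mathcal{S}_\gamma}\sum_{k\in E}|a^{(n)}_k|$ and weak nullity forces $a^{(n)}_k\to 0$ in $n$ for each $k$. Assuming $\|x^{**}_n\|^{**}_\gamma\geqslant \varepsilon$ along a subsequence for some $\gamma<\lambda(\xi)$, I would pick witnessing $E_n\in \mathcal{S}_\gamma$ and use the pointwise convergence of coordinates to perform a sliding-hump extraction making the $E_n$ successive with $\min E_n\to\infty$ and $\sum_{k\in E_n}|a^{(n)}_k|\geqslant \varepsilon/2$. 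Since $\gamma<\lambda(\xi)$ forces $\gamma+\xi=\xi$, Lemma~\ref{tech}(i) delivers a further subsequence with $\bigcup_{i\in G}E_i\in \mathcal{S}_\xi$ for every $G\in \mathcal{S}_\xi$. The functionals $x^*_n:=\sum_{k\in E_n}\operatorname{sign}(a^{(n)}_k)e^*_k$ then satisfy $x^{**}_n(x^*_n)\geqslant \varepsilon/2$, and by Remark~\ref{daniel} every signed sum $\sum_{i\in G}\varepsilon_ix^*_i$ over $G\in \mathcal{S}_\xi$ has $\|\cdot\|^*_\xi\leqslant 1$. A final diagonal extraction controlling the off-diagonal entries $x^{**}_i(x^*_j)$ for $i\neq j$ then allows me, for arbitrary $F\in \mathcal{S}_\xi$ and $\sum_{i\in F}|b_i|=1$, to pair $\sum b_i x^{**}_i$ with $\sum\operatorname{sign}(b_i)x^*_i$ and extract an $\ell_1^\xi$-lower bound, contradicting $\xi$-weak nullity.

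For part (ii), I would apply Proposition~\ref{phenomenal}(i) to $(x^*_n)\subset X^*_{\lambda(\xi)}$ with $\delta=\xi-\lambda(\xi)$ to pass to a subsequence satisfying $\|\sum_{i\in G}\varepsilon_ix^*_{n_i}\|^*_\xi\leqslant C$ for every $G\in \mathcal{S}_\delta$ and signs $\varepsilon_i$; the bounded inclusion $X^*_\xi\hookrightarrow X^*_{\xi,p}$ preserves the bound in the ambient dual. Assuming $|x^{**}_n(x^*_n)|\geqslant \varepsilon$ on a subsequence, Theorem~\ref{7surv}(ii) produces convex combinations $u=\sum_{i\in F}\mathbb{S}^\xi_{F,1}(i)x^{**}_i$ with $\|u\|^{**}_{\xi,p}<\eta$ for any preassigned $\eta$, and the factorization $\mathfrak{S}_\xi=\mathfrak{S}_\delta*\mathfrak{S}_{\lambda(\xi)}$ decomposes $F$ into $\mathcal{S}_{\lambda(\xi)}$-blocks $F=F_1\cup\cdots\cup F_d$ with $(\min F_j)_j\in \mathcal{S}_\delta$. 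Pairing $u$ with a signed combination $v=\sum_{j=1}^d\varepsilon_jx^*_{r_j}$, where $r_j\in F_j$ is chosen to maximize the $\mathbb{S}^{\lambda(\xi)}_{F_j,1}$-weight and signs are chosen diagonally, and combining this with a double diagonal extraction controlling $x^{**}_i(x^*_j)$ off-diagonally (feasible because both sequences are weakly null), forces $|u(v)|\leqslant \|u\|^{**}_{\xi,p}\cdot C'\leqslant \eta C'$ from above while the diagonal contribution keeps $|u(v)|\geqslant c\varepsilon$ from below, a contradiction once $\eta$ is taken small enough.

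The main obstacle throughout is the biorthogonalization step in the bidual: unlike in $X_{\xi,p}$, the constructed (in (i)) or supplied (in (ii)) functionals $x^*_n$ are only weak$^*$-null, so controlling $x^{**}_i(x^*_j)$ for $i\ll j$—which a priori depends on the behavior of $x^{**}_i$ on ``late'' coordinates where a bidual element need not vanish—requires exploiting Lemma~\ref{tech}(i) together with the weak nullity of $(x^{**}_n)$ itself in $X^{**}_{\xi,p}$ to kill the off-diagonal terms indirectly via the $\mathcal{S}_\xi$-structure, rather than by a direct tail estimate on individual $x^{**}_i$.
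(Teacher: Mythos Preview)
Your argument for (i) is essentially the paper's: you are unpacking by hand exactly what Proposition~\ref{phenomenal}(i) says (choose witnessing $x^*_n\in B_{X^*_\gamma}\cap c_{00}$, make them a block sequence by sliding hump, and use Lemma~\ref{tech}(i)/Remark~\ref{daniel} to conclude that a subsequence is a $c_0^\xi$-spreading model in $X^*_\xi$). One correction: your ``main obstacle'' is not an obstacle. Once you have shown $\sup\{\|\sum_{n\in G}\varepsilon_n x^*_n\|_{X^*_\xi}:G\in\mathcal{S}_\xi\}<\infty$, the sequence $(x^*_n)$ is \emph{weakly} null in $X^*_\xi$ (any seminormalized $c_0^1$-spreading model is weakly null), so for each fixed $i$ one has $x^{**}_i(x^*_j)\to 0$ as $j\to\infty$, and the standard two-sided diagonal extraction goes through without further ingenuity. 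The same remark applies in (ii): the hypothesis already gives $(x^*_n)$ weakly null in $X^*_{\lambda(\xi)}$, hence weakly null in $X^*_{\xi,p}$ via the bounded map $I^*$, so the off-diagonal control is routine.

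There is, however, a genuine gap in your plan for (ii). Your lower bound $|u(v)|\geqslant c\varepsilon$ is not justified. With $u=\sum_{i\in F}\mathbb{P}(i)x^{**}_i$ coming from $\mathfrak{S}_\delta*\mathfrak{S}_{\lambda(\xi)}$, the weight on $r_j\in F_j$ is $w_j\cdot \mathbb{S}^{\lambda(\xi)}_{F_j,1}(r_j)$, and even after maximizing you only get $\mathbb{S}^{\lambda(\xi)}_{F_j,1}(r_j)\geqslant 1/|F_j|$, which tends to $0$ as $\min F_j\to\infty$ whenever $\lambda(\xi)\geqslant 1$. Hence the diagonal contribution is at most $\varepsilon\sum_j w_j/|F_j|$, which need not be bounded away from $0$, and no contradiction with $|u(v)|\leqslant \eta C'$ follows. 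The paper avoids this entirely by a reduction to the space rather than the bidual: take intervals $I_n$ with $I_n x^*_n=x^*_n$, set $x_n=I_n x^{**}_n\in X_\xi$, observe $|x^*_n(x_n)|=|x^{**}_n(x^*_n)|>\varepsilon$, and use part (i) to get $\lim_n\|x_n\|_\beta=0$ for all $\beta<\lambda(\xi)$. Corollary~\ref{AC} then gives a WUC subsequence of $(x_n)$ in $X_{\lambda(\xi)}$, which cannot be paired diagonally with a weakly null $(x^*_n)\subset X^*_{\lambda(\xi)}$ away from zero. This projection trick is the missing idea in your (ii).
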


\begin{proof}$(i)$ Suppose not. Then for some $\gamma<\lambda(\xi)$ and $\ee>0$, we may pass to a subsequence and assume $\inf_n \|x^{**}_n\|_\gamma>\ee$.    We may choose a sequence $(x^*_n)_{n=1}^\infty\subset B_{X^*_\gamma}\cap c_{00}$ such that $\inf_n |x^{**}_n(x_n)|>\ee$.    Since $\lim_n x^{**}_n(e^*_i)=0$ for all $i\in\nn$, we may, by passing to a subsequence and replacing the functionals $x^*_n$ by tail projections thereof, assume that $(x^*_n)_{n=1}^\infty$ is a block sequence in $B_{X^*_\gamma}\cap c_{00}$.  Then by standard properties of ordinals, if $\delta$ is such that $\gamma+\delta=\xi$, $\delta=\xi$. By Proposition \ref{phenomenal}$(i)$, we may pass to a subsequence once more and asssume $(x^*_n)_{n=1}^\infty$ is a $c_0^\xi$-spreading model in $X^*_\xi$, and therefore weakly null in $X_\xi^*$.  By passing to a subsequence one final time, we may also assume $\sum_{n=1}^\infty \sum_{n\neq m=1}^\infty |x^{**}_n(x_m)|<\ee/2$.   Then by a standard duality argument, it follows that $(x^{**}_n)_{n=1}^\infty$ is not $\xi$-weakly null.   This contradiction finishes $(i)$. 

$(ii)$  Also by contradiction.  Assume $(x^{**}_n)_{n=1}^\infty\subset X_{\xi,p}^{**}$ is $\xi$-weakly null, $(x^*_n)_{n=1}^\infty \subset X^*_{\lambda(\xi)}$ is weakly null, and $\inf_n |x^{**}_n(x_n^*)|>\ee>0$.  By perturbing, we may assume $(x^*_n)_{n=1}^\infty$ is a block sequence and there exist $I_1<I_2<\ldots$ such that $I_nx^*_n=x^*_n$ for all $n\in\nn$.  Let $(\gamma_k)_{k=1}^\infty\subset [0, \lambda(\xi))$ be a sequence (possibly with repitition) such that $[0, \lambda(\xi))=\{\gamma_k: k\in\nn\}$.    By $(i)$, $\lim_n \|x^{**}_n\|_{\gamma_k}=0$ for all $k\in\nn$. By passing to a subsequence and relabeling, we may assume that for each $1\leqslant k\leqslant n$, $\|x^{**}_n\|_{\gamma_k}<1/n$.    Let $x_n=I_nx_n^{**}\in X_\xi$ and note that for each $\gamma<\lambda(\xi)$, $\lim_n \|x_n\|_\gamma=0$. Indeed, if $\gamma=\gamma_k$, then for all $n\geqslant k$, $$\|x_n\|_\gamma \leqslant \|x^{**}_n\|_{\gamma_k} \leqslant 1/n.$$   Since $I_nx^*_n=x_n^*$, $|x^*_n(x_n)|=|x^{**}_n(x^*_n)|>\ee$.    But by Corollary \ref{AC}, some subsequence of $(x_n)_{n=1}^\infty$, which we may assume is the entire sequence after relabeling, is WUC in $X_{\lambda(\xi)}$. But now we reach a contradiction by combining the facts that $(x_n)_{n=1}^\infty$ is WUC in $X_{\lambda(\xi)}$, $(x^*_n)_{n=1}^\infty\subset X_{\lambda(\xi)}^*$ is weakly null, and $\inf_n |x^*_n(x_n)|>0$.

\end{proof}

\section{Ideals of interest}

\subsection{Basic definitions}

We recall that $\textbf{Ban}$ is the class of all Banach spaces and $\mathfrak{L}$ denotes the class of all operators between Banach spaces.   For each pair $X,Y\in \textbf{Ban}$, $\mathfrak{L}(X,Y)$ is the class of all operators from $X$ into $Y$. Given a subclass $\mathfrak{I}$ of $\mathfrak{L}$, we let $\mathfrak{I}(X,Y)=\mathfrak{I}\cap \mathfrak{L}(X,Y)$.

We recall that a class $\mathfrak{I}$ is said to have the \emph{ideal property} provided that for any $W,X,Y,Z\in \textbf{Ban}$, $C\in \mathfrak{L}(W,X)$, $B\in \mathfrak{I}(X,Y)$, and $A\in \mathfrak{L}(Y,Z)$, $ABC\in \mathfrak{I}(W,Z)$. 

We say that $\mathfrak{I}$ is an \emph{operator ideal} (or just \emph{ideal}) provided that \begin{enumerate}[(i)]\item $\mathfrak{I}$ has the ideal property, \item $I_\mathbb{K}\in \mathfrak{I}$, \item for each $X,Y\in \textbf{Ban}$, $\mathfrak{I}(X,Y)$ is a vector subspace of $\mathfrak{L}(X,Y)$.  \end{enumerate}

Given an operator ideal $\mathfrak{I}$, we define the \begin{enumerate}[(i)]\item \emph{closure} $\overline{\mathfrak{I}}$ of $\mathfrak{I}$ to be the class of operators such that for every $X,Y\in \textbf{Ban}$, $\overline{I}(X,Y)=\overline{\mathfrak{I}(X,Y)}$, \item \emph{injective hull} $\mathfrak{I}^\text{inj}$ of $\mathfrak{I}$ to be the class of all operators $A:X\to Y$ such that if there exists $Z\in \textbf{Ban}$ and an isometric (equivalently, isomorphic) embedding $j:Y\to Z$ such that $jA\in \mathfrak{I}(X,Z)$,  \item \emph{surjective hull} $\mathfrak{I}^{\text{sur}}$ of $\mathfrak{I}$ to be the class of all operators $A:X\to Y$ such that there exist $W\in \textbf{Ban}$ and a quotient map (equivalently, a surjection) $q:W\to X$ such that $Aq\in \mathfrak{I}(W,Y)$, \item \emph{dual} $\mathfrak{I}^\text{dual}$ to be the class of all operators $A:X\to Y$ such that $A^*\in \mathfrak{I}(Y^*, X^*)$.   \end{enumerate}

Each of $\mathfrak{I}$, $\mathfrak{I}^\text{inj}$, $\mathfrak{I}^\text{sur}$ is also an ideal.

Given two ideals $\mathfrak{I}, \mathfrak{J}$, we let \begin{enumerate}[(i)]\item $\mathfrak{I}\circ \mathfrak{J}^{-1}$ denote the class of all operators $A:X\to Y$ such that for all $W\in \textbf{Ban}$ and $R\in \mathfrak{J}(W,X)$, $AR\in \mathfrak{I}(W,Y)$,  \item $\mathfrak{I}^{-1}\circ \mathfrak{J}$ denote the class of all operators $A:X\to Y$ such that for all $Z\in \textbf{Ban}$ and all $L\in \mathfrak{I}(Y,Z)$, $LA\in \mathfrak{J}(X,Z)$. \end{enumerate} We remark that for any three ideals $\mathfrak{I}_1, \mathfrak{I}_2, \mathfrak{J}$, $$(\mathfrak{I}_1^{-1}\circ \mathfrak{J})\circ \mathfrak{I}_2^{-1}=\mathfrak{I}_1^{-1}\circ (\mathfrak{J}\circ \mathfrak{I}_2^{-1}),$$ so that the symbol $\mathfrak{I}_1^{-1}\circ \mathfrak{J}\circ \mathfrak{I}_2^{-1}$ is unambiguous.

We say an operator ideal is \begin{enumerate}[(i)]\item \emph{closed} if $\overline{\mathfrak{I}}=\mathfrak{I}$, \item \emph{injective} if $\mathfrak{I}=\mathfrak{I}^\text{inj}$, \item \emph{surjective} if $\mathfrak{I}=\mathfrak{I}^\text{sur}$, \item \emph{symmetric} if $\mathfrak{I}=\mathfrak{I}^\text{dual}$. \end{enumerate}

With each ideal, we will associate the class of Banach spaces the identity of which lies in the given ideal. Our ideals will be denoted by fraktur lettering ($\mathfrak{A}, \mathfrak{B}, \mathfrak{I}, \ldots$) and the associated space ideal will be denoted by the same sans serif letter ($\textsf{A}, \textsf{B}, \textsf{I}, \ldots$).

We next list some ideals of interest. We let $\mathfrak{K}$, $\mathfrak{W}$, and $\mathfrak{V}$ denote the class of compact, weakly compact, and completely continuous operators, respectively. 

For the remaining paragraphs in this subsection, $\xi$ will be a fixed ordinal in $[0, \omega_1]$. We let $\mathfrak{W}_\xi$ denote the class of operators $A:X\to Y$ such that any bounded sequence in $X$ has a subsequence whose image under $A$ is $\xi$-convergent in $Y$ (let us recall that a sequence $(y_n)_{n=1}^\infty\subset Y$ is said to be $\xi$-\emph{convergent to} $y\in Y$ if $(y_n-y)_{n=1}^\infty$ is $\xi$-weakly null). Note that $\mathfrak{W}_0=\mathfrak{K}$ and $\mathfrak{W}_{\omega_1}=\mathfrak{W}$.   Furthermore, $\mathfrak{W}_1$ coincides with the class of Banach-Saks operators. For this reason, we refer to $\mathfrak{W}_\xi$ as the class of $\xi$-\emph{Banach-Saks operators}. This class was introduced in this generalilty in \cite{BF}. 

We let $\mathfrak{wBS}_\xi$ denote the class of operators $A:X\to Y$such that for any weakly null sequence $(x_n)_{n=1}^\infty$, $(Ax_n)_{n=1}^\infty$ is $\xi$-weakly convergent to $0$ in $Y$.  Note that $\mathfrak{wBS}_0=\mathfrak{V}$, $\mathfrak{wBS}_{\omega_1}=\mathfrak{L}$, and $\mathfrak{wBS}_1$ is the class of weak Banach-Saks operators.  For this reason, we refer to $\mathfrak{wBS}_\xi$ as the class of $\xi$-\emph{weak Banach-Saks operators}. These classes were introduced in this generality in \cite{BC2}.

We let $\mathfrak{V}_\xi$ denote the class of operators $A:X\to Y$ such that for any $\xi$-weakly null sequence $(x_n)_{n=1}^\infty$, $(Ax_n)_{n=1}^\infty$ is norm nul.  It is evident that $\mathfrak{V}_{\omega_1}=\mathfrak{V}$ and $\mathfrak{V}_0=\mathfrak{L}$. These classes were introduced in this generality in \cite{CN}.

For $0\leqslant \zeta\leqslant \omega_1$, we let $\mathfrak{G}_{\xi, \zeta}$ denote the class of operators $A:X\to Y$ such that whenever $(x_n)_{n=1}^\infty$ is $\xi$-weakly null, $(Ax_n)_{n=1}^\infty$ is $\zeta$-weakly null.  We isolate this class because it is a simultaneous generalization of the two previous paragraphs. Indeed, $\mathfrak{V}_\xi=\mathfrak{G}_{\xi, 0}$, while $\mathfrak{wBS}_\xi= \mathfrak{G}_{\omega_1, \xi}$. It is evident that $\mathfrak{G}_{\xi, \zeta}=\mathfrak{L}$ whenever $\xi\leqslant \zeta$.  These classes are newly introduced here.

For $0\leqslant \zeta\leqslant \omega_1$, we let $\mathfrak{M}_{\xi, \zeta}$ denote the class of all operators $A:X\to Y$ such that for any $\xi$-weakly null $(x_n)_{n=1}^\infty\subset X$ and any $\zeta$-weakly null $(y^*_n)_{n=1}^\infty\subset Y^*$, $\lim_n y^*_n(Ax_n)=0$.  The class $\mathfrak{M}_{\omega_1, \omega_1}$ (sometimes denoted by $\mathfrak{DP}$) is a previously defined class of significant interest, most notably because the associated space ideal $\textsf{M}_{\omega_1, \omega_1}$ is the class of Banach spaces with the Dunford-Pettis property.  As a class of operators,  $\mathfrak{M}_{\xi, \zeta}$ has not previously been investigated, but the space ideals $\textsf{M}_{1, \omega_1}$ and $\textsf{M}_{\omega_1, \xi}$ have been investigated in \cite{GG} and \cite{AG}, respectively.

\begin{rem}\upshape Let us recall that the image of a $\xi$-weakly null sequence under a continuous, linear operator is also $\xi$-weakly null,  for any $0\leqslant \xi\leqslant \zeta\leqslant \omega_1$, any sequence which is $\xi$-weakly null is also $\zeta$-weakly null, and the $0$-weakly null sequences are the norm null sequences.   From this we deduce the following.   \begin{enumerate}[(i)]\item $\mathfrak{G}_{\xi, \zeta}=\mathfrak{L}$ for any $\xi\leqslant \zeta\leqslant \omega_1$. \item $\mathfrak{M}_{\xi, \zeta}=\mathfrak{L}$ if $\min\{\xi, \zeta\}=0$. \item For $\zeta\leqslant \alpha\leqslant \omega_1$ and $\beta\leqslant \xi\leqslant \omega_1$, $\mathfrak{G}_{\xi, \zeta}\subset \mathfrak{G}_{\beta, \alpha}$. \item If $\alpha\leqslant \zeta\leqslant \omega_1$ and $\beta\leqslant \xi\leqslant \omega_1$, then $\mathfrak{M}_{\xi, \zeta}\subset \mathfrak{M}_{\beta, \alpha}$. \end{enumerate}

\label{avp}
\end{rem}

\begin{corollary} For any $0\leqslant \zeta,\xi\leqslant \omega_1$, $$\mathfrak{G}_{\xi, \zeta}\subset \bigcap_{\alpha<\omega_1} \mathfrak{G}_{\alpha+\xi, \alpha+\zeta}.$$

\label{addition}
\end{corollary}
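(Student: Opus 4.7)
The plan is to deduce this inclusion from Corollary \ref{block of block}(i) by a contradiction argument. First dispose of the boundary cases. If $\zeta = \omega_1$, then $\alpha + \zeta = \omega_1$ and $\mathfrak{G}_{\alpha+\xi, \omega_1} = \mathfrak{L}$ by Remark \ref{avp}(i), so the inclusion is trivial. If $\xi = \omega_1$, then $\alpha + \xi = \omega_1$ for every countable $\alpha$, and we need $\mathfrak{G}_{\omega_1, \zeta} \subset \mathfrak{G}_{\omega_1, \alpha + \zeta}$; since $\zeta \leqslant \alpha + \zeta$, this is immediate from the monotonicity in Remark \ref{avp}(iii). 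The case $\alpha = 0$ is trivial, so we may assume $0 < \alpha < \omega_1$ and $\xi, \zeta < \omega_1$.

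Now fix $A \in \mathfrak{G}_{\xi, \zeta}$, say $A : X \to Y$, and an $(\alpha + \xi)$-weakly null sequence $(x_n)_{n=1}^\infty \subset X$. Since $A$ is bounded, $(Ax_n)$ is weakly null. Suppose, aiming at a contradiction, that $(Ax_n)$ is not $(\alpha + \zeta)$-weakly null; then some subsequence of $(Ax_n)$ is an $\ell_1^{\alpha + \zeta}$-spreading model. Spreading models are by definition seminormalized, and the bound $\|Ax_n\| \leqslant \|A\|\,\|x_n\|$ forces the corresponding subsequence of $(x_n)$ to be seminormalized as well. Relabel and assume that $(x_n)$ itself is seminormalized, weakly null, has no subsequence which is an $\ell_1^{\alpha + \xi}$-spreading model (because it is $(\alpha + \xi)$-weakly null), and $(Ax_n)$ has a subsequence that is an $\ell_1^{\alpha + \zeta}$-spreading model.

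Apply Corollary \ref{block of block}(i) with parameters $\alpha$, $\beta := \zeta$, $\gamma := \xi$. This produces a convex block sequence $(z_n)_{n=1}^\infty$ of $(x_n)$ such that $(z_n)$ has no subsequence which is an $\ell_1^\xi$-spreading model and $(Az_n)_{n=1}^\infty$ is an $\ell_1^\zeta$-spreading model. Convex blocks of weakly null sequences are weakly null, so $(z_n)$ is weakly null and, combined with the first property, $\xi$-weakly null. Since $A \in \mathfrak{G}_{\xi, \zeta}$, the image $(Az_n)$ must be $\zeta$-weakly null, i.e., no subsequence of $(Az_n)$ is an $\ell_1^\zeta$-spreading model. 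But $(Az_n)$ itself is an $\ell_1^\zeta$-spreading model, contradicting this. Hence the assumption that $(Ax_n)$ fails to be $(\alpha + \zeta)$-weakly null is false, and $A \in \mathfrak{G}_{\alpha + \xi, \alpha + \zeta}$.

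There is no serious obstacle here; the proof is essentially an unpacking of Corollary \ref{block of block}(i). The only minor care points are verifying the seminormalization hypothesis needed to invoke that corollary (handled via the bound on $\|A\|$) and treating the $\omega_1$-boundary cases separately, as ordinal arithmetic collapses $\alpha + \omega_1 = \omega_1$.
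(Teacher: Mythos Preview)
Your argument is correct and follows essentially the same route as the paper: both proofs proceed by contrapositive/contradiction and invoke Corollary~\ref{block of block}(i) with parameters $\alpha$, $\beta=\zeta$, $\gamma=\xi$ to produce a $\xi$-weakly null convex block whose image is an $\ell_1^\zeta$-spreading model. Your treatment of the boundary cases $\xi=\omega_1$ and $\zeta=\omega_1$ is slightly more explicit than the paper's, but the substance is identical.
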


\begin{proof} Suppose $X,Y$ are Banach spaces, $A:X\to Y$ is an operator,  $\alpha<\omega_1$, and $0\leqslant \zeta, \xi\leqslant\omega_1$ are such that $A\in \complement \mathfrak{G}_{\alpha+\xi, \alpha+\zeta}$.  Then there exists a sequence $(x_n)_{n=1}^\infty\subset X$ which is $\alpha+\xi$-weakly null and such that $(Ax_n)_{n=1}^\infty$ is not $\alpha+\zeta$-weakly null. Note that $\zeta<\omega_1$, since otherwise $\alpha+\zeta=\alpha+\omega_1=\omega_1$, and $(Ax_n)_{n=1}^\infty$ would be a non-weakly null image of a weakly null sequence.   If $\xi=\omega_1$, we deduce that $A\in\complement \mathfrak{G}_{\xi, \zeta}$, since $(x_n)_{n=1}^\infty$ is a $\xi$-weakly null sequence the image of which under $A$ is not $\alpha+\zeta$-weakly null, and therefore not $\zeta$-weakly null.   If $\xi<\omega_1$, we use Corollary \ref{block of block} to deduce the existence of some convex blocking $(z_n)_{n=1}^\infty$ of $(x_n)_{n=1}^\infty$ which is $\xi$-weakly null and the image of which under $A$ is an $\ell_1^\zeta$-spreading model. Thus $A\in \complement \mathfrak{G}_{\xi, \zeta}$.  Therefore $\complement \mathfrak{G}_{\alpha+\xi, \alpha+\zeta}\subset \complement \mathfrak{G}_{\xi, \zeta}$.   Taking complements and noting that $\alpha<\omega_1$ was arbitrary, we are done.

\end{proof}

\begin{rem}\upshape We remark that adding $\alpha$ on the left in the previous corollary is necessary. The analogous statement fails if we try to add $\alpha$ on the right.  For example, for any $0\leqslant \xi<\omega_1$ and $\zeta<\omega^\xi$, the formal identity $I:X_{\omega^\xi}\to X_\zeta$ lies in $\textsf{G}_{\omega^\xi, 0}\cap \complement \textsf{G}_{\omega^\xi+1, \zeta}$.

\end{rem}

\subsection{Examples}

In this subsection, we provide examples to show the richness of the classes of interest, $\mathfrak{wBS}_\xi$, $\mathfrak{G}_{\xi, \zeta}$, and $\mathfrak{M}_{\xi, \zeta}$.  We note that $\mathfrak{wBS}_0=\mathfrak{V}$, $\mathfrak{G}_{\xi, \zeta}=\mathfrak{L}$ whenever $\xi\leqslant \zeta$, and $\mathfrak{M}_{\xi, \zeta}=\mathfrak{L}$ whenever $\min\{\xi, \zeta\}=0$. We typically omit reference to these trivial cases.

\begin{proposition} Fix $0<\xi<\omega_1$.  Then for any subset $S$ of $[0, \xi)$ with $\sup S=\xi$,  $(\oplus_{\zeta\in S}X_\zeta)_{\ell_1(S)}\in \mathfrak{wBS}_\xi\cap \cup_{\zeta<\xi}\complement\mathfrak{wBS}_\zeta$. 
\label{wbsx}
\end{proposition}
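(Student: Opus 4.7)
The plan splits into two parts, corresponding to the assertions $X:=(\oplus_{\zeta\in S}X_\zeta)_{\ell_1(S)} \in \textsf{wBS}_\xi$ and $X\notin \textsf{wBS}_\gamma$ for each $\gamma<\xi$. The second is the easier part: since $\sup S=\xi>\gamma$, I can pick $\eta\in S$ with $\gamma<\eta$. The canonical basis $(e_n)$ of $X_\eta$ is weakly null and an $\ell_1^\eta$-spreading model by Theorem \ref{sbfacts}(v), and under the natural isometric embedding $X_\eta\hookrightarrow X$ both properties persist. Proposition \ref{deep facts}(vi) supplies $m\in\nn$ such that $m\leqslant E\in\mathcal{S}_\gamma$ implies $E\in\mathcal{S}_\eta$, so the tail $(e_n)_{n\geqslant m}$ is an $\ell_1^\gamma$-spreading model in $X$. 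Hence $(e_n)$ is a weakly null sequence in $X$ with a subsequence which is an $\ell_1^\gamma$-spreading model, witnessing $I_X\notin \mathfrak{wBS}_\gamma$.

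For $X\in\textsf{wBS}_\xi$, fix weakly null $(x_n)\subset X$ with decomposition $x_n=(x_n^\zeta)_{\zeta\in S}$. It suffices to show that for every $\ee>0$ there exist $F\in\mathcal{S}_\xi$ and positive $(a_i)_{i\in F}$ summing to $1$ with $\|\sum_{i\in F}a_ix_i\|_X<\ee$, since the same argument applied to any subsequence prevents any subsequence from being an $\ell_1^\xi$-spreading model. The key tool is the tightness of weakly null sequences in $\ell_1$-sums: there is a finite $T=\{\zeta_1,\ldots,\zeta_k\}\subset S$ with $\sup_n\sum_{\zeta\notin T}\|x_n^\zeta\|_{X_\zeta}<\ee/2$. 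This is an $\ell_1$-sum analogue of the Dunford--Pettis criterion, which I would prove by a gliding-hump argument: if tightness failed, disjointly supported norm-one functionals $(f_k)\subset X^*=(\oplus_\zeta X_\zeta^*)_{\ell_\infty}$ could be selected so that their sum in $X^*$ separates $(x_n)$ from $0$, contradicting weak nullity.

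Granted tightness, for each $j\leqslant k$ the coordinate sequence $(x_n^{\zeta_j})_n$ is weakly null in $X_{\zeta_j}$, hence $(\zeta_j+1)$-weakly null by Theorem \ref{sbfacts}(v); since $\zeta_j<\xi$ forces $\zeta_j+1\leqslant\xi$ in every case (successor or limit), this sequence is $\xi$-weakly null in $X_{\zeta_j}$. Applying Remark \ref{simul} to the $k$-tuple of $\xi$-weakly null sequences $(x_n^{\zeta_1}),\ldots,(x_n^{\zeta_k})$ simultaneously yields a single $F\in\mathcal{S}_\xi$ and positive $(a_i)_{i\in F}$ summing to $1$ with $\|\sum_{i\in F}a_ix_i^{\zeta_j}\|_{X_{\zeta_j}}<\ee/(2k)$ for every $j$. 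The $\ell_1$-sum norm then gives
\[\|\sum_{i\in F}a_ix_i\|_X = \sum_{\zeta\in S}\|\sum_{i\in F}a_ix_i^\zeta\|_{X_\zeta} \leqslant \sum_{j=1}^k\frac{\ee}{2k}+\sum_{i\in F}a_i\sum_{\zeta\notin T}\|x_i^\zeta\|_{X_\zeta}\leqslant \ee,\]
which closes the argument. The main obstacle is the tightness step; everything else is a routine application of the combinatorial machinery from Section 2.
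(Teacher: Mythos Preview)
Your proof is correct and follows essentially the same route as the paper. For membership in $\textsf{wBS}_\xi$, the paper simply cites forward to Proposition~\ref{bt}(i) (closure of $\mathfrak{G}_{\xi,\zeta}$ under $\ell_1$ sums), whose proof is precisely the argument you wrote out: Fact~\ref{fact} gives the tightness step, and Remark~\ref{simul} gives the simultaneous $\mathcal{S}_\xi$-average on the finitely many remaining coordinates. Your proof is in fact more complete than the paper's, which omits any justification of the $\complement\textsf{wBS}_\gamma$ part; your argument via the embedded $X_\eta$ basis and the almost monotone property is the intended one.
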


\begin{proof} By Theorem \ref{sbfacts}$(v)$, if $\zeta<\xi$, $X_\zeta\in \textsf{wBS}_\xi$. We will prove in Proposition \ref{bt} that the $\ell_1$ direct sum of members of $\textsf{wBS}_\xi$ also lies in $\textsf{wBS}_\xi$.

\end{proof}

\begin{theorem} For $0\leqslant \zeta<\xi<\omega_1$, the formal inclusion $I:X_\xi\to X_\zeta$ lies in $\mathfrak{G}_{\xi, \zeta}\cap \complement \mathfrak{G}_{\xi+1, \zeta}$. 

\label{gx}
\end{theorem}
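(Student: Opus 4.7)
The plan is to verify the two containments separately. For the positive direction $I \in \mathfrak{G}_{\xi,\zeta}$, I would take an arbitrary $\xi$-weakly null sequence $(x_n)_{n=1}^\infty \subset X_\xi$ and show that its image $(x_n)_{n=1}^\infty$ in $X_\zeta$ is $\zeta$-weakly null. Since the formal inclusion $X_\xi \to X_\zeta$ is bounded (by the remark preceding Theorem \ref{sbfacts}), the sequence is automatically weakly null in $X_\zeta$. The key tool is then Theorem \ref{shinsuke}(v): because $\xi > 0$, the hypothesis of $\xi$-weak nullity in $X_\xi$ is equivalent to $\lim_n \|x_n\|_\beta = 0$ for every $\beta < \lambda(\xi)$. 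Since $\zeta \leqslant \xi$ forces $\lambda(\zeta) \leqslant \lambda(\xi)$, the same vanishing holds a fortiori for every $\beta < \lambda(\zeta)$; if $\zeta > 0$, a second application of Theorem \ref{shinsuke}(v), this time in $X_\zeta$, yields the desired $\zeta$-weak nullity. The one boundary case needing separate handling is $\zeta = 0$: here $0$-weak nullity means norm nullity in $X_0 = c_0$, and I would extract this from the $\beta = 0$ instance of the characterization, noting that $\lambda(\xi) \geqslant 1$ for any $\xi \geqslant 1$, so $\|x_n\|_0 = \|x_n\|_{c_0} \to 0$.

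For the negative direction $I \notin \mathfrak{G}_{\xi+1,\zeta}$, I would exhibit the canonical basis $(e_n)_{n=1}^\infty$ as a witness. By Theorem \ref{sbfacts}(v) applied in $X_\xi$, $(e_n)_{n=1}^\infty$ is a weakly null $\ell_1^\xi$-spreading model and every normalized weakly null sequence in $X_\xi$ is $(\xi+1)$-weakly null; in particular $(e_n)_{n=1}^\infty$ itself is $(\xi+1)$-weakly null in $X_\xi$. On the other hand, $I e_n = e_n$, and Theorem \ref{sbfacts}(v) applied this time in $X_\zeta$ says that the canonical basis of $X_\zeta$ is an $\ell_1^\zeta$-spreading model, which by the very definition of $\zeta$-weak nullity (a weakly null sequence with no $\ell_1^\zeta$-spreading-model subsequence) precludes $(e_n)_{n=1}^\infty$ from being $\zeta$-weakly null in $X_\zeta$. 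Thus $I$ sends a $(\xi+1)$-weakly null sequence to one that is not $\zeta$-weakly null, proving $I \notin \mathfrak{G}_{\xi+1,\zeta}$.

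I do not anticipate any real obstacle: once the spreading-model machinery of Theorems \ref{sbfacts} and \ref{shinsuke} is invoked, the argument is essentially a bookkeeping check. The only delicate point is the $\zeta = 0$ corner of part one, where Theorem \ref{shinsuke}(v) is stated only for positive indices, but the $\beta = 0$ instance of the condition applied in $X_\xi$ directly encodes norm convergence in $c_0 = X_0$, resolving it cleanly.
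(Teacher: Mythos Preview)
Your proposal is correct and follows essentially the same route as the paper: both directions use Theorem~\ref{shinsuke}(v) to translate $\xi$-weak nullity into the vanishing condition $\lim_n\|x_n\|_\beta=0$ for $\beta<\lambda(\xi)$, then pass to $X_\zeta$ via $\lambda(\zeta)\leqslant\lambda(\xi)$ (handling $\zeta=0$ separately exactly as you do), and both use the canonical basis together with Theorem~\ref{sbfacts}(v) for the failure at level $\xi+1$.
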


\begin{proof} Fix $(x_n)_{n=1}^\infty \subset X_\xi$ $\xi$-weakly null.   Then by Theorem \ref{shinsuke}$(v)$, $\lim_n \|x_n\|_\beta=0$ for every $\beta<\lambda(\xi)$. If $\zeta=0$, then $\zeta<\lambda(\xi)$ and $\lim_n \|x_n\|_\zeta=0$.  Therefore $(Ix_n)_{n=1}^\infty$ is $\zeta$-weakly null.  If $\zeta>0$, then since $\lambda(\zeta)\leqslant \lambda(\xi)$, $\lim_n \|Ix_n\|_\beta=0$ for every $\beta<\lambda(\zeta)$, and Theorem \ref{shinsuke}$(v)$ yields that $(Ix_n)_{n=1}^\infty$ is $\zeta$-weakly null in this case.  In either case, $(Ix_n)_{n=1}^\infty$ is $\zeta$-weakly null, and $I\in \mathfrak{G}_{\xi, \zeta}$. However, the canonical basis is $\xi+1$-weakly null in $X_\xi$ and not $\zeta$-weakly null in $X_\zeta$, so $I\in \complement \mathfrak{G}_{\xi+1, \zeta}$.

\end{proof}

It is well-known and obvious that every Schur space and every space whose dual is a Schur space has the Dunford-Pettis property.  The generalization of this fact to operators is $\mathfrak{V}, \mathfrak{V}^\text{dual}\subset \mathfrak{DP}$.  The ordinal analogues are also obvious: For any $0<\xi\leqslant \omega_1$, $\mathfrak{V}_\xi\subset \mathfrak{M}_{\xi, \omega_1}$ and $\mathfrak{V}_\xi^\text{dual}\subset \mathfrak{M}_{\omega_1, \xi}$.  Thus it is of interest to come up with examples of members of $\mathfrak{M}_{\xi, \omega_1}$, or more generally $\mathfrak{M}_{\xi, \zeta}$, which do not come from $\mathfrak{V}_\xi$ or $\mathfrak{V}_\zeta^\text{dual}$.

\begin{theorem} For $0<\xi<\omega_1$ and $1<p\leqslant \infty$,  the formal inclusion $I:X_{\xi,p}\to X_{\lambda(\xi)}$ lies in $\mathfrak{M}_{\xi, \omega_1}\cap \complement \mathfrak{M}_{\xi+1, 1}\cap \complement \mathfrak{V}_\xi$ and the formal inclusion $J:X_{\lambda(\xi)}^*\to X_{\xi,p}^*$ lies in $\mathfrak{M}_{\omega_1, \xi}\cap \complement \mathfrak{M}_{1, \xi+1}\cap \complement \mathfrak{V}_\xi^\text{\emph{dual}}$.  

\label{dpx}
\end{theorem}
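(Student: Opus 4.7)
The plan is to prove the three statements about $I$ directly and then deduce the three for $J=I^*$ by combining Lemma \ref{double dual}(ii) with naturality of the canonical embeddings.

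For $I\in \mathfrak{M}_{\xi,\omega_1}$, I would fix a $\xi$-weakly null $(x_n)\subset X_{\xi,p}$ and a weakly null $(y_n^*)\subset X^*_{\lambda(\xi)}$. By Corollary \ref{AC}, after passing to a subsequence I may assume $(Ix_n)$ is WUC in $X_{\lambda(\xi)}$, which produces a bounded operator $T:c_0\to X_{\lambda(\xi)}$ with $Te_n=Ix_n$. Its adjoint $T^*:X^*_{\lambda(\xi)}\to\ell_1$ carries $(y_n^*)$ to a weakly null sequence in $\ell_1$, which is norm null by the Schur property of $\ell_1$; the inequality $|y_n^*(Ix_n)|=|(T^*y_n^*)(e_n)|\leqslant\|T^*y_n^*\|_{\ell_1}$ closes the argument. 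To witness $I\notin\mathfrak{V}_\xi$ I will exhibit a single $\xi$-weakly null sequence whose image is bounded below: choose $M_n\in[\nn]$ with $\min M_n\to\infty$ and put $x_n=\sum_i \mathbb{S}^{\lambda(\xi)}_{M_n,1}(i)e_i$. The support lies in $MAX(\mathcal{S}_{\lambda(\xi)})$ and the coefficients sum to $1$, so $\|Ix_n\|_{\lambda(\xi)}\geqslant 1$. On the other hand, for each $\beta<\lambda(\xi)$ one has $CB(\mathcal{S}_\beta)=\omega^\beta+1\leqslant \omega^{\lambda(\xi)}$, so the $\lambda(\xi)$-sufficiency of $(\mathfrak{S}_{\lambda(\xi)},\mathcal{S}_{\lambda(\xi)})$ combined with a diagonal choice of the $M_n$ forces $\|x_n\|_\beta\to 0$ for every $\beta<\lambda(\xi)$; weak nullity in $X_{\xi,p}$ follows from the shrinkingness of the basis, and Theorem \ref{shinsuke}(v) then upgrades $(x_n)$ to a $\xi$-weakly null sequence. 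Finally, for $I\notin\mathfrak{M}_{\xi+1,1}$, the canonical basis $(e_n)$ of $X_{\xi,p}$ is $(\xi+1)$-weakly null by Theorem \ref{sbfacts}(v); in $X^*_{\lambda(\xi)}$ the sequence $(e_n^*)$ is weakly null (shrinkingness of the $X_{\lambda(\xi)}$-basis), and Proposition \ref{phenomenal}(i) applied with $\gamma=0,\delta=\lambda(\xi)$ extracts a subsequence uniformly bounded on $\mathcal{S}_{\lambda(\xi)}\supset\mathcal{S}_1$, making it a $c_0^1$-spreading model; this rules out any $\ell_1^1$-spreading model subsequence, so $(e_n^*)$ is $1$-weakly null, and the pairing $e_n^*(Ie_n)=1$ prevents membership in $\mathfrak{M}_{\xi+1,1}$.

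For the dual statements I appeal to Lemma \ref{double dual}(ii): the pairing $y^{**}(Jx^*)=y^{**}(I^*x^*)$ is precisely the one appearing there, giving $J\in\mathfrak{M}_{\omega_1,\xi}$ directly. For $J\notin\mathfrak{M}_{1,\xi+1}$ I will take $x_n=e_n^*\in X^*_{\lambda(\xi)}$ (1-weakly null as above) and $y_n^*=e_n\in X_{\xi,p}$ embedded canonically into $X^{**}_{\xi,p}$; any $\phi\in X^{***}_{\xi,p}$ agrees on the image of $X_{\xi,p}$ with the canonical extension of its restriction to $X_{\xi,p}$, so weakly null sequences in $X_{\xi,p}$ remain weakly null in $X^{**}_{\xi,p}$, and since the embedding is isometric Theorem \ref{sbfacts}(v) yields $(\xi+1)$-weak nullity of $(e_n)$ in $X^{**}_{\xi,p}$; one then computes $e_n(Je_n^*)=1\not\to 0$. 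The last assertion $J\notin\mathfrak{V}_\xi^{\text{dual}}$ unfolds as $I^{**}\notin\mathfrak{V}_\xi$, and the sequence $(x_n)$ constructed above, viewed in $X^{**}_{\xi,p}$, is still $\xi$-weakly null by the same transfer argument while $I^{**}x_n=Ix_n$ remains seminormalized in $X^{**}_{\lambda(\xi)}$.

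The main obstacle I anticipate is the construction used to rule out $I\in\mathfrak{V}_\xi$: verifying the delicate $\xi$-weak nullity of the $\lambda(\xi)$-level repeated averages $(x_n)$ inside the larger Baernstein space $X_{\xi,p}$. One must marshal $\lambda(\xi)$-sufficiency to simultaneously drive down all the norms $\|x_n\|_\beta$ for $\beta<\lambda(\xi)$, and then invoke the nontrivial equivalence in Theorem \ref{shinsuke}(v) to pass from these Schreier-norm estimates to genuine $\xi$-weak nullity; once this is secured, the remaining pieces are relatively formal consequences of the already-recorded machinery.
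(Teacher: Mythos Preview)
Your argument is correct and matches the paper's proof almost step for step: the paper also cites Lemma~\ref{double dual}(ii) for the positive memberships, uses the canonical bases of $X_{\xi,p}$ and $X^*_{\lambda(\xi)}$ to witness $I\notin\mathfrak{M}_{\xi+1,1}$ and $J\notin\mathfrak{M}_{1,\xi+1}$, and builds the same $\mathcal{S}_{\lambda(\xi)}$-supported convex combinations (via Theorem~\ref{shinsuke}(v)) to witness $I\notin\mathfrak{V}_\xi$ and $J\notin\mathfrak{V}_\xi^{\text{dual}}$.

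One small citation glitch: your appeal to Proposition~\ref{phenomenal}(i) with $\gamma=0$ does not literally apply, since its hypothesis requires $(e_n^*)$ to be weakly null in $X_0^*=\ell_1$, which it is not. The conclusion you want, that $(e_n^*)$ is a $c_0^1$-spreading model in $X^*_{\lambda(\xi)}$, is immediate anyway: for $G\in\mathcal{S}_1\subset\mathcal{S}_{\lambda(\xi)}$ (Proposition~\ref{deep facts}(vii), using $\lambda(\xi)\geqslant 1$) and $|\varepsilon_n|\leqslant 1$, the functional $\sum_{n\in G}\varepsilon_n e_n^*$ satisfies $\sum_{n\in G}\varepsilon_n e_n^*\sqsubset G\in\mathcal{S}_{\lambda(\xi)}$, so Remark~\ref{daniel} gives $\|\sum_{n\in G}\varepsilon_n e_n^*\|_{X^*_{\lambda(\xi)}}\leqslant 1$. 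This is exactly the one-line justification the paper uses.
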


\begin{proof} It follows from Lemma \ref{double dual}$(ii)$ that $I\in \mathfrak{M}_{\xi, \omega_1}$ and $J\in \mathfrak{M}_{\omega_1, \xi}$.  Since the canonical basis of $X_{\xi,p}\subset X_{\xi,p}^{**}$ is $\xi+1$-weakly null and the canonical basis of $X^*_{\lambda(\xi)}$ is a $c_0^1$-spreading model, and therefore $1$-weakly null, $I\in \complement \mathfrak{M}_{\xi+1, 1}$ and $J\in \complement \mathfrak{M}_{1, \xi+1}$.   Now if $(\gamma_k)_{k=1}^\infty \subset [0, \lambda(\xi))$ is such that $[0, \lambda(\xi))=\{\gamma_k:k\in\nn\}$, we may select $F_1<F_2<\ldots$, $F_i\in \mathcal{S}_{\lambda(\xi)}$,  and positive scalars $(a_i)_{i\in \cup_{n=1}^\infty F_i}$ such that for each $1\leqslant k\leqslant n$, $\sum_{i\in F_n}a_i=1$ and $\|\sum_{i\in F_n}a_ie_i\|_{\gamma_k}<1/n$.    Then with $x_n=\sum_{i\in F_n}a_ie_i$, Theorem \ref{shinsuke}$(v)$ yields that $(x_n)_{n=1}^\infty$ is $\xi$-weakly null in $X_{\xi,p}\subset X_{\xi,p}^{**}$. Evidently $(x_n)_{n=1}^\infty$ is normalized in $X_{\lambda(\xi)}$, whence $I\in \complement \mathfrak{V}_\xi$ and $J\in \complement \mathfrak{V}_\xi^\text{dual}$.

\end{proof}

\begin{corollary}  For any $0\leqslant \alpha, \beta, \zeta, \xi\leqslant \omega_1$, $\mathfrak{G}_{\beta, \alpha}= \mathfrak{G}_{\xi, \zeta}$ if and only if one of the two exclusive conditions holds: \begin{enumerate}[(i)]\item $\xi\leqslant \zeta$ and $\beta\leqslant \alpha$ (in which case $\mathfrak{G}_{\beta, \alpha}=\mathfrak{L}=\mathfrak{G}_{\xi, \zeta}$).  \item $\alpha= \zeta<\xi= \beta$.  \end{enumerate}

\label{gcor}
\end{corollary}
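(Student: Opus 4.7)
The reverse direction is immediate: under condition (i), Remark \ref{avp}(i) gives $\mathfrak{G}_{\beta, \alpha} = \mathfrak{L} = \mathfrak{G}_{\xi, \zeta}$, while under condition (ii) the two pairs of ordinals coincide.

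For the forward direction, the plan is first to record the following reduction: $\mathfrak{G}_{\xi', \zeta'} = \mathfrak{L}$ if and only if $\xi' \leqslant \zeta'$. The ``if'' half is Remark \ref{avp}(i); for the ``only if'' half, when $\zeta' < \xi'$ we necessarily have $\zeta' < \omega_1$, and the canonical basis of $X_{\zeta'}$ is $\zeta' + 1$-weakly null (hence $\xi'$-weakly null) yet is an $\ell_1^{\zeta'}$-spreading model by Theorem \ref{sbfacts}(v), so $I_{X_{\zeta'}} \notin \mathfrak{G}_{\xi', \zeta'}$. Consequently, assuming $\mathfrak{G}_{\beta, \alpha} = \mathfrak{G}_{\xi, \zeta}$ and that (i) fails, both ideals must be proper, which forces $\alpha < \beta$, $\zeta < \xi$, and $\alpha, \zeta < \omega_1$.

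Next I would prove $\alpha = \zeta$ by a symmetric argument. Suppose for contradiction that $\alpha < \zeta$. By Theorem \ref{sbfacts}(v), the canonical basis of $X_\alpha$ is $\alpha + 1$-weakly null but fails to be $\alpha$-weakly null; since $\beta \geqslant \alpha + 1$, this basis witnesses $I_{X_\alpha} \notin \mathfrak{G}_{\beta, \alpha}$. On the other hand, every weakly null sequence in $X_\alpha$ is $\alpha + 1$-weakly null, and because $\zeta \geqslant \alpha + 1$ every such sequence is $\zeta$-weakly null by Remark \ref{avp}; hence every $\xi$-weakly null sequence in $X_\alpha$ is $\zeta$-weakly null, giving $I_{X_\alpha} \in \mathfrak{G}_{\xi, \zeta}$. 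This contradicts the assumed equality, and exchanging the roles of $(\beta, \alpha)$ and $(\xi, \zeta)$ rules out $\zeta < \alpha$, so $\alpha = \zeta$.

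Finally, with $\alpha = \zeta$ in hand, I would establish $\beta = \xi$ in the same spirit. Suppose $\beta < \xi$; then $\alpha < \beta < \omega_1$, and Theorem \ref{gx} supplies the formal inclusion $I : X_\beta \to X_\alpha$ lying in $\mathfrak{G}_{\beta, \alpha} \setminus \mathfrak{G}_{\beta + 1, \alpha}$. Since $\xi \geqslant \beta + 1$, Remark \ref{avp}(iii) yields $\mathfrak{G}_{\xi, \alpha} \subset \mathfrak{G}_{\beta + 1, \alpha}$, so $I \notin \mathfrak{G}_{\xi, \alpha} = \mathfrak{G}_{\xi, \zeta} = \mathfrak{G}_{\beta, \alpha}$, a contradiction; swapping the roles of the pairs rules out $\xi < \beta$, and (ii) follows. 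The only real content is selecting the right test operators: the identity on $X_\alpha$ distinguishes classes in the second coordinate, and the Schreier inclusion $X_\beta \to X_\alpha$ distinguishes them in the first; once these are chosen, everything reduces to Theorem \ref{sbfacts}(v), Theorem \ref{gx}, and the monotonicity of Remark \ref{avp}.
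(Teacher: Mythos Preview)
Your argument is correct and follows essentially the same route as the paper: the same test operators ($I_{X_\alpha}$ to separate the second coordinates, the formal inclusion $X_\beta\to X_\alpha$ from Theorem \ref{gx} to separate the first) and the same monotonicity facts from Remark \ref{avp}. Your organization is slightly tidier—isolating the equivalence $\mathfrak{G}_{\xi',\zeta'}=\mathfrak{L}\Leftrightarrow \xi'\leqslant\zeta'$ first lets you reduce in one step to the case $\alpha<\beta$, $\zeta<\xi$, whereas the paper handles the mixed cases $\xi\leqslant\zeta,\ \beta>\alpha$ and $\beta\leqslant\alpha,\ \zeta<\xi$ explicitly—but the content is the same.
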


\begin{proof} It is obvious that $(i)$ and $(ii)$ are exclusive and either implies equality. Now suppose that neither $(i)$ nor $(ii)$ holds. Suppose $\xi\leqslant \zeta$ and $\beta>\alpha$. Then $I_{X_\alpha}\in \mathfrak{L}\cap \complement \mathfrak{G}_{\beta, \alpha}=\mathfrak{G}_{\xi, \zeta}\cap \complement \mathfrak{G}_{\beta, \alpha}$, and $\mathfrak{G}_{\xi, \zeta}\neq \mathfrak{G}_{\beta, \alpha}$.  Similarly, $\mathfrak{G}_{\xi, \zeta}\neq \mathfrak{G}_{\beta, \alpha}$ if $\beta\leqslant \alpha$ and $\zeta<\xi$.    

For the remainder of the proof, suppose that $\alpha<\beta$ and $\zeta<\xi$.     Now suppose $\alpha<\zeta$.  Then $$I_{X_\alpha}\in \mathfrak{wBS}_{\alpha+1}\cap \complement \mathfrak{wBS}_\alpha\subset \mathfrak{G}_{\xi, \zeta}\cap \complement \mathfrak{G}_{\beta, \alpha}.$$   Similarly, $\mathfrak{G}_{\xi, \zeta}\neq \mathfrak{G}_{\beta, \alpha}$ if $\zeta<\alpha$.  Next assume $\zeta=\alpha< \xi<\beta$. Then if $I:X_\xi\to X_\zeta$ is the formal inclusion, $I\in \mathfrak{G}_{\xi, \zeta}\cap \complement \mathfrak{G}_{\beta, \alpha}$.   If $\zeta=\alpha<\beta<\xi$, we argue similarly with the inclusion $I:X_\beta\to X_\alpha$.  Since this is a complete list of the possible ways for $(i)$ and $(ii)$ to simultaneously fail, we are done.

\end{proof}

\begin{corollary} For any $0\leqslant \alpha, \beta, \zeta, \xi\leqslant \omega_1$, $\mathfrak{M}_{\beta, \alpha}\subset \mathfrak{M}_{\xi, \zeta}$ if and only if one of the two exclusive conditions holds: \begin{enumerate}[(i)]\item $0=\min\{\zeta, \xi\}$ (in which case $\mathfrak{M}_{\beta, \alpha}=\mathfrak{L}=\mathfrak{M}_{\xi, \zeta}$). \item $0<\zeta\leqslant \alpha$ and $0<\xi\leqslant \beta$. \end{enumerate}

In particular, $\mathfrak{M}_{\beta, \alpha}=\mathfrak{M}_{\xi, \zeta}$ if and only if $\min \{\beta, \alpha\}=0=\min \{\xi, \zeta\}$ or $0<\alpha=\zeta$ and $0<\beta=\xi$. 

\label{dpcor}
\end{corollary}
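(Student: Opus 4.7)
The plan is to deduce both assertions from Theorem \ref{dpx} together with the monotonicity properties recorded in Remark \ref{avp}.

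The ``if'' direction of the first equivalence is immediate: condition (i) gives $\mathfrak{M}_{\xi,\zeta}=\mathfrak{L}$ by Remark \ref{avp}(ii), and condition (ii) gives $\mathfrak{M}_{\beta,\alpha}\subset \mathfrak{M}_{\xi,\zeta}$ directly from Remark \ref{avp}(iv) applied with $\zeta\leqslant \alpha$ and $\xi\leqslant \beta$.

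For the ``only if'' direction I would argue by contrapositive. The failure of (i) forces $\xi,\zeta>0$, and combined with the failure of (ii) this forces $\xi>\beta$ or $\zeta>\alpha$. In the first case, if $\beta=0$ then $\mathfrak{M}_{\beta,\alpha}=\mathfrak{L}$, while $I_{\ell_2}\notin \mathfrak{M}_{\xi,\zeta}$ for any $\xi,\zeta>0$ (the canonical $\ell_2$ basis and its biorthogonal functionals are $\eta$-weakly null for every ordinal $\eta$, since neither admits an $\ell_1^\eta$-spreading subsequence, yet their pairing is identically $1$), so $I_{\ell_2}$ is the desired witness. If instead $\beta\geqslant 1$, Theorem \ref{dpx} furnishes the formal inclusion $I\colon X_{\beta,p}\to X_{\lambda(\beta)}$, which lies in $\mathfrak{M}_{\beta,\omega_1}\subset \mathfrak{M}_{\beta,\alpha}$ by Remark \ref{avp}(iv), while $\xi\geqslant \beta+1$ and $\zeta\geqslant 1$ give $\mathfrak{M}_{\xi,\zeta}\subset \mathfrak{M}_{\beta+1,1}$, and Theorem \ref{dpx} also asserts $I\notin \mathfrak{M}_{\beta+1,1}$. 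The case $\zeta>\alpha$ is completely symmetric: use $I_{\ell_2}$ when $\alpha=0$, and otherwise use the formal inclusion $J\colon X^*_{\lambda(\alpha)}\to X^*_{\alpha,p}$ from Theorem \ref{dpx}, which lies in $\mathfrak{M}_{\omega_1,\alpha}\subset \mathfrak{M}_{\beta,\alpha}$ but not in $\mathfrak{M}_{1,\alpha+1}\supset \mathfrak{M}_{\xi,\zeta}$.

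The ``in particular'' statement follows by applying the first equivalence in both directions and enumerating which of the four combinations of (i), (ii) and their swapped analogues (obtained by interchanging the pairs $(\beta,\alpha)$ and $(\xi,\zeta)$) can hold simultaneously. The mixed possibilities force, say, $\min\{\xi,\zeta\}=0$ together with $\xi\leqslant \beta$, which forces $\xi=0=\beta$ and contradicts the positivity in the swapped version of (ii); consequently only two scenarios survive, namely that both classes equal $\mathfrak{L}$ (i.e.\ $\min\{\beta,\alpha\}=0=\min\{\xi,\zeta\}$) or that $\alpha=\zeta>0$ and $\beta=\xi>0$. The step I expect to require the most care is the boundary subcase $\beta=0$ or $\alpha=0$, where the generic monotonicity argument provides no operator outside $\mathfrak{M}_{\xi,\zeta}$; verifying that $I_{\ell_2}$ does the job there reduces to the observation that a normalized subsymmetric basic sequence whose closed linear span does not contain $\ell_1$ is $\eta$-weakly null for every ordinal $\eta$, which is immediate from the definitions.
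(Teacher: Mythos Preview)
Your argument is correct and mirrors the paper's proof: the same monotonicity from Remark \ref{avp}, the witness $I_{\ell_2}$ for the degenerate boundary cases, and the formal inclusions $I,J$ from Theorem \ref{dpx} for the nondegenerate ones, with only a cosmetic difference in how the case split is organized. In the final paragraph your bookkeeping slips (you write $\xi\leqslant\beta$ where the swapped condition (ii$'$) actually reads $0<\beta\leqslant\xi$, so the deduction ``forces $\xi=0=\beta$'' is garbled), but the intended contradiction---that $\min\{\xi,\zeta\}=0$ is incompatible with $0<\alpha\leqslant\zeta$ and $0<\beta\leqslant\xi$---is clear and matches the paper's derivation of the equality characterization.
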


\begin{proof} It is obvious that $(i)$ and $(ii)$ are exclusive, and either implies that $\mathfrak{M}_{\beta, \alpha}\subset \mathfrak{M}_{\xi, \zeta}$.

Now assume that $\min \{\zeta, \xi\}>0$.   If $\min \{\alpha, \beta\}=0$, $\mathfrak{M}_{\beta, \alpha}=\mathfrak{L}\not\subset \mathfrak{M}_{\xi, \zeta}$, since $I_{\ell_2}\in \complement \mathfrak{M}_{1,1}\subset \complement \mathfrak{M}_{\xi, \zeta}$.    If $0<\alpha, \beta$ and $\beta<\xi$, then let $I:X_\beta\to X_{\lambda(\beta)}$ be the formal inclusion.  Then $$I\in \mathfrak{M}_{\beta, \omega_1}\cap \complement \mathfrak{M}_{\beta+1, 1}\subset \mathfrak{M}_{\beta, \alpha}\cap \complement \mathfrak{M}_{\xi, \zeta}.$$   Now if $0<\alpha, \beta$ and $\alpha<\zeta$, let $J:X^*_{\lambda(\alpha)}\to X^*_\alpha$ be the formal inclusion.  Then $$J\in \mathfrak{M}_{\omega_1, \alpha}\cap \complement \mathfrak{M}_{1, \alpha+1}\subset \mathfrak{M}_{\beta, \alpha}\cap \complement \mathfrak{M}_{\xi, \zeta}.$$  

The last statement follows from the fact that if $\mathfrak{M}_{\beta, \alpha}=\mathfrak{M}_{\xi, \zeta}$, then either both classes must equal $\mathfrak{L}$, which happens if and only if $\min \{\beta, \alpha\}=0=\min \{\xi, \zeta\}$, or neither class is $\mathfrak{L}$, in which case $\min \{\beta, \alpha\}, \min \{\xi, \zeta\}>0$.   In the latter case, using the previous paragraph and symmetry, $\alpha=\zeta$ and $\beta=\xi$.

\end{proof}

\subsection{General properties}

We will need the following fact, shown in \cite{CN}.

\begin{proposition} If $X$  is a Banach space and $(x_n)_{n=1}^\infty\subset X$ is $\xi$-weakly null, then there exists a subsequence $(x_{n_i})_{i=1}^\infty$ of $(x_n)_{n=1}^\infty$ such that the operator $\Phi:\ell_1\to X$ given by $\Phi\sum_{i=1}^\infty a_ie_i=\sum_{i=1}^\infty a_ix_{n_i}$ lies in $\mathfrak{W}_\xi(\ell_1, X)$. 

\label{CN}
\end{proposition}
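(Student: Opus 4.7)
The plan is to select the subsequence $(x_{n_i})$ via the averaging criterion of Remark \ref{6surv}, and then verify the $\xi$-Banach-Saks property of $\Phi$ by reducing, via Rosenthal's $\ell_1$-theorem combined with the Schur property of $\ell_1$, to the case of a block sequence in $\ell_1$.

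First, fix a $\xi$-sufficient probability block $(\mathfrak{P}, \mathcal{P})$ with $CB(\mathcal{P})=\omega^\xi+1$, for instance $(\mathfrak{S}_\xi, \mathcal{S}_\xi)$. Applying Remark \ref{6surv} to $(x_n)$ produces a subsequence $(x_{n_i})_{i=1}^\infty$ with the property that for every $N\in [\{n_j: j\in\nn\}]$ and every $k\in\nn$,
$$\Bigl\|\sum_i \mathbb{P}_{N,k}(i)\, x_i\Bigr\| < 1/k.$$
The operator $\Phi$ defined by $\Phi e_i=x_{n_i}$ is then bounded, and it remains to show $\Phi\in \mathfrak{W}_\xi$.

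To this end, fix a sequence $(y_k)_{k=1}^\infty\subset B_{\ell_1}$. By the Schur property of $\ell_1$ combined with Rosenthal's $\ell_1$-theorem, after passing to a subsequence we are in one of two cases. Either $y_k\to y$ in norm, in which case $\Phi y_k\to \Phi y$ in norm and we are done, or $(y_k)$ is equivalent to the canonical $\ell_1$-basis. In the latter case a standard gliding-hump and perturbation argument further reduces to $y_k=\sum_{i\in F_k} a_{k,i}\, e_i$, with $F_1<F_2<\ldots$ and $\sum_i |a_{k,i}|\leq 1$. For this block sequence, $(\Phi y_k)$ is immediately weakly null in $X$: for any $x^*\in X^*$, $|x^*(\Phi y_k)|\leq \max_{i\in F_k}|x^*(x_{n_i})|$, which tends to $0$ since $\min F_k\to\infty$ and $(x_{n_i})$ is weakly null.

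The main task is to upgrade the weak nullity of $(\Phi y_k)$ to $\xi$-weak nullity after a further subsequence. By Theorem \ref{7surv}(ii), it suffices, for each $L'\in[\nn]$ and $\varepsilon>0$, to produce $M'\in[L']$ such that for every $N'\in [M']$,
$$\Bigl\|\sum_k \mathbb{P}_{N',1}(k)\, \Phi y_k\Bigr\| = \Bigl\|\sum_i c_i\, x_{n_i}\Bigr\| <\varepsilon,$$
where $c_i=\mathbb{P}_{N',1}(k(i))\, a_{k(i),i}$, $k(i)$ is the unique index with $i\in F_{k(i)}$ (if any), and $\sum_i |c_i|\leq 1$. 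Passing to a further subsequence of $(y_k)$ so that $(\min F_k)_{k=1}^\infty$ grows sufficiently fast, one uses Lemma \ref{tech}(ii) to arrange that $\bigcup_{k\in N'|_\mathcal{P}} F_k$ lines up, along the embedding $j\mapsto n_j$, with a maximal element of $\mathcal{P}$ inside some infinite $N\subset \{n_j\}$. Decomposing $(c_i)$ into its positive and negative parts, each part becomes dominated (up to a rapidly summable perturbation) by a probability of the form $\mathbb{P}_{N,k}$ for large $k$, and the averaging estimate from Step 1 makes each resulting norm arbitrarily small.

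The hardest part is the combinatorial alignment in the final paragraph: ensuring that after the sparsification, the union $\bigcup_{k\in N'|_\mathcal{P}} F_k$ transports along $j\mapsto n_j$ to a maximal initial segment of some infinite $N\subset\{n_j\}$ in $\mathcal{P}$. This is where Lemma \ref{tech}(ii) plays its role, applied with parameters matching the (controlled) complexity of the $F_k$'s and the $\xi$-structure of $\mathcal{P}$, together with the spreading and hereditary properties of $\mathcal{P}$.
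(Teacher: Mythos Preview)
The paper does not give its own proof of this proposition; it is quoted from \cite{CN}. So there is no in-paper argument to compare against, and the question is simply whether your argument stands on its own.

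Your reduction is fine through the point where you have a block sequence $(y_k)$ in $\ell_1$, $\Phi y_k=\sum_{i\in F_k}a_{k,i}x_{n_i}$ with $F_1<F_2<\ldots$ and $\sum_i|a_{k,i}|\leqslant 1$, and you observe that $(\Phi y_k)$ is weakly null. The gap is in the final ``alignment'' step. You write the $\mathcal P$-average $\sum_k\mathbb P_{N',1}(k)\Phi y_k=\sum_i c_i x_{n_i}$ and then assert that, after splitting into positive and negative parts, each part is ``dominated (up to a rapidly summable perturbation) by a probability of the form $\mathbb P_{N,k}$.'' This is not justified and, as stated, is not correct. The measures $\mathbb P_{N,k}$ from the repeated averages hierarchy are highly specific; an arbitrary nonnegative sequence with $\ell_1$-norm at most $1$, even one supported on a maximal member of $\mathcal P$, need not be pointwise dominated by any $\mathbb P_{N,k}$. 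And even if pointwise domination held, it would not yield $\|\sum_i c_i^{\pm}x_{n_i}\|\leqslant \|\sum_i \mathbb P_{N,k}(n_i)x_{n_i}\|$ without some unconditionality of $(x_{n_i})$, which you have not assumed. Your appeal to Lemma \ref{tech}(ii) does not help here: that lemma decomposes members of $\mathcal S_{\gamma+\delta}$ into $\mathcal S_\gamma$-pieces with minima forming an $\mathcal S_\delta$-set; it says nothing about the inner blocks $F_k$, which are arbitrary finite sets produced by a gliding-hump in $\ell_1$ and carry no Schreier complexity bound. In short, the subsequence chosen via Remark \ref{6surv} only controls norms of the \emph{specific} convex combinations $\mathbb P_{N,k}$, and your argument needs control of quite different combinations obtained by interposing the unrelated coefficients $a_{k,i}$.

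What is actually needed is a mechanism that propagates $\xi$-weak nullity from $(x_{n_i})$ to absolute-convex block sequences of it (equivalently, to sequences in $\Phi(B_{\ell_1})$). This is the substantive content proved in \cite{CN}, and it does not follow from the one-step averaging estimate of Remark \ref{6surv} alone. If you want to repair your approach, you must either (a) prove directly that an $\ell_1^\xi$-spreading model among the $(\Phi y_k)$ would force an $\ell_1^\xi$-spreading model inside $(x_{n_i})$---which requires handling the unbounded sizes of the $F_k$ and is where the real work lies---or (b) invoke the characterization in \cite{CN} that underlies the proposition rather than trying to rebuild it from Theorem \ref{7surv}(ii).
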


\begin{rem}\upshape It follows that if $(y^*_n)_{n=1}^\infty$ is $\xi$-weakly null, there exist a subsequence $(y^*_{n_i})_{i=1}^\infty$ of $(y^*_n)_{n=1}^\infty$ such that the operator given by $\Phi_*:Y\to c_0$ given by $\Phi_* y=(y^*_{n_i}(y))_{i=1}^\infty$ lies in $\mathfrak{W}_\xi^\text{dual}(Y, c_0)$. This follows immediately from Proposition \ref{CN}, since $\Phi_*^*:\ell_1\to Y^*$ is given by $\Phi_*^*\sum_{i=1}^\infty a_ie_i=\sum_{i=1}^\infty a_i y^*_{n_i}$.

\end{rem}

\begin{rem}\upshape In the following results, we will repeatedly use the fact that a weakly null $\ell_1^\zeta$-spreading model can have no $\zeta$-convergent subsequence.

\end{rem}

\begin{theorem}  Fix $0\leqslant \zeta<\xi\leqslant \omega_1$. Then $$\mathfrak{G}_{\xi, \zeta}= \mathfrak{W}_\zeta\circ \mathfrak{W}_\xi^{-1}$$ and $$\mathfrak{G}_{\xi, \zeta}^{\text{\emph{dual}}}= (\mathfrak{W}_\xi^{\text{\emph{dual}}})^{-1}\circ \mathfrak{W}_\zeta^\text{\emph{dual}}.$$    Consequently, $\mathfrak{G}_{\xi, \zeta}$ is a closed, two-sided ideal containing all compact operators. Moreover, $\mathfrak{G}_{\xi, \zeta}$ is injective but not surjective. Finally, $$\mathfrak{G}_{\xi, \zeta}^{\text{\emph{dual\ dual}}}\subsetneq \mathfrak{G}_{\xi, \zeta},$$ while neither of $\mathfrak{G}_{\xi, \zeta}$, $\mathfrak{G}_{\xi, \zeta}^\text{\emph{dual}}$ is contained in the other. 

\label{downshift theorem}
\end{theorem}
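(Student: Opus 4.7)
Two identities drive everything. For the first, $\mathfrak{G}_{\xi,\zeta} = \mathfrak{W}_\zeta \circ \mathfrak{W}_\xi^{-1}$, the forward inclusion is a direct unpacking: if $A \in \mathfrak{G}_{\xi,\zeta}$ and $R \in \mathfrak{W}_\xi(W,X)$, any bounded $(w_n) \subset W$ has a subsequence with $(Rw_{n_i})$ $\xi$-convergent to some $x \in X$, so $(Rw_{n_i} - x)$ is $\xi$-weakly null, its image $(ARw_{n_i} - Ax)$ is $\zeta$-weakly null by hypothesis, and $(ARw_{n_i})$ is $\zeta$-convergent to $Ax$. The reverse inclusion I would argue by contraposition: if $A \notin \mathfrak{G}_{\xi,\zeta}$, pass to a subsequence so that $(x_n)$ is $\xi$-weakly null while $(Ax_n)$ is an $\ell_1^\zeta$-spreading model, then apply Proposition \ref{CN} to a further subsequence to produce $\Phi \in \mathfrak{W}_\xi(\ell_1, X)$ with $\Phi e_i = x_{n_i}$. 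Membership of $A$ in $\mathfrak{W}_\zeta \circ \mathfrak{W}_\xi^{-1}$ would force $A\Phi \in \mathfrak{W}_\zeta$, hence $(Ax_{n_i}) = (A\Phi e_i)$ would contain a $\zeta$-convergent subsequence, impossible for a weakly null $\ell_1^\zeta$-spreading model.

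The second identity is a mirror argument on the dual side. In the standard direction, if $A^* \in \mathfrak{G}_{\xi,\zeta}$ and $L \in \mathfrak{W}_\xi^{\text{dual}}(Y,Z)$, take a bounded $(z_n^*) \subset Z^*$, extract a subsequence with $(L^* z_{n_i}^*)$ $\xi$-convergent to some $y^*$, push through $A^*$ to get $\zeta$-convergence of $(A^* L^* z_{n_i}^*) = ((LA)^* z_{n_i}^*)$, yielding $LA \in \mathfrak{W}_\zeta^{\text{dual}}$. The reverse direction uses the dual version of Proposition \ref{CN} (the remark immediately after it): a $\xi$-weakly null $(y_n^*) \subset Y^*$ admits a subsequence with $\Phi_* : Y \to c_0$, $\Phi_* y = (y_{n_i}^*(y))_i$, in $\mathfrak{W}_\xi^{\text{dual}}$, and the spreading-model obstruction closes the contrapositive exactly as before.

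Each stated consequence follows essentially mechanically once the identities are available. Closedness and the ideal property are automatic since any quotient $\mathfrak{I} \circ \mathfrak{J}^{-1}$ of closed ideals is itself a closed ideal. The compactness inclusion is $\mathfrak{K}\mathfrak{W}_\xi \subseteq \mathfrak{K} \subseteq \mathfrak{W}_\zeta$. Injectivity one reads directly off the original definition of $\mathfrak{G}_{\xi,\zeta}$: weak topologies and $\ell_1^\zeta$-spreading models are unaffected by isometric embeddings, so $\xi$-weak nullity of domain sequences and $\zeta$-weak nullity of image sequences pull back from any overspace of the codomain. For failure of surjectivity, take $A = I_{X_\zeta}$ (or $I_{c_0}$ when $\zeta = 0$): its canonical basis is $\zeta{+}1$-weakly null, hence $\xi$-weakly null, but is itself an $\ell_1^\zeta$-spreading model, so $A \notin \mathfrak{G}_{\xi,\zeta}$; on the other hand, for any quotient map $q: \ell_1 \twoheadrightarrow X_\zeta$, the Schur property of $\ell_1$ forces $q$ to send weakly null sequences to norm null ones, hence $Aq = q \in \mathfrak{G}_{\xi,\zeta}$, exhibiting $A \in \mathfrak{G}_{\xi,\zeta}^{\text{sur}} \setminus \mathfrak{G}_{\xi,\zeta}$.

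Finally, $\mathfrak{G}_{\xi,\zeta}^{\text{dual dual}} \subseteq \mathfrak{G}_{\xi,\zeta}$ is a formal consequence of injectivity combined with $J_Y A = A^{**} J_X$, since if $A^{**} \in \mathfrak{G}_{\xi,\zeta}$ then $J_Y A = A^{**} J_X \in \mathfrak{G}_{\xi,\zeta}$ and the injective property strips off the isometric embedding $J_Y$. The strict inclusion here and the incomparability of $\mathfrak{G}_{\xi,\zeta}$ with $\mathfrak{G}_{\xi,\zeta}^{\text{dual}}$ both reduce, via the two identities just proved, to the known non-self-duality of the Banach-Saks ideals $\mathfrak{W}_\xi$ for $0 < \xi < \omega_1$ recorded in \cite{BF}: picking an operator $T \in \mathfrak{W}_\xi \setminus \mathfrak{W}_\xi^{\text{dual}}$ and routing it through $\mathfrak{W}_\zeta \circ \mathfrak{W}_\xi^{-1}$ and $(\mathfrak{W}_\xi^{\text{dual}})^{-1} \circ \mathfrak{W}_\zeta^{\text{dual}}$ yields the required witnesses on both sides. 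I expect this last step to be the principal obstacle of the argument: the witness must be chosen uniformly across the full range $0 \leq \zeta < \xi \leq \omega_1$ (including the extreme cases $\zeta = 0$ and $\xi = \omega_1$), and the transfer from $\mathfrak{W}_\xi$ (and its dual) to $\mathfrak{G}_{\xi,\zeta}$ has to be traced with some care, since a single operator must be manufactured to fall into one ideal and not the other by a targeted spreading-model analysis.
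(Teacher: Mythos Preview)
Your treatment of the two identities and of closedness, the ideal property, compactness, injectivity, failure of surjectivity, and the inclusion $\mathfrak{G}_{\xi,\zeta}^{\text{dual dual}}\subseteq\mathfrak{G}_{\xi,\zeta}$ is essentially the paper's argument, and is fine.

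The gap is in your final paragraph. You propose to derive the strictness of $\mathfrak{G}_{\xi,\zeta}^{\text{dual dual}}\subsetneq\mathfrak{G}_{\xi,\zeta}$ and the mutual non-containment of $\mathfrak{G}_{\xi,\zeta}$ and $\mathfrak{G}_{\xi,\zeta}^{\text{dual}}$ by ``picking an operator $T\in\mathfrak{W}_\xi\setminus\mathfrak{W}_\xi^{\text{dual}}$ and routing it through'' the two quotient formulas. This does not work as stated. First, the quotient $\mathfrak{W}_\zeta\circ\mathfrak{W}_\xi^{-1}$ consists of operators $A$ such that $AR\in\mathfrak{W}_\zeta$ for \emph{every} $R\in\mathfrak{W}_\xi$; possessing a single $T\in\mathfrak{W}_\xi\setminus\mathfrak{W}_\xi^{\text{dual}}$ gives you no mechanism for manufacturing such an $A$, nor for showing it fails the dual condition. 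Second, and decisively, your strategy collapses when $\xi=\omega_1$: there $\mathfrak{W}_{\omega_1}=\mathfrak{W}=\mathfrak{W}^{\text{dual}}=\mathfrak{W}_{\omega_1}^{\text{dual}}$, so no such $T$ exists at all, yet the theorem still asserts strict inclusion and incomparability for every $\zeta<\omega_1$.

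The paper sidesteps this entirely by exhibiting concrete \emph{spaces} (identity operators) rather than transferring non-self-duality from $\mathfrak{W}_\xi$. For $\mathfrak{G}_{\xi,\zeta}^{\text{dual dual}}\neq\mathfrak{G}_{\xi,\zeta}$: when $\zeta=0$ one uses $\ell_1\in\textsf{V}\subset\textsf{G}_{\xi,0}$ while $\ell_1^{**}\supset\ell_2$ fails $\textsf{G}_{\xi,0}$; when $\zeta>0$ one uses $c_0\in\textsf{wBS}_1\subset\textsf{G}_{\xi,\zeta}$ while $\ell_\infty$ contains a copy of the $X_\zeta$ basis and hence fails $\textsf{G}_{\xi,\zeta}$. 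For incomparability: when $\zeta=0$ the pair $\ell_1,c_0$ (and $\ell_\infty$) does the job; when $\zeta>0$ the reflexive Baernstein space $X_{\zeta,2}$ and its dual work, using that $X_{\zeta,2}^*\in\textsf{wBS}_1$ (Theorem~\ref{sbfacts}) while the canonical basis of $X_{\zeta,2}$ is an $\ell_1^\zeta$-spreading model. These examples are uniform in $\xi$ and handle $\xi=\omega_1$ without difficulty, which your proposed reduction cannot.
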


\begin{proof} Fix $X,Y\in \textbf{Ban}$ and $A\in \mathfrak{L}(X,Y)$.   First suppose that $A\in \mathfrak{G}_{\xi, \zeta}(X,Y)$.   Fix a Banach space $W$ and $R\in \mathfrak{W}_\xi(W,X)$.   Fix a bounded sequence $(w_n)_{n=1}^\infty$. By passing to a subsequence, we may assume there exists $x\in X$ such that $(x-Rw_n)_{n=1}^\infty$ is $\xi$-weakly null, whence $(Ax-ARw_n)_{n=1}^\infty$ is $\zeta$-weakly null.  Since this holds for an arbitrary bounded sequence in $(w_n)_{n=1}^\infty$, $AR\in \mathfrak{W}_\zeta$. Since $W\in \textbf{Ban}$ and $R\in \mathfrak{W}_\xi(W,X)$ were arbitrary, $A\in \mathfrak{W}_\zeta \circ \mathfrak{W}_\xi^{-1}(X,Y)$.

Now suppose that $A\in\complement \mathfrak{G}_{\xi, \zeta}$. Then there exists a $\xi$-weakly null sequence $(x_n)_{n=1}^\infty$ in $X$ such that $(Ax_n)_{n=1}^\infty$ is an $\ell_1^\zeta$-spreading model.     By Proposition \ref{CN}, after passing to a subsequence and relabeling, we may assume the operator $R:\ell_1\to X$ given by $R\sum_{i=1}^\infty a_ie_i=\sum_{i=1}^\infty a_ix_i$ lies in $\mathfrak{W}_\xi(\ell_1, X)$. But since $(ARe_i)_{i=1}^\infty=(Ax_i)_{i=1}^\infty$ has no $\zeta$-convergent subsequence, $A\in \complement \mathfrak{W}_\zeta\circ \mathfrak{W}_\xi^{-1}(X,Y)$.

Next, suppose that $A\in \mathfrak{G}_{\xi, \zeta}^\text{dual}(X,Y)$.     Fix $Z\in \textbf{Ban}$ and an operator $L\in \mathfrak{W}_\xi^\text{dual}(Y,Z)$.  Then $A^*\in \mathfrak{G}_{\xi, \zeta}(Y^*, X^*)= \mathfrak{W}_\zeta\circ \mathfrak{W}_\xi^{-1}(Y^*, X^*)$ and $L^*\in \mathfrak{W}_\xi(Z^*, Y^*)$, whence $(LA)^*=A^*L^*\in \mathfrak{W}_\zeta(Z^*, X^*)$. Thus $LA\in \mathfrak{W}_\zeta^\text{dual}(X,Z)$.    Since this holds for any $Z\in \textbf{Ban}$ and $L\in \mathfrak{W}_\xi^\text{dual}(Y,Z)$, $A\in (\mathfrak{W}_\xi^\text{dual})^{-1}\circ \mathfrak{W}_\zeta^\text{dual}(X,Y)$.

Now if $A\in \complement \mathfrak{G}_{\xi, \zeta}^\text{dual}(X,Y)$, there exists $(y^*_n)_{n=1}^\infty \subset Y^*$ which is $\xi$-weakly null and $(A^*y^*_n)_{n=1}^\infty$ is an $\ell_1^\zeta$-spreading model. By the remarks preceding the theorem, by passing to a subsequence and relabeling,  we may assume the operator $L:Y\to c_0$ given by $Ly=(y^*_n(y))_{n=1}^\infty$ lies in $\mathfrak{W}_\xi^\text{dual}(Y, c_0)$.   But since $(A^*L^*e_i)_{i=1}^\infty=(A^*y^*_i)_{i=1}^\infty$ is a weakly null $\ell_1^\zeta$-spreading model,  $(LA)^*=A^*L^*\in \complement \mathfrak{W}_\zeta(\ell_1, X^*)$. Thus $LA\in \complement ((\mathfrak{W}_\xi^\text{dual})^{-1}\circ \mathfrak{W}_\zeta^\text{dual})(X,Y)$.

This yields the first two equalities.  It follows from the fact that $\mathfrak{W}_\zeta, \mathfrak{W}_\xi$ are closed, two-sided ideals containing the compact operators that $\mathfrak{G}_{\xi, \zeta}$ is also.

It is evident that $\mathfrak{G}_{\xi, \zeta}$ is injective, since a given sequence is $\zeta$-weakly null if and only if its image under some (equivalently, every) isomorphic image of that sequence is $\zeta$-weakly null.    The ideal $\mathfrak{G}_{\xi, \zeta}$ is not surjective, since $X_\zeta\in \complement \textsf{G}_{\xi, \zeta}$, while $X_\zeta$ is a quotient of $\ell_1\in \textsf{V}\subset \textsf{G}_{\xi, \zeta}$.

It is also easy to see that if $A^{**}\in \mathfrak{G}_{\xi, \zeta}$, then $A\in \mathfrak{G}_{\xi, \zeta}$, whence $\mathfrak{G}_{\xi, \zeta}^{\text{dual\ dual}}\subset \mathfrak{G}_{\xi, \zeta}$.   If $\zeta=0$, note that $\ell_1\in \textsf{V}\subset \textsf{G}_{\xi, \zeta}$, but $\ell_1^{**}$ contains an isomorphic copy of $\ell_2$, whence $\ell_1^{**}\in \complement \textsf{G}_{\xi, 0}$. This yields that $\mathfrak{G}_{\xi, 0}^\text{dual\ dual}\neq \mathfrak{G}_{\xi,0}$.   Now if $\zeta>0$, $c_0\in \textsf{wBS}_1\subset \textsf{G}_{\xi, \zeta}$. But $\ell_\infty=c_0^{**}\in \complement \textsf{G}_{\xi, \zeta}$.  In order to see that $\ell_\infty\in \complement \textsf{G}_{\xi, \zeta}$, simply note that $\ell_\infty$ contains a sequence equivalent to the $X_\zeta$ basis, which is $\xi$-weakly null and not $\zeta$-weakly null.

Finally, let us note that if $\zeta=0$, $\ell_1\in \textsf{V}\subset \textsf{G}_{\xi, \zeta}$, while $c_0, \ell_\infty\in \complement \textsf{G}_{\xi, 0}$.   Thus neither of $\mathfrak{G}_{\xi,0}, \mathfrak{G}_{\xi, 0}^\text{dual}$ is contained in the other. Now suppose that $\zeta>0$.     Then since $X^*_{\zeta, 2}\in \textsf{wBS}_1\subset \textsf{G}_{\xi, \zeta}$, $$X_{\zeta, 2}\in \textsf{G}_{\xi, \zeta}^\text{dual}\cap \complement\textsf{G}_{\xi, \zeta}$$ and $$X_{\zeta, 2}^*\in \textsf{G}_{\xi, \zeta}\cap \complement \textsf{G}_{\xi, \zeta}^\text{dual}.$$  Here we recall that $X_{\zeta, 2}$ is reflexive.  This yields that if $0<\zeta<\xi\leqslant \omega_1$, neither of $\mathfrak{G}_{\xi, \zeta}, \mathfrak{G}_{\xi, \zeta}^\text{dual}$ is contained in the other.

\end{proof}

\begin{theorem}  Fix $0<\zeta,\xi\leqslant \omega_1$. Then $$\mathfrak{M}_{\xi, \zeta}=(\mathfrak{W}_\zeta^\text{\emph{dual}})^{-1}\circ \mathfrak{V}_\xi=(\mathfrak{W}_\zeta^\text{\emph{dual}})^{-1}\circ \mathfrak{K}\circ \mathfrak{W}_\xi^{-1}.$$   Consequently, $\mathfrak{M}_{\xi, \zeta}$ is a closed, two-sided ideal containing all compact operators.  Moreover, $\mathfrak{M}_{\xi, \zeta}$ is neither injective nor surjective. Finally, $$\mathfrak{M}_{\xi, \zeta}^\text{\emph{dual}}\subsetneq \mathfrak{M}_{\zeta, \xi}$$ and   $$\mathfrak{M}_{\xi, \zeta}^\text{\emph{dual\ dual}}\subsetneq \mathfrak{M}_{\xi, \zeta}.$$   

\label{dp theorem}
\end{theorem}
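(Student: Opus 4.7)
The plan is to establish the two identities, derive the structural properties from the representation, and finally address the duality assertions.

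For the inclusion $\mathfrak{M}_{\xi,\zeta}\subseteq (\mathfrak{W}_\zeta^\text{dual})^{-1}\circ \mathfrak{V}_\xi$, suppose $A\in \mathfrak{M}_{\xi,\zeta}(X,Y)$ and $L\in \mathfrak{W}_\zeta^\text{dual}(Y,Z)$ with $LA\notin \mathfrak{V}_\xi$. Choose a $\xi$-weakly null $(x_n)\subset X$ with $\|LAx_n\|>\ee$ and $z_n^*\in S_{Z^*}$ with $z_n^*(LAx_n)>\ee/2$. Since $L^*\in \mathfrak{W}_\zeta$, along some subsequence $L^*z_n^*$ is $\zeta$-convergent to some $y^*\in Y^*$, so $(L^*z_n^*-y^*)$ is $\zeta$-weakly null. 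The hypothesis $A\in \mathfrak{M}_{\xi,\zeta}$ forces $(L^*z_n^*-y^*)(Ax_n)\to 0$, which together with $y^*(Ax_n)\to 0$ (from $Ax_n\to 0$ weakly) yields $z_n^*(LAx_n)\to 0$, a contradiction. For the reverse inclusion, if some $A\in (\mathfrak{W}_\zeta^\text{dual})^{-1}\circ \mathfrak{V}_\xi$ fails to lie in $\mathfrak{M}_{\xi,\zeta}$, extract $\xi$-weakly null $(x_n)\subset X$, $\zeta$-weakly null $(y_n^*)\subset Y^*$, and $\ee>0$ with $|y_{n_k}^*(Ax_{n_k})|>\ee$ along a subsequence. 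Applying the Remark after Proposition \ref{CN} to $(y_{n_k}^*)$ yields a further subsequence along which the operator $L:Y\to c_0$ given by $Ly=(y_{n_{k_j}}^*(y))_j$ lies in $\mathfrak{W}_\zeta^\text{dual}$. Then $LA\in \mathfrak{V}_\xi$ forces $\|LAx_{n_{k_j}}\|_{c_0}\to 0$, contradicting the lower bound. The second identity is then immediate from Theorem \ref{downshift theorem} applied at $\zeta=0$, which gives $\mathfrak{V}_\xi=\mathfrak{K}\circ \mathfrak{W}_\xi^{-1}$.

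The closed two-sided ideal property and the containment $\mathfrak{K}\subset \mathfrak{M}_{\xi,\zeta}$ are routine consequences of the representation: $\mathfrak{V}_\xi$ and $\mathfrak{W}_\zeta^\text{dual}$ are closed two-sided ideals, $\mathfrak{K}\subset \mathfrak{V}_\xi$, and the operation $\mathfrak{A}^{-1}\circ \mathfrak{B}$ preserves these features. For non-injectivity, consider the inclusion $j:\ell_2\hookrightarrow C[0,1]$: it lies in $\mathfrak{M}_{\omega_1,\omega_1}\subset \mathfrak{M}_{\xi,\zeta}$, since $C[0,1]$ has the Dunford-Pettis property and, for any weakly null $(y_n)\subset \ell_2$, the image $(jy_n)$ remains weakly null in $C[0,1]$, so pairings with any weakly null sequence in $C[0,1]^*$ vanish. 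However $I_{\ell_2}\notin \mathfrak{M}_{\xi,\zeta}$, since the canonical $\ell_2$-basis is $\xi$-weakly null, its biorthogonals are $\zeta$-weakly null, and their self-pairings equal $1$. For non-surjectivity, a quotient map $q:\ell_1\to \ell_2$ lies in $\mathfrak{V}\subset \mathfrak{M}_{\xi,\zeta}$ by the Schur property of $\ell_1$, so $I_{\ell_2}\circ q=q$ lies in $\mathfrak{M}_{\xi,\zeta}$ while $I_{\ell_2}$ does not.

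The inclusions $\mathfrak{M}_{\xi,\zeta}^\text{dual}\subseteq \mathfrak{M}_{\zeta,\xi}$ and $\mathfrak{M}_{\xi,\zeta}^{\text{dual\ dual}}\subseteq \mathfrak{M}_{\xi,\zeta}$ follow from the pairing identity $y_n^*(Ax_n)=(\iota x_n)(A^*y_n^*)=(A^{**}\iota x_n)(\iota y_n^*)$ together with the preservation of $\alpha$-weak nullity under the canonical embeddings $\iota:X\to X^{**}$ and $\iota:Y^*\to Y^{***}$. The strictness assertions constitute the main technical obstacle. For $\mathfrak{M}_{\xi,\zeta}^{\text{dual\ dual}}\subsetneq \mathfrak{M}_{\xi,\zeta}$, one must produce $A\in \mathfrak{M}_{\xi,\zeta}$ with $A^{**}\notin \mathfrak{M}_{\xi,\zeta}$; the natural candidate is a formal inclusion between non-reflexive Schreier-type spaces furnished by Theorem \ref{dpx} in the case $p=\infty$, and the failure of $A^{**}$ to lie in $\mathfrak{M}_{\xi,\zeta}$ is detected by exhibiting a sequence in $X^{**}$ whose image in $Y^{**}$ pairs non-trivially with a weakly null sequence in $Y^{***}$ lying outside $\iota Y^*$ (any candidate in $\iota Y^*$ is ruled out by Lemma \ref{double dual}(ii)). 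The strict inclusion $\mathfrak{M}_{\xi,\zeta}^\text{dual}\subsetneq \mathfrak{M}_{\zeta,\xi}$ is then obtained by a parallel construction, transposing the roles of domain and codomain via adjoints.
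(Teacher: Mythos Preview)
Your proof of the identities and of the closed-ideal, non-injective, non-surjective assertions is correct and essentially parallel to the paper's: you establish $\mathfrak{M}_{\xi,\zeta}=(\mathfrak{W}_\zeta^{\text{dual}})^{-1}\circ\mathfrak{V}_\xi$ directly, while the paper establishes $\mathfrak{M}_{\xi,\zeta}=(\mathfrak{W}_\zeta^{\text{dual}})^{-1}\circ\mathfrak{K}\circ\mathfrak{W}_\xi^{-1}$ directly; each then deduces the other from $\mathfrak{V}_\xi=\mathfrak{K}\circ\mathfrak{W}_\xi^{-1}$. Your non-injectivity/surjectivity witnesses $(\ell_2\hookrightarrow C[0,1]$ and $\ell_1\twoheadrightarrow\ell_2)$ are minor variants of the paper's $(\ell_2\hookrightarrow\ell_\infty$ and $\ell_1\twoheadrightarrow\ell_2)$. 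The inclusion $\mathfrak{M}_{\xi,\zeta}^{\text{dual}}\subset\mathfrak{M}_{\zeta,\xi}$ via canonical embeddings is also the paper's argument.

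The genuine gap is in the two \emph{strictness} assertions. You acknowledge this is ``the main technical obstacle'' and then only gesture at a construction: you propose to take the formal inclusion $I:X_\xi\to X_{\lambda(\xi)}$ from Theorem~\ref{dpx} and detect failure of $I^{**}\in\mathfrak{M}_{\xi,\zeta}$ by ``exhibiting'' suitable sequences in $X_\xi^{**}$ and $X_{\lambda(\xi)}^{***}$, without actually exhibiting them. Nothing in the paper gives you control over weakly null sequences in the triple dual of a Schreier space, and Lemma~\ref{double dual} only rules out candidates in $\iota Y^*$, it does not produce a witness outside. Moreover, the statement must hold for \emph{all} $0<\xi,\zeta\leqslant\omega_1$, and your Schreier-space candidate does not even exist when $\xi=\omega_1$. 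The ``parallel construction'' you invoke for the other strict inclusion inherits the same defects.

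The paper sidesteps all of this with Stegall's example. For $\mathfrak{M}_{\xi,\zeta}^{\text{dual}}\subsetneq\mathfrak{M}_{\zeta,\xi}$ it takes $X=\ell_1(\ell_2^n)$: this is Schur, hence $X\in\textsf{M}_{\omega_1,\omega_1}\subset\textsf{M}_{\zeta,\xi}$, while $X^*$ contains a complemented copy of $\ell_2$, so $X^*\notin\textsf{M}_{1,1}\supset\textsf{M}_{\xi,\zeta}$ and thus $X\notin\textsf{M}_{\xi,\zeta}^{\text{dual}}$. For $\mathfrak{M}_{\xi,\zeta}^{\text{dual dual}}\subsetneq\mathfrak{M}_{\xi,\zeta}$ it takes $Y=c_0(\ell_2^n)$: then $Y^*=X$ is Schur so $Y\in\textsf{M}_{\xi,\zeta}$, but $Y^{**}=X^*$ contains complemented $\ell_2$, so $Y^{**}\notin\textsf{M}_{\xi,\zeta}$. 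These single examples work uniformly in $\xi,\zeta$ and require no analysis of higher duals of exotic spaces.
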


\begin{proof} It follows from the fact that $\mathfrak{V}_\xi=\mathfrak{K}\circ \mathfrak{W}_\xi^{-1}$, which was shown in \cite{CN}, that $(\mathfrak{W}_\zeta^\text{dual})^{-1}\circ \mathfrak{V}_\xi= (\mathfrak{W}_\zeta^\text{dual})^{-1}\circ \mathfrak{K}\circ \mathfrak{W}_\xi^{-1}$.     We will show that $\mathfrak{M}_{\xi, \zeta}=(\mathfrak{W}_\zeta^\text{dual})^{-1}\circ \mathfrak{K}\circ \mathfrak{W}_\xi^{-1}$.   To that end, fix Banach spaces $X,Y$ and $A\in \mathfrak{L}(X,Y)$.

Suppose that $A\in \mathfrak{L}(X,Y)$. Fix Banach spaces $W,Z$ and operators $R\in \mathfrak{W}_\xi(W,X)$ and $L\in \mathfrak{W}_\zeta^\text{dual}(Y,Z)$.    We will show that $LAR\in \mathfrak{K}(W,Z)$.  Seeking a contradiction, suppose $LAR\in \complement \mathfrak{K}$. Note that there exists a bounded sequence $(w_n)_{n=1}^\infty\subset W$ such that $\inf_{m\neq n} \|LAR w_m-LARw_n\|\geqslant 4$.    By passing to a subsequence, we may assume there exist $x\in X$ such that $(x-Rw_n)_{n=1}^\infty$ is $\xi$-weakly null.   Since $\|LARw_m-LARw_n\|\geqslant 4$ for all $m\neq n$, there is at most one $n\in\nn$ such that $\|LAx-LARw_n\|<2$.   By passing to a subsequence, we may assume $\|LAx-LARw_n\|\geqslant 2$ for all $n\in\nn$. For each $n\in\nn$, fix $z^*_n\in B_{Z^*}$ such that $|z^*_n(LAx-LARw_n)|\geqslant 2$.  By passing to a subsequence one final time, we may assume there exists $y^*\in Y^*$ such that $(y^*-L^*z^*_n)_{n=1}^\infty$ is $\zeta$-weakly null and, since $(Ax-ARw_n)_{n=1}^\infty$ is weakly null, $|y^*(Ax-ARw_n)|<1$ for all $n\in\nn$.     Then $(y^*-L^*z^*_n)_{n=1}^\infty\subset Y^*$ is $\zeta$-weakly null, $(x-Rw_n)_{n=1}^\infty$ is $\xi$-weakly null, and $$\inf_n |(y^*-L^*z^*_n)(Ax-ARw_n)| \geqslant \inf_n |L^*z^*_n(Ax-ARw_n)|- 1 =\inf_n |z^*_n(LAx-LARw_n)|-1 \geqslant 1.$$   This contradiction yields that  $\mathfrak{M}_{\xi, \zeta}\subset (\mathfrak{W}_\zeta^\text{dual})^{-1}\circ \mathfrak{K}\circ \mathfrak{W}_\xi^{-1}$.    

Now suppose that $A\in \complement \mathfrak{M}_{\xi, \zeta}(X,Y)$.   Then there exist a $\xi$-weakly null sequence $(x_n)_{n=1}^\infty\subset X$ and a $\zeta$-weakly null sequence $(y^*_n)_{n=1}^\infty\subset Y^*$ such that $\inf_n |y^*_n(Ax_n)|=1$.   Using Proposition \ref{CN} and the remark following it, after passing to subsequences twice and relabling, we may assume the operators $R:\ell_1\to X$ given by $R\sum_{i=1}^\infty a_ie_i=\sum_{i=1}^\infty a_ix_i$ and $L:Y\to c_0$ given by $Ly=(y^*_n(y))_{n=1}^\infty$ lie in $\mathfrak{W}_\xi(\ell_1, X)$ and $\mathfrak{W}_\zeta^\text{dual}(Y, c_0)$, respectively.  But $LAR:\ell_1\to c_0$ is not compact, since $|e^*_n(LARe_n)|= |y^*_n(Ax_n)|\geqslant 1$ for all $n\in\nn$.   This yields that $\mathfrak{M}_{\xi, \zeta}= (\mathfrak{W}_\zeta^\text{dual})^{-1}\circ \mathfrak{K}\circ \mathfrak{W}_\xi^{-1}$.

Since $\ell_2\in \complement \textsf{M}_{1,1}\subset \complement \textsf{M}_{\xi, \zeta}$ is a subspace of $\ell_\infty\in \textsf{M}_{\omega_1, \omega_1}\subset \textsf{M}_{\xi, \zeta}$ and a quotient of $\ell_1\in \textsf{M}_{\omega_1, \omega_1}\subset \textsf{M}_{\xi, \zeta}$, $\mathfrak{M}_{\xi, \zeta}$ is neither injective nor surjective. 

Now suppose $A\in \mathfrak{M}_{\xi, \zeta}^\text{dual}(X,Y)$.  Now if $(x_n)_{n=1}^\infty \subset X$ is $\zeta$-weakly null,  $(y^*_n)_{n=1}^\infty$ is $\xi$-weakly null, and $j:X\to X^{**}$ is the canonical embedding, then $(jx_n)_{n=1}^\infty\subset X^{**}$ is $\zeta$-weakly null. Since $A\in \mathfrak{M}_{\xi, \zeta}^\text{dual}(X,Y)$, $$\lim_n y^*_n(Ax_n)=\lim_n A^*y^*_n(x_n)= \lim_n jx_n(A^*y^*_n)=0.$$   Thus $A\in \mathfrak{M}_{\zeta, \xi}(X,Y)$. This yields that $\mathfrak{M}_{\xi, \zeta}^\text{dual}\subset \mathfrak{M}_{\zeta, \xi}$.   To see that $\mathfrak{M}_{\xi, \zeta}^\text{dual}\neq \mathfrak{M}_{\zeta, \xi}$, we cite Stegall's example \cite{Stegall}, $X=\ell_1(\ell_2^n)$.  This space has the Schur property, and therefore lies in $\textsf{M}_{\omega_1, \omega_1}\subset \textsf{M}_{\zeta, \xi}$, while $X^*$ contains a complemented copy of $\ell_2$.  Thus $X\in \complement \textsf{M}_{1,1}^\text{dual}\subset \complement \textsf{M}_{\xi, \zeta}^\text{dual}$.

Next, we note that $$\mathfrak{M}_{\xi, \zeta}^\text{dual\ dual} =(\mathfrak{M}_{\xi, \zeta}^\text{dual})^\text{dual} \subset \mathfrak{M}_{\zeta, \xi}^\text{dual}\subset \mathfrak{M}_{\xi, \zeta}.$$  To see that $\mathfrak{M}_{\xi, \zeta}^\text{dual\ dual}\neq \mathfrak{M}_{\xi, \zeta}$, we make yet another appeal to Stegall's example and let $Y=c_0(\ell_2^n)$.  Then $Y^*=X$ has the Schur property, and therefore $Y\in \textsf{M}_{\omega_1, \omega_1}\subset \textsf{M}_{\xi, \zeta}$. But  $Y^{**}=X^*\in \complement \textsf{M}_{1,1}\subset \complement \textsf{M}_{\xi, \zeta}$.    Therefore $Y\in \complement \textsf{M}_{\xi, \zeta}^\text{dual\ dual}$.

\end{proof}

\subsection{Direct sums}

For $1\leqslant p\leqslant \infty$ and classes $\mathfrak{I}, \mathfrak{J}$, we say $\mathfrak{J}$ is \emph{closed under} $\mathfrak{I}$-$\ell_p$ \emph{sums} provided that for any set $I$ and any collection $(A_i:X_i\to Y_i)_{i\in I}\subset \mathfrak{I}$ such that $\sup_{i\in I}\|A_i\|<\infty$, the operator $A:(\oplus_{i\in I}X_i)_{\ell_p(I)}\to (\oplus_{i\in I} Y_i)_{\ell_p(I)}$ lies in $\mathfrak{J}$.  The notion of an ideal being closed under $\mathfrak{I}$-$c_0$ sums is defined similarly.

We will use the following well-known fact about weakly null sequences in $\ell_1$ sums of Banach spaces. 

\begin{fact} Let $I$ be a set, $(X_i)_{i\in I}$ a collection of Banach spaces, and $(x_n)_{n=1}^\infty=\bigl((x_{i,n})_{i\in I}\bigr)_{n=1}^\infty$ a weakly null sequence in $(\oplus_{i\in I}X_i)_{\ell_1(I)}$.  Then for any $\ee>0$, there exists a  subset $J\subset I$ such that $|I\setminus J|<\infty$ and  for all $n\in\nn$, $\sum_{i\in J}\|x_{i,n}\|<\ee$. 

\label{fact}

\end{fact}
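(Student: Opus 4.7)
The proof plan is a sliding hump argument by contradiction, following the scheme that underlies Schur's theorem in $\ell_1$. Recall that the dual of $(\oplus_{i\in I} X_i)_{\ell_1(I)}$ identifies isometrically with $(\oplus_{i\in I} X_i^*)_{\ell_\infty(I)}$ via the pairing $(y_i^*)_i(x)=\sum_i y_i^*(x_i)$, and I would use this identification throughout. Assuming for contradiction that the conclusion fails, there is $\varepsilon>0$ such that for every finite $F\subset I$ there exists $n$ with $\sum_{i\notin F}\|x_{i,n}\|\geqslant\varepsilon$; the goal is to manufacture a single $y^*$ in the dual unit ball and a subsequence $(n_k)$ along which $\mathrm{Re}\, y^*(x_{n_k})\geqslant\varepsilon/4$, contradicting weak nullity.

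A useful preliminary observation is that the set $T(F):=\{n:\sum_{i\notin F}\|x_{i,n}\|\geqslant\varepsilon\}$ is in fact infinite for every finite $F$. Otherwise, say $T(F_0)\subset\{1,\ldots,N\}$, and for each $n\leqslant N$ pick a finite $F^{(n)}\supset F_0$ with $\sum_{i\notin F^{(n)}}\|x_{i,n}\|<\varepsilon$ (possible since $\|x_n\|_{\ell_1(I)}<\infty$); then $F:=F_0\cup F^{(1)}\cup\cdots\cup F^{(N)}$ satisfies $\sum_{i\notin F}\|x_{i,n}\|<\varepsilon$ for all $n$, violating the standing assumption.

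I would then recursively choose finite sets $\varnothing=F_0\subsetneq F_1\subsetneq F_2\subsetneq\cdots$, indices $n_1<n_2<\cdots$, and $y_i^*\in B_{X_i^*}$ for each $i\in\bigcup_k F_k$, arranging that for each $k$:
\begin{enumerate}[(a)]
\item $\sum_{i\in F_k\setminus F_{k-1}}\mathrm{Re}\, y_i^*(x_{i,n_k})\geqslant \varepsilon/2$,
\item $\sum_{i\notin F_k}\|x_{i,n_k}\|<\varepsilon/8$,
\item for $k\geqslant 2$, $\bigl|\sum_{i\in F_{k-1}}y_i^*(x_{i,n_k})\bigr|<\varepsilon/8$.
\end{enumerate}
At stage $k$ the truncated functional $z_k^*:=(y_i^*\mathbf{1}_{F_{k-1}}(i))_i$ is already fixed and lies in the dual unit ball, so weak nullity gives $z_k^*(x_n)\to 0$, which yields (c) for all sufficiently large $n$; infiniteness of $T(F_{k-1})$ lets us also require $\sum_{i\notin F_{k-1}}\|x_{i,n_k}\|\geqslant\varepsilon$. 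A finite $F_k\supsetneq F_{k-1}$ absorbing at least $3\varepsilon/4$ of this tail yields (b), and Hahn--Banach choices of $y_i^*$ on $F_k\setminus F_{k-1}$ almost-attaining $\|x_{i,n_k}\|$ give (a). Extending $y^*$ by zero off $\bigcup_k F_k$ produces the desired unit-norm functional, and splitting $y^*(x_{n_k})$ into the three regions $F_{k-1}$, $F_k\setminus F_{k-1}$, $I\setminus F_k$ combines (a)--(c) into $\mathrm{Re}\, y^*(x_{n_k})\geqslant \varepsilon/2-\varepsilon/8-\varepsilon/8=\varepsilon/4$.

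The main obstacle is item (c). Coordinate-wise weak nullity does not force $\|x_{i,n}\|\to 0$ for individual $i$, so one cannot naively discard the contribution coming from the finite set $F_{k-1}$. The key point---and the only place where joint (rather than merely coordinate-wise) weak nullity of $(x_n)$ is really used---is that once the $y_i^*$ on $F_{k-1}$ are frozen, the truncated functional $z_k^*$ is a single fixed element of the dual; weak nullity tested against it delivers (c) by taking $n_k$ sufficiently large.
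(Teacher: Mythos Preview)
The paper does not prove this Fact; it is simply quoted as ``well-known'' and used as a black box. Your sliding-hump argument is the standard proof (essentially the $\ell_1$-Schur argument transplanted to $\ell_1$-sums) and is correct in outline and in all essential details.

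One small wording imprecision: in the recursive step you write that ``a finite $F_k\supsetneq F_{k-1}$ absorbing at least $3\varepsilon/4$ of this tail yields (b).'' This is backwards. The tail $\sum_{i\notin F_{k-1}}\|x_{i,n_k}\|$ is at least $\varepsilon$ but may be much larger, so absorbing $3\varepsilon/4$ of it need not force the remainder below $\varepsilon/8$. What you actually want is to choose $F_k\supset F_{k-1}$ finite so that (b) holds directly---possible because $x_{n_k}\in(\oplus X_i)_{\ell_1(I)}$---and then the mass captured on $F_k\setminus F_{k-1}$ is automatically at least $\varepsilon-\varepsilon/8=7\varepsilon/8$, which together with Hahn--Banach choices gives (a). With this trivially repaired ordering of the implications, the argument goes through exactly as you describe, and the final estimate $\mathrm{Re}\,y^*(x_{n_k})\geqslant\varepsilon/2-\varepsilon/8-\varepsilon/8=\varepsilon/4$ is correct.
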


\begin{proposition} Fix $0\leqslant \zeta<\xi\leqslant \omega_1$. \begin{enumerate}[(i)]\item The class $\mathfrak{G}_{\xi, \zeta}$ is closed under $\mathfrak{G}_{\xi, \zeta}$-$\ell_1$ sums. \item  The class $\mathfrak{G}_{\xi, \zeta}$ is closed under $\mathfrak{G}_{\xi, \zeta}$-$\ell_p$ sums for $1<p<\infty$ if and only if $\zeta>0$. \item  The class $\mathfrak{G}_{\xi, \zeta+1}$ is  closed under $\mathfrak{G}_{\xi, \zeta}$-$c_0$ sums.   \item The class $\mathfrak{G}_{\xi, \zeta}$ is not closed under $\mathfrak{G}_{\xi, \zeta}$-$c_0$ sums. \item The class $\mathfrak{G}_{\xi, \zeta}$ is not closed under $\mathfrak{V}$ sums.  \end{enumerate}

\label{bt}

\end{proposition}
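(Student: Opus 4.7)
For (i), I would combine Fact \ref{fact} with Remark \ref{simul}. Given a $\xi$-weakly null sequence $(x_n)=((x_{i,n})_{i\in I})$ in the $\ell_1$-sum, assume for contradiction that $(Ax_n)$ contains an $\ell_1^\zeta$-spreading model subsequence with constant $\delta>0$. Fact \ref{fact} yields a cofinite $J\subset I$ with $\sup_n\sum_{i\in J}\|x_{i,n}\|<\delta/(2\|A\|)$, so only the finitely many coordinates in $I\setminus J$ need individual attention. For each $i\in I\setminus J$ the coordinate sequence $(A_ix_{i,n})_n$ is $\zeta$-weakly null in $Y_i$ (continuous image of a $\xi$-weakly null sequence under $A_i\in\mathfrak{G}_{\xi,\zeta}$), and Remark \ref{simul} produces a single $F\in\mathcal{S}_\zeta$ and positive $(a_n)_{n\in F}$ summing to $1$ making every $\|\sum_na_nA_ix_{i,n}\|$ simultaneously smaller than $\delta/(2|I\setminus J|)$. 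The $\ell_1$-sum norm of $\sum_na_nAx_n$ is then strictly below $\delta$, contradicting the spreading-model lower bound.

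For (ii), the negative direction $\zeta=0$ uses the counterexample $I_{\ell_p}=\bigoplus_iI_{\mathbb K}$: each factor is in $\mathfrak V\subset\mathfrak V_\xi$, but the $\ell_p$ basis is $1$-weakly null (hence $\xi$-weakly null for $\xi\geq1$) without being norm null, so $I_{\ell_p}\notin\mathfrak V_\xi=\mathfrak{G}_{\xi,0}$. For the positive direction $\zeta>0$, I would adapt the argument of (i) to the $\ell_p$-norm. The missing ingredient is that Fact \ref{fact} has no direct $\ell_p$-analogue for $p>1$; I would substitute a standard gliding-hump argument which, after passage to a subsequence, localizes a weakly null sequence in the $\ell_p$-sum onto near-disjointly supported blocks. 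Once localized, Minkowski's inequality reduces the $\ell_p$-norm of $\sum_na_nAx_n$ to control of finitely many coordinate-wise $\mathcal{S}_\zeta$-averages, which can be made simultaneously small via Remark \ref{simul} precisely because $\zeta>0$.

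For (iii), I would diagonalize using the structure $\mathbb{S}^{\zeta+1}_{N,1}=\tfrac{1}{\min N}\sum_{j=1}^{\min N}\mathbb{S}^\zeta_{N,j}$. For each coordinate $l$, $(A_lx_{l,n})_n$ is $\zeta$-weakly null (projection of $(x_n)$ then $A_l\in\mathfrak{G}_{\xi,\zeta}$), so Remark \ref{6surv} together with the permanence property (Remark \ref{permanence}) yields nested $M_l\in[\nn]$ such that every $\mathbb{S}^\zeta_{N,j}$-average for $N\in[M_l]$ has norm below $2^{-l}$. Taking a diagonal $M=(m_l)$ with $m_l\in M_l$ and any $N\in[M]$ with $l_0$ the $M$-index of $\min N$, the coordinates $l\leq l_0$ are controlled coordinate-by-coordinate by the $2^{-l}$-type bound. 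The tail $l>l_0$ must be handled by combining the $1/\min N$ averaging factor with the fact that each individual $Ax_i$ lies in a $c_0$-sum and hence has vanishing $l$-th coordinate; forcing this control to be uniform across infinitely many tail coordinates, after suitable preparation of the original sequence $(x_n)$, is the main technical obstacle of the proposition.

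For (iv), the plan is the explicit counterexample $X_i=Y_i=X_\zeta^{(i)}$ (the span of the first $i$ basis vectors of $X_\zeta$ with the induced norm) and $A_i=I_{X_\zeta^{(i)}}$, each compact and hence in $\mathfrak{G}_{\xi,\zeta}$. In $Z=(\bigoplus X_\zeta^{(i)})_{c_0}$ consider the seminormalized sequence $y_n=\sum_{i=n}^{n+h(n)}e_n^{(i)}$, where $h$ is chosen so that every $F\in\mathcal{S}_\zeta$ with $\min F=n$ satisfies $\max F\leq n+h(n)$. A direct computation then shows that for $F\in\mathcal{S}_\zeta$ the convex combination $\sum_{n\in F}a_ny_n$ attains at coordinate $i=\max F$ the full $X_\zeta$-spreading-model lower bound, while for wide $F\in\mathcal{S}_{\zeta+1}$ no single coordinate sees more than a vanishing fraction of $F$ and the $Z$-norm decays. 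Hence $(y_n)$ is $\zeta+1$-weakly null and an $\ell_1^\zeta$-spreading model, so $A=I_Z\notin\mathfrak{G}_{\xi,\zeta}$ for every $\xi>\zeta$. Part (v) then follows immediately: for $\zeta=0$ use the $I_{\ell_p}$ construction of (ii) with $I_{\mathbb K}\in\mathfrak V$, and for $\zeta>0$ use (iv), whose $A_i$'s are compact and hence in $\mathfrak V$.
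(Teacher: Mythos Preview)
Your treatments of (i) and (ii) are essentially what the paper does. For the positive half of (ii) the paper's mechanism is a bit more concrete than your gliding-hump: after localizing to a weak limit $v\in\ell_p(I)$ plus near-disjoint remainders, it pre-averages over $\mathcal{A}_k$-blocks (using that $\mathcal{S}_\zeta[\mathcal{A}_k](M)\subset\mathcal{S}_\zeta$ for suitable $M$, which is where $\zeta>0$ enters) to force the $\ell_p$-mass of the remainders below $\ee/2$, then finishes with the finite-coordinate simultaneous averaging exactly as in (i).

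Part (iii) is where you have a genuine gap, and you correctly flag it: your diagonal over coordinates controls any fixed finite initial block of coordinates, but provides no uniform control over the infinite tail, and the $1/\min N$ factor alone cannot rescue this. The paper's idea is to interleave the choice of $\mathcal{S}_\zeta$-averages with a growing chain of finite coordinate sets. Recursively pick $F_n\in\mathcal{S}_\zeta$ and convex weights making the average small on every coordinate in the already-chosen finite set $I_{n-1}$ (Remark~\ref{simul}), \emph{then} enlarge $I_{n-1}$ to a finite $I_n$ so that the $c_0$-tail of this particular average outside $I_n$ is below $\ee_n$. Now take the Cesaro mean of the blocks $n{+}1,\dots,2n$, which lies in $\mathcal{S}_{\zeta+1}$: at any single coordinate $i$, all but at most one of the $n$ summands is controlled (either $i\in I_{m-1}$ or $i\notin I_m$), and the one exceptional summand contributes $O(1/n)$.

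For (iv) the paper's construction is different and cleaner because it leverages (iii): it takes \emph{infinite}-dimensional summands $X_{\mathcal{F}_n}\in\textsf{wBS}_\zeta$ with $\mathcal{F}_n\uparrow\mathcal{S}_\zeta$ and the diagonal sequence $x_n=(e_n,\dots,e_n,0,\dots)$; the $\ell_1^\zeta$-lower bound is read off at coordinate $\min F$, and $\zeta{+}1$-weak nullity is immediate from (iii). Your finite-dimensional variant is plausible, but your $\zeta{+}1$-weak-nullity argument is only heuristic. More importantly, your reduction for (v) does not match the paper: the paper's (v) is an $\ell_\infty$-sum statement (a Stegall-type construction showing $\ell_\infty(E_n)$, with $E_n=[e_i:i\leqslant n]\subset X_{\zeta,2}$ finite-dimensional and hence in $\textsf{V}$, contains $X_{\zeta,2}$ complemented). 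Your route via (iv) produces only a $c_0$-sum counterexample, and $\xi$-weak nullity of a sequence in the $c_0$-sum need not survive passage to the much larger dual of the $\ell_\infty$-sum, so the transfer is not automatic.
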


\begin{proof} Throughout, let $I$ be a set, $(A_i:X_i\to Y_i)_{i\in I}$ a collection of operators such that $\sup_{i\in I}\|A_i\|=1$. Let $X_p=(\oplus_{i\in I}X_i)_{\ell_p(I)}$, $Y_p=(\oplus_{i\in I}Y_i)_{\ell_p(I)}$, and $A_p:X_p\to Y_p$ the operator such that $A_p|_{X_i}=A_i$.   As usual, $p=0$ will correspond to the $c_0$ direct sum.

$(i)$ Assume $A_i\in \mathfrak{G}_{\xi, \zeta}$ for all $i\in I$. Fix $(x_n)_{n=1}^\infty \subset X_1$ $\xi$-weakly null.  Write $x_n=(x_{i,n})_{i\in I}$ and note that for each $i\in I$, $(x_{i,n})_{n=1}^\infty$ is $\xi$-weakly null, so $(A_ix_{i,n})_{i=1}^\infty$ is $\zeta$-weakly null.   Fix $\ee>0$ and $M\in[\nn]$.    Using Fact \ref{fact}, there exists a subset $J$ of $I$ such that $|I\setminus J|<\infty$ and $\sup_n \sum_{i\in J}\|x_{i,n}\|<\ee/2$.   Since $(A_ix_{i,n})_{n=1}^\infty$ is $\zeta$-weakly null, then there exists $F\in \mathcal{S}_\zeta\cap [M]^{<\nn}$ and positive scalars $(a_n)_{n\in F}$ summing to $1$ such that for each $i\in I\setminus J$, $\|\sum_{n\in F} a_nx_{i,n}\|_{Y_i}< \frac{\ee/2}{1+|I\setminus J|}$.   Then $$\|A_1 \sum_{n\in F}a_nx_n\| \leqslant \sum_{i\in I\setminus J} \|\sum_{n\in F} a_nA_ix_{i,n}\|_{Y_i} + \sum_{n\in F}a_n\sum_{i\in I\setminus J}\|x_{i,n}\|_{X_i}<\ee/2+\ee/2=\ee.$$  Since $\ee>0$ and $M\in[\nn]$ were arbitrary, $(A_1x_n)_{n=1}^\infty$ is $\zeta$-weakly null.

$(ii)$ Fix $1<p<\infty$. Since $\ell_p\in \complement \textsf{G}_{\xi, 0}$ and $\mathbb{K}\in \textsf{G}_{\xi,0}$, $\mathfrak{G}_{\xi,0}$ is not closed under $\ell_p$ sums. It follows by an inessential modification of work from \cite{BC} that for $0<\zeta<\omega_1$, $\mathfrak{G}_{\xi, \zeta}$ is closed under $\mathfrak{G}_{\xi, \zeta}$-$\ell_p$ sums.  More specifically, let $(x_n)_{n=1}^\infty \subset B_{X_p}$ be $\xi$-weakly null and let $v_n=(\|x_{i,n}\|_{X_i})_{i\in I}\in B_{\ell_p(I)}$. Assume $(A_px_n)_{n=1}^\infty$ satisfies $$0<\ee\leqslant \inf\{\|A_px\|: F\in \mathcal{S}_\zeta, x\in \text{co}(x_n: n\in F)\}.$$   By passing to a subsequence, we may assume $v_n\to v=(v_i)_{i\in I}\in B_{\ell_p(I)}$ weakly, and that $v_n$ is a small perturbation of $v+b_n$, where the sequence $(b_n)_{n=1}^\infty$ consists of disjointly supported vectors in $B_{X_p}$.  We may fix a subset $J$ of $I$ such that $|I\setminus J|<\infty$ and $\bigl(\sum_{i\in J}v_i^p\bigr)_{1/p}<\ee/3$.    For $k\in\nn$, we may first choose $M=(m_i)_{i=1}^\infty\in[\nn]$ such that $\mathcal{S}_\zeta[\mathcal{A}_k](M)\subset \mathcal{S}_\zeta$ and let $$u_n=\frac{1}{k}\sum_{j=nk+1}^{(n+1)k} x_{m_j}.$$  If $k$ was chosen sufficiently large, then $$\sup_n \bigl(\sum_{i\in J} \|u_{i,n}\|_{X_i}^p\bigr)^{1/p}<\ee/2.$$  By our choice of $M$, $(A_pu_n)_{n=1}^\infty$ also satisfies $$\ee\leqslant \inf\{\|Au_n \|:  F\in \mathcal{S}_\zeta, x\in \text{co}(x_n: n\in F)\}.$$     Since $(A_ix_{i,n})_{n=1}^\infty$ is $\zeta$-weakly null, there exist $F\in \mathcal{S}_\zeta$ and positive scalars $(a_n)_{n\in F}$ summing to $1$ such that for each $i\in I\setminus J$, $\|\sum_{n\in F}a_nA_ix_{i,n}\|_{Y_i}<\frac{\ee/2}{1+|I\setminus J|}$.  We reach a contradiction as in $(i)$.

$(iii)$ Fix $(x_n)_{n=1}^\infty =((x_{i,n})_{i\in I})_{n=1}^\infty\subset B_{X_0}$ $\xi$-weakly null.   Fix $(\ee_n)_{n=1}^\infty$ such that $\sum_{n=1}^\infty \ee_n<1$.   Since for each $i\in I$, $(A_i x_{i,n})_{n=1}^\infty$ is $\zeta$-weakly null, we may recursively select  $F_1<F_2<\ldots$, $F_n\in \mathcal{S}_\zeta$, positive scalars $(a_j)_{j\in \cup_{n=1}^\infty F_n}$, and finite subsets $\varnothing=I_0\subset  I_2\subset \ldots$ of $I$ such that for each $n\in\nn$, $$\sum_{j\in F_n}a_j=1,$$ $$\max_{i\in I_{n-1}} \|A_i \sum_{j\in F_n} a_jx_{i,j}\|<\ee_n,$$ and $$\max_{i\in I\setminus I_n} \|\sum_{j\in F_n}a_j x_{i,j}\|<\ee_n.$$    Then since for each $n\in\nn$, $\cup_{m=n+1}^{2n} F_m\in \mathcal{S}_{\zeta+1}$ for each $n\in\nn$, we deduce that \begin{align*} \sup_{i\in I} \|A_0\frac{1}{n}\sum_{m=n+1}^{2n} \sum_{j\in F_m} a_j x_{i,j}\| & \leqslant \max\Biggl\{\max_{i\in I\setminus I_{2n}} \sum_{m=n+1}^{2n} \|\sum_{j\in F_n}a_j x_{i,j}\|, \\ & \max_{n<m\leqslant 2n} \Bigl\{\max_{i\in I_m\setminus I_{m-1}} \frac{1}{n}\|A_i\sum_{j\in F_m} a_jx_{i,j}\|+\sum_{m\neq l=n+1}^{2n} \|A_i\sum_{j\in F_l} a_jx_{i,j}\|\Bigr\}\Biggr\} \\ & \leqslant  \frac{1}{n}+\sum_{m=n+1}^\infty \ee_m \underset{n\to \infty}{\to}0. \end{align*}

$(iv)$ For the $\zeta=0$ case, $c_0=c_0(\mathbb{K})$ yields that $\mathfrak{G}_{\xi, 0}$ is not closed under $\mathfrak{G}_{\xi, 0}$-$c_0$ sums.   If $\zeta=\mu+1$, let $\mathcal{F}_n=\mathcal{A}_n[\mathcal{S}_\mu]$ and note that $X_{\mathcal{F}_n}$ is isomorhpic to $X_\mu$.  If $\zeta$ is a limit ordinal, let $(\zeta_n)_{n=1}^\infty$ be the sequence defining $\mathcal{S}_\zeta$ and let $\mathcal{F}_n=\mathcal{S}_{\zeta_n+1}$. In either the successor or limit case, $\mathcal{S}_\zeta=\{E: \exists n\leqslant E\in \mathcal{F}_n\}$.  Also, in both cases, $X_{\mathcal{F}_n}\in \textsf{wBS}_\zeta\subset \textsf{G}_{\xi, \zeta}$ for all $n\in\nn$.   Let $x_n=(e_n, e_n, e_n, \ldots, e_n, 0, 0, \ldots)$, where  $(e_i)_{i=1}^\infty$ simultaneously denotes the basis of each $X_{\mathcal{F}_n}$ and $e_n$ appears $n$ times.   Now fix $\varnothing\neq G \in \mathcal{S}_\zeta$,  let $m=\min G$, and note that $G\in \mathcal{F}_m$.  Fix $(a_n)_{n\in G}$ and note that the $m^{th}$ term of the sequence $\sum_{n\in G}a_nx_n$ is $\sum_{n\in G}a_ne_n$, which has norm $\sum_{n\in G}|a_n|$ in $X_{\mathcal{F}_m}$.  Thus $(x_n)_{n=1}^\infty$ is a weakly null,  isometric $\ell_1^\zeta$-spreading model. By $(iii)$, $(x_n)_{n=1}^\infty$ is $\xi$-weakly null (more precisely, we are using the fact that $\textsf{wBS}_{\zeta+1}$, and therefore $\textsf{wBS}_\xi$,  is closed under $\textsf{wBS}_\zeta$-$c_0$ sums).

$(v)$ Let $E_n=[e_i:i\leqslant n]\subset X_{\zeta, 2}$, which lies in $\textsf{V}$.    But, analogously to Stegall's example,  $\ell_\infty(E_n)$ contains a complemented copy of $X_{\zeta, 2}$.   More precisely, let $Z$ denote the subspace of $\ell_\infty(E_n)$ consisting of those $z=(\sum_{i=1}^n a_{i,n}e_i)_{n=1}^\infty$ such that for all $m<n\in\nn$ and $1\leqslant i\leqslant m$, $a_{i,m}=a_{i,n}$ (that is, the sequences $(a_{i,n})_{i=1}^\infty$ are each initial segments of a single scalar sequence $(a_i)_{i=1}^\infty$).  For $x=\sum_{i=1}^\infty a_ie_i\in X_{\xi,2}$, let $j(x)=(\sum_{i=1}^n a_ie_i)_{n=1}^\infty$, which is an isometric embedding of $X_{\xi,2}$ into $Z$. Moreover, $j$ is onto. Indeed, since the basis of $X_{\xi,2}$ is boundedly-complete and if $z=(\sum_{i=1}^n a_i e_i)_{n=1}^\infty\in Z$, then $$\sup_n \|\sum_{i=1}^n a_ie_i\|_{\xi,2}=\|z\|_{\ell_\infty(E_n)}<\infty,$$ and $x:=\sum_{i=1}^\infty a_ie_i\in X_{\xi,2}$ is such that $j(x)=z$.  Thus $Z$ is isometrically isomorphic to $X_{\xi,2}$. Let $\mathcal{U}$ be a free ultrafilter on $\nn$ and for $z=(\sum_{i=1}^n a_{i,n}e_i)_{n=1}^\infty \in \ell_\infty(E_n)$, let $$Pz=\underset{n\in \mathcal{U}}{\text{weak lim}} \sum_{i=1}^n a_{i,n}e_i\in X_{\xi,2}.$$ This limit is well-defined, since $(\sum_{i=1}^n a_{i,n}e_i)_{n=1}^\infty$ is bounded in the reflexive space $X_{\xi,2}$.    Then $Z$ is an isometric copy of $X_{\xi,2}$ which is $1$-complemented in $\ell_\infty(E_n)$ via the map $jP$.

\end{proof}

\begin{proposition} Fix $0<\zeta, \xi\leqslant \omega_1$.  \begin{enumerate}[(i)]\item  The class $\mathfrak{M}_{\xi, \zeta}$ is closed under $c_0$ and $\ell_1$ sums. \item The class $\mathfrak{M}_{\xi, \zeta}$ is not closed under $\ell_p$ sums for any $1<p<\infty$. \item The class $\mathfrak{M}_{\xi, \zeta}$ is not closed under $\ell_\infty$ sums. \end{enumerate}

\end{proposition}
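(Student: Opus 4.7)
The plan is to treat the three parts separately, recycling duality and a Stegall-type complementation already used in Proposition \ref{bt}.

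For (i), the key observation is that the dual of an $\ell_1$-sum is an $\ell_\infty$-sum and the dual of a $c_0$-sum is an $\ell_1$-sum, so in either case Fact \ref{fact} applies to exactly one of the two sequences involved. I would fix $(A_i:X_i\to Y_i)_{i\in I}\subset \mathfrak{M}_{\xi,\zeta}$ with $\sup_i\|A_i\|\leqslant 1$ and let $A$ denote the assembled operator. Given $\xi$-weakly null $(x_n)=((x_{i,n}))_n$ in the domain and $\zeta$-weakly null $(y_n^*)=((y_{i,n}^*))_n$ in the dual of the codomain, the coordinate projections are contractive linear and therefore preserve ordinal weak nullity; thus $A_i\in\mathfrak{M}_{\xi,\zeta}$ yields $\lim_n y_{i,n}^*(A_ix_{i,n})=0$ for each fixed $i$. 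I would then apply Fact \ref{fact} to whichever of the two sequences lives on the $\ell_1$-sum side to extract a cofinite $J\subset I$ on which $\sum_{i\in J}\|x_{i,n}\|$ (for $p=1$) or $\sum_{i\in J}\|y_{i,n}^*\|$ (for $p=c_0$) is uniformly smaller than $\varepsilon$. Splitting $y_n^*(Ax_n)=\sum_{i\in I\setminus J}(\cdots)+\sum_{i\in J}(\cdots)$ bounds the tail by a constant times $\varepsilon$ and the finite head by coordinate-wise convergence, which gives the conclusion.

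For (ii), I would take $\ell_p=(\oplus_{i\in\nn}\mathbb{K})_{\ell_p}$; each $I_\mathbb{K}$ lies trivially in $\mathfrak{M}_{\xi,\zeta}$, while the assembled operator is $I_{\ell_p}$. Since $\mathfrak{M}_{\xi,\zeta}\subset \mathfrak{M}_{1,1}$ whenever $\xi,\zeta\geqslant 1$ by Corollary \ref{dpcor}, it is enough to show $\ell_p\notin \textsf{M}_{1,1}$. The witness is $(e_n)\subset \ell_p$ paired with $(e_n^*)\subset \ell_q$: Ces\`aro means of $(e_n)$ have norm $n^{1/p-1}\to 0$, so no subsequence is an $\ell_1^1$-spreading model and $(e_n)$ is $1$-weakly null in $\ell_p$; the same holds for $(e_n^*)$ in $\ell_q$; yet $e_n^*(e_n)=1\not\to 0$.

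Part (iii) I would attack by a Stegall-type complementation mimicking Proposition \ref{bt}(v). Let $E_n=\ell_2^n$, so every $I_{E_n}$ is finite-rank and hence in $\mathfrak{M}_{\xi,\zeta}$, while the assembled operator is $I_{\ell_\infty(E_n)}$. I would realize $\ell_2$ as a $1$-complemented subspace of $\ell_\infty(E_n)$: the map $(a_i)_{i=1}^\infty\mapsto (\sum_{i\leqslant n}a_ie_i)_{n=1}^\infty$ is an isometric embedding of $\ell_2$ onto the closed subspace $Z$ of compatible sequences, and a free ultrafilter $\mathcal{U}$ on $\nn$ defines a norm-one projection onto $Z$ via $z\mapsto \underset{n\in \mathcal{U}}{\text{weak\ lim}}\, P_nz$, which is legitimate by reflexivity of $\ell_2$.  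Because the ideal property forces complemented subspaces of members of $\textsf{M}_{\xi,\zeta}$ to lie in $\textsf{M}_{\xi,\zeta}$, and because $\ell_2\notin \textsf{M}_{\xi,\zeta}$ by the argument of (ii), I conclude $\ell_\infty(E_n)\notin \textsf{M}_{\xi,\zeta}$.  I expect the main technical subtlety to sit in (i), in coordinating the ordinal weak nullity on both sides of the pairing with the cofinite truncation supplied by Fact \ref{fact}; the remaining two parts are essentially ready-made, requiring only the observation that the classical obstructions to membership in the Dunford--Pettis ideal survive at every level of the ordinal hierarchy.
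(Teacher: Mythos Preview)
Your proposal is correct and follows essentially the same approach as the paper: for (i) the paper likewise invokes Fact \ref{fact} on the $\ell_1$ side of the pairing (domain for $\ell_1$ sums, dual for $c_0$ sums) together with coordinate-wise membership in $\mathfrak{M}_{\xi,\zeta}$; for (ii) the paper uses $\ell_p=\ell_p(\mathbb{K})\notin\textsf{M}_{1,1}$; and for (iii) the paper cites Stegall's example $\ell_\infty(\ell_2^n)$ with its complemented copy of $\ell_2$, which is exactly your construction.
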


\begin{proof} Item $(i)$ follows from inessential modifications of the fact that the class of spaces with the Dunford-Pettis property are closed under $c_0$ and $\ell_1$ sums, using Fact \ref{fact}.

Item $(ii)$ follows from the fact that $\ell_p=\ell_p(\mathbb{K})$, $1<p<\infty$, does not lie in $\textsf{M}_{1, 1}$, while     $\mathbb{K}\in \textsf{V}$.

Item $(iii)$ again follows from Stegall's example, which is an $\ell_\infty$ sum of Schur spaces which contains a complemented copy of $\ell_2$, and therefore does not lie in $\textsf{M}_{1,1}$.

\end{proof}

\section{Space ideals}

\subsection{Hereditary properties}

Let us say a Banach space $X$ is \emph{hereditarily} $\textsf{M}_{\xi, \zeta}$ provided that any subspace $Y$ of $X$ lies in $\textsf{M}_{\xi, \zeta}$.     For convenience, let us say a sequence $(x_n)_{n=1}^\infty$ in a Banach space is a $c_0^{\omega_1}$-\emph{spreading model} provided that it is equivalent to the canonical $c_0$ basis.

\begin{proposition} For $0<\xi, \zeta\leqslant \omega_1$, $X$ is hereditarily $\textsf{\emph{M}}_{\xi, \zeta}$ if and only if every seminormalized, $\xi$-weakly null sequence in $X$ has a subsequence which is a $c_0^\zeta$-spreading model.

\end{proposition}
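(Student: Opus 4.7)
The plan is to prove the two implications separately, with the ``only if'' direction argued contrapositively.

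\textbf{``If'' direction.} Assume every seminormalized $\xi$-weakly null sequence in $X$ has a $c_0^\zeta$-spreading model subsequence, let $Y\subset X$, and suppose for contradiction that $(y_n)\subset Y$ is $\xi$-weakly null, $(y_n^*)\subset Y^*$ is $\zeta$-weakly null, and $\inf_n |y_n^*(y_n)|\geqslant c>0$. Then $(y_n)$ is seminormalized, so after extraction it is a $c_0^\zeta$-spreading model with constant $C$; also arrange by diagonal extraction that $|y_i^*(y_j)|\leqslant 4^{-\max\{i,j\}}$ for $i\neq j$, using that both sequences are weakly null. For $\zeta<\omega_1$ apply Theorem \ref{7surv}(ii) and Remark \ref{simul} to $(y_n^*)$ to obtain, for any prescribed $\varepsilon>0$, a set $F\in\mathcal{S}_\zeta$ with $\min F$ large and positive coefficients $(a_i)_{i\in F}$ summing to $1$ with $\|\sum_{i\in F} a_i y_i^*\|<\varepsilon$. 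Setting $\varepsilon_i=\operatorname{sgn} y_i^*(y_i)$ and $v=\sum_{i\in F}\varepsilon_i y_i$, the bound $\|v\|\leqslant C$ gives $|\sum_i a_i y_i^*(v)|\leqslant \varepsilon C$; but expanding isolates a diagonal contribution $\sum_i a_i|y_i^*(y_i)|\geqslant c$ which dominates an off-diagonal error small when $\min F$ is large, yielding $c\leqslant c/2$, a contradiction. The case $\zeta=\omega_1$ is Elton's classical argument via $c_0^{\omega_1}$-spreading model $\cong c_0$ and Schur's property of $\ell_1=c_0^*$.

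\textbf{``Only if'' direction.} Contrapositively, let $(x_n)\subset X$ be seminormalized and $\xi$-weakly null with no $c_0^\zeta$-spreading model subsequence; pass to a basic subsequence with basis constant $K$, set $Y=\overline{\text{span}}(x_n)$, and let $(x_n^*)\subset Y^*$ be the biorthogonal functionals. I will show some subsequence of $(x_n^*)$ is $\zeta$-weakly null; since $x_n^*(x_n)=1$, this witnesses $Y\notin \textsf{M}_{\xi,\zeta}$, contradicting the hereditary hypothesis. Biorthogonality gives weak-star nullity of $(x_n^*)$ in $Y^*$. The key claim is that no subsequence of $(x_n^*)$ can be an $\ell_1^\zeta$-spreading model: if $(x_{n_{k_j}}^*)$ had constant $\delta$, then on each finite-dimensional $Z_F:=[x_{n_{k_j}}:j\in F]$ with $F\in\mathcal{S}_\zeta$, a Hahn--Banach plus biorthogonal-projection argument (with norm loss at most $2K$) shows $(x_{n_{k_j}}^*|_{Z_F})_{j\in F}$ is an $\ell_1^{|F|}$-basis of $Z_F^*$ with lower constant $\delta/(2K)$ and upper constant $2K$; dualizing in finite dimensions yields $\|\sum_{j\in F}\varepsilon_j x_{n_{k_j}}\|\leqslant 2K/\delta$ for all signs, making $(x_{n_{k_j}})$ a $c_0^\zeta$-spreading model, contradicting the standing assumption on $(x_n)$.

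The main obstacle is to bridge from weak-star nullity of $(x_n^*)$ in $Y^*$ to weak nullity, which is automatic only when $(x_n)$ is shrinking. I plan to handle this by applying Rosenthal's $\ell_1$ theorem to $(x_n^*)\subset Y^*$: an $\ell_1$-equivalent subsequence would in particular be an $\ell_1^\zeta$-spreading model, ruled out by the duality above, so a weak-Cauchy subsequence must exist, and combined with weak-star nullity this is weakly null. A more self-contained alternative exploits Lemma \ref{c0}(ii), which under the no-$c_0^\zeta$-spreading-model hypothesis supplies successive $H_n\in MAX(\mathcal{S}_\zeta)$ with $\|\sum_{i\in H_n} x_i\|$ unbounded; these produce Hahn--Banach functionals which, after biorthogonal projection onto $H_n$-supports, can be normalized and assembled into a $\zeta$-weakly null sequence witnessing the failure of $\textsf{M}_{\xi,\zeta}$ more directly.
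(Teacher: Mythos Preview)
Your ``if'' direction is fine and is essentially the contrapositive of the paper's argument: the paper shows directly that the biorthogonal pairing forces $(y_n^*)$ to be an $\ell_1^\zeta$-spreading model (hence not $\zeta$-weakly null), while you assume $\zeta$-weak nullity and test against the $c_0^\zeta$-block $v$. Same idea, same estimates.

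The ``only if'' direction, however, has a genuine gap at the step you label ``Hahn--Banach plus biorthogonal-projection argument (with norm loss at most $2K$).'' You are claiming that the $\ell_1^\zeta$ lower bound
\[
\Bigl\|\sum_{j\in F} a_j\, x_{n_{k_j}}^*\Bigr\|_{Y^*}\geqslant \delta\sum_{j\in F}|a_j|
\]
passes, up to the factor $2K$, to the restricted functionals in $Z_F^*$. Concretely, if $y=\sum_n c_n x_n\in B_Y$ witnesses the lower bound, you need $z=\sum_{j\in F} c_{n_{k_j}} x_{n_{k_j}}\in Z_F$ to satisfy $\|z\|\leqslant 2K$. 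But $z$ is obtained from $y$ by the coordinate projection onto a \emph{non-interval} set of basis indices, and for a merely conditional basic sequence this projection is \emph{not} bounded by $2K$ (the basis constant controls only interval projections). So the inequality $\|\sum_{j\in F} a_j (x_{n_{k_j}}^*|_{Z_F})\|_{Z_F^*}\geqslant (\delta/2K)\sum|a_j|$ is unjustified, and hence so is the dualized bound $\|\sum_{j\in F}\varepsilon_j x_{n_{k_j}}\|\leqslant 2K/\delta$. Your Rosenthal step and your ``alternative'' via Lemma~\ref{c0}(ii) both rest on the same flawed projection idea.

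This is exactly the point at which the paper invokes an external, non-trivial result: \cite[Theorem~3.9]{AG} produces an infinite $M$ such that for \emph{every} $L\in[M]$ the \emph{restricted} functionals $(x_n^*|_{Y_M})_{n\in L}$ fail to be an $\ell_1^\zeta$-spreading model in $Y_M^*$. The Argyros--Gasparis argument is a genuine combinatorial dichotomy (of Ramsey type) relating $c_0^\zeta$-behaviour of $(x_n)$ to $\ell_1^\zeta$-behaviour of the biorthogonal functionals after passing to a suitable $M$; it cannot be replaced by the elementary duality you propose. (For $\zeta=\omega_1$ the paper uses Elton's extraction instead, which is again a non-trivial Ramsey-type input.) If you want a self-contained proof, you would need to reproduce that dichotomy rather than appeal to a basis-projection bound that does not hold.
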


\begin{proof} Suppose that every normalized, $\xi$-weakly null sequence in $X$ has a subsequence which is a $c_0^\zeta$-spreading model. Let $Y$ be any subspace of $X$. Suppose that $(y_n)_{n=1}^\infty\subset Y$ is $\xi$-weakly null,  $(y^*_n)_{n=1}^\infty\subset Y^*$ is weakly null, and $\inf_n |y^*_n(y_n)|=\ee>0$.  By passing to a subsequence, we may assume $(y_n)_{n=1}^\infty$ is a $c_0^\zeta$-spreading model and $\sum_{n=1}^\infty \sum_{n\neq m=1}^\infty |y^*_n(y_m)|<\ee/2$.  Then if $C=\sup \{\|\sum_{n\in F}\ee_n y_n\|: F\in \mathcal{S}_\zeta, |\ee_n|=1\}$, then $$\inf\{\|\sum_{n\in F} a_ny_n^*\|: F\in \mathcal{S}_\zeta, \sum_{n\in F}|a_n|=1\} \geqslant \frac{\ee}{2C}.$$   This yields that $(y^*_n)_{n=1}^\infty$ is not $\zeta$-weakly null, and $Y\in \textsf{M}_{\xi, \zeta}$.

For the converse in  the $\zeta<\omega_1$ case, suppose that $(x_n)_{n=1}^\infty$ is a  seminormalized, $\xi$-weakly null sequence in $X$ having no subsequence which is a $c_0^\zeta$-spreading model.  Assume that $(x_n)_{n=1}^\infty$ is a basis for $Y=[x_n:n\in\nn]$ and let $(x^*_n)_{n=1}^\infty \subset Y^*$ denote the coordinate functionals. For $M=(m_n)_{n=1}^\infty\in [\nn]$, let $Y_M=[x_{m_n}:n\in\nn]$.  By hypothesis, there does not exist $L\in[\nn]$ such that $(x_n)_{n\in L}$ is a $c_0^\zeta$-spreading model. By \cite[Theorem $3.9$]{AG}, there exists $M\in [\nn]$ such that for each $L\in [M]$, $(x^*_n|_{Y_M})_{n\in L}$ is not an $\ell_1^\zeta$-spreading model.   Then $(x^*_n|_{Y_M})_{n\in M}$ is $\zeta$-weakly null in $Y_M^*$.  Since $(x_n)_{n\in M}$ is $\xi$-weakly null in $Y_M$ and $x^*_n(x_n)=1$ for all $n\in M$, $Y_M\in \complement \textsf{M}_{\xi, \zeta}$. 

For the $\zeta=\omega_1$ case of the converse, this is an inessential modification of Elton's characterization of the hereditary Dunford-Pettis property.  For the sake of completenesss, we record the argument as given in \cite[Page 28]{Diestel}.    Suppose that $(x_n)_{n=1}^\infty\subset X$ is $\xi$-weakly null having no subsequence equivalent to the $c_0$ basis. By passing to a subsequence, we may assume $(x_n)_{n=1}^\infty$ is basic with coordinate functionals $(x^*_n)_{n=1}^\infty$ and for any subsequence $(y_n)_{n=1}^\infty$ of $(x_n)_{n=1}^\infty$ and $(a_n)_{n=1}^\infty\in \ell_\infty\setminus c_0$, $\lim_n \|\sum_{i=1}^n a_iy_i\|=\infty$. Now if $P_k:[x_n:n\in\nn]\to [P_n: n\leqslant k]$ denotes the basis projections, for any $x^{**}\in X^{**}$, $$\sup_n \|\sum_{i=1}^n x^{**}(x_i^*)x_i\| \leqslant \|x^{**}\|\sup_n \|P_n\|<\infty.$$  Therefore $(x^{**}(x^*_n))_{n=1}^\infty\in c_0$, and $(x^*_n)_{n=1}^\infty$ is $\omega_1$-weakly null.   Since $x^*_n(x_n)=1$ for all $n\in\nn$, $[x_n: n\in\nn]\in \complement \textsf{M}_{\xi, \omega_1}$.

\end{proof}

\begin{rem}\upshape For each $0\leqslant \xi<\omega_1$, $X_{\omega^\xi}$ is hereditarily $\textsf{M}_{\omega^\xi, \omega_1}$, since every seminormalized, weakly null sequence in $X_{\omega^\xi}$ has either a subsequence which is an $\ell_1^{\omega^\xi}$-spreading model or a subsequence equivalent to the canonical $c_0$ basis.    

In \cite{BCM}, for each $0\leqslant \xi<\omega_1$, a reflexive Banach space $\mathfrak{X}^{\omega^\xi}_{0,1}$ with $1$-unconditional basis was defined such that every seminormalized, weakly null sequence has a subsequence which is either an $\ell_1^{\omega^\xi}$-spreading model or a $c_0^1$-spreading model, and both alternatives occur in every infinite dimensional subspace.   Thus $\mathfrak{X}^{\omega^\xi}_{0,1}$ furnish reflexive examples of members of $\textsf{M}_{\omega^\xi, 1}$.

\end{rem}

For $0\leqslant \zeta, \xi\leqslant \omega_1$.    Let us say that $X$ is \emph{hereditary by quotients} $\textsf{M}_{\xi, \zeta}$ if every quotient of $X$ is a member of $\textsf{M}_{\xi, \zeta}$.

\begin{theorem} Fix $0<\gamma\leqslant \omega_1$. For a Banach space $X$, the following are equivalent. \begin{enumerate}[(i)]\item $X^*\in \textsf{\emph{V}}_\gamma$. \item $X^*$ is hereditarily $\textsf{\emph{M}}_{\gamma, \omega_1}$. \item $X$ is hereditary by quotients $\textsf{\emph{M}}_{\omega_1, \gamma}$. \item $X\in \textsf{\emph{M}}_{\omega_1, \gamma}$ and $\ell_1\not\hookrightarrow X$.  \end{enumerate}

\end{theorem}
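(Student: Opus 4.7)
The plan is to organize the proof around the analytic core (i) $\Leftrightarrow$ (iv), deduce (i) $\Leftrightarrow$ (ii) by combining the preceding Proposition with a Bessaga--Pelczynski observation on $c_0$ subspaces of duals, and invoke Theorem 1.4(ii) from the introduction for (ii) $\Leftrightarrow$ (iii). The trivial directions come first: (i) $\Rightarrow$ (ii) and (i) $\Rightarrow$ (iii) follow because $\textsf{V}_\gamma$ is hereditary (a $\gamma$-weakly null sequence in a subspace of $X^*$ is also $\gamma$-weakly null in $X^*$, hence norm null), so every subspace $Y \subset X^*$ and every $(X/E)^* = E^\perp$ inherits $\textsf{V}_\gamma$, and membership in $\textsf{V}_\gamma$ forces membership in the relevant $\textsf{M}$-class trivially as the pairing condition is vacuous on a norm-null sequence. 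For (i) $\Rightarrow$ (iv), the containment $\mathfrak{V}_\gamma^{\text{dual}} \subset \mathfrak{M}_{\omega_1,\gamma}$ noted earlier yields $X \in \textsf{M}_{\omega_1,\gamma}$ immediately, and if $\ell_1 \hookrightarrow X$, Pelczynski's theorem produces $L_1[0,1] \hookrightarrow X^*$, whence Khintchine's inequality applied to the Rademacher system gives a seminormalized Cesaro-null (hence $1$-weakly null, hence $\gamma$-weakly null) sequence in $X^*$, contradicting $X^* \in \textsf{V}_\gamma$.

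For (iv) $\Rightarrow$ (i), the heart of the argument, I would proceed by contradiction: suppose $(x_n^*) \subset X^*$ is seminormalized and $\gamma$-weakly null, and pick witnesses $x_n \in B_X$ with $x_n^*(x_n) \geq 1/2$. Since $\ell_1 \not\hookrightarrow X$, Rosenthal's $\ell_1$ theorem yields a weakly Cauchy subsequence of $(x_n)$, with some weak-$*$ limit $x^{**} \in X^{**}$. Because $(x_n^*)$ is weakly null in $X^*$ we have $x^{**}(x_n^*) \to 0$, and since $\lim_m x_n^*(x_m) = x^{**}(x_n^*)$ for each fixed $n$, a diagonal extraction produces a strictly increasing $m(n)$ with $|x_n^*(x_{m(n)})| < 1/4$. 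Setting $z_n := x_n - x_{m(n)}$ gives a bounded sequence that is weakly null in $X$ (two subsequences of a weakly Cauchy sequence share the same weak-$*$ limit in $X^{**}$), yet $|x_n^*(z_n)| \geq 1/4$, contradicting $X \in \textsf{M}_{\omega_1,\gamma}$.

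For (ii) $\Rightarrow$ (i), I would rephrase (ii) via the preceding Proposition as: every seminormalized $\gamma$-weakly null sequence in $X^*$ admits a subsequence equivalent to the canonical $c_0$ basis. Note first that (ii) precludes $\ell_2 \hookrightarrow X^*$, since any such copy would lie in $\textsf{M}_{\gamma,\omega_1}$ by (ii), but $\ell_2 \notin \textsf{M}_{\gamma,\omega_1}$ for $\gamma \geq 1$ (the basis is $\gamma$-weakly null in $\ell_2$ and weakly null in $\ell_2^* = \ell_2$ with $e_n(e_n) = 1 \not\to 0$). Invoking the classical Bessaga--Pelczynski result for dual spaces, $c_0 \hookrightarrow X^*$ would force $\ell_\infty \hookrightarrow X^*$ and hence $\ell_2 \hookrightarrow X^*$ via the isomorphic embedding $\ell_2 \hookrightarrow C[0,1] \hookrightarrow \ell_\infty$, which has just been excluded. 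Thus $c_0 \not\hookrightarrow X^*$, so no $c_0$-equivalent sequence exists in $X^*$; combined with the Proposition's characterization, this forces no seminormalized $\gamma$-weakly null sequence to exist in $X^*$ at all, which is precisely $X^* \in \textsf{V}_\gamma$. The companion equivalence (ii) $\Leftrightarrow$ (iii) is Theorem 1.4(ii) from the introduction, closing the cycle.

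The main obstacle is the diagonal extraction step in (iv) $\Rightarrow$ (i): one must simultaneously ensure $m(n) \to \infty$ strictly increasing and $|x_n^*(x_{m(n)})|$ small, which requires carefully blending the $n$-wise weak-$*$ convergence $x_n^*(x_m) \to x^{**}(x_n^*)$ (as $m \to \infty$) with the vanishing of the limit $x^{**}(x_n^*)$ itself (as $n \to \infty$). A secondary delicate point is the chain $c_0 \hookrightarrow X^* \Rightarrow \ell_\infty \hookrightarrow X^* \Rightarrow \ell_2 \hookrightarrow X^*$ in the (ii) $\Rightarrow$ (i) argument, which exploits that $X^*$ is a \emph{dual} space for the first step and the universality of $\ell_\infty$ for separable Banach spaces for the second.
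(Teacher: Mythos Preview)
Your argument has one genuine gap and one redundant (though correct and interesting) detour.

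\textbf{The gap.} Your appeal to ``Theorem 1.4(ii) from the introduction'' for $(ii)\Leftrightarrow(iii)$ is circular: that item of the introduction is precisely the equivalence $(ii)\Leftrightarrow(iii)$ of the present theorem, announced there and proved here. Since you only establish $(i)\Rightarrow(iii)$ directly, your cycle never returns from $(iii)$. The fix is the paper's one-line $(iii)\Rightarrow(iv)$: if $\ell_1\hookrightarrow X$ then $\ell_2$ is a quotient of $X$ (as $\ell_2$ is a quotient of $\ell_1$), and $\ell_2\notin\textsf{M}_{\omega_1,\gamma}$, contradicting $(iii)$; and $X$ is a quotient of itself, so $X\in\textsf{M}_{\omega_1,\gamma}$. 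With this in hand your implications $(i)\Rightarrow(ii)$, $(i)\Rightarrow(iii)$, $(iii)\Rightarrow(iv)$, $(iv)\Rightarrow(i)$ already give the full equivalence, and your separate $(ii)\Rightarrow(i)$ and $(i)\Rightarrow(iv)$ become superfluous.

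\textbf{Comparison with the paper.} The paper runs the clean cycle $(i)\Rightarrow(ii)\Rightarrow(iii)\Rightarrow(iv)\Rightarrow(i)$. Its $(ii)\Rightarrow(iii)$ is the immediate observation that $(X/N)^*=N^\perp\leqslant X^*$, so each quotient of $X$ lies in $\textsf{M}_{\gamma,\omega_1}^{\text{dual}}\subset\textsf{M}_{\omega_1,\gamma}$. Your $(ii)\Rightarrow(i)$ via the preceding Proposition together with Bessaga--Pe\l czy\'nski ($c_0\hookrightarrow X^*\Rightarrow\ell_\infty\hookrightarrow X^*\Rightarrow\ell_2\hookrightarrow X^*$, which violates hereditary $\textsf{M}_{\gamma,\omega_1}$) is a correct and genuinely different argument, but it is heavier machinery than needed and, once the cycle is closed, unnecessary. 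Your $(iv)\Rightarrow(i)$ is essentially the paper's: Rosenthal to get a weakly Cauchy subsequence, then form differences to produce a weakly null sequence in $X$ on which the $\gamma$-weakly null functionals stay bounded away from zero. The paper's extraction is slightly cleaner (pass to a subsequence so that $|x^*_n(x_m)|<\varepsilon$ for $m<n$, then take $y_n=x_{2n}-x_{2n-1}$), avoiding your two-limit diagonal step.
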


\begin{proof}$(i)\Rightarrow (ii)$ Assume $(i)$ holds. If $(x^*_n)_{n=1}^\infty\subset X^*$ is $\gamma$-weakly null, it is norm null.   Thus for any subspace $Y$ of $X^*$, any $\gamma$-weakly null $(y_n)_{n=1}^\infty\subset Y$, and any bounded sequence $(y^*_n)_{n=1}^\infty\subset Y^*$, $\lim_n y^{*}_n(y_n)=0$.  

%$(ii)\Rightarrow (i)$ Suppose that $X^*\in \complement \textsf{V}_\gamma$ and $X^*$ is hereditarily $\textsf{M}_{\gamma, \omega_1}$.  Since $X^*\in \complement \textsf{V}_\gamma$, there exists a seminormalized, $\gamma$-weakly null sequence $(x^*_n)_{n=1}^\infty$. Since $X^*$ is hereditarily $\textsf{M}_{\gamma, \omega_1}$, $(x^*_n)_{n=1}^\infty$ has a subsequence equivalent to the canonical $c_0$ basis.    Then $X^*$ contains an isomorph of $\ell_\infty$, and therefore of $\ell_2\in \complement \textsf{M}_{1,1}$.  This contradicts the assumption that $  X^*$ is hereditarily $\textsf{M}_{\omega_1, \gamma}$.    

$(ii)\Rightarrow (iii)$ Assume $(ii)$ holds.   For any quotient $X/N$ of $X$, $(X/N)^*=N^\perp\leqslant X^*$, so $X/N\in \textsf{M}_{\gamma, \omega_1}^\text{dual}\subset \textsf{M}_{\omega_1, \gamma}$. 

%$(i),(ii)\Rightarrow (iii)$ If $\ell_1\hookrightarrow X$, then $\ell_2$ is a isomorphic to quotient of $X$, and therefore to a subspace of $X^*$.  If $X\in \complement \textsf{M}_{\omega_1, \gamma}$, then there exist a $\gamma$-weakly null sequence $(x^*_n)_{n=1}^\infty$ in $X^*$ and a weakly null sequence $(x_n)_{n=1}^\infty$ in $X$ such that $\inf_n |x^*_n(x_n)|>0$.    Then with $Y=[x^*_n:n\in\nn]$ and $y_n^*(y)=y(x_n)$ for all $y\in Y$, $(y_n)_{n=1}^\infty \subset Y$ and $(y^*_n)_{n=1}^\infty\subset Y^*$ witness that $X^*$ is not hereditarily $\textsf{M}_{\gamma, \omega_1}$.     By contraposition, $(i), (ii)\Rightarrow (iii)$. 

$(iii)\Rightarrow (iv)$ Assume $(iii)$ holds. If $\ell_1\hookrightarrow X$, then $\ell_2$ is a quotient of $X$, which is a contradiction.  Thus $\ell_1\not\hookrightarrow X$.    Since $X$ is a quotient of itself, $X\in \textsf{M}_{\omega_1, \gamma}$. 

$(iv)\Rightarrow (i)$ Assume $(iv)$ and $\neg (i)$.   Since $X^*\in \complement \textsf{V}_\gamma$, there exists a seminormalized, $\gamma$-weakly null sequence $(x^*_n)_{n=1}^\infty$ in $X^*$.      Fix  $0<\ee<\frac{1}{2}\inf_n \|x^*_n\|$. For each $n\in\nn$, we may fix $x_n\in B_X$ such that $x^*_n(x_n)>2\ee$.   By passing to a subsequence and relabeling, we may assume that for all $m<n$, $|x^*_n(x_m)|<\ee$.   Since $\ell_1\not\hookrightarrow X$, we may also assume that $(x_n)_{n=1}^\infty$ is weakly Cauchy.  Then with $y^*_n=x_{2n}^*$ and $y_n=x_{2n}-x_{2n-1}$, $(y^*_n)_{n=1}^\infty$ is $\gamma$-weakly null, $(y_n)_{n=1}^\infty$ is weakly null, and $\inf_n |y^*_n(y_n)|\geqslant \ee$.

\end{proof}

\subsection{Distinctness of space ideals}

We showed in Section $3$ that for any $0\leqslant \zeta<\xi\leqslant \omega_1$ and $0\leqslant \alpha<\beta\leqslant \omega_1$, $\mathfrak{G}_{\xi, \zeta}=\mathfrak{G}_{\beta, \alpha}$ if and only if $\zeta=\alpha$ and $\xi=\zeta$. Our next goal is to show that this is not true for the space ideals, due to the idempotence of  identity operators.  We recall the result from \cite{CN} that a Banach space $X$ lies in $\textsf{V}_\zeta$ for some $\omega^\xi<\zeta<\omega^{\xi+1}$ if and only if $X$ lies in $\textsf{V}_\zeta$ for every $\omega^\xi<\zeta<\omega^{\xi+1}$, which is a consequence of considering blocks of blocks. We prove analogous results below.  We need the following result for blocks of blocks.

\begin{proposition} Let $X,Y,Z$ be operators, $\alpha, \beta, \zeta$ countable ordinals, and assume $B\in \mathfrak{G}_{\beta+\zeta, \zeta}$ and $A\in \mathfrak{G}_{\alpha+\zeta, \zeta}$.  Then $AB\in \mathfrak{G}_{\alpha+\beta+\zeta, \zeta}$. 
\label{gis}
\end{proposition}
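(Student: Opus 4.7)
The plan is to reduce the composition statement to a one-step chain of the two hypotheses by first upgrading the hypothesis on $B$ via Corollary \ref{addition}.

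First, observe that Corollary \ref{addition}, applied with the pair $(\beta+\zeta, \zeta)$ in place of $(\xi, \zeta)$ in the statement of that corollary, and with the fixed ordinal $\alpha$, yields the inclusion
$$\mathfrak{G}_{\beta+\zeta, \zeta} \subset \mathfrak{G}_{\alpha+(\beta+\zeta), \alpha+\zeta} = \mathfrak{G}_{\alpha+\beta+\zeta, \alpha+\zeta},$$
where the equality uses associativity of ordinal addition. So from $B \in \mathfrak{G}_{\beta+\zeta, \zeta}$ we immediately conclude that $B \in \mathfrak{G}_{\alpha+\beta+\zeta, \alpha+\zeta}$.

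Second, fix any $(\alpha+\beta+\zeta)$-weakly null sequence $(x_n)_{n=1}^\infty \subset X$. The previous step gives that $(Bx_n)_{n=1}^\infty \subset Y$ is $(\alpha+\zeta)$-weakly null, and then the hypothesis $A \in \mathfrak{G}_{\alpha+\zeta, \zeta}$ yields that $(ABx_n)_{n=1}^\infty \subset Z$ is $\zeta$-weakly null. Since $(x_n)_{n=1}^\infty$ was arbitrary, $AB \in \mathfrak{G}_{\alpha+\beta+\zeta, \zeta}$, as desired.

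The main obstacle — the nontrivial combinatorial step — has already been absorbed into Corollary \ref{addition}, which itself was deduced from Corollary \ref{block of block}. The genuine content lies in that earlier argument: if one is handed a sequence whose image has too-complex an $\ell_1$-spreading structure, one can produce a convex block sequence of it that simultaneously witnesses both the correct reduced weak-nullity order on the domain side and an $\ell_1$-spreading model on the image side. Once that decomposition is in hand, the composition result for the $\mathfrak{G}_{\cdot,\cdot}$ classes is a formal consequence and no further Ramsey or Schreier-family manipulations are required.
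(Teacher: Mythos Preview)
Your proof is correct and follows exactly the same approach as the paper: first invoke Corollary \ref{addition} to upgrade $B\in\mathfrak{G}_{\beta+\zeta,\zeta}$ to $B\in\mathfrak{G}_{\alpha+\beta+\zeta,\alpha+\zeta}$, then chain with the hypothesis on $A$. In fact your citation of Corollary \ref{addition} is the right one; the paper's own proof cites ``Corollary \ref{avp}'' at that step, which appears to be a mislabel, since \ref{avp} is a remark recording only the monotonicity $\mathfrak{G}_{\xi,\zeta}\subset\mathfrak{G}_{\beta,\alpha}$ for $\beta\leqslant\xi$, $\zeta\leqslant\alpha$, and the needed inclusion $\mathfrak{G}_{\beta+\zeta,\zeta}\subset\mathfrak{G}_{\alpha+\beta+\zeta,\alpha+\zeta}$ is precisely the content of Corollary \ref{addition}.
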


\begin{proof} By Corollary \ref{avp}, $B\in \mathfrak{G}_{\alpha+\beta+\zeta, \alpha+\zeta}$. Thus if $(x_n)_{n=1}^\infty$ is $\alpha+\beta+\zeta$-weakly null, it is sent by $B$ to a sequence which is $\alpha+\zeta$-weakly null, which is sent by $A$ to a sequence which is $\zeta$-weakly null. 

\end{proof}

\begin{corollary} For a Banach space $X$ and $\zeta<\omega_1$, let $\textsf{\emph{g}}_\zeta(X)=\omega_1$ if $X\in \textsf{\emph{G}}_{\omega_1, \zeta}$, and otherwise let $\textsf{\emph{g}}_\zeta(X)$ be the minimum ordinal $\xi<\omega_1$ such that $X\in \complement \textsf{\emph{G}}_{\xi+\zeta, \zeta}$ (noting that such a $\xi$ must exist). Then there exists $\gamma\leqslant \omega_1$ such that $\textsf{\emph{g}}_\zeta(X)=\omega^\gamma$. 

\label{nn}
\end{corollary}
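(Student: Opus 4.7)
The plan is to show that $\xi := \textsf{g}_\zeta(X)$ is additively indecomposable; since the countable additively indecomposable ordinals are precisely $0$ and the ordinals $\omega^\gamma$ with $\gamma < \omega_1$, this gives the desired form (with the convention $\gamma = \omega_1$ in case $\textsf{g}_\zeta(X) = \omega_1$). If $\xi = \omega_1$ we are done, so assume $\xi < \omega_1$. First I would rule out $\xi = 0$: by Remark \ref{avp}$(i)$, $\mathfrak{G}_{\zeta, \zeta} = \mathfrak{L}$, so every Banach space lies in $\textsf{G}_{0 + \zeta, \zeta}$, and $\textsf{g}_\zeta(X) = 0$ is impossible.

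The core of the argument is to combine Proposition \ref{gis} with the idempotence $I_X = I_X \circ I_X$. Suppose, for contradiction, that $\xi$ is not additively indecomposable; then one may choose $0 < \alpha, \beta < \xi$ with $\alpha + \beta = \xi$. By the minimality clause in the definition of $\xi$, both $X \in \textsf{G}_{\alpha + \zeta, \zeta}$ and $X \in \textsf{G}_{\beta + \zeta, \zeta}$ hold, i.e.\ $I_X \in \mathfrak{G}_{\alpha + \zeta, \zeta} \cap \mathfrak{G}_{\beta + \zeta, \zeta}$. I would then apply Proposition \ref{gis} with $A = B = I_X$ to obtain
$$I_X = I_X \circ I_X \in \mathfrak{G}_{\alpha + \beta + \zeta, \zeta} = \mathfrak{G}_{\xi + \zeta, \zeta},$$
contradicting $X \notin \textsf{G}_{\xi + \zeta, \zeta}$. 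Hence $\xi$ is additively indecomposable and, being a positive countable ordinal, has the form $\omega^\gamma$ for some $\gamma < \omega_1$, as required.

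There is no real obstacle in this argument; the entire content is absorbed into Proposition \ref{gis}, which translates composition of operators into addition of their space-level indices, so that the idempotence of $I_X$ forces $\xi$ to be closed under sums $\alpha + \beta$ with $\alpha, \beta < \xi$, that is, additively indecomposable. The only small point to be careful about is translating the characterization "not of the form $\omega^\gamma$" for a nonzero ordinal into the existence of a decomposition $\xi = \alpha + \beta$ with $0 < \alpha, \beta < \xi$, which is the standard characterization of the additively indecomposable countable ordinals.
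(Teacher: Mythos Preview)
Your proof is correct and follows essentially the same route as the paper's: both arguments apply Proposition \ref{gis} with $A=B=I_X$ to show that $\alpha,\beta<\textsf{g}_\zeta(X)$ implies $\alpha+\beta<\textsf{g}_\zeta(X)$, and then invoke the standard fact that a nonzero ordinal closed downward under addition is a power of $\omega$. The only cosmetic difference is that the paper states the closure property directly, whereas you phrase it as a contradiction from a putative decomposition $\xi=\alpha+\beta$; the content is identical.
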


\begin{proof} Note that $\textsf{g}_\zeta(X)>0$. Fix $\alpha, \beta<\textsf{g}_\zeta(X)$.    Then $I_X\in \mathfrak{G}_{\beta+\zeta, \zeta}$ and $I_X\in \mathfrak{G}_{\alpha+\zeta, \zeta}$. By Proposition \ref{gis}, $I_X\in \mathfrak{G}_{\alpha+\beta+\zeta, \zeta}$.  Thus we have shown that if $\alpha, \beta<\textsf{g}_\zeta(X)$, $\alpha+\beta<\textsf{g}_\zeta(X)$. Since $0<\textsf{g}_\zeta(X)\leqslant \omega_1$, standard facts about ordinals yield that there exists $\gamma\leqslant \omega_1$ such that $\textsf{g}_\zeta(X)=\omega^\gamma$.

\end{proof}

For the following theorem, note that if $\omega^\xi<\lambda(\zeta)$, then $\omega^\xi+\zeta=\zeta$, so $\mathfrak{G}_{\omega^\xi+\zeta, \zeta}=\mathfrak{L}$. This is the reason for the omission of this trivial case. 

\begin{theorem} Fix $0\leqslant \zeta<\omega_1$ and $\xi< \omega_1$ such that $\omega^\xi\geqslant \lambda(\zeta)$.  Then $$\varnothing\neq \complement\textsf{\emph{G}}_{\omega^\xi+\zeta, \zeta} \cap \bigcap_{\eta<\omega^\xi} \textsf{\emph{G}}_{\eta+\zeta, \zeta}.$$

\label{dreachi}
\end{theorem}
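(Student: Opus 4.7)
The plan is to exhibit a Banach space $X$ in the intersection, splitting into cases based on how $\lambda(\zeta)$ compares with $\omega^\xi$ and whether $\zeta = 0$.

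\textbf{Case 1:} $\lambda(\zeta) = \omega^\xi$. Every $\eta < \omega^\xi = \lambda(\zeta)$ then satisfies $\eta + \zeta = \zeta$, so $\mathfrak{G}_{\eta+\zeta,\zeta} = \mathfrak{L}$ and the intersection $\bigcap_{\eta<\omega^\xi}\textsf{G}_{\eta+\zeta,\zeta}$ collapses to all of $\textbf{Ban}$; we need only produce $X \notin \textsf{G}_{\omega^\xi+\zeta,\zeta}$. I would take $X = X_\zeta$: by Theorem \ref{sbfacts}(v), its canonical basis is a weakly null $\ell_1^\zeta$-spreading model which is $\zeta+1$-weakly null, hence $\omega^\xi+\zeta$-weakly null (since $\omega^\xi \geqslant 1$), but fails to be $\zeta$-weakly null.

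\textbf{Case 2:} $\lambda(\zeta) < \omega^\xi$ and $\zeta \geqslant 1$. I would take $X = X_{\omega^\xi}$. By Theorem \ref{sbfacts}(v), its canonical basis is an $\ell_1^{\omega^\xi}$-spreading model and $\omega^\xi+1$-weakly null; since $\zeta \leqslant \omega^\xi$ it is in particular an $\ell_1^\zeta$-spreading model (so not $\zeta$-weakly null) and $\omega^\xi+\zeta$-weakly null, witnessing $X \notin \textsf{G}_{\omega^\xi+\zeta,\zeta}$. To verify $X \in \textsf{G}_{\eta+\zeta,\zeta}$ for each $\eta < \omega^\xi$, I would take a seminormalized $\eta+\zeta$-weakly null $(x_n) \subset X_{\omega^\xi}$ and apply the corollary following Theorem \ref{shinsuke} to an arbitrary subsequence, producing a sub-subsequence equivalent either to the canonical $c_0$ basis or to a subsequence of the $X_{\omega^\xi}$ basis. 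Since $\omega^\xi$ is additively indecomposable and $\eta, \zeta < \omega^\xi$, we have $\eta + \zeta < \omega^\xi$, so any subsequence of the $X_{\omega^\xi}$ basis is an $\ell_1^{\omega^\xi}$-spreading model and hence an $\ell_1^{\eta+\zeta}$-spreading model, contradicting the $\eta+\zeta$-weak nullity of $(x_n)$. Every sub-subsequence of $(x_n)$ is therefore $c_0$-equivalent, and since $c_0$-equivalent sequences fail to be $\ell_1^\zeta$-spreading models for $\zeta \geqslant 1$, a standard subsequence/spreading-model argument shows $(x_n)$ is $\zeta$-weakly null.

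\textbf{Case 3:} $\zeta = 0$. For $\xi = 0$ this is trivial (take $X = c_0$, which fails $\textsf{V}_1$). For $\xi \geqslant 1$ we need $X \in \bigcap_{\eta<\omega^\xi}\textsf{V}_\eta \setminus \textsf{V}_{\omega^\xi}$, the analog of the distinctness result for $\xi$-Schur classes proved in \cite{CN}. The Schreier space $X_{\omega^\xi}$ fails here, since its weakly null sequences admit $c_0$-equivalent subsequences which are seminormalized and $\eta$-weakly null for every $\eta \geqslant 1$; a Tsirelson-type construction is therefore required. One takes as witness the $\omega^\xi$-repeated averages $(y_n) = \sum_{i \in F_n} \mathbb{S}^{\omega^\xi}_{M,n}(i) e_i$ with $M$ chosen via $\omega^\xi$-sufficiency of $(\mathfrak{S}_{\omega^\xi}, \mathcal{S}_{\omega^\xi})$; the $\omega^\xi$-weak nullity of $(y_n)$ then follows from Theorem \ref{shinsuke}(v), and the dichotomy of the ambient space rules out seminormalized $\eta$-weakly null sequences for $\eta < \omega^\xi$.

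The main obstacle is precisely Case 3 with $\xi \geqslant 1$: the space must simultaneously admit a seminormalized $\omega^\xi$-weakly null sequence while forbidding any seminormalized $\eta$-weakly null sequence for $\eta < \omega^\xi$, a delicate dichotomy not achievable within the Schreier--Baernstein family and whose construction is by now a nontrivial input from \cite{CN}.
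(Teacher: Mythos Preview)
Your proof is correct. Cases 1 and 3 align with the paper: both take $X_\zeta$ when $\lambda(\zeta)=\omega^\xi$, and both defer to the construction in \cite{CN} for the $\zeta=0$, $\xi\geqslant 1$ case (your exposition in the middle of Case 3 is somewhat muddled---the ``ambient space'' and the role of the repeated averages are never made explicit---but your final paragraph correctly identifies this as a black-box input from \cite{CN}, which is exactly what the paper does).

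Case 2 is where you diverge, and your route is genuinely simpler. For $0<\zeta$, $\lambda(\zeta)<\omega^\xi$, the paper builds an Ostrovskii-type example: it takes the space $Z$ from \cite{CN} (which already requires that machinery), a mixed Schreier space $Y$, and sets $X=\ker(T)\subset Z\oplus_1 X_\zeta$ for a suitable $T:Z\oplus_1 X_\zeta\to Y$. You instead observe that $X_{\omega^\xi}$ itself already works, using only the dichotomy (the corollary to Theorem~\ref{shinsuke}) that every seminormalized weakly null sequence in $X_{\omega^\xi}$ has a subsequence which is either $c_0$-equivalent or equivalent to a subsequence of the basis. Since $\eta+\zeta<\omega^\xi$ rules out the latter alternative for any $\eta+\zeta$-weakly null sequence, every such sequence is $1$-weakly null and hence $\zeta$-weakly null. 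This avoids the Ostrovskii construction entirely and keeps Case 2 within the Schreier-space framework; the paper's construction, on the other hand, produces a space that is tailored more closely to the pair $(\zeta,\omega^\xi)$ and recurs in the subsequent analysis of three-space properties.
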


\begin{proof}  It was shown in \cite{CN} that for any Banach space $Y$ with a normalized, bimonotone basis and $0<\xi<\omega_1$,  there exists a Banach space $Z$ (there denoted by $Z_\xi(E_Y)$) such that $Z$ has a normalized, bimonotone basis, $Y$ is a quotient of $Z$,  $Z\in \cap_{\eta<\omega^\xi} \textsf{V}_\eta$, and if $(y_n)_{n=1}^\infty$ is an $\omega^\xi$-weakly null sequence in $Y$, then there exists an $\omega^\xi$-weakly null sequence $(z_n)_{n=1}^\infty$ in $Z$ such that $qz_n=y_n$ for all $n\in\nn$.

If $\zeta=0$, we consider $Z$ as above with $Y=c_0$. This space lies in $$\complement \textsf{V}_{\omega^\xi} \cap \bigcap_{\eta<\omega^\xi} \textsf{V}_\eta = \complement \textsf{G}_{\omega^\xi,0} \cap \bigcap_{\eta<\omega^\xi} \textsf{G}_{\eta,0}.$$ This completes the $\zeta=0$ case. For the remainder of the proof, we consider $\zeta>0$. 

Suppose that $\xi=0$.   Then since $1=\omega^\xi\geqslant \lambda(\zeta)\geqslant 1$, $\zeta$ is finite. Futhermore, $\eta+\zeta=\zeta$ for any $\eta<\lambda(\zeta)$, since the only such $\eta$ is $0$.  Then $X=X_\zeta$ is easily seen to satisfy the conclusions. For the remainder of the proof, we assume $0<\xi<\omega_1$.  

If $\lambda(\zeta)=\omega^\xi$, then for every $\eta<\omega^\xi$, $\eta+\zeta=\zeta$. In this case, membership in $\bigcap_{\eta<\omega^\xi} \textsf{G}_{\eta+\zeta, \zeta}=\textbf{Ban}$ is automatic.    In this case, $X_\zeta\in \complement \textsf{G}_{\omega^\xi+\zeta, \zeta}$ is the example we seek.

  We consider the remaining case, $0<\zeta, \xi$ and  $\lambda(\zeta)<\omega^\xi$. Note that this implies that $\zeta<\omega^\xi$.    We use a technique of Ostrovskii from \cite{Ostrovskii}. If $\lambda(\zeta)$ is finite, then it is equal to $1$.  In this case, let $Y=c_0$. If $\lambda(\zeta)$ is infinite, then it is a limit ordinal. In this case, let $(\lambda_n)_{n=1}^\infty$ be the sequence used to define $\mathcal{S}_{\lambda(\zeta)}$.    Let $Y$ be the completion of $c_{00}$ with respect to the norm $$\|x\|=\sup_{n\in\nn} 2^{-n} \|x\|_{\lambda_n}.$$   Note that the formal inclusions $I_1:X_\zeta\to X_{\lambda(\zeta)}$, $I_2:X_{\lambda(\zeta)}\to Y$ are bounded. The first is bounded by the almost monotone property.  For $n\in\nn$ and $E\in \mathcal{S}_{\lambda_n}$, $F=[n, \infty)\in \mathcal{S}_{\lambda(\zeta)}$. Therefore for $x\in c_{00}$, $$2^{-n}\|Ex\|_{\ell_1}\leqslant 2^{-n}((n-1)\|x\|_{c_0}+\|Fx\|_{\ell_1}) \leqslant n2^{-n} \|x\|_{\lambda(\zeta)}\leqslant 2^{-1}\|x\|_{\lambda(\zeta)}.$$ Let us also note that a bounded block sequence $(x_n)_{n=1}^\infty$ in $X_\zeta$ is $\zeta$-weakly null if and only if $\lim_n \|x\|_\beta=0$ for every $\beta<\lambda(\zeta)$ if and only if $(I_2I_1x_n)_{n=1}^\infty$ is norm null in $Y$. We have already established the equivalence of the first two properties. Let us explain the equivalence of the last two properties. First, if $(I_2I_1x_n)_{n=1}^\infty$ is norm null in $Y$, then for any $\beta<\lambda(\zeta)$, we can fix $k$ such that $\beta<\lambda_k$ and note that $$\lim_n \|x_n\|_\beta \leqslant c \lim_n \|x_n\|_{\lambda_k} \leqslant c 2^k \lim_n \|x_n\|_Y=0.$$  Here, $c$ is the norm of the formal inclusion of $X_{\lambda_k}$ into $X_\beta$.   For the reverse direction, suppose $(x_n)_{n=1}^\infty\subset B_{X_\zeta}$ and $\lim_n \|x_n\|_\beta=0$ for all $\beta<\lambda(\zeta)$.    Then \begin{align*} \lim\sup_n \|I_2I_1y\| & \leqslant \inf \Bigl\{\max\bigl\{\lim\sup_n \sum_{m=1}^k \|x_n\|_{\lambda_m}, \sup_{n>k} 2^{-n}\|I_2\|\|I_1\|\bigr\} :  k\in\nn\Bigr\} =0. \end{align*}

Let $i=I_2I_1$ and let $Z$ be as described in the first paragraph with this choice of $Y$. Let $q:Z\to Y$ be the described quotient map.    Let $W=Z\oplus_1 X_\zeta$ and let $T:W\to Y$ be given by $T(z,x)=ix- qz$.    Let $X=\ker(T)$.    Now for $\eta<\omega^\xi$, since in this case $\zeta<\omega^\xi$, $\eta+\zeta<\omega^\xi$.    Suppose that $(z_n, x_n)_{n=1}^\infty\subset X$ is $\eta+\zeta$-weakly null.   Then since $Z\in \textsf{V}_\eta$, $\|z_n\|\to 0$.  From this it follows that $(ix_n)_{n=1}^\infty=(qz_n)_{n=1}^\infty$ is norm null. Therefore $(ix_n)_{n=1}^\infty$ is norm null, and  $(x_n)_{n=1}^\infty$ is $\zeta$-weakly null in $X_\zeta$. Therefore $(z_n, x_n)_{n=1}^\infty$ is $\zeta$-weakly null in $X$.   We last show that $X\in \complement \textsf{G}_{\omega^\xi+\zeta, \zeta}$.  To that end, let us first note that the basis of $Y$ is $\lambda(\zeta)$-weakly null. This is obvious if $\lambda(\zeta)=1$ and $Y=c_0$.  For the case in which $\lambda(\zeta)$ is infinite, the space $Y$ is a mixed Schreier space as defined in \cite{CN}, where it was shown that the basis of $Y$ is $\lambda(\zeta)$-weakly null.  By the properties of $Z$ and $q$, since $\lambda(\zeta) \leqslant \omega^\xi<\omega^\xi+\zeta$, there exists an $\omega^\xi$-weakly null sequence $(z_n)_{n=1}^\infty$ in $Z$ such that $qz_n=e_n$, where $(e_n)_{n=1}^\infty$ simultaneously denotes the bases of $Y$ and $X_\zeta$.  Also note that $(e_n)_{n=1}^\infty$ is $\zeta+1$-weakly null in $X_\zeta$. Since $$\omega^\xi+\zeta\geqslant \zeta+\omega^\xi\geqslant \zeta+1,$$ $(e_n)_{n=1}^\infty$ is $\omega^\xi+\zeta$-weakly null in $X_\zeta$.    Therefore $(z_n, e_n)_{n=1}^\infty$ is $\omega^\xi+\zeta$-weakly null in $X$. However, since $(e_n)_{n=1}^\infty$ is not $\zeta$-weakly null in $X_\zeta$, $(z_n, e_n)_{n=1}^\infty$ is not $\zeta$-weakly null in $X$.   Therefore $X\in \complement \textsf{G}_{\omega^\xi+\zeta, \zeta}$.

\end{proof}

\begin{corollary} The classes $\textsf{\emph{wBS}}_\zeta, \textsf{\emph{G}}_{\omega^\gamma+\zeta, \zeta}$, $ \zeta,\gamma<\omega_1, \omega^\gamma\geqslant \lambda(\zeta)$, are distinct.

\end{corollary}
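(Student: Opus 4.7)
The plan is to apply Theorem \ref{dreachi} to produce explicit witness spaces distinguishing the various classes. First I would note that for fixed $\zeta<\omega_1$, the family $\{\textsf{G}_{\omega^\gamma+\zeta,\zeta}:\omega^\gamma\geqslant \lambda(\zeta)\}$ together with $\textsf{wBS}_\zeta = \textsf{G}_{\omega_1,\zeta}$ is totally ordered by inclusion: by monotonicity of $\textsf{G}_{\xi,\zeta}$ in the first coordinate (a larger $\xi$ yields a smaller class, since every $\xi'$-weakly null sequence is $\xi$-weakly null when $\xi'\leqslant\xi$), one has $\textsf{wBS}_\zeta \subset \textsf{G}_{\omega^{\gamma_2}+\zeta,\zeta} \subset \textsf{G}_{\omega^{\gamma_1}+\zeta,\zeta}$ whenever $\gamma_1 \leqslant \gamma_2 < \omega_1$. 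So all that must be shown is that each of these inclusions is strict.

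To establish strictness between $\textsf{G}_{\omega^{\gamma_1}+\zeta,\zeta}$ and $\textsf{G}_{\omega^{\gamma_2}+\zeta,\zeta}$ with $\gamma_1 < \gamma_2$, I would apply Theorem \ref{dreachi} with the parameter choice $\xi=\gamma_2$. This yields a Banach space $X$ lying in $\bigcap_{\eta<\omega^{\gamma_2}}\textsf{G}_{\eta+\zeta,\zeta}$ but outside $\textsf{G}_{\omega^{\gamma_2}+\zeta,\zeta}$. Taking $\eta=\omega^{\gamma_1}<\omega^{\gamma_2}$ gives $X\in\textsf{G}_{\omega^{\gamma_1}+\zeta,\zeta}\setminus\textsf{G}_{\omega^{\gamma_2}+\zeta,\zeta}$. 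This same $X$ automatically lies outside $\textsf{wBS}_\zeta$, since $\textsf{wBS}_\zeta\subset\textsf{G}_{\omega^{\gamma_2}+\zeta,\zeta}$.

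To separate $\textsf{wBS}_\zeta$ from an arbitrary $\textsf{G}_{\omega^\gamma+\zeta,\zeta}$ with $\omega^\gamma\geqslant\lambda(\zeta)$, I would apply Theorem \ref{dreachi} with $\xi=\gamma+1$; the hypothesis $\omega^\gamma\geqslant\lambda(\zeta)$ clearly implies $\omega^{\gamma+1}\geqslant\lambda(\zeta)$, so the theorem applies. The resulting space belongs to $\textsf{G}_{\omega^\gamma+\zeta,\zeta}$ (by choosing $\eta=\omega^\gamma<\omega^{\gamma+1}$ in the intersection) but fails membership in $\textsf{G}_{\omega^{\gamma+1}+\zeta,\zeta}$, hence a fortiori fails to lie in $\textsf{wBS}_\zeta$. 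If the statement is interpreted as asserting distinctness also across different values of $\zeta$, the $\textsf{wBS}_{\zeta}$ classes are separated using Proposition \ref{wbsx}, while the mixed cases can be handled by observing via Theorem \ref{shinsuke}$(v)$ that the Schreier space $X_{\zeta_1}$ lies in $\textsf{wBS}_{\zeta_1+1}$ (hence in every $\textsf{G}_{\omega^{\gamma_2}+\zeta_2,\zeta_2}$ with $\zeta_2>\zeta_1$) but not in $\textsf{G}_{\omega^{\gamma_1}+\zeta_1,\zeta_1}$, the canonical basis being an $\ell_1^{\zeta_1}$-spreading model.

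There is no real obstacle here: all the substance resides in Theorem \ref{dreachi}, which does the difficult work of constructing a space realizing each admissible value of the invariant $\textsf{g}_\zeta$ from Corollary \ref{nn}. The only care needed is verifying the ordinal inequalities, namely that $\omega^{\gamma_1}<\omega^{\gamma_2}$ whenever $\gamma_1<\gamma_2$ (so the prescribed $\eta$ sits strictly below $\omega^{\gamma_2}$), and that $\omega^\gamma\geqslant\lambda(\zeta)$ passes to $\omega^{\gamma+1}\geqslant\lambda(\zeta)$, both of which are immediate.
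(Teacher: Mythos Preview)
Your proposal is correct and is exactly the argument the paper has in mind: the corollary is stated without proof, immediately following Theorem \ref{dreachi}, because the witness spaces produced there (together with the elementary monotonicity of $\textsf{G}_{\xi,\zeta}$ in $\xi$ and the Schreier spaces $X_\zeta$ for varying $\zeta$) are precisely what you invoke. You have simply spelled out the details the paper leaves implicit.
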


\begin{theorem} The classes $\textsf{\emph{G}}_{\zeta+\omega^\gamma, \zeta}$, $0\leqslant \zeta<\omega_1$, $0\leqslant \gamma\leqslant \omega_1$, are distinct. 
\label{sok}
\end{theorem}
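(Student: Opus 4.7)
The plan is to prove distinctness by exhibiting, for each pair $(\zeta_1, \gamma_1) \neq (\zeta_2, \gamma_2)$, a Banach space that lies in one class but not the other. The argument splits into two cases.

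The first case ($\zeta_1 < \zeta_2$) I would handle using the Schreier space $X_{\zeta_1}$. By Theorem \ref{sbfacts}(v), every seminormalized weakly null sequence in $X_{\zeta_1}$ is $\zeta_1 + 1$-weakly null; since $\zeta_2 \geqslant \zeta_1 + 1$, such a sequence is $\zeta_2$-weakly null, placing $X_{\zeta_1} \in \textsf{G}_{\omega_1, \zeta_2} \subseteq \textsf{G}_{\zeta_2 + \omega^{\gamma_2}, \zeta_2}$. On the other hand, the canonical basis of $X_{\zeta_1}$ is $\zeta_1 + 1$-weakly null (hence $\zeta_1 + \omega^{\gamma_1}$-weakly null, since $\zeta_1 + \omega^{\gamma_1} \geqslant \zeta_1 + 1$) but, being an $\ell_1^{\zeta_1}$-spreading model, is not $\zeta_1$-weakly null, so $X_{\zeta_1} \notin \textsf{G}_{\zeta_1 + \omega^{\gamma_1}, \zeta_1}$.

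For the second case ($\zeta_1 = \zeta_2 = \zeta$ and $\gamma_1 < \gamma_2$), I would adapt the Ostrovskii-style construction of Theorem \ref{dreachi}. The goal is to select a countable ordinal $\xi$ satisfying $\zeta + \omega^{\gamma_1} < \omega^\xi \leqslant \zeta + \omega^{\gamma_2}$, and take $X = \ker(T) \subseteq Z \oplus_1 X_\zeta$ with $T(z,x) = ix - qz$ as in Theorem \ref{dreachi}. The witness sequence $(z_n, e_n)$, where $(z_n)$ is the $\omega^\xi$-weakly null lifting in $Z$ of the $X_\zeta$-basis $(e_n)$, will be $\zeta + \omega^{\gamma_2}$-weakly null in the $\ell_1$-sum (each projection is, using $\omega^\xi \leqslant \zeta + \omega^{\gamma_2}$ and $\zeta + 1 \leqslant \zeta + \omega^{\gamma_2}$) but not $\zeta$-weakly null (as $(e_n)$ is not), yielding $X \notin \textsf{G}_{\zeta + \omega^{\gamma_2}, \zeta}$. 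Conversely, if $(u_n, v_n) \in X$ is $\zeta + \omega^{\gamma_1}$-weakly null, then $(u_n) \subset Z$ is $\zeta + \omega^{\gamma_1}$-weakly null; since $\zeta + \omega^{\gamma_1} < \omega^\xi$, the membership $Z \in \textsf{V}_{\zeta + \omega^{\gamma_1}}$ forces $(u_n) \to 0$ in norm, whence $iv_n = qu_n \to 0$ in $Y$, which by the identification in the proof of Theorem \ref{dreachi} means $(v_n)$ is $\zeta$-weakly null in $X_\zeta$, so $X \in \textsf{G}_{\zeta + \omega^{\gamma_1}, \zeta}$. In the generic subcase $\omega^{\gamma_2} > \lambda(\zeta)$, taking $\xi = \max(\gamma_1, \delta_1) + 1$ with $\lambda(\zeta) = \omega^{\delta_1}$ satisfies the required interval condition; the subcase $\gamma_2 = \omega_1$ (in which $\textsf{G}_{\zeta + \omega^{\omega_1}, \zeta} = \textsf{wBS}_\zeta$) is handled similarly with any $\xi > \gamma_1$.

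The hard part will be the edge subcase of Case 2 where $\omega^{\gamma_2} \leqslant \lambda(\zeta)$. Here $\zeta + \omega^{\gamma_1}$ and $\zeta + \omega^{\gamma_2}$ share the same leading term $\lambda(\zeta)$, and no pure power $\omega^\xi$ can be interposed between them, so the direct transfer from Theorem \ref{dreachi} fails. To cover this subcase, I would refine the construction, replacing the mixed Schreier target space $Y$ of Theorem \ref{dreachi} (built from $\mathcal{S}_{\lambda(\zeta)}$) by one built from a norming family of Cantor Bendixson index commensurate with $\zeta + \omega^{\gamma_2}$, so that the lifting space $Z$ absorbs sequences up to the finer threshold $\zeta + \omega^{\gamma_1}$. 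The key technical step is to verify an analogue of the equivalence in the proof of Theorem \ref{dreachi}: a block sequence $(x_n) \subset X_\zeta$ tends to $0$ in the new $Y$-norm if and only if $(x_n)$ is $\zeta$-weakly null. This will require a suitable generalization of Theorem \ref{shinsuke}(v) adapted to the refined family.
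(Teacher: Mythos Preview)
Your Case 1 ($\zeta_1\neq\zeta_2$) is correct and matches the paper. Your generic subcase of Case 2 is also sound: when $\omega^{\gamma_2}>\lambda(\zeta)$ you can interpose a power $\omega^\xi$ between $\zeta+\omega^{\gamma_1}$ and $\zeta+\omega^{\gamma_2}$ and transplant the Ostrovskii construction of Theorem~\ref{dreachi}.

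The gap is exactly where you identify it, but your proposed fix does not close it. In the edge subcase $\omega^{\gamma_2}\leqslant\lambda(\zeta)$ (for instance $\zeta=\omega$, $\gamma_1=0$, $\gamma_2=1$), the obstruction is not the target space $Y$ but the lifting space $Z$. The construction from \cite{CN} that Theorem~\ref{dreachi} invokes produces a space $Z$ with $Z\in\bigcap_{\eta<\omega^\xi}\textsf{V}_\eta$ together with an $\omega^\xi$-weakly null lifting, and both thresholds are tied to a single power $\omega^\xi$. Your argument needs $Z\in\textsf{V}_{\zeta+\omega^{\gamma_1}}$ while simultaneously admitting a lift that is $\zeta+\omega^{\gamma_2}$-weakly null; when $\zeta+\omega^{\gamma_1}$ and $\zeta+\omega^{\gamma_2}$ lie strictly between consecutive powers of $\omega$, no choice of $\omega^\xi$ separates them, and tinkering with the norming family defining $Y$ does not address this. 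The equivalence you plan to generalize (norm nullity in $Y$ $\Leftrightarrow$ $\zeta$-weak nullity in $X_\zeta$) depends only on $\lambda(\zeta)$ and is already available; what is missing is a $Z$ calibrated to non-power thresholds, which is genuinely new and not a refinement of Theorem~\ref{dreachi}.

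The paper avoids this entirely by abandoning the Ostrovskii scheme for Case 2 and instead using an Odell--Schlumprecht type space $Z_\gamma(X_\zeta)$, the completion of $c_{00}$ under an implicit norm built over $\|\cdot\|_{X_\zeta}$ with $\mathcal{S}_{\gamma_n}$-constraints. This space has two structural properties: (a) no seminormalized block sequence is $\beta$-weakly null for any $\beta<\omega^\gamma$, and (b) a block sequence is $\omega^\gamma$-weakly null in $Z_\gamma(X_\zeta)$ if and only if it is $\omega^\gamma$-weakly null in $X_\zeta$. The paper then checks $Z_\gamma(X_\zeta)\in\bigcap_{\beta<\omega^\gamma}\textsf{G}_{\zeta+\beta,\zeta}\setminus\textsf{G}_{\zeta+\omega^\gamma,\zeta}$, splitting into the cases $\lambda(\zeta)<\omega^\gamma$ (where (a) alone suffices) and $\lambda(\zeta)\geqslant\omega^\gamma$ (where one uses (b) together with Proposition~\ref{phenomenal} and the $\ell_1^{\mathcal{S}_{\gamma_n}[\mathcal{S}_\zeta]}$-lower estimates coming from the implicit norm). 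This second case is precisely your edge subcase, and the mechanism that resolves it is unrelated to your proposed modification.
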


\begin{proof}  We first recall that if $\zeta<\omega_1$ and $\gamma\leqslant \gamma_1\leqslant \omega_1$, $\textsf{G}_{\zeta+\omega^{\gamma_1}, \zeta}\subset \textsf{G}_{\zeta+\omega^\gamma, \zeta}$.    Thus the statement that these two classes are distinct is equivalent to saying that the former is a proper subset of the latter.

   We will show that the classes are distinct.   Fix $0\leqslant \zeta, \zeta_1<\omega_1$ and $0\leqslant \gamma, \gamma_1\leqslant \omega_1$.    If $\zeta<\zeta_1$, $$X_\zeta\in \textsf{wBS}_{\zeta_1}\cap \complement \textsf{G}_{\zeta+\omega^\gamma, \zeta}\subset \textsf{G}_{\zeta_1+\omega^{\gamma_1}, \zeta_1}\cap \complement \textsf{G}_{\zeta+\omega^\gamma, \zeta}.$$   By symmetry, if $\zeta_1<\zeta$, $\textsf{G}_{\zeta+\omega^\gamma, \zeta}\neq \textsf{G}_{\zeta_1+\omega^{\gamma_1}, \zeta_1}$.   Thus if $\zeta\neq \zeta_1$,  $\textsf{G}_{\zeta+\omega^\gamma, \zeta}\neq \textsf{G}_{\zeta_1+\omega^{\gamma_1}, \zeta_1}$.    

In order to complete the proof that the classes are distinct, it suffices to assume that $\gamma_1<\gamma\leqslant \omega_1$ and exhibit some Banach space $Z\in \textsf{G}_{\zeta+\omega^{\gamma_1}, \zeta}\cap \complement \textsf{G}_{\zeta+\omega^{\gamma}, \zeta}$.  We first claim that it is sufficient to prove the case $\gamma<\omega_1$.   This is because if we prove that $\textsf{G}_{\zeta+\omega^{\gamma}, \zeta}\subsetneq \textsf{G}_{\zeta+\omega^{\gamma_1}, \zeta}$ whenever $0\leqslant \gamma_1<\gamma<\omega_1$, then for any $0\leqslant \gamma_1<\omega_1$, $$\textsf{G}_{\zeta+\omega^{\omega_1}, \zeta}=\textsf{G}_{\omega_1, \zeta}\subset \textsf{G}_{\zeta+\omega^{\gamma_1+1}, \zeta} \subsetneq \textsf{G}_{\zeta+\omega^{\gamma_1}, \zeta}.$$

Fix $0<\gamma<\omega_1$ and let $(\gamma_n)_{n=1}^\infty$ be the sequence defining  $\mathcal{S}_{\omega^\gamma}$.    Fix a sequence $(\vartheta_n)_{n=1}^\infty$ such that $\vartheta:=\sum_{n=1}^\infty \vartheta_n<1$.     Given a sequence space $E$,  we define norm on $[\cdot]$ on $c_{00}$ by letting $|\cdot|_0=\|\cdot\|_E$, $$|x|_{k+1,n}=\sup\Bigl\{\vartheta_n \sum_{i=1}^d |E_ix|_k: n\in\nn, E_1<\ldots <E_d, (\min E_i)_{i=1}^d\in \mathcal{S}_{\gamma_n}\Bigr\},$$ $$|x|_{k+1}=\max\Bigl\{|x|_k, \bigl(\sum_{n=1}^\infty |x|_{k+1,n}^2\bigr)^{1/2}\Bigr\},$$ $$[x]=\lim_k|x|_k,$$ and $$[x]_n=\lim_k |x|_{k,n}.$$  Let us denote the completion of $c_{00}$ with respect to this norm by $Z_\gamma(E)$.  The norm $[\cdot]$ on $Z_\gamma(E)$ satisfies the following $$[z]= \max\Bigl\{\|z\|_E, \bigl(\sum_{n=1}^\infty [z]_n^2\bigr)^{1/2}\Bigr\}.$$     This construction is a generalization of a construction by Odell and Schlumprecht. We will apply the construction with $E=X_\zeta$.    It is a well known fact of such constructions that, since the basis of $X_\zeta$ is shrinking, so is the basis of $Z_\gamma(X_\zeta)$ (see, for example, \cite{CN}).    It was shown in \cite{CN} that if $(z_n)_{n=1}^\infty$ is any seminormalized block sequence in $Z_\gamma(X_\zeta)$, then \begin{enumerate}[(a)]\item $(z_n)_{n=1}^\infty$ is not $\beta$-weakly null for any $\beta<\omega^\gamma$,  \item $(z_n)_{n=1}^\infty$ is  $\omega^\gamma$-weakly null in $Z_\gamma(X_\zeta)$ if and only if it is $\omega^\gamma$-weakly null in $X_\zeta$. \end{enumerate}

We will show that $Z_\gamma(X_\zeta)\in \cap_{\beta<\omega^\gamma} \textsf{G}_{\zeta+\beta, \zeta}$, and in particular $Z_\gamma(X_\zeta)\in \textsf{G}_{\zeta+\omega^{\gamma_1}, \zeta}$, while $Z_\gamma(X_\zeta)\in \complement \textsf{G}_{\zeta+\omega^\gamma, \zeta}$.  This will complete the proof of the distinctness of the classes.   

We prove that $Z_\gamma(X_\zeta)\in \complement \textsf{G}_{\zeta+\omega^\gamma, \zeta}$.    As remarked above, the basis is shrinking and normalized, and so it is weakly null.  If it were not $\zeta+\omega^\gamma$-weakly null, there would exist some $(m_n)_{n=1}^\infty\in[\nn]$ and $\ee>0$ such that $$\ee\leqslant \inf \{[z]: F\in \mathcal{S}_{\omega^\gamma}[\mathcal{S}_\zeta], z\in \text{co}(e_{m_n}: n\in F)\}.$$   But by Theorem \ref{shinsuke}, we may choose $F_1<F_2<\ldots$, $F_i\in \mathcal{S}_\zeta$, and positive scalars $(a_i)_{i\in \cup_{n=1}^\infty F_n}$ such that $\sum_{i\in F_n}a_i=1$ and the sequence $(z_n)_{n=1}^\infty$ defined by $z_n=\sum_{i\in F_n} a_ie_{m_i}$ is equivalent to the $c_0$ basis in $X_\zeta$. But since $$\ee\leqslant \inf \{[z]: F\in \mathcal{S}_{\omega^\gamma}[\mathcal{S}_\zeta], z\in \text{co}(e_{m_n}: n\in F)\},$$ $(z_n)_{n=1}^\infty$ is an $\ell_1^{\omega^\gamma}$-spreading model in $Z_\gamma(X_\zeta)$, contradicting item $(b)$ above.  Therefore the canonical $Z_\gamma(X_\zeta)$ basis is $\zeta+\omega^\gamma$-weakly null.  But it is evidently not $\zeta$-weakly null, whence $Z_\gamma(X_\zeta)\in \complement \textsf{G}_{\zeta+\omega^\gamma, \zeta}$.     

Now let us show that $Z_\gamma(X_\zeta)\in \cap_{\beta<\omega^\gamma}\textsf{G}_{\zeta+\beta, \zeta}$.     First consider the case $\lambda(\zeta)<\omega^\gamma$, which is equivalent to $\zeta+\beta<\omega^\gamma$ for all $\beta<\omega^\gamma$. In this case, $$\{\zeta+\beta: \beta<\omega^\gamma\}=[0, \omega^\gamma).$$  It therefore follows from property $(a)$ above that $$Z_\gamma(X_\zeta)\in \bigcap_{\beta<\omega^\gamma} \textsf{G}_{\beta, 0} = \bigcap_{\beta<\omega^\gamma} \textsf{G}_{\zeta+\beta, 0} \subset \bigcap_{\beta<\omega^\gamma} \textsf{G}_{\zeta+\beta, \zeta}.$$

   Let us now treat the case $\lambda(\zeta)\geqslant \omega^\gamma$.    Write $$\zeta= \lambda(\zeta)+\mu$$ and note that $$\mu+\omega^\gamma \leqslant \mu+\lambda(\zeta) \leqslant \lambda(\zeta)+\mu=\zeta.$$

	We  claim that if $(z_n)_{n=1}^\infty$ is a seminormalized block sequence in $Z_\gamma(X_\zeta)$ which is not $\zeta$-weakly null in $Z_\gamma(X_\zeta)$, then there exists $\beta<\lambda(\zeta)$ such that $\lim\sup_n \|z_n\|_\beta>0$.  To see this, suppose that for every $\beta<\lambda(\zeta)$, $\lim_n \|z_n\|_\beta=0$, but $(z_n)_{n=1}^\infty$ is not $\zeta$-weakly null in $Z_\gamma(X_\zeta)$.   Then, by Proposition \ref{phenomenal}$(ii)$, by passing to a subsequence and relabeling, we may assume $(z_n)_{n=1}^\infty$, when treated as a sequence in $X_\zeta$,  is dominated by a subsequence $(e_{m_i})_{i=1}^\infty$ of the $X_\mu$ basis, and $(z_n)_{n=1}^\infty$, when treated as a sequence in $Z_\gamma(X_\zeta)$,  is an $\ell_1^\zeta$-spreading model.    Since $\zeta\geqslant \mu+\omega^\gamma$, we may, after passing to a subsequence again, assume $$0<\ee\leqslant \inf \{[z]: F\in \mathcal{S}_{\omega^\gamma}[\mathcal{S}_\mu], z\in \text{co}(z_n: n\in F)\}.$$    We may select $F_1<F_2<\ldots$, $F_i\in \mathcal{S}_\mu$, and positive scalars $(a_i)_{i\in \cup_{n=1}^\infty F_n}$ such that $\sum_{i\in F_n}a_i=1$ and $(\sum_{i\in F_n}a_ie_{m_i})_{n=1}^\infty\subset X_\mu$ is equivalent to the canonical $c_0$ basis (again using Theorem \ref{shinsuke} as in the previous case).   Since $(z_n)_{n=1}^\infty\subset X_\zeta$ is dominated by $(e_{m_i})_{i=1}^\infty\subset X_\mu$, $(\sum_{i\in F_n} a_iz_i)_{n=1}^\infty$ is WUC in $X_\zeta$.    But since $$0<\ee\leqslant \inf \{[z]: F\in \mathcal{S}_{\omega^\gamma}[\mathcal{S}_\mu], z\in \text{co}(z_n: n\in F)\},$$   $(\sum_{i\in F_n}a_i z_i)_{n=1}^\infty$ must be an $\ell_1^{\omega^\gamma}$-spreading model in $Z_\gamma(X_\zeta)$, contradicting $(b)$ above.    This proves the claim from the beginning of the paragraph.    Now suppose that $(z_n)_{n=1}^\infty$ is a weakly null sequence in $Z_\gamma(X_\zeta)$ which is not $\zeta$-weakly null.   Then by the claim  combined with Corollary \ref{AC}, $(z_n)_{n=1}^\infty$ is not $\zeta$-weakly null in $X_\zeta$.  After passing to a subsequence, we may assume $(z_n)_{n=1}^\infty$ is an $\ell_1^\zeta$-spreading model in $X_\zeta$. Assume that $$0<\ee\leqslant \inf \{[z]: F\in \mathcal{S}_\zeta, z\in \text{co}(z_n: n\in F)\}.$$      Now fix $n\in\nn$ and $F\in \mathcal{S}_{\gamma_n}[\mathcal{S}_\zeta]$ and scalars $(a_i)_{i\in F}$.   By definition of $\mathcal{S}_{\gamma_n}[\mathcal{S}_\zeta]$, there exist $F_1<\ldots <F_d$ such that $F=\cup_{j=1}^d F_j$,  $\varnothing\neq F_j\in \mathcal{S}_\zeta$, and $(\min F_j)_{j=1}^d\in \mathcal{S}_{\gamma_n}$.   Let $E_i=\text{supp}(z_i)$ and let $I_j=\cup_{i\in F_j} E_i$.  Since $$\min I_i = \min \text{supp}(z_{\min F_j}) \geqslant \min F_j,$$ $(\min I_j)_{j=1}^d$ is a spread of $(\min F_j)_{j=1}^d$, whence $(\min I_j)_{j=1}^d\in \mathcal{S}_{\gamma_n}$.     Therefore \begin{align*} [\sum_{i\in F} a_iz_i] &  \geqslant \Bigl(\sum_{k=1}^\infty \bigl[\sum_{i\in F}a_iz_i\bigr]_k^2\Bigr)^{1/2} \geqslant \bigl[\sum_{i\in F} a_iz_i\bigr]_n \\ & \geqslant \vartheta_n \sum_{j=1}^d [I_j\sum_{i\in F}a_iz_i]= \vartheta_n\sum_{j=1}^d [\sum_{i\in F_j} a_iz_i] \geqslant \ee \vartheta_n \sum_{j=1}^d \sum_{i\in F_j}|a_i| \\ & = \ee \vartheta_n \sum_{i\in F}|a_i|. \end{align*}   Thus $$0<\inf\{[z]: F\in \mathcal{S}_{\gamma_n}[\mathcal{S}_\zeta], x\in \text{co}(z_n: n\in F)\}.$$  From this it follows that $(z_i)_{i=1}^\infty$ is not $\zeta+\gamma_n$-weakly null.  Since this holds for any $n\in\nn$ and $\sup_n \gamma_n=\omega^\gamma$, $(z_i)_{i=1}^\infty$ is not $\zeta+\beta$-weakly null for any $\beta<\omega^\gamma$.   Thus by contraposition, for any $\beta<\omega^\gamma$, any $\zeta+\beta$-weakly null sequence in $Z_\gamma(X_\zeta)$ is $\zeta$-weakly null, whence $Z_\gamma(X_\zeta)\in \cap_{\beta<\omega^\gamma} \textsf{G}_{\zeta+\beta, \zeta}$.     This completes the proof of the distinctness of these classes.

\end{proof}

\begin{rem}\upshape For $\xi, \eta<\omega_1$ and $\delta, \zeta\leqslant \omega_1$ with $\eta\neq \zeta$, the classes $\textsf{G}_{\omega^\xi+\zeta, \zeta}$, $\textsf{G}_{\eta+\omega^\delta, \eta}$ are not equal. Indeed, if $\eta<\zeta$, $X_\eta\in \textsf{G}_{\omega^\xi+\zeta, \zeta}\cap \complement \textsf{G}_{\eta+\omega^\delta, \eta}$. This is because every sequence in $X_\eta$ is $\eta+1$-weakly null, and therefore $\zeta$-weakly null. However, the basis of $X_\eta$ is $\eta+1$-weakly null, and therefore $\eta+\omega^\delta$-weakly null, but not $\eta$-weakly null.   Now if $\zeta<\eta$, either $\omega^\xi+\zeta>\zeta$ or $\omega^\xi+\zeta=\zeta$. If $\omega^\xi+\zeta>\zeta$, $X_\zeta\in \textsf{G}_{\eta+\omega^\delta, \eta}\cap \complement \textsf{G}_{\omega^\xi+\zeta, \zeta}$. If $\omega^\xi+\zeta=\zeta$, then $\textsf{G}_{\omega^\xi+\zeta, \zeta}=\textbf{Ban}\neq \textsf{G}_{\eta+\omega^\delta, \eta}$.

We next wish to discuss how the classes $\textsf{G}_{\omega^\xi+\zeta, \zeta}$ can be compared to the classes $\textsf{G}_{\zeta+\omega^\delta, \zeta}$. In particular, we will show that they are equal if and only if $\omega^\xi+\zeta=\zeta+\omega^\delta$.  If $\zeta=0$, then  $\textsf{G}_{\omega^\xi+\zeta, \zeta}=\textsf{V}_{\omega^\xi}$ and $\textsf{G}_{\zeta+\omega^\delta, \zeta}=\textsf{V}_{\omega^\delta}$.  Then $\textsf{V}_{\max\{\omega^\xi, \omega^\delta\}}\subset \textsf{V}_{\min \{\omega^\xi, \omega^\delta\}}$, with proper containment if and only if $\xi\neq \delta$.

Now for $0<\zeta<\omega_1$, write $\zeta=\omega^{\alpha_1}n_1+\ldots +\omega^{\alpha_l}n_l$ $l, n_1, \ldots, n_l\in\nn$, $\alpha_1>\ldots >\alpha_l$.   Let us consider several cases.  For convenience, let $\alpha=\alpha_1$ and $n=n_1$.    

Case $1$: $\xi<\alpha$. Then $\omega^\xi+\zeta=\zeta$ and $\textsf{G}_{\omega^\xi+\zeta, \zeta}=\textbf{Ban}\neq \textsf{G}_{\zeta+\omega^\delta, \delta}$.

For the remaining cases, we will assume $\xi\geqslant \alpha$, which implies that $\omega^\xi+\zeta>\zeta$.

Case $2$: $\omega^\xi+\zeta<\zeta+\omega^\delta$.  Then there exists $\beta<\omega^\delta$ such that $\omega^\xi+\zeta= \zeta+\beta$.      Then the space $Z_\delta(X_\zeta)$ from Theorem \ref{sok} lies in $\complement \textsf{G}_{\zeta+\omega^\delta, \zeta}\cap \textsf{G}_{\zeta+\beta, \zeta}= \complement \textsf{G}_{\zeta+\delta, \zeta}\cap \textsf{G}_{\omega^\xi+\zeta, \zeta}$.

Case $3$: $\omega^\xi+\zeta=\zeta+\omega^\delta$.  In this case, of course $\textsf{G}_{\omega^\xi+\zeta, \zeta}=\textsf{G}_{\zeta+\omega^\delta, \zeta}$.  By considering the Cantor normal forms of $\omega^\xi+\zeta$ and $\zeta+\omega^\delta$, it follows that equality can only hold in the case that $\xi=\delta=\alpha$ and $\zeta=\omega^\alpha n$, in which case $\omega^\xi+\zeta=\omega^\alpha(n+1)=\zeta+\omega^\delta$.

For the remaining cases, we will assume $\omega^\xi+\zeta>\zeta+\omega^\delta$. Note that this implies $\delta\leqslant \xi$. Indeed, if $\delta>\xi$, then since we are in the case $\xi\geqslant \alpha$, it follows that $\omega^\delta>\omega^\xi, \zeta$. By standard properties of ordinals, $\omega^\xi>\omega^\xi+\zeta$.   Therefore for the remaining cases, $\omega^\xi+\zeta>\zeta+\omega^\delta$ and $\alpha, \delta\leqslant \xi$.

Case $4$: $\delta=\xi>\alpha$. Then the space $X_{\omega^\delta}$ lies in $\complement \textsf{G}_{\omega^\xi+\zeta, \zeta}\cap \textsf{G}_{\zeta+\omega^\delta, \zeta}$.  To see this, note that since $\delta>\alpha$, $\zeta+\omega^\delta=\omega^\delta$. Moreover, we have already shown that any $\omega^\delta$-weakly null sequence in $X_{\omega^\delta}$ has the property that every subsequence has a further WUC subsequence.  Thus any $\omega^\delta$-weakly null sequence in $X_{\omega^\delta}$ is $1$-weakly null, and $X_{\omega^\delta}\in \textsf{G}_{\zeta+\omega^\delta, \zeta}$. But of course the basis of $X_{\omega^\delta}$ shows that it does not lie in $\textsf{G}_{\omega^\xi+\zeta, \zeta}\subset \textsf{G}_{\omega^\delta+1, \omega^\delta}$. 

Case $5$: $\xi=\alpha>\delta$.  The space $Z_\xi(X_\zeta)$, as defined in Theorem \ref{sok}, lies in $\complement \textsf{G}_{\omega^\xi+\zeta, \zeta}\cap \textsf{G}_{\zeta+\omega^\delta, \zeta}$.  To see this, let us note that $$Z_\xi(X_\zeta)\in \complement \textsf{G}_{\zeta+\omega^\xi, \zeta}\cap \bigcap_{\gamma<\omega^\xi} \textsf{G}_{\zeta+\gamma, \zeta}.$$  Since $\xi\geqslant \alpha$, $\omega^\xi+\zeta \geqslant \zeta+\omega^\xi$, whence $\textsf{G}_{\omega^\xi+\zeta, \zeta}\subset \textsf{G}_{\zeta+\omega^\xi, \zeta}$ and $Z_\xi(X_\zeta)\in \complement \textsf{G}_{\zeta+\omega^\xi, \zeta}\subset \complement \textsf{G}_{\omega^\xi+\zeta, \zeta}$.   Since $\omega^\delta<\omega^\xi$, $Z_\xi(X_\zeta)\in \textsf{G}_{\zeta+\omega^\delta ,\zeta}$.

Case $6$: $\xi>\alpha, \delta$.  Then the space $Z_\xi(c_0)$, as shown in \cite{CN}, lies in $\textsf{wBS}_{\omega^\xi}\cap \bigcap_{\gamma<\omega^\xi} \textsf{V}_\gamma$. Furthermore, the basis of the space is normalized, weakly null.  Therefore the basis is $\omega^\xi$-weakly null but not $\gamma$-weakly null for any $\gamma<\omega^\xi$.  Therefore $Z_\xi(c_0)\in \complement \textsf{G}_{\omega^\xi, \zeta}\subset \complement \textsf{G}_{\omega^\xi+\zeta, \zeta}$.  However, since $\alpha, \delta<\xi$, $\zeta+\omega^\delta<\xi$, and $Z_\xi(c_0)\in \textsf{V}_{\zeta+\omega^\delta}\subset \textsf{G}_{\zeta+\omega^\delta, \zeta}$. Therefore $Z_\xi(c_0)$ lies in $\complement \textsf{G}_{\omega^\xi+\zeta, \zeta}\cap \textsf{G}_{\zeta+\omega^\delta, \zeta}$. 

Case $7$: $\xi=\alpha=\delta$. In this case, we can write $\zeta=\omega^\alpha n+\mu$, where $\mu=\omega^{\alpha_2}n_2+\ldots +\omega^{\alpha_l}n_l$. Note that in this case, $\mu>0$, since otherwise we would be in the case $\omega^\xi+\zeta=\zeta+\omega^\delta$. Then the space $X_{\omega^\alpha(n+1)}$ lies in $\complement \textsf{G}_{\omega^\alpha +\zeta, \zeta}\cap \textsf{G}_{\zeta+\omega^\alpha, \zeta}$.    To see this,  note that  the canonical basis of $X_{\omega^\alpha (n+1)}$ is $$\omega^\alpha(n+1)+1 \leqslant \omega^\alpha(n+1)+\mu= \omega^\alpha +\zeta$$ weakly null, but it is not $\omega^\alpha(n+1)=\omega^\alpha n+\omega^\alpha$-weakly null, and therefore not $\zeta$-weakly null.      Thus $X_{\omega^\alpha(n+1)}\in \complement \textsf{G}_{\omega^\alpha+\zeta, \zeta}$. However, if $(x_n)_{n=1}^\infty$ is $\omega^\alpha (n+1)$-weakly null, then by Theorem \ref{shinsuke}, every subsequence of $(x_n)_{n=1}^\infty$ has a further subsequence which is dominated by a subsequence of the $X_{\omega^\alpha n}$ basis. This means $(x_n)_{n=1}^\infty$ is $\omega^\alpha n+1$-weakly null. Since $\omega^\alpha n+1\leqslant \omega^\alpha n+\mu=\zeta$ and $\zeta+\omega^\alpha = \omega^\alpha(n+1)$,  $X_{\omega^\alpha(n+1)}\in \textsf{G}_{\zeta+\omega^\alpha, \zeta}$.

\end{rem}

Our next goal will be to prove a fact regarding the distinctness of the space ideals $\textsf{M}_{\xi, \zeta}$ analogous to those proved above for the classes $\textsf{G}_{\xi, \zeta}$.  

\begin{rem}\upshape If $\xi, \eta$ are ordinals such that $\omega^\xi+1<\eta<\omega^{\xi+1}$, then there exist ordinals $\alpha, \gamma<\eta$ such that $\gamma>1$ and $\alpha+\gamma=\eta$. This is obvious if $\xi=0$, since since $\eta>2$ is finite and we may take $\eta=1+(\eta-1)$ in this case. Assume $0<\xi$.  Then there exist $n\in\nn$ and $\delta<\omega^\xi$ such that $\eta=\omega^\xi n+\delta$. If $n>1$,  we may take $\alpha=\omega^\xi(n-1)$ and $\gamma=\omega^\xi$.  Now if $n=1$, then $\delta>1$, and we may take $\alpha=\omega^\xi$ and $\gamma=\delta$. 

\label{sidearm}
\end{rem}

\begin{theorem} Fix $0\leqslant \xi<\omega_1$ and $0< \nu\leqslant \omega_1$. Let $X$ be a Banach space.  \begin{enumerate}[(i)]\item $X$ is hereditarily $\textsf{\emph{M}}_{\mu, \nu}$ for some $\omega^\xi<\mu<\omega^{\xi+1}$ if and only if $X$ is hereditarily $\textsf{\emph{M}}_{\mu, \nu}$ for every $\omega^\xi<\mu<\omega^{\xi+1}$.  \item $X$ is hereditarily $\textsf{\emph{M}}_{\nu, \mu}$ for some $\omega^\xi<\mu<\omega^{\xi+1}$ if and only if $X$ is hereditarily $\textsf{\emph{M}}_{\nu, \mu}$ for every $\omega^\xi<\mu<\omega^{\xi+1}$. \end{enumerate}

\end{theorem}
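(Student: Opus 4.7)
The plan is to combine the characterization stated just above (hereditarily $\textsf{M}_{\alpha,\beta}$ is equivalent to: every seminormalized $\alpha$-weakly null sequence in $X$ has a subsequence which is a $c_0^\beta$-spreading model) with Corollary \ref{block of block} and the $\mathcal{S}_\nu[\mathcal{S}_{\mu_0}]$-versus-$\mathcal{S}_{\mu_0+\nu}$ identification from Proposition \ref{deep facts}. By the monotonicity of $\mathfrak{M}_{\alpha,\beta}$ in Remark \ref{avp}, only the direction ``some $\mu$ implies every $\mu$'' is non-trivial, and it reduces to showing the following: if $X$ is hereditarily $\textsf{M}_{\mu_0,\nu}$ for some $\mu_0\in(\omega^\xi,\omega^{\xi+1})$, then $X$ is hereditarily $\textsf{M}_{\mu,\nu}$ for every $\mu\in[\mu_0,\omega^{\xi+1})$ (and symmetrically for (ii)).

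For (i), suppose toward a contradiction that $(x_n)\subset X$ is seminormalized, $\mu$-weakly null, and has no $c_0^\nu$-spreading model subsequence. If $(x_n)$ were $\mu_0$-weakly null the hypothesis applied to $[x_n]$ would immediately contradict this, so after passing to a subsequence assume $(x_n)$ is an $\ell_1^{\mu_0}$-spreading model. Applying Corollary \ref{block of block}(i) with $A=I$, $\alpha=\mu_0$, $\beta=0$, and $\gamma=\mu-\mu_0$ (so $\alpha+\gamma=\mu$), one obtains a convex block sequence $(x_n^{(1)})$ of $(x_n)$ which is seminormalized and $(\mu-\mu_0)$-weakly null; in that construction each $x_n^{(1)}$ has support in $MAX(\mathcal{S}_{\mu_0})$, a fact central to the next step.

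The main technical claim is that $(x_n^{(1)})$ still has no $c_0^\nu$-spreading model subsequence. Were such a subsequence to exist, Lemma \ref{c0}(i) would give $L\in[\nn]$ with $\sup_{M'\in[L]}\|\sum_{n\in M'|_{\mathcal{S}_\nu}}x_n^{(1)}\|<\infty$; expanding $x_n^{(1)}=\sum_{i\in F_n}a_i^{(n)}x_i$ with $F_n\in MAX(\mathcal{S}_{\mu_0})$, this becomes a uniform bound on sums $\sum_{i\in G}c_ix_i$ with $|c_i|\leqslant 1$ and support $G=\bigcup_{n\in M'|_{\mathcal{S}_\nu}}F_n\in\mathcal{S}_\nu[\mathcal{S}_{\mu_0}]$. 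By Proposition \ref{deep facts}(i,v), $CB(\mathcal{S}_\nu[\mathcal{S}_{\mu_0}])=\omega^{\mu_0+\nu}+1=CB(\mathcal{S}_{\mu_0+\nu})$, so after refining to an appropriate infinite subset one has $\mathcal{S}_\nu[\mathcal{S}_{\mu_0}]\subset\mathcal{S}_{\mu_0+\nu}$, and Lemma \ref{c0}(i) applied in reverse to $(x_n)$ then produces a $c_0^{\mu_0+\nu}$-spreading model subsequence of $(x_n)$; since $\nu\leqslant\mu_0+\nu$ this is also a $c_0^\nu$-spreading model subsequence, contradicting the hypothesis on $(x_n)$.

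Iterating, at step $k$ we reapply Corollary \ref{block of block}(i) to $(x_n^{(k-1)})$ with $\alpha=\mu_0,\beta=0,\gamma=\mu-k\mu_0$ to produce $(x_n^{(k)})$, seminormalized, $(\mu-k\mu_0)$-weakly null, and (by iterating the preservation claim) still without a $c_0^\nu$-spreading model subsequence. Since $\mu_0>\omega^\xi$, $\mu_0\cdot k\geqslant\mu$ for some finite $k$, so after finitely many steps $\mu-k\mu_0\leqslant\mu_0$; the almost-monotone property of the Schreier families then forces $(x_n^{(k)})$ to be $\mu_0$-weakly null, and applying the hypothesis to the subspace $[x_n^{(k)}]\subset X$ produces the required contradiction. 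Part (ii) is proved by the mirror-image argument: starting from $(x_n)\subset X$ seminormalized, $\nu$-weakly null, with no $c_0^\mu$-spreading model subsequence, we apply Corollary \ref{block of block}(ii) iteratively with $\alpha=\mu_0,\beta=0,\gamma=\mu-k\mu_0$ to build block sequences $(y_n^{(k)})$ with no $c_0^{\mu-k\mu_0}$-spreading model subsequence; the preservation in this case is that each $(y_n^{(k)})$ remains $\nu$-weakly null, verified by the same $\mathcal{S}_\nu[\mathcal{S}_{\mu_0}]\subset\mathcal{S}_{\mu_0+\nu}$ argument (an $\ell_1^\nu$-spreading subsequence of $(y_n^{(k)})$ would, upon expansion, yield an $\ell_1^{\mu_0+\nu}$-spreading model structure on $(x_n)$, contradicting $(\mu_0+\nu)$-weak nullity). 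In both parts the principal obstacle is exactly this preservation step, which turns on the identification $CB(\mathcal{S}_\nu[\mathcal{S}_{\mu_0}])=CB(\mathcal{S}_{\mu_0+\nu})$.
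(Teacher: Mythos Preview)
Your preservation step in part (i) has a genuine gap. After expanding $\sum_{n\in M'|_{\mathcal{S}_\nu}}x_n^{(1)}$ you obtain a bound on $\bigl\|\sum_{i\in G}c_ix_i\bigr\|$ where $c_i$ are the \emph{repeated-averages weights} $a_i^{(n)}$, not unimodular scalars. Lemma~\ref{c0}(i) for $(x_n)$ requires uniform control of $\bigl\|\sum_{i\in M|_{\mathcal{F}}}x_i\bigr\|$, i.e.\ coefficients equal to $1$; a bound with convex weights summing to $1$ on each $F_n$ tells you nothing about this. Concretely, the $X_1$ basis has no $c_0^1$-spreading model subsequence, yet its $\mathcal{S}_1$-repeated-averages blocks are equivalent to the $c_0$ basis. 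So the implication ``$(x_n^{(1)})$ has a $c_0^\nu$ subsequence $\Rightarrow$ $(x_n)$ has one'' is false in general, and your iteration cannot proceed past the first step. The mirror argument in (ii) has the dual problem: if $(y_n^{(1)})=\bigl(\sum_{i\in F_n}x_i\bigr)$ were an $\ell_1^\nu$-spreading model, expanding gives $\bigl\|\sum_{i\in G}b_ix_i\bigr\|\geqslant\varepsilon\sum_{n}|a_n|$ with $b_i=a_n$ constant on each $F_n$; but $\sum_i|b_i|=\sum_n|a_n|\,|F_n|$, so the putative $\ell_1$-constant for $(x_i)$ is off by the unbounded factor $|F_n|$, and no $\ell_1^{\mu_0+\nu}$-spreading model follows.

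The paper's proof sidesteps exactly this issue by taking $\beta=1$ (not $\beta=0$) in Corollary~\ref{block of block}. In (i), one first argues by minimality: let $\eta$ be least with $X$ not hereditarily $\textsf{M}_{\eta,\nu}$, write $\eta=\alpha+\gamma$ with $\alpha,\gamma<\eta$, $\gamma>1$ (Remark~\ref{sidearm}); the minimality gives an $\ell_1^{\alpha+1}$-spreading model subsequence, and Corollary~\ref{block of block}(i) with $\beta=1$ produces a $\gamma$-weakly null convex block which is an $\ell_1^1$-spreading model. The $\ell_1^1$ structure \emph{itself} forbids any $c_0^\nu$-spreading model subsequence, so no preservation argument is needed, and a single application suffices. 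Part (ii) is the mirror: with $\beta=1$ the block is a $c_0^1$-spreading model, hence automatically $1$-weakly null and therefore $\nu$-weakly null. The fix to your approach is thus to replace $\beta=0$ by $\beta=1$; once you do that the iteration becomes unnecessary and you recover the paper's argument.
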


\begin{proof}$(i)$ Seeking a contradiction, suppose that $X$ is hereditarily $\textsf{M}_{\mu, \nu}$ for some but not all $\mu \in (\omega^\xi, \omega^{\xi+1})$.   Let $\eta$ be the minimum ordinal $\mu$ such that $X$ is not hereditarily  $\textsf{M}_{\mu, \nu}$. Note that, since the classes $\textsf{M}_{\mu, \nu}$ are decreasing with $\mu$ and $X$ is hereditarily $\textsf{M}_{\mu, \nu}$ for some $\omega^\xi<\mu<\omega^{\xi+1}$,  it follows that $\omega^\xi+1<\eta$.  We can write $\eta=\alpha+\gamma$ for some $\alpha, \gamma<\eta$ with $\gamma>1$.   Since $X$ is not hereditarily $\textsf{M}_{\eta, \nu}$, there exists a seminormalized, $\eta$-weakly null sequence $(x_n)_{n=1}^\infty$ in $X$ which has no subsequence which is a $c_0^\nu$-spreading model.    Since $\alpha+1<\alpha+\gamma$, the minimality of $\eta$ implies that $X$ is hereditarily $\textsf{M}_{\alpha+1, \nu}$, which means $(x_n)_{n=1}^\infty$ has a subsequence which is an $\ell_1^{\alpha+1}$-spreading model.   By Corollary \ref{block of block}$(i)$, there exists a convex block sequence $(y_n)_{n=1}^\infty$ of $(x_n)_{n=1}^\infty$ which is an $\ell_1^1$-spreading model and which is $\gamma$-weakly null.  But since $(y_n)_{n=1}^\infty$ is an $\ell_1^1$-spreading model, it can have no subsequence which is a $c_0^\nu$-spreading model. Since $\gamma<\eta$, $(y_n)_{n=1}^\infty$ witnesses that $X$ is not hereditarily $\textsf{M}_{\gamma, \nu}$, contradicting the minimality of $\eta$.

$(ii)$ Arguing as in $(i)$, let us suppose we have $\omega^\xi+1<\eta<\omega^{\xi+1}$ such that $X$ is hereditarily $\textsf{M}_{\nu, \mu}$for every $\mu<\eta$ but $X$ is not hereditarily $\textsf{M}_{\nu, \eta}$. Then there exists a $\nu$-weakly null $(x_n)_{n=1}^\infty\subset X$ which has no subsequence which is a $c_0^\eta$-spreading model. Write $\eta=\alpha+\gamma$, $\alpha, \gamma<\eta$, $\gamma>1$.   By passing to a subsequence, we may assume $(x_n)_{n=1}^\infty$ is a $c_0^{\alpha+1}$-spreading model.    By Corollary \ref{block of block}$(ii)$, there exists a blocking $(y_n)_{n=1}^\infty$ of $(x_n)_{n=1}^\infty$ which is a $c_0^1$-spreading model and has no subsequence which is a $c_0^\gamma$-spreading model.    Since $(y_n)_{n=1}^\infty$ is a $c_0^1$-spreading model, it is $1$-weakly null, and therefore $\nu$-weakly null.    But $(y_n)_{n=1}^\infty$ has no subsequence which is a $c_0^\gamma$-spreading model. Since $\gamma<\eta$, this contradicts the minimality of $\eta$.

\end{proof}

\begin{rem}\upshape The previous theorem yields that for a fixed $0<\zeta\leqslant \omega_1$ and $0\leqslant \xi<\omega_1$, a given Banach space $X$ may lie in $\complement \textsf{M}_{\omega^\xi, \zeta}\cap \bigcap_{\eta<\omega^\xi}\textsf{M}_{\eta, \zeta}$. That is, the first ordinal $\eta$ for which $X$ fails to lie in $\textsf{M}_{\eta, \zeta}$ is of the form $\omega^\xi$, $0\leqslant \xi<\omega_1$.  But it also allows for $X$ to lie in $\textsf{M}_{\omega^\xi, \zeta}$ and fail to lie in $\textsf{M}_{\omega^\xi+1, \zeta}$.  Let us make this precise: For $1\leqslant \zeta\leqslant \omega_1$, let  $\textsf{m}_\zeta(X)=\omega_1$ if $X\in \textsf{M}_{\omega_1, \zeta}$ and otherwise let $\textsf{m}_\zeta(X)$ be the minimum $\eta$ such that $X\in \complement \textsf{M}_{\eta, \zeta}$.  Let $\textsf{m}^*_\zeta(X)=\omega_1$ if $X\in \textsf{M}_{\zeta, \omega_1}$, and otherwise let $\textsf{m}^*_\zeta(X)$ be the minimum $\eta$ such that $X\in \complement \textsf{M}_{\zeta, \eta}$.    Then the preceding theorem yields that for any $1\leqslant \zeta\leqslant \omega_1$ and any Banach space $X$,  there exists $0\leqslant \xi\leqslant \omega_1$ such that either $\textsf{m}_\zeta(X)=\omega^\xi$ or $\textsf{m}_\zeta(X)=\omega^\xi+1$, and a similar statement holds for $\textsf{m}_\zeta^*$.

Contrary to the $\textsf{G}_{\xi, \zeta}$ case, both alternatives can occur for both $\textsf{m}_\zeta$ and $\textsf{m}^*_\zeta$.     For example, for $0<\xi<\omega_1$, our spaces $Z_\xi(c_0)$ lie in $\bigcap_{\eta<\omega^\xi}\textsf{V}_\eta$, and therefore lie in $\bigcap_{\eta<\omega^\xi}\textsf{M}_{\eta, \omega_1}\subset \bigcap_{\zeta\leqslant \omega_1}\bigcap_{\eta<\omega^\xi}\textsf{M}_{\eta, \zeta}$.  However, the basis of this space is $\omega^\xi$-weakly null, and the dual basis is $1$-weakly null, so $Z_\xi(c_0)\in \complement \textsf{M}_{\omega^\xi, 1}\subset \bigcap_{1\leqslant \zeta\leqslant \omega_1} \textsf{M}_{\omega^\xi, \zeta}$.    Thus for every $1\leqslant \zeta\leqslant \omega_1$, $\textsf{m}_\zeta(Z_\xi(c_0))=\omega^\xi$.     Since these spaces have a shrinking, asymptotic $\ell_1$ basis, they are reflexive.    From this it follows that for all $1\leqslant \zeta\leqslant \omega_1$, $\textsf{m}_\zeta^*(Z_\xi(c_0)^*)=\omega^\xi$.    For the $\xi=0$ case, $\textsf{m}_\zeta(\ell_2)=\textsf{m}_\zeta^*(\ell_2)=1=\omega^0$ for every $1\leqslant \zeta\leqslant \omega_1$.

However, as we have already seen, for any $0\leqslant \xi<\omega_1$, $\textsf{m}_\zeta(X_{\omega^\xi})=\textsf{m}_\zeta^*(X_{\omega^\xi}^*)=\omega^\xi+1$. This completely elucidates the examples with $\xi<\omega_1$.

For the $\xi=\omega_1$ case, we note that $\textsf{m}_\zeta(X)=\omega_1$ if and only if $X\in \bigcap_{\eta<\omega_1}\textsf{M}_{\eta, \zeta}=\textsf{M}_{\omega_1, \zeta}$, and a similar statement holds for $\textsf{m}^*_\zeta$.

\end{rem}

\subsection{Three-space}

In \cite{Ostrovskii}, a Banach space $X$ with subspace $Y$ was exhibited such that $Y, X/Y$ have the weak Banach-Saks property, while $X$ does not. In \cite{CastilloGonzalez}, it was shown that $Y, X/Y$ have the hereditary Dunford-Pettis property, while $X$ does not.   More precisely, let $q:\ell_1\to c_0$ be a quotient map, $I:X_1\to c_0$  the formal inclusion, $X=\ell_1\oplus_1 X_1$, $T:X\to c_0$ be given by $T(x,y)=qx+Iy$, and $Y=\ker(T)$.    Then $X/Y=c_0$, which has the  hereditarily Dunford-Pettis property. If $(x_n, y_n)_{n=1}^\infty$ is a weakly null sequence in $Y$, then $\lim_n x_n=\lim_n qx_n=\lim_n Iy_n=0$. Since $(y_n)_{n=1}^\infty$ is a weakly null sequence in $X_1$ with $\lim_n \|y_n\|_{c_0}=0$, $(y_n)_{n=1}^\infty$, and therefore $(x_n, y_n)_{n=1}^\infty$, has a WUC subsequence. This yields that $Y$ has the hereditary Dunford-Pettis property.   However, the basis of $X_1$ is $2$-weakly null and can be normed by the basis of $X_1^*$, which is $1$-weakly null.   Thus $X\in \complement \textsf{M}_{2,1}$.  We modify this example to provide a sharp solution to the three space properties of the classes $\textsf{wBS}_\xi$.

\begin{theorem} For any $0\leqslant \zeta, \xi<\omega_1$, any Banach space $X$, and any subspace $Y$ such that $Y\in \textsf{\emph{wBS}}_\xi$, and $X/Y\in \textsf{\emph{wBS}}_\zeta$, $X\in \textsf{\emph{wBS}}_{\zeta+\xi}$.   

For any $0\leqslant \zeta, \xi<\omega_1$, there exist a Banach space $X$ with a subspace $Y$ such that $Y\in \textsf{\emph{wBS}}_\xi$, $X/Y\in \textsf{\emph{wBS}}_\zeta$, and $X\in \cup_{\gamma<\zeta+\xi} \complement \textsf{\emph{wBS}}_\gamma$.

\end{theorem}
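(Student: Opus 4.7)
The plan for the positive direction is to characterize $\zeta+\xi$-weak nullity via the repeated averages hierarchy (Theorem \ref{7surv}) and to perform two nested averaging rounds, one driven by the quotient map and one by the subspace. Let $q:X\to X/Y$ be the quotient map and fix a weakly null $(x_n)\subset X$. Since $(qx_n)\subset X/Y\in \textsf{wBS}_\zeta$ is weakly null, it is $\zeta$-weakly null; by Remark \ref{6surv} applied to the $\zeta$-sufficient block $(\mathfrak{S}_\zeta,\mathcal{S}_\zeta)$, after passing to a subsequence indexed by some $N\in[\nn]$, the averages $u_k=\sum_i \mathbb{S}^\zeta_{N,k}(i)x_i$ satisfy $\|qu_k\|<1/k$. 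Choose liftings $y_k\in Y$ with $\|u_k-y_k\|<2/k$. Then $(y_k)$, being a norm-null perturbation of the weakly null sequence $(u_k)$, is weakly null in $Y\in \textsf{wBS}_\xi$ and hence $\xi$-weakly null, and a second application of Remark \ref{6surv} with $(\mathfrak{S}_\xi,\mathcal{S}_\xi)$ produces $M\in[\nn]$ with $\|v_j\|\to 0$ for $v_j=\sum_k \mathbb{S}^\xi_{M,j}(k)y_k$. Setting $w_j=\sum_k \mathbb{S}^\xi_{M,j}(k)u_k$ and using that $\min\mathrm{supp}(\mathbb{S}^\xi_{M,j})\to\infty$, the triangle inequality yields $\|w_j-v_j\|\leqslant 2/\min\mathrm{supp}(\mathbb{S}^\xi_{M,j})\to 0$, so $\|w_j\|\to 0$. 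But $w_j$ is exactly the $\mathbb{O}_{M',j}$-average of the $x_i$ for an appropriate $M'\in[\nn]$ and measures $\mathbb{O}_{M',j}\in \mathfrak{S}_\xi*\mathfrak{S}_\zeta$; since $(\mathfrak{S}_\xi*\mathfrak{S}_\zeta,\,\mathcal{S}_\xi[\mathcal{S}_\zeta])$ is $\zeta+\xi$-sufficient by Theorem \ref{7surv}(i) and $CB(\mathcal{S}_\xi[\mathcal{S}_\zeta])=\omega^{\zeta+\xi}+1$ by Proposition \ref{deep facts}(i), Theorem \ref{7surv}(ii) concludes that $(x_n)$ is $\zeta+\xi$-weakly null, so $X\in \textsf{wBS}_{\zeta+\xi}$.

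For the sharpness direction, the plan is to extend the Ostrovskii--Castillo--Gonz\'alez example $X=\ell_1\oplus_1 X_1$, $T(x,y)=qx-Iy$, $Y=\ker T$, to arbitrary $\zeta,\xi$. The boundary cases $\zeta=0$ and $\xi=0$ reduce directly to Proposition \ref{wbsx}, so assume $0<\zeta,\xi$. Let $E=\ell_1$ and $F=X_\xi$, and let $G$ be a target space built as a mixed Schreier space in the style of the proof of Theorem \ref{dreachi}: in the successor case $\zeta=\mu+1$ use the norm associated to the family $\mathcal{A}_n[\mathcal{S}_\mu]$ appearing in Proposition \ref{bt}(iv), and in the limit case use the mixed-Schreier norm $\sup_n 2^{-n}\|\cdot\|_{\zeta_n}$ with $(\zeta_n)$ defining $\mathcal{S}_\zeta$. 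In both cases $G\in \textsf{wBS}_\zeta$ and the canonical basis $(g_n)$ of $G$ is $\zeta$-weakly null but not $\beta$-weakly null for any $\beta<\zeta$. Choose a quotient $q:\ell_1\to G$ sending the $\ell_1$-basis to $(g_n)$, and let $I:F\to G$ be the bounded formal inclusion sending the $X_\xi$-basis to $(g_n)$. Put $X=E\oplus_1 F$, $T(e,f)=qe-If$, $Y=\ker T$. Then $X/Y\cong\mathrm{Im}(T)\leqslant G$, and injectivity of $\textsf{wBS}_\zeta$ (Theorem \ref{downshift theorem}) gives $X/Y\in \textsf{wBS}_\zeta$. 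For a weakly null $(e_n,f_n)\in Y$, the Schur property of $\ell_1$ forces $\|e_n\|\to 0$, hence $\|If_n\|_G=\|qe_n\|_G\to 0$; the design of $G$ forces $\|f_n\|_\beta\to 0$ for every $\beta<\lambda(\xi)$, and Corollary \ref{AC} concludes that $(f_n)$, hence $(e_n,f_n)$, is $\xi$-weakly null, establishing $Y\in \textsf{wBS}_\xi$. The Ostrovskii-style bad sequence is the lift $(z_n,e_n^F)\in Y$ of the $F$-basis, where $z_n\in \ell_1$ is chosen with $qz_n=Ie_n^F=g_n$ and uniformly bounded $\ell_1$-norm; bimonotonicity of the $\ell_1$-sum norm, together with the $\ell_1^\xi$-spreading model property of $(e_n^F)$ inside $F$ and the $\zeta$-complexity of $(g_n)$ inside $G$, combine to make $(z_n,e_n^F)$ an $\ell_1^\gamma$-spreading model in $X$ for every $\gamma<\zeta+\xi$, witnessing $X\in \complement\textsf{wBS}_\gamma$.

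The main obstacle is the engineering of the target $G$ and the quotient $q$ so that the three constraints hold simultaneously: $G\in \textsf{wBS}_\zeta$ exactly, the basis of $X_\xi$ survives as an $\ell_1^\xi$-spreading model inside the amalgam $X$ without being washed out by the identification $qz_n=Ie_n^F$, and the pullback $\ell_1$-sum norm on $X$ combines the $\zeta$-complexity from the quotient side with the $\xi$-complexity from the $F$-side to produce the required $\mathcal{S}_\xi[\mathcal{S}_\zeta]$-lower-bound on convex combinations. The mixed-Schreier space handles all three simultaneously and parallels the role of $c_0$ in Ostrovskii's original case $\zeta=\xi=1$; the most delicate verification is for limit ordinal $\zeta$, where one must carefully interpolate the sequence $(\zeta_n)$ so that a lift $(z_n)$ of $(g_n)$ in $\ell_1$ exists with bounded norm while preserving the $\ell_1^\gamma$-spreading model character of $(z_n,e_n^F)$ at every countable level $\gamma<\zeta+\xi$.
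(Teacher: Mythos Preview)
Your positive direction is essentially the paper's argument rephrased through the repeated averages hierarchy; the paper proceeds by contradiction (assuming an $\ell_1^{\mathcal{S}_\xi[\mathcal{S}_\zeta]}$-spreading model subsequence and producing a single small $\mathcal{S}_\xi[\mathcal{S}_\zeta]$-convex combination), which avoids your slightly imprecise final step.  Your claim that $w_j$ is literally an $\mathbb{O}_{M',j}$-average is not quite right (the outer $\mathcal{S}_\xi$-weights in the product block are indexed by the minima $l_k$ of the $\mathcal{S}_\zeta$-blocks, not by $k$), and invoking Theorem~\ref{7surv}$(ii)$ directly requires the ``for all $N'\in[M']$'' quantifier you never address.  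What you have actually produced are $\mathcal{S}_\xi[\mathcal{S}_\zeta]$-convex combinations of $(x_n)$ with norms tending to zero; combined with the fact that your two-round averaging applies to every subsequence of $(x_n)$, this does rule out $\ell_1^{\zeta+\xi}$-spreading model subsequences, but you should say so rather than cite Theorem~\ref{7surv}$(ii)$.

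Your sharpness construction, however, has a genuine gap.  With $X=\ell_1\oplus_1 X_\xi$, every weakly null sequence $(e_n,f_n)\subset X$ has $\|e_n\|_{\ell_1}\to 0$ by the Schur property, while $(f_n)$ is $\xi+1$-weakly null in $X_\xi$ by Theorem~\ref{sbfacts}$(v)$; hence $X\in\textsf{wBS}_{\xi+1}$.  So your $X$ cannot lie in $\complement\textsf{wBS}_\gamma$ for any $\gamma$ with $\xi+1\leqslant\gamma<\zeta+\xi$, and the example fails whenever $\zeta\geqslant 2$ (for finite $\xi$) or more generally whenever $\zeta+\xi>\xi+1$.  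Your proposed bad sequence $(z_n,e_n^F)$ cannot rescue this: since $\|z_n\|_{\ell_1}\geqslant\|g_n\|_G>0$, it is not even weakly null in $X$, so it witnesses nothing about $\textsf{wBS}_\gamma$.  A second problem is that the formal inclusion $I:X_\xi\to G$ is only bounded when the Schreier levels defining $G$ lie below $\xi$; for limit $\zeta>\xi$ your map $I$ does not exist.

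The paper repairs exactly this by replacing your single $X_\xi$ with $W=(\oplus_m X_{\zeta+\xi_m})_{\ell_1}$ (where $\sup_m(\xi_m+1)=\xi$), so that $X=\ell_1\oplus_1 W$ contains an isomorph of each $X_{\zeta+\xi_m}$, and the canonical basis of $X_{\zeta+\xi_m}$ is a weakly null witness that $X\notin\textsf{wBS}_\gamma$ for every $\gamma\leqslant\zeta+\xi_m$.  Correspondingly the target is $Z=(\oplus_m(\oplus_n X_{\zeta_n})_{\ell_1})_{\ell_1}$ with $S|_{X_{\zeta+\xi_m}}$ built from the bounded inclusions $X_{\zeta+\xi_m}\to X_{\zeta_n}$ (bounded because $\zeta_n<\zeta\leqslant\zeta+\xi_m$).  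The verification that $Y=\ker T\in\textsf{wBS}_\xi$ then uses Proposition~\ref{phenomenal}$(ii)$ with $\gamma=\zeta$ and $\delta=\xi_m$ to dominate each coordinate by a subsequence of the $X_{\xi_m}$ basis.  The essential missing idea in your attempt is that the ``$F$'' piece must already carry the full $\zeta+\xi$ complexity, not merely the $\xi$ complexity.
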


\begin{proof}  Assume $Y\in \textsf{wBS}_\xi$ and $X/Y\in \textsf{wBS}_\zeta$. Fix a weakly null sequence $(x_n)_{n=1}^\infty\subset X$ and, seeking a contradiction, assume $$0<\ee=\inf\{\|x\|:F\in \mathcal{S}_{\zeta+\xi}, x\in \text{co}(x_n:n\in F)\}.$$  By passing to a subsequence, we may assume $$\ee\leqslant \inf \{\|x\|: F\in \mathcal{S}_\xi[\mathcal{S}_\zeta], x\in \text{co}(x_n: n\in F)\}.$$

 Since $(x_n+Y)_{n=1}^\infty$ is weakly null in $X/Y$, it is $\zeta$-weakly null.   Thus there exist $F_1<F_2<\ldots$, $F_i\in \mathcal{S}_\zeta$,  and positive scalars $(a_i)_{i\in \cup_{n=1}^\infty F_n}$ such that $\sum_{i\in F_n}a_i=1$ and $\|\sum_{i\in F_n}a_ix_i +Y\|<\min \{\ee/2, 1/n\}$.  For each $n\in\nn$, we fix $y_n\in Y$ such that $\|y_n-\sum_{i\in F_n}a_ix_i\|<\min \{\ee/2, 1/n\}$.   Since $(x_n)_{n=1}^\infty$ is weakly null, so are $(\sum_{i\in F_n}a_ix_i)_{n=1}^\infty$ and $(y_n)_{n=1}^\infty$.   Since $Y\in \textsf{wBS}_\xi$, there exist $G\in \mathcal{S}_\xi$ and positive scalars $(b_n)_{n\in G}$ such that $\sum_{n\in G}b_n=1$ and $\|\sum_{n\in G}b_ny_n\|<\ee/2$. Since $\cup_{n\in G}F_n\in \mathcal{S}_\xi[\mathcal{S}_\zeta]$, $$\ee\leqslant \|\sum_{n\in G}\sum_{i\in F_n}b_na_ix_i\|\leqslant \|\sum_{n\in G}b_ny_n\|+\sum_{n\in G}b_n\|y_n-\sum_{i\in F_n}a_ix_i\|<\ee/2+\ee/2=\ee,$$ and this contradiction finishes the first statement.

Now if $\zeta=0=\xi$, let $X$ be any finite dimensional space and let $Y=X$.  If $\zeta=0$ and $\xi>0$, let $(\xi_n)_{n=1}^\infty$ be any sequence such that $\sup_n \xi_n+1=\xi$. Let $X=(\oplus_{n=1}^\infty X_{\xi_n})_{\ell_1}$ and let $Y=X$. If $\xi=0$ and $\zeta>0$, let $(\zeta_n)_{n=1}^\infty$ be any sequence such that $\sup_n \zeta_n+1=\zeta$. Let $X=(\oplus_{n=1}^\infty X_{\zeta_n})_{\ell_1}$ and let $Y=\{0\}$. Each of these choices is easily seen to be the example we seek in these trivial cases.

We now turn to the non-trivial case, $\xi, \zeta>0$.  Fix $(\xi_n)_{n=1}^\infty$ such that if $\xi$ is a successor, $\xi_n+1=\xi$ for all $n\in\nn$. Otherwise let $(\xi_n)_{n=1}^\infty$ be the sequence such that $$\mathcal{S}_\xi=\{E\in [\nn]^{<\nn}: \exists n\leqslant E\in \mathcal{S}_{\xi_n}\}.$$   Let $(\zeta_n)_{n=1}^\infty$ be chosen similarly.  Let $I_{m,n}:X_{\zeta+\xi_m}\to X_{\zeta_n}$ be the canonical inclusion, which is bounded, since $\zeta+\xi_m\geqslant \zeta>\zeta_n$.  Let $a_{m,n}=\|I_{m,n}\|^{-1}$.   For each $m\in\nn$, let  $Z_m=(\oplus_{n=1}^\infty X_{\zeta_n})_{\ell_1}$ and let $Z=(\oplus_{m=1}^\infty Z_m)_{\ell_1}$.  Define $J_m:X_{\zeta+\xi_m}\to Z_m$ by $J_m(w)=(2^{-n} a_{m,n}I_{m,n}w)_{n=1}^\infty$. Note that $\|J_m\|\leqslant 1$.  Now let $W=(\oplus_{m=1}^\infty X_{\zeta+\xi_m})_{\ell_1}$ and define $S:W\to Z$ by letting $S|_{X_{\zeta+\xi_m}}=J_m$.  Note that $\|S\|\leqslant 1$.   Let $q:\ell_1\to Z$ be a quotient map. Let $X=\ell_1\oplus_1 W$ and define $T:X\to Z$ by $T(x,w)=qx+Sw$.  Then $T$ is also a quotient map, and, with $Y=\ker(T)$, $X/Y=Z$.   Since $\zeta_n<\zeta$, $X_{\zeta_n}\in \textsf{wBS}_\zeta$. Since $\textsf{wBS}_\zeta$ is closed under $\ell_1$ sums, $Z_m$ and $Z$ lie in $\textsf{wBS}_\zeta$.    Fix $\gamma<\zeta+\xi$ and note that there exists $m\in\nn$ such that $\gamma\leqslant \zeta+\xi_m$.  Since $X$ contains an isomorph of $X_{\zeta+\xi_m}$, the basis of which is not $\zeta+\xi_m$-weakly null, $X\in \complement \textsf{wBS}_\gamma$. It remains to show that $Y\in \textsf{wBS}_\xi$.  To that end, fix a weakly null sequence $((x_n, w_n))_{n=1}^\infty\subset B_{\ker(T)}$.   Then $x_n\to 0$, and $Tx_n\to 0$. From this it follows that $Sw_n\to 0$.    Seeking a contradiction, assume that $$0<\ee = \inf\{\|z\|: F\in \mathcal{S}_\xi, z\in \text{co}((x_n, w_n): n\in F)\}.$$  By passing to a subsequence, we may assume $\|x_n\|<\ee/2$ for all $n$, whence $$\ee/2\leqslant \inf\{\|w\|: F\in \mathcal{S}_\xi, w \in \text{co}(w_n: n\in F)\}.$$    Since $(w_n)_{n=1}^\infty \subset W$ is weakly null, there exists $k\in\nn$ such that for all $n\in\nn$, $$\sum_{m=k+1}^\infty \|w_{n,m}\|_{X_{\zeta+\xi_m}}<\ee/4,$$ where $w_n=(w_{n,m})_{m=1}^\infty$.  Since $Sw_n\to 0$, it follows that for all $m\in\nn$,  $J_mw_{n,m}\underset{n}{\to}0$.  In particular, for every $\beta<\zeta$ and $m\in\nn$, $\lim_n \|w_{n,m}\|_\beta=0$.    By passing to a subsequence $k$ times, once for each $1\leqslant m\leqslant k$, we may assume $(w_{n,m})_{n=1}^\infty$ is dominated by a subsequence of the $X_{\xi_m}$ basis. For this we are using Proposition \ref{phenomenal}$(ii)$.       Since $\xi_m<\xi$, $(w_{n,m})_{n=1}^\infty$ is $\xi$-weakly null for each $1\leqslant m\leqslant k$.   From this it follows that there exist $F\in \mathcal{S}_\xi$ and positive scalars $(a_n)_{n\in F}$ such that $\sum_{n\in F}a_n=1$ and for each $1\leqslant m\leqslant k$, $\|\sum_{n\in F}a_n w_{m,n}\|_{\zeta+\xi_m}<\ee/4k$.    Then $$\ee/2 \leqslant \|\sum_{n\in F}a_nw_n\|\leqslant \sum_{m=1}^k \|\sum_{n\in F}a_nw_{m,n}\|_{\zeta+\xi_m} + \sum_{n\in F}a_n \sum_{m=k+1}^\infty \|w_{m,n}\|_{\zeta+\xi_m} <\ee/4+\ee/4=\ee/2,$$ a contradiction.

\end{proof}

\end{document}